\definecolor{darkblue}{rgb}{0.0,0.0,0.3}
\numberwithin{equation}{section}
\theoremstyle{plain}
\newtheorem{thm}[equation]{Theorem}
\newtheorem{cor}[equation]{Corollary}
\newtheorem{lem}[equation]{Lemma}
\newtheorem{prop}[equation]{Proposition}
\newtheorem*{theorem*}{Theorem}
\newtheorem*{cor*}{Corollary}
\newtheorem*{prop*}{Proposition}
\newtheorem{thmintro}{Theorem}
\theoremstyle{definition}
\newtheorem{defn}[equation]{Definition}
\newtheorem{example}[equation]{Example}
\newtheorem{problemintro}{Problem}
\theoremstyle{remark}
\newtheorem{rem}[equation]{Remark}
\newcommand{\ZZ}{\mathbb{Z}}
\newcommand{\id}{\mathrm{id}}
\newcommand{\rC}{\mathrm{C}}
\author{Matthew Kennedy}
\address{Department of Pure Mathematics, University of Waterloo \\ 200 University Avenue West \\ Waterloo, Ontario, N2L 3G1 \\ Canada}
\email{matt.kennedy@uwaterloo.ca}
\author{Larissa Kroell}
\address{Department of Pure Mathematics, University of Waterloo \\ 200 University Avenue West \\ Waterloo, Ontario, N2L 3G1 \\ Canada}
\email{lkroell@uwaterloo.ca}
\author{Camila F. Sehnem}
\address{Department of Pure Mathematics, University of Waterloo \\ 200 University Avenue West \\ Waterloo, Ontario, N2L 3G1 \\ Canada}
\email{camila.sehnem@uwaterloo.ca}
\title[Primality and the ideal intersection property]{Primality and the ideal intersection property for reduced crossed products}
\thanks{The first and second author acknowledge the support of the Natural Sciences and Engineering Research Council of Canada (NSERC)}
\begin{document}

\begin{abstract}
We consider the ideal structure of reduced crossed products over discrete groups. First, we completely characterize primality for reduced crossed products. Second, we characterize the ideal intersection property for reduced crossed products over FC-hypercentral groups. Both of these characterizations are intrinsic, in terms of conditions on the underlying dynamics. A key intermediate result is a complete characterization of the regular ideal intersection property for reduced crossed products. For C*-dynamical systems over groups with restrictive subgroup structure, these characterizations simplify even further, which we demonstrate with a number of examples.
\end{abstract}

\maketitle

\setcounter{tocdepth}{1}

\tableofcontents

\section{Introduction}
This paper concerns the ideal structure of reduced crossed products over discrete groups. Let $(A,G,\alpha)$ be a C*-dynamical system consisting of a C*-algebra $A$, a discrete group $G$ and an action $\alpha$ of $G$ by automorphisms on $A$. The corresponding reduced crossed product $A \times_\lambda G$ is a C*-algebra generated by a copy of $A$ and a copy of the left regular representation $\lambda$ of $G$ satisfying
\[
\lambda_g a \lambda_g^* = \alpha_g(a), \quad g \in G,\ a \in A.
\]

The relationship between the ideal structure of the reduced crossed product and the structure of the underlying C*-dynamical system has been a focus point in the theory of C*-algebras for at least the past half-century. In this paper, we will consider two fundamental problems along these lines.

\begin{problemintro} \label{problemintro:primality}
Characterize the primality of reduced crossed products in terms of the underlying C*-dynamical system.
\end{problemintro}

The first problem that we will consider asks for a characterization of the primality of reduced crossed products. Recall that a C*-algebra is prime if every pair of nonzero ideals in the algebra has non-empty intersection. For separable C*-algebras, Dixmier \cite{Dixmier1960} proved that primality is equivalent to the existence of a faithful irreducible representation.

This problem was first considered in an influential paper of Rieffel \cite{Rieffel1980}. He obtained a characterization of primality for reduced crossed products of separable C*-dynamical systems over finite groups using ideas from Morita theory and imprimitivity theory. Subsequently, Hamana \cite{Hamana1985} obtained another characterization of primality for reduced crossed products of C*-dynamical systems over finite groups using the theory of injective envelopes. Later, Echterhoff \cite{Echterhoff1990} established an equivalence between primality and simplicity for reduced crossed products of minimal C*-dynamical systems over FC-hypercentral groups. Finally, in very recent work, Geffen and Ursu obtained a characterization of primality for reduced crossed products of minimal C*-dynamical systems.

The first main result in this paper is a complete resolution of Problem~\ref{problemintro:primality}. Specifically, we completely characterize the primality of reduced crossed products in terms of the underlying C*-dynamical system. A key idea underlying this result, and many of the other results in this paper, is a new theory of induction for C*-dynamical systems. In line with other notions of induction, this is a mechanism for constructing new C*-dynamical systems over a group from a given C*-dynamical system over a subgroup. There are also corresponding tensor product decompositions for an induced C*-dynamical system and its reduced crossed product. However, unlike other notions of induction, the induction  here is at the level of the injective envelope of the system. As a result, the corresponding tensor product decompositions are also necessarily at this level. A crucial point, which has important structural implications, is that the injective envelope of a C*-dynamical system, as well as the injective envelope of its reduced crossed product, may decompose into tensor products, despite the fact that the system and its reduced crossed product do not. 

It will also be important for our arguments that there is a complementary theory of imprimitivity, for detecting when a given C*-dynamical system is induced. If a C*-dynamical system $(A,G,\alpha)$ is induced, then it is induced from a regular C*-dynamical subsystem $(J,H,\beta)$, consisting of a subgroup $H \leq G$, a regular $H$-invariant ideal $J \trianglelefteq A$ and an action $\beta$ obtained by restricting $\alpha$ in the obvious way. This new theory of induction and imprimitivity provides a conceptual framework for relating the structure of an induced C*-dynamical system to the structure of the corresponding C*-dynamical subsystem.

We are now ready to state the first main result (see Section~\ref{sec:primality}).

\begin{thmintro}\label{thmintro:primeness} Let $(A, G, \alpha)$ be a prime C*-dynamical system. The following are equivalent:
\begin{enumerate}
\item $A \times_\lambda G$ is prime;
\item Whenever $(A,G,\alpha)$ is induced from a regular C*-dynamical subsystem $(J,H,\beta)$ with $r \in H \setminus \{e\}$ and a $C_H(r)$-invariant essential hereditary C*-subalgebra $B \subseteq J$ such that $\alpha_r|_B = \exp(\delta)$ for a $C_H(r)$-commuting *-derivation $\delta$ of $B$, then the $H$-conjugacy class $\operatorname{Cl}_H(r)$ of $r$ is infinite;
\item Whenever $(I(A),G,\alpha)$ is induced from a regular C*-dynamical subsystem $(J,H,\beta)$ with $r \in H \setminus \{e\}$ such that $\beta_r = \operatorname{Ad}(u_r)$ for a $C_H(r)$-invariant unitary $u_r \in J$, then the $H$-conjugacy class $\operatorname{Cl}_H(r)$ of $r$ is infinite;
\end{enumerate}
If $A$ is separable, then these conditions are additionally equivalent to:
\begin{enumerate}[resume]
\item Whenever $(A,G,\alpha)$ is induced from a regular C*-dynamical subsystem $(J,H,\beta)$ with $r \in H \setminus \{e\}$, a nonzero $r$-invariant ideal $I \trianglelefteq J$ and a unitary $u \in M(I)$ such that
\begin{enumerate}
\item $I \cap \alpha_h(I)$ is an essential ideal in both $I$ and $\alpha_h(I)$ for all $h \in C_H(r)$ and 
\item $C := \|\alpha_r|_I - \operatorname{Ad}(u)\|$ and $D := \sup_{h \in C_H(r)} \|\alpha_h(u) - u\|$ satisfy $2\sqrt{2 - \sqrt{4-C^2}} + D < \sqrt{2}$,
\end{enumerate}
then the $H$-conjugacy class $\operatorname{Cl}_H(r)$ of $r$ is infinite.
\end{enumerate}
\end{thmintro}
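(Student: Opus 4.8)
\emph{Proof strategy.} The plan is to route all the equivalences through condition $(3)$, which is the one phrased on the injective envelope and hence the natural meeting point of the induction--imprimitivity machinery and its tensor product decompositions: I would first prove $(1)\Leftrightarrow(3)$, then descend to $(2)$, and finally deduce $(4)$ under separability. For $(1)\Leftrightarrow(3)$ the guiding observation is that a C*-algebra is prime exactly when it has no proper nonzero regular ideal, and that --- because $A$ is prime, so every nonzero invariant ideal of $A$ is essential and thus (by faithfulness of the canonical conditional expectation on the reduced crossed product) generates an essential ideal of $A\times_\lambda G$ --- the algebra $A\times_\lambda G$ is prime if and only if it has the regular ideal intersection property. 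Thus the question becomes whether there is a nonzero regular ideal of $A\times_\lambda G$ meeting $A$ trivially, and I would feed this into the characterization of the regular ideal intersection property obtained in this paper.

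The easy direction is $\neg(3)\Rightarrow\neg(1)$. If $(I(A),G,\alpha)$ is induced from $(J,H,\beta)$ with $\beta_r=\operatorname{Ad}(u_r)$ for a $C_H(r)$-invariant unitary $u_r\in J$ and $\operatorname{Cl}_H(r)$ finite, then the reduced crossed product of the induced system decomposes at the level of the injective envelope so that primality of $I(A)\times_\lambda G$ would force primality of $J\times_\lambda H$; but inside the latter the unitary multiplier $v:=\lambda_r u_r^{*}$ commutes with $J$ and with $\lambda_{C_H(r)}$ (using $\beta_r=\operatorname{Ad}(u_r)$ and $\beta_h(u_r)=u_r$ for $h\in C_H(r)$), so, choosing representatives $h_1,\dots,h_n$ for the cosets of $C_H(r)$ in $H$, the symmetrization $z:=\sum_i \lambda_{h_i}v\lambda_{h_i}^{*}$ is a unitary multiplier commuting with $J$ and fixed under conjugation by $\lambda_H$. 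Its Fourier support is $\operatorname{Cl}_H(r)\not\ni e$, so $z$ is not a scalar, whence $z+z^{*}$ or $i(z-z^{*})$ is a non-scalar central self-adjoint multiplier of $J\times_\lambda H$, and that algebra is not prime. Finiteness of $\operatorname{Cl}_H(r)$, that is $[H:C_H(r)]<\infty$, is precisely what makes the symmetrization a finite sum. The substantive direction $\neg(1)\Rightarrow\neg(3)$ goes the other way: starting from a nonzero regular ideal of $A\times_\lambda G$ transverse to $A$, one must manufacture the induced presentation --- the subgroup $H$, the element $r\neq e$ with finite $H$-conjugacy class, and the $C_H(r)$-invariant unitary $u_r$ implementing $\beta_r$. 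This is where the imprimitivity theory does the work, certifying that the failure of the regular ideal intersection property forces $I(A)$ to decompose into finitely many pieces cyclically permuted by $G$ --- i.e.\ to be induced from the stabilizer subgroup $H$ of one piece --- and that on the corresponding corner $r$ acts innerly via a $C_H(r)$-invariant unitary.

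For $(2)\Leftrightarrow(3)$ I would set up a dictionary between a bounded $C_H(r)$-commuting $*$-derivation on an essential hereditary subalgebra $B\subseteq J\trianglelefteq A$ and an inner automorphism of the regular ideal of $I(A)$ attached to it, using that $I(B)$ is identified with a corner of $I(A)$ and that, $B$ being essential, this corner has full central support so that it is the ideal $J$ of $(3)$. Given $(2)$, the derivation $\delta$ extends uniquely to a $C_H(r)$-commuting derivation of the monotone complete algebra $I(B)$, which is therefore inner, say $\operatorname{ad}(ih)$ with $h=h^{*}$; then $u_r:=e^{ih}$ implements $\alpha_r$, and the commutation with $C_H(r)$ leaves $u_r$ fixed up to a central-valued cocycle that must be absorbed by shifting $h$ by a central self-adjoint element. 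Conversely, restricting $\operatorname{Ad}(u_r)$ to a sufficiently small essential hereditary subalgebra $B\subseteq A$ on which $\alpha_r$ displaces elements only slightly exhibits $\alpha_r|_B$ as $\exp(\delta)$ for a $C_H(r)$-commuting derivation. Finally, for $A$ separable, $(3)\Leftrightarrow(4)$: one implication is the case $C=D=0$, and for the other the numerical inequality $2\sqrt{2-\sqrt{4-C^{2}}}+D<\sqrt{2}$ is calibrated exactly to keep the unitaries built from $\alpha_r$, $u$ and its $C_H(r)$-translates within a half-plane of their spectra, so that continuous logarithms exist; averaging the resulting skew-adjoint element over $C_H(r)$ inside the injective envelope and exponentiating upgrades the approximate data on $I$ to the exact data of $(3)$ on the regular ideal it generates. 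Separability is used to pass to $I(A)$, to run the averaging and limiting argument, and, where convenient, via the equivalence of primality with the existence of a faithful irreducible representation.

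The step I expect to be the main obstacle is the direction $\neg(1)\Rightarrow\neg(3)$ of the first part: certifying that the \emph{only} way $A\times_\lambda G$ can fail to be prime is through an induced presentation in which $\beta_r$ is inner via a $C_H(r)$-invariant unitary with $\operatorname{Cl}_H(r)$ finite. This rests on the imprimitivity theory and a careful analysis of the central structure of the injective envelope of the crossed product together with the $G$-orbits arising there; once that decomposition is available, the reverse direction, the descent $(2)\Leftrightarrow(3)$, and the perturbative argument for $(4)$ are comparatively routine.
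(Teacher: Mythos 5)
Your overall architecture does track the paper's: reducing primality of $A \times_\lambda G$ (given primality of the system) to the regular ideal intersection property is correct, and your construction for $\neg(3)\Rightarrow\neg(1)$ --- symmetrizing $u_r^*\lambda_r$ over a transversal of $C_H(r)$ in $H$ to obtain a non-scalar element commuting with $J \times_\lambda H$, then invoking $I(A\times_\lambda G) \cong \mathcal{B}(\ell^2(G/H)) \overline{\otimes} I(J\times_\lambda H)$ --- is exactly the paper's Lemma \ref{lem:induced-to-relative-commutant} combined with Theorem \ref{thm:imprimitivity-reduced-crossed-product} (minor slip: the symmetrized sum $z$ is not a unitary, but centrality and non-scalarity are all you use).

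The genuine gap is the direction you yourself flag as the main obstacle, $\neg(1)\Rightarrow\neg(3)$: you defer it to ``the imprimitivity theory,'' but the imprimitivity results (Proposition \ref{prop:imprimitivity}, Theorem \ref{thm:imprimitivity-reduced-crossed-product}) only describe systems already known to be induced; they contain no mechanism for manufacturing the subgroup $H$, the element $r$, and the $C_H(r)$-invariant unitary from a non-trivial central element of $I(A\times_\lambda G)$. What the paper actually does is the meandering-projection argument of Section \ref{sec:projections}: from $Z(I(A))^G \subsetneq Z(I(A\times_\lambda G))$ one takes a central element $b$ of the monotone complete crossed product, extracts a nonzero Fourier coefficient $a_r$ with $r \ne e$, whose polar decomposition yields a unitary implementing $\alpha_r$ on a corner of $I(A)$; centrality gives the key bound $\|\sum_{t\in F} \alpha_t(a_r^*a_r)\| \le \|E(b^2)\|$ over transversals of $C_G(r)$, i.e.\ a meandering projection (Lemma \ref{lem:relative-commutant-to-projection}); and then the extremally-disconnected analysis on the Glimm space of $Z(I(A))$ (Lemmas \ref{lem:collection-of-functions} and \ref{lem:projection-to-induced}) cuts down to a clopen set meeting only finitely many translates, whose stabilizer is the subgroup $H$, with $r \in FC(H)$ and an invariant unitary on the corresponding regular ideal. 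None of this appears in your sketch, and your picture of the conclusion is also inaccurate: the failure of (3) does not force $I(A)$ to split into \emph{finitely many} pieces ($[G:H]$ may well be infinite); what is finite is $[H:C_H(r)]$, i.e.\ $\operatorname{Cl}_H(r)$. Two smaller soft spots of the same kind: in your $(2)\Rightarrow(3)$ step the central-valued cocycle obstructing invariance of $e^{ih}$ is not automatically a coboundary for an arbitrary group $C_H(r)$ (the paper instead argues as in Hamana's Theorem 7.3), and in $(3)\Leftrightarrow(4)$ one cannot ``average over $C_H(r)$,'' which may be an arbitrary infinite group --- the paper invokes the Geffen--Ursu equivariant analogue of Elliott's theorem, and that is precisely where separability is consumed.
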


We want to emphasize that Conditions (2) and (4) in Theorem \ref{thmintro:primeness} are ``intrinsic,'' by which we mean that they are stated in terms of the underlying C*-dynamical system, and are possible to check in practice. In contrast, Condition (3) is ``extrinsic,'' since it refers to the (unique) C*-dynamical system obtained by extending the action to the injective envelope of the underlying C*-algebra.

For the special case of minimal C*-dynamical systems, the equivalence between Conditions (1) and (3) recovers one of the main results from \cite{GeffenUrsu2023}*{Theorem 6.2} (see Remark \ref{rem:recover-geffen-ursu-primality}). In the separable minimal case, they also proved the equivalence between Conditions (1) and (4). A key step in their proof of the latter result is a characterization of automorphisms of separable C*-algebras that are implemented, on an invariant ideal, by a unitary in the injective envelope that is invariant under a given group of automorphisms (see \cite{GeffenUrsu2023}*{Theorem 7.15}). Their characterization, in terms of approximation by inner automorphisms that are implemented by approximately invariant unitaries, can be seen as an equivariant analogue of Elliott's characterization of proper outerness for automorphisms of separable C*-algebras from \cite{Elliott1980}*{Theorem 2.3}. We will utilize this characterization in our proof of the equivalence between Conditions (1) and (4) in the general separable case.

The equivalence between Conditions (2) and (4) does not require separability. In order to prove this equivalence, we will establish another characterization of automorphisms of C*-algebras with the above-mentioned properties in terms of *-derivations. This characterization can be seen as an equivariant analogue of Olesen and Pedersen's characterization of proper outerness for automorphisms of C*-algebras from \cite{OlesenPedersen1982}*{Theorem 4.3}. See Section \ref{sec:intrinsic} for more details about these results.

In Section \ref{sec:examples}, we will apply Theorem \ref{thmintro:primeness} to a number of examples. We will show that for groups with restrictive subgroup structure, the conditions in the theorem can be significantly simplified.

\begin{problemintro} \label{problemintro:ideal-intersection-property}
Characterize the ideal intersection property for C*-dynamical systems.
\end{problemintro}

The second problem that we will consider asks for a characterization of the ideal intersection property for C*-dynamical systems. This problem was first considered by O'Donovan in \cite{ODonovan1975}, and Olesen and Pedersen in \cite{OlesenPedersen1980} (see Remark 4.8 of the latter paper). A C*-dynamical system is said to have the ideal intersection property if every nonzero ideal of the corresponding reduced crossed product has nonzero intersection with the underlying C*-algebra. Note that for minimal C*-dynamical systems, the ideal intersection property is equivalent to the simplicity of the reduced crossed product.

The ideal intersection property is perhaps the most fundamental example of the ``uniqueness'' phenomenon identified by Cuntz and Krieger \cite{CuntzKrieger1980}, whereby it is possible to infer a great deal of information about the structure of a C*-algebra from the structure of a well-understood, and typically much smaller, C*-subalgebra. Further, Sierakowski \cite{Sierakowski2010} proved that the existence of a lattice isomorphism between the ideals of the reduced crossed product and the invariant ideals of the underlying C*-algebra can be characterized in terms of the ideal intersection property. This is typically the only situation in which it is possible to completely describe the ideal structure of the reduced crossed product in terms of the underlying C*-dynamical system.

Olesen and Pedersen obtained a characterization of the ideal intersection property for C*-dynamical systems over abelian groups. Later, Kawamura and Tomiyama \cite{KawamuraTomiyama1990} obtained a characterization of the ideal intersection property for abelian C*-dynamical systems over amenable groups. This characterization was then extended to amenable abelian C*-dynamical systems by Archbold and Spielberg \cite{ArchboldSpielberg1994}. Recently, Kawabe \cite{Kawabe2017} obtained a characterization of the ideal intersection property for unital abelian C*-dynamical systems, generalizing a characterization of C*-simple groups by the first author \cite{Kennedy2020}. Subsequently, Kalantar and Scarparo \cite{KalantarScarparo2022} extended Kawabe's result to non-unital abelian C*-dynamical systems.

For non-abelian C*-dynamical systems, the ideal intersection property has proven more difficult to understand. However, significant progress has recently been made. The first author and Schafhauser \cite{KennedySchafhauser2019} identified a cohomological obstruction to the ideal intersection property. Under the assumption that this obstruction vanishes, they obtained a characterization of the ideal intersection property for arbitrary C*-dynamical systems. Subsequently, Geffen and Ursu \cite{GeffenUrsu2023} obtained a characterization of the ideal intersection property in two important special cases: for C*-dynamical systems over FC (finite conjugacy class) groups and minimal C*-dynamical systems over FC-hypercentral groups.

The second main result in this paper is a resolution of Problem \ref{problemintro:ideal-intersection-property} for FC-hypercentral groups. Specifically, we characterize the ideal intersection property for C*-dynamical systems over FC-hypercentral groups. A group is FC-hypercentral if it has no non-trivial ICC (infinite conjugacy class) quotient. This is a large class of groups that contains every finitely generated group of polynomial growth. For prime C*-dynamical systems over FC-hypercentral groups, our result implies an equivalence between the primality of the corresponding reduced crossed product and the ideal intersection property for the system, extending a result of Echterhoff \cite{Echterhoff1990} in the minimal case. We will also demonstrate that our characterization of the ideal intersection property does not extend beyond the class of FC-hypercentral groups (see Example \ref{ex:non-fc-case}). This is in line with recent results establishing a dichotomy between the dynamical properties of FC-hypercentral groups and non-FC-hypercentral groups (see \cites{FrischHartmanTamuzVahidiFerdowsi2019,FrischTamuzVahidiFerdowsi2019}). 

Our characterization of the ideal intersection property for C*-dynamical systems over FC-hypercentral groups also requires the induction and imprimitivity theory developed in this paper. We will say that a C*-dynamical system $(A,G,\alpha)$ is sub-induced by a regular C*-dynamical system $(J,H,\beta)$ if the restriction of $(A,G,\alpha)$ to a regular $G$-invariant ideal of $A$ containing the regular ideal $J$ is induced by $(J,H,\beta)$ in the above sense.

We are now ready to state the second main result (see Section~\ref{sec:intersection-property}).

\begin{thmintro} \label{thmintro:FC-hypercentral-ideal-intersection-property}
Let $(A,G,\alpha)$ be a unital C*-dynamical system such that $G$ is FC-hypercentral. The following are equivalent:
\begin{enumerate}
\item $(A,G,\alpha)$ has the ideal intersection property;

\item Whenever $(A,G,\alpha)$ is sub-induced from a regular C*-dynamical subsystem $(J,H,\beta)$ with $r \in H \setminus \{e\}$ and a $C_H(r)$-invariant essential hereditary C*-subalgebra $B \subseteq J$ such that $\alpha_r|_B = \exp(\delta)$ for a $C_H(r)$-commuting *-derivation $\delta$ of $B$, then the $H$-conjugacy class $\operatorname{Cl}_H(r)$ of $r$ is infinite;

\item Whenever $(I(A),G,\alpha)$ is sub-induced from a regular C*-dynamical subsystem $(J,H,\beta)$ with $r \in H \setminus \{e\}$ such that $\beta_r = \operatorname{Ad}(u_r)$ for a $C_H(r)$-invariant unitary $u_r \in J$, then the $H$-conjugacy class $\operatorname{Cl}_H(r)$ of $r$ is infinite;

\end{enumerate}
If, in addition, $A$ is separable, then these conditions are equivalent to:
\begin{enumerate}[resume]
\item Whenever $(A,G,\alpha)$ is sub-induced from a regular C*-dynamical subsystem $(J,H,\beta)$ with $r \in H \setminus \{e\}$, a nonzero $r$-invariant ideal $I \trianglelefteq J$ and a unitary $u \in M(I)$ such that
\begin{enumerate}
\item $I \cap \alpha_h(I)$ is an essential ideal in both $I$ and $\alpha_h(I)$ for all $h \in C_H(r)$ and 
\item $C := \|\alpha_r|_I - \operatorname{Ad}(u)\|$ and $D := \sup_{h \in C_H(r)} \|\alpha_h(u) - u\|$ satisfy $2\sqrt{2 - \sqrt{4-C^2}} + D < \sqrt{2}$,
\end{enumerate}
then the $H$-conjugacy class $\operatorname{Cl}_H(r)$ of $r$ is infinite.
\end{enumerate}
\end{thmintro}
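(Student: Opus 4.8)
The plan is to derive Theorem~\ref{thmintro:FC-hypercentral-ideal-intersection-property} from the characterization of the \emph{regular} ideal intersection property obtained in this paper, together with the induction and imprimitivity theory. Conditions (2), (3) and (4) are three equivalent reformulations of a single dynamical datum: (2)$\Leftrightarrow$(3) is the passage between a $C_H(r)$-commuting *-derivation on a $C_H(r)$-invariant essential hereditary subalgebra of $J$ and a $C_H(r)$-invariant unitary in the injective envelope implementing $\beta_r$ (the equivariant Olesen--Pedersen criterion of Section~\ref{sec:intrinsic}), while (3)$\Leftrightarrow$(4), in the separable case, is the passage to approximation by inner automorphisms implemented by approximately $C_H(r)$-invariant unitaries (the equivariant Elliott criterion). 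These translations are group-independent and are precisely those already employed in the proof of Theorem~\ref{thmintro:primeness}; applied to each regular $G$-invariant ideal-restriction occurring in the definition of ``sub-induced,'' they show that (2), (3) and (4) all amount to the regular ideal intersection property for $(A,G,\alpha)$. The content of the theorem is thus the single implication: for unital $A$ and FC-hypercentral $G$, the regular ideal intersection property implies the ideal intersection property (the converse being immediate, since regular ideals are ideals).

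To prove this implication I would argue contrapositively, producing from a nonzero ideal $I \trianglelefteq A \times_\lambda G$ with $I \cap A = 0$ a nonzero \emph{regular} ideal of $A \times_\lambda G$ meeting $A$ trivially. The FC-hypercentral hypothesis enters through the FC-hypercentral series $\{e\} = G_0 \trianglelefteq G_1 \trianglelefteq \cdots \trianglelefteq G_\kappa = G$, in which $G_{\alpha+1}/G_\alpha$ is the FC-centre of $G/G_\alpha$ and which by hypothesis exhausts $G$; one runs a transfinite induction on the length of this series. At a successor stage one peels off the FC-centre $G_1 \trianglelefteq G$ via the iterated crossed product decomposition $A \times_\lambda G \cong (A \times_\lambda G_1) \times_\lambda (G/G_1)$ (valid here, $G$ being amenable): a failure of the ideal intersection property for $(A,G,\alpha)$ forces one either for $(A, G_1)$ --- where $G_1$ is an FC group and a sub-induction datum $(J,H,\beta)$ with $r \neq e$, finite $\operatorname{Cl}_H(r)$ and $C_H(r)$-invariant unitary $u_r \in J$ is extracted directly, as in the FC case of Geffen and Ursu --- or one for $(A \times_\lambda G_1, G/G_1)$, whose FC-hypercentral series is strictly shorter, so that the inductive hypothesis applies; limit stages are handled by a continuity argument along the series. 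Pulling the resulting datum back up the series and invoking the regular ideal intersection property characterization then yields the required regular ideal: concretely, the obstructing ideal is generated by the spectral projections of a group-like unitary supported on $\operatorname{Cl}_H(r) = \{r_1,\dots,r_n\}$ and built from $u_r$ --- the very obstruction isolated in the proof of Theorem~\ref{thmintro:primeness} --- which survives into the injective envelope while meeting the relevant regular $G$-invariant ideal of $A$ trivially.

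The main obstacle I anticipate is exactly this transfinite reduction. One must track simultaneously, across possibly transfinitely many stages, the regularity of the invariant ideals being manipulated, the tensor-product decompositions supplied by the induction theory --- which live at the level of the injective envelope and hence interact delicately with quotients --- and the passage between $A$ and $I(A)$, and one must verify that ``sub-induced'' is precisely the bookkeeping notion stable under all of these operations and surviving the limit ordinals. A further subtlety is that the iterated crossed product decomposition does not yield a clean two-sided equivalence of ideal intersection properties, so the failure has to be located with care. That the reduction genuinely requires the series to terminate, and so cannot be pushed past the FC-hypercentral class, is confirmed by Example~\ref{ex:non-fc-case}. The remaining points are routine: the unital hypothesis on $A$ is used only to guarantee the availability of the relevant regular $G$-invariant ideals and group-like unitaries, and, apart from the localisation step, the argument runs in parallel to the proof of Theorem~\ref{thmintro:primeness} in Section~\ref{sec:primality}.
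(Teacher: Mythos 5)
Your reduction of the theorem to the single implication ``regular ideal intersection property $\Rightarrow$ ideal intersection property for FC-hypercentral $G$'' is exactly right, and your identification of (2), (3), (4) with the regular ideal intersection property via Theorem~\ref{thm:regular-ideal-intersection-property} matches the paper. But your proposed proof of that implication has a genuine gap, in two places. First, the decomposition $A \times_\lambda G \cong (A \times_\lambda G_1) \times_\lambda (G/G_1)$ is not available: for a normal subgroup $N \trianglelefteq G$ the reduced crossed product decomposes only as a \emph{twisted} crossed product of $A \times_\lambda N$ by $G/N$ (the map $gN \mapsto \lambda_g$ is not a homomorphism), and amenability of $G$ does not untwist it -- amenability only identifies full and reduced crossed products. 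Since the entire inductive step treats $(A \times_\lambda G_1, G/G_1)$ as an ordinary C*-dynamical system to which the inductive hypothesis (and the paper's machinery of sub-induction, meandering projections and Theorem~\ref{thm:regular-ideal-intersection-property}) applies, the induction cannot be run as stated without first redeveloping all of this for twisted systems.

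Second, and more fundamentally, the step you describe as ``pulling the resulting datum back up the series'' is precisely the crux, and it is not supplied. In the branch where $(A,G_1)$ fails the ideal intersection property you obtain a datum $(J,H,\beta)$ with $H \leq G_1$, i.e.\ a failure of the \emph{$G_1$-system's} regular ideal intersection property; this does not by itself violate condition (3) for the $G$-system, because there the subgroup $H$, the centralizer $C_H(r)$, the invariance of $u_r$, and above all the finiteness/boundedness over the full $G$-conjugacy class are computed inside $G$. The whole difficulty is to promote the local unitary to a meandering projection for $(I_G(A),G,\alpha)$ with the norm bound of Definition~\ref{defn:meandering-projection}(2) taken over a transversal of $C_G(r)$ in $G$. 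The paper handles this by never leaving the original system: it works with a single pseudo-expectation $\phi : A \times_\lambda G \to I_G(A)$, shows by transfinite induction along the FC-central series that $\phi$ vanishes on each $\lambda_r$, and at the successor step uses the B\'edos--Omland/Geffen--Ursu summation trick for the normal subgroup $F_i \trianglelefteq G$ with $rF_i \in FC(G/F_i)$ (Lemmas~\ref{lem:pseudo-expectation-bounded-sum} and~\ref{lem:pseudo-expectation-r-projection}) to manufacture exactly that $G$-level meandering projection; the implication to the ideal intersection property then comes from the unique pseudo-expectation property via \cite{KennedySchafhauser2019} (Proposition~\ref{prop:regular-ideal-intersection-property-implies-unique-pseudo-expectation-property}). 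Your sketch replaces this mechanism with an assertion that the obstructing regular ideal is ``generated by spectral projections of a group-like unitary supported on $\operatorname{Cl}_H(r)$,'' but no argument is given that such an ideal of $A \times_\lambda G$ is regular, nonzero, and meets $A$ trivially, and without the $G$-level bound this is exactly what can fail (compare Example~\ref{ex:non-fc-case}). So as written the proposal does not close; you would either need to carry out the twisted-crossed-product induction and the promotion step in detail, or adopt the pseudo-expectation route.
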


As with Theorem \ref{thmintro:primeness}, we want to emphasize that Conditions (2) and (4) in Theorem \ref{thmintro:FC-hypercentral-ideal-intersection-property} are intrinsic, in terms of the underlying C*-dynamical system and possible to check in practice, while Condition (3) is extrinsic, in terms of the injective envelope of the underlying C*-algebra. In fact, it will only be possible to establish the equivalence between Condition (1) and Conditions (2) and (4) once we have established the equivalence between Conditions (1) and (3), and both of these steps will require a significant amount of work.

This strategy, of studying operator-algebraic structures in terms of their interplay with boundary-theoretic structures, which we refer to as noncommutative boundary theory, will be utilized throughout this paper. For C*-dynamical systems, the starting point for this theory is Hamana's \cite{Hamana1985} result that every C*-dynamical system has a unique minimal injective extension called the injective envelope. The injective envelope is typically a much larger C*-dynamical system, but it has a number of nice properties and, crucially, it encodes a great deal of information about the original system. Kalantar and the first author \cite{KalantarKennedy2017} proved that this extension is a noncommutative generalization of Furstenberg's universal topological boundary of a group, and applied these ideas to characterize C*-simple groups. These ideas have subsequently been extended and applied with a great deal of success to the study of reduced crossed products. For example, they feature prominently in \cites{BryderKennedy2018, Kawabe2017,KennedySchafhauser2019,GeffenUrsu2023}.

It can be quite difficult to reduce extrinsic conditions on boundary-theoretic objects to intrinsic conditions on the objects of interest, and techniques for doing this have become a major component of noncommutative boundary theory. For example, consider the case of a discrete group $G$, which corresponds to the trivial C*-dynamical system $(\mathbb{C},G,\alpha)$. The corresponding boundary-theoretic object is the injective envelope, which is the C*-dynamical system $(\rC(\partial_F G), G, \alpha)$, where $\partial_F G$ denotes the Furstenberg boundary of $G$, i.e. the universal minimal strongly proximal $G$-flow. The main result in \cite{KalantarKennedy2017} is an extrinsic characterization of the C*-simplicity of $G$, i.e. the simplicity of the reduced C*-algebra $\mathrm{C}^*_\lambda(G)$, which is the reduced crossed product of the C*-dynamical system $(\mathbb{C},G,\alpha)$: $G$ is C*-simple if and only if the $G$-action on $\partial_F G$ is free. On the other hand, the main result in \cite{Kennedy2020} is an intrinsic characterization of C*-simplicity of $G$: $G$ is C*-simple if and only if it has no amenable confined subgroups.

In the present paper, we will need to perform this reduction from extrinsic to intrinsic conditions for arbitrary C*-dynamical systems. For this, we will utilize the new theory of induction and imprimitivity mentioned above.
The following result about tensor product decompositions will play a central role in this paper. This was inspired by Rieffel's approach to the primality of reduced crossed products of C*-dynamical systems over finite groups \cite{Rieffel1980}, and by Vaes' characterization of the factoriality of abelian W*-dynamical systems \cite{Vaes2020}. It is an analogue of Green's imprimitivity theorem \cite{Green1980}*{Theorem 2.13} for this new notion of induction. See Section \ref{sec:imprimitivity} for details.

\begin{thmintro} \label{introthm:imprimitivity-reduced-crossed-product}
Let $(A,G,\alpha)$ be a C*-dynamical system. Suppose that $(A,G,\alpha)$ is induced from a regular C*-dynamical subsystem $(J,H,\beta)$. Then
\[
I(A) \cong \ell^\infty(G/H) \overline{\otimes} I(J)
\]
and
\[
I(A \times_\lambda G) \cong \mathcal{B}(\ell^2(G/H)) \overline{\otimes} I(J \times_\lambda H),
\]
where the tensor products on the right hand side are the monotone complete tensor products.
\end{thmintro}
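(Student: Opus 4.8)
The plan is to locate, inside $A$ and inside $A \times_\lambda G$, essential ideals that are visibly of ``transitively induced'' type, and then to transport the resulting tensor decompositions through the injective-envelope functor.

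First I would unwind the hypothesis. Let $p \in Z(I(A))$ be the central projection supporting the regular $H$-invariant ideal $J$, so that $pI(A) \cong I(J)$; being induced from $(J,H,\beta)$ means that $p$ is $H$-invariant, that $\{\alpha_g(p) : gH \in G/H\}$ is an orthogonal family (note $\alpha_g(p)$ depends only on $gH$), and that $\sum_{gH} \alpha_g(p) = 1$. Orthogonality of the $\alpha_g(p)$ forces $\alpha_g(J) \cap \alpha_{g'}(J) = 0$ whenever $gH \neq g'H$, so the $G$-invariant ideal $B := \overline{\sum_{g \in G} \alpha_g(J)}$ of $A$ is the internal direct sum $\bigoplus_{gH \in G/H} \alpha_g(J)$; since its central support in $I(A)$ is $\sum_{gH}\alpha_g(p) = 1$, the ideal $B$ is essential in $A$. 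Fixing coset representatives identifies $B$, $G$-equivariantly, with the Green-induced algebra $\operatorname{Ind}_H^G(J,\beta)$, which for discrete $G$ is precisely $\bigoplus_{G/H} J$ with the twisted-translation action. The first isomorphism is now quick: $I(A) = I(B)$ because $B$ is essential in $A$, and, since the injective envelope of a $c_0$-direct sum of C*-algebras is the monotone-complete product of their injective envelopes, $I(B)$ is the monotone-complete product of copies of $I(J)$ indexed by $G/H$, that is, $\ell^\infty(G/H) \overline{\otimes} I(J)$; one checks that this identification intertwines the $G$-actions.

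For the second isomorphism I would first show that $B \times_\lambda G$ is an essential ideal of $A \times_\lambda G$, so that $I(A \times_\lambda G) = I(B \times_\lambda G)$. If an ideal $\mathcal{I} \trianglelefteq A \times_\lambda G$ meets $B \times_\lambda G$ trivially then $\mathcal{I} \cdot B = 0$; applying the canonical (hence $A$-bimodular) conditional expectation $E \colon A \times_\lambda G \to A$ to $(\lambda_g^* x)b$ with $x \in \mathcal{I}$, $b \in B$ gives $E(\lambda_g^* x)\, b = 0$, so $E(\lambda_g^* x) \in B^{\perp} = 0$ for every $g \in G$; since an element of a reduced crossed product is determined by its Fourier coefficients, $\mathcal{I} = 0$. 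Next, because $B \cong \operatorname{Ind}_H^G(J,\beta)$ equivariantly, Green's imprimitivity theorem (for discrete groups, in its reduced form) yields
\[
B \times_\lambda G \;\cong\; \mathcal{K}(\ell^2(G/H)) \otimes (J \times_\lambda H).
\]
Combining this with the identity $I(\mathcal{K}(\mathcal{H}) \otimes C) \cong \mathcal{B}(\mathcal{H}) \overline{\otimes} I(C)$ — valid because $\mathcal{B}(\mathcal{H}) \overline{\otimes} I(C)$ is injective and contains $\mathcal{K}(\mathcal{H}) \otimes C$ as an essential C*-subalgebra, as one checks by compressing against rank-one projections on $\mathcal{H}$ and using that $C$ is essential in $I(C)$ — and taking $\mathcal{H} = \ell^2(G/H)$, $C = J \times_\lambda H$, we obtain
\[
I(A \times_\lambda G) \;\cong\; \mathcal{B}(\ell^2(G/H)) \overline{\otimes} I(J \times_\lambda H).
\]

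The step I expect to be the main obstacle is this final passage through the injective envelope: establishing $I(\mathcal{K}(\mathcal{H}) \otimes C) \cong \mathcal{B}(\mathcal{H}) \overline{\otimes} I(C)$ rigorously requires care both about which tensor product is meant and about the correct (Hamana) notion of an essential extension, and one must also keep track of the $G$-equivariance of every identification, since it is the equivariant statement that is needed in the later sections. The classical Green input and the essential-ideal arguments are, by contrast, routine once the setup is arranged. One can also avoid the detour through $B$ by working directly in $I(A \times_\lambda G)$: embedding $I(A) \subseteq I(A \times_\lambda G)$ and fixing coset representatives $\{s_x\}$ with $s_{eH} = e$, the elements $e_{x,y} := \lambda_{s_x} p \lambda_{s_y}^*$ form a system of matrix units in $I(A \times_\lambda G)$ with $\sum_x e_{x,x} = 1$, whence $I(A \times_\lambda G) \cong \mathcal{B}(\ell^2(G/H)) \overline{\otimes} \bigl(p\, I(A \times_\lambda G)\, p\bigr)$; the delicate point of that route is the separate identification of the corner $p\, I(A \times_\lambda G)\, p$ with $I(J \times_\lambda H)$.
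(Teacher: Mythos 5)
Your argument is correct, and its backbone is the same as the paper's: exhibit an essential $G$-invariant copy of $c_0(G/H)\otimes J$, apply Green's imprimitivity theorem to it, and then pass to injective envelopes using the Hamana-type identification $I(\mathcal{K}(\ell^2(G/H))\otimes C)\cong \mathcal{B}(\ell^2(G/H))\overline{\otimes}I(C)$ with $C=J\times_\lambda H$. The differences are in the bookkeeping. The paper first replaces $A$ by $I(A)$: Lemma \ref{lem:imprimitivity-detect-induced-injective-envelope} and Proposition \ref{prop:imprimitivity} give the first isomorphism $I(A)\cong\ell^\infty(G/H)\overline{\otimes}I(J)$ directly, and then the essentiality of $(c_0(G/H)\otimes I(J))\times_\lambda G$ inside $I(A)\times_\lambda G$ is obtained by citing Hamana's theorem that essential $G$-invariant inclusions induce essential inclusions of reduced crossed products. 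You instead stay at the level of $A$, take $B=\bigoplus_{gH\in G/H}\alpha_g(J)$, and prove by hand, via the conditional expectation and Fourier coefficients, that $B\times_\lambda G$ is essential in $A\times_\lambda G$; that computation is a legitimate substitute for the citation (your annihilator step is fine because the left and right annihilators of a closed two-sided ideal coincide, and $B^\perp=0$ follows from $\vee_{gH}\alpha_g(p)=1$). The one statement you invoke that is slightly stronger than what the paper records is $I\bigl(\bigoplus_{G/H}J\bigr)\cong\ell^\infty(G/H)\overline{\otimes}I(J)$ for a non-injective summand $J$: Lemma \ref{lem:easy-inj-env} assumes the summand is injective, so you should either add the (standard, but not completely free) observation that $c_0(G/H)\otimes J\subseteq c_0(G/H)\otimes I(J)$ is an essential operator-system inclusion, or simply obtain the first isomorphism as the paper does from Lemma \ref{lem:imprimitivity-detect-induced-injective-envelope} together with Proposition \ref{prop:imprimitivity}, after which your crossed-product argument goes through verbatim (and also settles that the resulting $I(J\times_\lambda H)$ is the envelope of the original, not the injectivized, subsystem). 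Your closing matrix-unit sketch is not the paper's route and, as you yourself note, hides the real difficulty in identifying the corner; the argument you carried out in full is the right one.
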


An important intermediate step towards our main results that may be of independent interest is a complete characterization of the regular ideal intersection property (see Section \ref{sec:regular-ideal-intersection-property}). A C*-dynamical system is said to have the regular ideal intersection property if every nonzero regular ideal of the corresponding reduced crossed product has nonzero intersection with the underlying C*-algebra. This property was recently introduced by Brown, Fuller, Pitts and Reznikoff \cite{BrownFullerPittsReznikoff2024} (see also \cite{Exel2023}) in the general setting of C*-inclusions. Although it is not true in general, it turns out that for FC-hypercentral groups, the regular ideal intersection property coincides with the ideal intersection property, and the proof of this fact is a key step in the proof of Theorem \ref{thmintro:FC-hypercentral-ideal-intersection-property}. See Section \ref{sec:regular-ideal-intersection-property} for our characterization of the regular ideal intersection property. 

In addition to this introduction, there are 9 sections in this paper. In Section \ref{sec:preliminaries}, we will review some essential facts about C*-algebras, C*-dynamical systems and reduced crossed products, including injective envelopes. In Section \ref{sec:imprimitivity}, we will introduce the new theory of induction and imprimitivity for C*-dynamical systems mentioned above. In Section \ref{sec:projections}, we will introduce an important technical device called a meandering projection that is useful for detecting the imprimitivity of a C*-dynamical subsystem. In Section \ref{sec:intrinsic}, we will prove the equivalence of the extrinsic and intrinsic conditions in our main results. In Section \ref{sec:regular-ideal-intersection-property}, we will establish our characterization of the regular ideal intersection property, and in Section \ref{sec:primality}, we will establish our characterization of primality for reduced crossed products. In Section \ref{sec:pseudo-expectations}, we will obtain some technical results about an important class of maps on reduced crossed products called pseudo-expectations that will be required for Section \ref{sec:intersection-property}, where we will establish our characterization of the ideal intersection property for FC-hypercentral groups. Finally, in Section \ref{sec:examples}, we will present some examples and applications of our main results.

\section*{Acknowledgements} The first author is grateful to Dan Ursu for helpful discussions about \cite{GeffenUrsu2023}. 

\section{Preliminaries} \label{sec:preliminaries}

In this section we will briefly review some of the basic facts about C*-algebras, C*-dynamical systems and reduced crossed products that we will require. Because they will be utilized throughout this paper, we will also review some essential facts about injective and monotone complete C*-algebras, including injective envelopes of C*-dynamical systems. For a general reference on C*-algebras, we refer the reader to Blackadar's book \cite{Blackadar2006}, and for a general reference on C*-dynamical systems and crossed products, we refer the reader to Williams' book \cite{Williams2007}. Since the results we will require about injective and monotone complete C*-algebras have not yet been collected in one place, we will provide references along the way. However, the book of Paulsen \cite{Paulsen2002} provides an introduction to operator space theory, including injective envelopes, and the book of Sait\^{o} and Wright \cite{SaitoWright2015} provides an introduction to the theory of monotone complete C*-alegbras.

\subsection{C*-algebras, hereditary C*-subalgebras, ideals and primality}

Let $A$ be a C*-algebra. We will not generally assume that $A$ is unital. A C*-subalgebra $B \subseteq A$ is {\em hereditary} if whenever $a \in A$ and $b \in B$ satisfy $0 \leq a \leq b$, then $a \in B$. 

A (closed two-sided) ideal $J \trianglelefteq A$ is an example of a hereditary C*-subalgebra of $A$. Recall that the ideal $J$ is {\em essential} in $A$ if $J \cap I \ne 0$ for every nonzero ideal $I \trianglelefteq A$. We will be particularly interested in hereditary C*-subalgebras that generate essential ideals.

We will write $\tilde{A}$ for the minimal unitization of $A$, which contains $A$ as an essential ideal. We will also write $M(A)$ for the multiplier C*-algebra of $A$, which contains $A$ as an essential ideal. Note that if $A$ is already unital, then $A = \tilde{A} = M(A)$.

We will need to deal with hereditary C*-subalgebras that are not necessarily ideals when we consider *-derivations. It will be convenient to introduce the following definition, which generalizes the definition of an essential ideal. 

\begin{defn}
Let $A$ be a C*-algebra. We will say that a hereditary C*-subalgebra $B \subseteq A$ is {\em essential} in $A$ if the ideal generated by $B$ in $A$ is essential.
\end{defn}

\begin{defn}
A C*-algebra is {\em prime} if every pair of nonzero ideals in the C*-algebra has nonzero intersection.
\end{defn}

In other words, a C*-algebra $A$ is prime if and only if every nonzero ideal in $A$ is essential. This is equivalent to the zero ideal being a prime ideal in $A$. Note that every simple C*-algebra is prime. The primality of $A$ is equivalent to the primality of $\tilde{A}$, and to the primality of $M(A)$. It is an easy exercise to verify that a prime C*-alegbra necessarily has trivial center.

It is easy to see that a C*-algebra with a faithful irreducible representation is necessarily prime, and as mentioned in the introduction, Dixmier \cite{Dixmier1960} proved that the converse holds for separable C*-algebras. However, Weaver \cite{Weaver2003} showed that the converse does not necessarily hold for non-separable C*-algebras.

\subsection{Glimm ideals and the Glimm space}

Let $A$ be a C*-algebra. We will write $Z(A)$ for the center of $A$, which is an abelian C*-algebra. The {\em Glimm space} $\operatorname{Glimm}(A)$ of $A$ is the spectrum of $Z(A)$. An ideal of $A$ is a {\em Glimm ideal} if it can be written as $A K_x$ for a point $x$ in $\operatorname{Glimm}(A)$, where $K_x \trianglelefteq Z(A)$ denotes the corresponding maximal ideal
\[
K_x = \{f \in Z(A) : f(x) = 0 \}.
\]
Note that $A K_x$ is a closed two-sided ideal of $A$ by Cohen's factorization theorem.

For a Glimm ideal $A K_x$ of $A$ as above, it follows from the Dauns-Hoffman theorem that $Z(A) \cap A K_x = K_x$. This gives a bijective correspondence between points in $\operatorname{Glimm}(A)$ and Glimm ideals of $A$. This correspondence played an important role in \cite{KennedySchafhauser2019}, and it will also play an important role in this paper.

\subsection{Monotone complete and injective C*-algebras} \label{subsec:injective-envelopes}

As mentioned in the introduction, noncommutative boundary theory will play a major role throughout this paper. For this, we will require some essential facts about monotone complete and injective C*-algebras, including injective envelopes.

A C*-algebra $A$ is {\em monotone complete} if every norm bounded upward directed subset $X$ of self-adjoint elements in $A$ has a least upper bound, which we will denote by $\sup_A X$. A monotone complete C*-algebra is always unital (see e.g. \cite{SaitoWright2015}*{Proposition 2.1.15}). If $A$ is monotone complete and $G \subseteq \operatorname{Aut}(A)$ is a subgroup of automorphisms of $A$, then it is easy to see that the fixed point C*-subalgebra $A^G$ is also monotone complete. In particular, applying this to the subgroup of inner automorphisms of $A$ yields that the center $Z(A)$ of $A$ is monotone complete.

Consider the category consisting of unital operator systems with unital complete order homomorphisms as morphisms. A morphism is an {\em embedding} if it is a complete order embedding, and an {\em isomorphism} if it is an invertible embedding. An operator system $R$ is {\em injective} if whenever $S,T$ are operator systems with an embedding $\phi : S \to T$ and a morphism $\psi : S \to R$, then there is a morphism $\theta : T \to R$ extending $\psi$, meaning that $\theta \circ \phi = \psi$.

A unital C*-algebra $A$ is {\em injective} if it is injective in the category of unital operator systems, or equivalently, if its image under the forgetful functor from the category of unital C*-algebras to the category of unital operator systems is injective. It follows from a result of Choi and Effros \cite{ChoiEffros1977}*{Theorem~3.1} that every injective unital operator system is the image of an injective unital C*-algebra under this functor. An injective C*-algebra is always monotone complete (see e.g. \cite{SaitoWright2015}*{Theorem 8.1.21}). For abelian C*-algebras, monotone completeness is equivalent to injectivity (see e.g. \cite{SaitoWright2015}*{Theorem 2.3.7} and Lemma \ref{lem:equivalence-abelian-injectivity} at the end of this section).

An embedding $\iota : R \to T$ of operator systems is {\em essential} if whenever $T$ is an operator system and $\phi : S \to T$ is a morphism such that $\phi \circ \iota$ is an embedding, then $\phi$ is an embedding. 

Hamana \cite{Hamana1979} proved the existence and uniqueness of injective envelopes in the category of operator systems. Specifically, he proved that for an operator system $R$, there is a unique pair $(I(R),\iota)$ consisting of an injective operator system $I(R)$ and an essential embedding $\iota : R \to I(R)$. Furthermore, if $R$ is a C*-algebra, then $\iota$ is a *-homomorphism. The operator system $I(R)$ is the injective envelope of $R$, and we will identify $R$ with its image in $I(R)$. From above, $I(R)$ is a unital injective C*-algebra. 

The inclusion $R \subseteq I(R)$ is {\em rigid}, meaning that whenever $\phi : I(R) \to I(R)$ is a morphism such that $\phi|_R = \id_R$, then $\phi = \id$. It follows easily from this that every automorphism of $R$ has a unique extension to an automorphism of $I(R)$.

If $A$ is a non-unital C*-algebra, the injective envelope is defined by $I(A) := I(\tilde{A})$, where $\tilde{A}$ denotes the minimal unitization of $A$. Every automorphism of $A$ has a unique extension to an automorphism of $\tilde{A}$, and hence from above, a unique extension to an automorphism of $I(A)$. We will make frequent use of this fact.

Let $A$ be a monotone complete C*-algebra. For a projection $p \in A$, there is a smallest central projection $c(p) \in Z(A)$ called the {\em central cover} of $p$. In fact, $c(p) = \vee_{u \in U(A)} upu^*$. The ideal $Ac(p)$ is the smallest ideal of $A$ containing the hereditary C*-algebra $pAp$.

The following result is very well known. See \cite{HadwinPaulsen2011}*{Theorem 2.4} for a proof.

\begin{lem} \label{lem:equivalence-abelian-injectivity}
Let $A$ be a unital abelian C*-algebra. The following are equivalent:
\begin{enumerate}
\item $A$ is injective in the category of unital operator systems with unital complete order homomorphisms as morphisms;
\item $A$ is injective in the category of unital abelian C*-algebras with unital *-homomorphisms as morphisms;
\item $A$ has extremally disconnected spectrum.
\end{enumerate}
\end{lem}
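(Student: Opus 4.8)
Write $X$ for the spectrum of $A$, so that $A\cong\rC(X)$; the plan is to establish $(1)\Rightarrow(3)$, $(2)\Rightarrow(3)$, $(3)\Rightarrow(1)$ and $(3)\Rightarrow(2)$, so that each of the three conditions becomes equivalent to extremal disconnectedness of $X$. The implication $(1)\Rightarrow(3)$ is immediate: an injective C*-algebra is monotone complete (recalled in the preliminaries), and a monotone complete abelian C*-algebra has extremally disconnected spectrum by a classical theorem of Stone. For $(2)\Rightarrow(3)$, given an open set $U\subseteq X$ I would consider the disjoint union $Y:=\overline{U}\sqcup(X\setminus U)$ together with the tautological continuous surjection $q\colon Y\to X$; dually, $q^*$ is a unital embedding of $\rC(X)$ into $\rC(Y)=\rC(\overline U)\oplus\rC(X\setminus U)$. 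Applying condition $(2)$ to the identity morphism of $\rC(X)$ along this inclusion produces a unital $*$-homomorphism $E\colon\rC(Y)\to\rC(X)$ with $E\circ q^*=\id$, equivalently a continuous section $s\colon X\to Y$ of $q$. Then $W:=s^{-1}\big(\overline U\big)$, with $\overline U$ regarded as a clopen subset of $Y$, is clopen in $X$ and satisfies $U\subseteq W\subseteq\overline U$, which forces $W=\overline U$; hence $\overline U$ is clopen and $X$ is extremally disconnected. (In the language of Gelfand duality, condition $(2)$ says exactly that $X$ is a projective object in the category of compact Hausdorff spaces, and the argument just given is one half of Gleason's theorem.)

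The implication $(3)\Rightarrow(1)$ is the heart of the matter, and I would prove it by the classical one-step extension argument, with extremal disconnectedness entering as the Dedekind completeness of $\rC(X)$ as a lattice. Since a unital positive map into an abelian C*-algebra is automatically completely positive and a unital contractive map between operator systems is positive, it suffices, given operator systems $S\subseteq T$ and a unital completely positive map $\psi\colon S\to\rC(X)$, to produce a unital positive extension $T\to\rC(X)$. By Zorn's lemma (chains of unital completely positive extensions extend to the closure of their union by continuity) there is a maximal extension $\psi_0\colon S_0\to\rC(X)$ of $\psi$. If $S_0\neq T$, choose a self-adjoint $t\in T\setminus S_0$ and set, as pointwise suprema and infima of functions on $X$,
\[
g_0:=\sup\{\psi_0(s):s\in(S_0)_{\mathrm{sa}},\ s\le t\},\qquad h_0:=\inf\{\psi_0(s):s\in(S_0)_{\mathrm{sa}},\ s\ge t\}.
\]
Then $g_0$ is bounded and lower semicontinuous, $h_0$ is bounded and upper semicontinuous, and $g_0\le h_0$. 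Using Dedekind completeness, put $f:=\sup_{\rC(X)}\{\phi\in\rC(X):\phi\le g_0\}$; since every bounded lower (respectively upper) semicontinuous function on the compact Hausdorff space $X$ is the pointwise supremum (respectively infimum) of the continuous functions lying below (respectively above) it, one checks that $g_0\le f\le h_0$. The formula $\psi_1(s+\lambda t):=\psi_0(s)+\lambda f$ for $s\in S_0$ and $\lambda\in\RR$ then defines a unital positive extension of $\psi_0$ to the operator system $S_0+\RR t$, contradicting maximality; hence $S_0=T$. (Alternatively one may invoke the Nachbin--Goodner--Kelley theorem that $\rC(X)$ is $1$-injective as a Banach space when $X$ is extremally disconnected, together with the remark that an extension of a unital completely positive map is automatically unital and contractive, hence positive, hence completely positive.)

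Finally, $(3)\Rightarrow(2)$ is the substantive half of Gleason's theorem, and I would prove it in two steps. First, a continuous irreducible surjection $q\colon Y\to X$ of compact Hausdorff spaces onto an extremally disconnected $X$ is a homeomorphism: if $q(y_1)=q(y_2)=x$ with $y_1\neq y_2$, separate $y_1,y_2$ by disjoint open sets $O_1,O_2\subseteq Y$, set $V_i:=X\setminus q(Y\setminus O_i)$ — open and nonempty by irreducibility, with $q^{-1}(V_i)\subseteq O_i$ — and verify that $V_1\cap V_2=\varnothing$ while $x\in\overline{V_1}\cap\overline{V_2}$, the latter again using irreducibility; this contradicts the fact that disjoint open sets in an extremally disconnected space have disjoint closures. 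Second, given a continuous surjection $q\colon Y\twoheadrightarrow Z$ and a continuous map $f\colon X\to Z$, form the fibre product $P:=\{(x,y)\in X\times Y:f(x)=q(y)\}$ and choose, by Zorn's lemma, a minimal closed subset $W_0\subseteq P$ surjecting onto $X$ under the first projection $\pi_X$; minimality makes $\pi_X|_{W_0}$ irreducible, hence a homeomorphism by the first step, and then $\pi_Y\circ(\pi_X|_{W_0})^{-1}\colon X\to Y$ is a continuous lift of $f$, so $X$ is projective. The main difficulty is the interpolation step in $(3)\Rightarrow(1)$ — squeezing a continuous function between the semicontinuous bounds $g_0$ and $h_0$, which is precisely where Dedekind completeness of $\rC(X)$ is used; the topological core of Gleason's theorem needed for $(3)\Rightarrow(2)$ is the other place where genuine work is required, and everything else is bookkeeping.
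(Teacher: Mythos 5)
Your argument is correct, but it is worth noting that the paper does not actually prove this lemma at all: it simply records it as well known and points to Hadwin--Paulsen, Theorem 2.4, for a proof. What you have written is a self-contained assembly of the classical chain of ideas behind that reference: $(1)\Rightarrow(3)$ via monotone completeness of injective C*-algebras together with Stone's theorem; $(2)\Leftrightarrow(3)$ via Gelfand duality, which converts injectivity among unital abelian C*-algebras into projectivity of the spectrum, so that $(2)\Rightarrow(3)$ is your section-of-$\overline U\sqcup(X\setminus U)$ trick and $(3)\Rightarrow(2)$ is Gleason's theorem (irreducible surjections onto Stonean spaces are homeomorphisms, plus the fibre-product/minimal-closed-set lift); and $(3)\Rightarrow(1)$ via the one-step positive extension, where Dedekind completeness of $\rC(X)$ supplies the continuous function squeezed between the semicontinuous envelopes $g_0\le h_0$, and the fact that unital positive maps into an abelian C*-algebra are automatically completely positive upgrades the extension to a ucp map. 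All the key verifications (nonemptiness and disjointness of the $V_i$, $x\in\overline{V_1}\cap\overline{V_2}$ via irreducibility, $g_0\le f\le h_0$ via semicontinuous approximation from below and above) are sound. Two cosmetic points: the one-step extension should be phrased on the operator system $S_0+\CC t$ (your $S_0+\RR t$ is not a complex subspace; define $\psi_1(s+zt)=\psi_0(s)+zf$ and check positivity only on self-adjoint elements $s+\lambda t$ with $\lambda\in\RR$), and in the Zorn step one should remark that the union of a chain of ucp extensions is defined on an operator system whose closure still contains $1$ and is self-adjoint, so the limit extension is again ucp. Neither affects the validity of the proof; compared with the paper, your route trades a one-line citation for a complete but entirely standard argument.
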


Hamana \cites{Hamana1982a,Hamana1982b} introduced a notion of {\em monotone complete tensor product}. For a W*-algebra $M$ and a monotone complete C*-algebra $A$, the monotone complete tensor product $M \overline{\otimes} A$ is a monotone complete C*-algebra that is uniquely determined by $M$ and $A$. There are C*-algebra embeddings $M \otimes 1, 1 \otimes A \subseteq M \overline{\otimes} A$.

\subsection{Regular ideals}

Let $A$ be a C*-algebra. We will write $\operatorname{Id}(A)$ for the set of all ideals of $A$, and $\operatorname{Prim}(A)$ for the set of all primitive ideals of $A$. 

For a subset $X \subseteq A$, the corresponding annihilator $X^\perp$ is defined by
\[
X^\perp = \{a \in A : ax = xa = 0 \text{ for all } x \in X\}.
\]
An ideal $J \trianglelefteq A$ is said to be {\emph regular} if $J = J^{\perp \perp}$. We will write $\operatorname{Reg}(A)$ for the set of regular ideals of $A$.

For $X \subseteq A$ and $Y \subseteq \operatorname{Prim}(A)$, let
\[
\operatorname{hull}(X) = \{P \in \operatorname{Prim}(A) : X \subseteq P\}, \qquad \operatorname{ker}(Y) = \bigcap_{P \in Y} P.
\]
Writing $\overline{Y} = \operatorname{hull} \operatorname{ker}(Y)$, the map $Y \to \overline{Y}$ defines a Kuratowski closure operator on $\operatorname{Prim}(A)$. We equip $\operatorname{Prim}(A)$ with the corresponding topology, called the {\em hull-kernel} topology.

The map $J \to \operatorname{Prim}(A) \setminus \operatorname{hull}(J)$ is a bijection between ideals of $A$ and open subsets of $\operatorname{Prim}(A)$. An ideal $J \trianglelefteq A$ is regular if and only if the corresponding open subset $U$ is regular open, meaning that $\operatorname{int}\operatorname{cl}(U) = U$. 

There is a natural complete boolean algebra structure on the regular open subsets of $\operatorname{Prim}(A)$. Namely, join, meet and complementation are defined by
\[
\vee_i U_i = \operatorname{int} \operatorname{cl}(\cup_i U_i), \quad \wedge_i U_i = \operatorname{int}(\cap_i U_i), \quad \neg U = \operatorname{int}(\operatorname{Prim}(A) \setminus U).
\]
for regular open subsets $U,U_i \subseteq \operatorname{Prim}(A)$. This induces a boolean algebra structure on $\operatorname{Reg}(A)$. It is easy to verify that join, meet and complementation are given by
\[
\vee_i J_i = (\cup_i J_i)^{\perp \perp}, \quad \wedge_i J_i = \cap_i J_i, \quad \neg J = J^\perp,
\]
for $J,J_i \in \operatorname{Reg}(A)$.

Hamana \cite{Hamana1982c}*{Lemma 1.3} proved a very useful characterization of regular ideals in terms of the injective envelope $I(A)$ of $A$. Namely, an ideal $J \trianglelefteq A$ is regular if and only if there is a central projection $p \in Z(I(A))$ such that $J = A \cap Ap$.

If $B$ is an injective C*-algebra, so that $B = I(B)$, then an ideal $K \trianglelefteq B$ is regular if and only if there is a central projection $q \in Z(B)$ such that $K = Bq$. It follows that there is a bijective correspondence between regular ideals in an arbitrary C*-algebra $A$ and regular ideals in its injective envelope $I(A)$. For a regular ideal $J \trianglelefteq A$, the corresponding regular ideal in $I(A)$ is $I(J)$. In particular, regular ideals in injective C*-algebras are themselves injective. 

Recall that $Z(I(A))$ is an injective commutative C*-algebra, meaning in particular that it has extremally disconnected spectrum. In an extremally disconnected space, a subset is regular open if and only if it is clopen. Hence there is a bijective correspondence between regular ideals of $I(A)$ and clopen subsets of the spectrum of $Z(I(A))$.

We will require the following basic result about the orthogonality of regular ideals.

\begin{lem} \label{lem:orthogonal-regular-ideals}
Let $A$ be a C*-algebra and let $J,K \trianglelefteq A$ be regular ideals. Then $J$ and $K$ are orthogonal if and only if the corresponding regular ideals $I(J),I(K) \trianglelefteq I(A)$ are orthogonal. 
\end{lem}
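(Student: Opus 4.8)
The plan is to reduce the statement to a computation with the central projections furnished by Hamana's characterization of regular ideals. Recall from the discussion preceding the lemma that for a regular ideal $J \trianglelefteq A$ there is a (unique) central projection $p_J \in Z(I(A))$ with $J = A \cap I(A)p_J$, and that the corresponding regular ideal of $I(A)$ is $I(J) = I(A)p_J$. The key elementary observation is that two ideals $J, K$ in any C*-algebra are orthogonal (meaning $JK = 0$, equivalently $J \cap K = 0$ since $J \cap K$ is an ideal and $jk = 0$ for all $j \in J, k \in K$ forces $(J\cap K)^2 = 0$ hence $J \cap K = 0$, and conversely $J \cap K = 0$ gives $JK \subseteq J \cap K = 0$) if and only if, in the boolean algebra language introduced above, $K \leq \neg J = J^\perp$; and for regular ideals this is equivalent to $p_J p_K = 0$ in $Z(I(A))$, since the assignment $J \mapsto p_J$ is a boolean algebra isomorphism from $\operatorname{Reg}(A)$ onto the clopen subsets of the spectrum of $Z(I(A))$ (equivalently onto the central projections of $Z(I(A))$).

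First I would record the easy direction: if $I(J)$ and $I(K)$ are orthogonal in $I(A)$, then $JK \subseteq I(J)I(K) = 0$ since $J \subseteq I(J)$ and $K \subseteq I(K)$, so $J$ and $K$ are orthogonal. This direction needs nothing beyond the inclusions $J \subseteq I(J)$, $K \subseteq I(K)$.

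For the converse, suppose $J$ and $K$ are orthogonal, i.e.\ $JK = 0$. Writing $J = A \cap I(A)p_J$ and $K = A \cap I(A)p_K$ with $p_J, p_K \in Z(I(A))$ central projections, I want to conclude $p_J p_K = 0$, which gives $I(J)I(K) = I(A)p_J I(A)p_K = I(A)p_Jp_K = 0$. The cleanest route is via the boolean algebra of regular ideals: orthogonality of $J$ and $K$ says exactly $K \subseteq J^\perp = \neg J$ in $\operatorname{Reg}(A)$, hence $K \wedge J = K \cap J^{\perp\perp\perp}\cap\dots$— more simply, $J \wedge K = J \cap K \subseteq J \cap J^\perp = 0$, so $J$ and $K$ are complementary-disjoint in the boolean algebra $\operatorname{Reg}(A)$. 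Transporting along the boolean isomorphism $\operatorname{Reg}(A) \cong \operatorname{Reg}(I(A))$ (which sends $J \mapsto I(J)$, established in the text right before the lemma), disjointness in $\operatorname{Reg}(A)$ corresponds to disjointness in $\operatorname{Reg}(I(A))$, i.e.\ $I(J) \wedge I(K) = I(J) \cap I(K) = 0$ in $I(A)$; and since $I(J) = I(A)p_J$, $I(K) = I(A)p_K$ are generated by central projections, $I(J) \cap I(K) = I(A)p_Jp_K$, so $p_Jp_K = 0$ and hence $I(J)I(K) = 0$, as desired.

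The main obstacle, and the only point requiring care, is making sure the correspondence $J \mapsto I(J)$ is genuinely an isomorphism of boolean algebras — in particular that it preserves the meet (intersection) operation — rather than merely a bijection of sets; but this is exactly what the paragraph preceding the lemma asserts via the identification of $\operatorname{Reg}(A)$ and $\operatorname{Reg}(I(A))$ with the clopen subsets of the spectrum of $Z(I(A))$. An alternative, more hands-on argument avoiding the boolean formalism would be to show directly that $J$ orthogonal to $K$ forces $p_Jp_K = 0$: if not, then $q := p_Jp_K$ is a nonzero central projection of $I(A)$ with $I(A)q \subseteq I(J) \cap I(K)$, and one uses that $A$ is essential in $I(A)$ — more precisely, that for the regular ideal $L := A \cap I(A)q$ one has $I(L) = I(A)q \neq 0$ hence $L \neq 0$ — together with $L \subseteq J \cap K = 0$ to reach a contradiction. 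I would present the boolean-algebra argument as the main proof since it is shortest, mentioning the direct argument only if a referee objects.
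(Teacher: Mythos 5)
Your proposal is correct, and its ``hands-on'' alternative is precisely the paper's proof: the easy direction follows from $J \subseteq I(J)$, $K \subseteq I(K)$, and for the converse one observes that $I(J) \cap I(K) = I(A)r$ with $r = p_J p_K \in Z(I(A))$, and that if $r \neq 0$ then essentiality of $A \subseteq I(A)$ gives $0 \neq A \cap I(A)r = J \cap K$, a contradiction. The one caveat concerns the route you designate as the main proof: the preliminaries only assert a \emph{bijection} between $\operatorname{Reg}(A)$ and $\operatorname{Reg}(I(A))$ (via clopen subsets of the spectrum of $Z(I(A))$), not that $J \mapsto I(J)$ is an isomorphism of boolean algebras, and the nontrivial half of meet-preservation --- that $J \cap K = 0$ forces $I(J) \cap I(K) = 0$ --- is exactly the content of the lemma itself; any proof of it comes down to the same essentiality argument. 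So as written the boolean-algebra shortcut is circular (or at least defers the real work to an unstated fact), and you should promote the direct argument with the central projection $p_J p_K$ and essentiality of $A$ in $I(A)$ to be the proof, which then agrees with the paper's.
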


\begin{proof}
Since $J,K$ are regular ideals, there are central projections $p,q \in Z(I(A))$ such that $J = A \cap I(A)p$ and $K = A \cap I(A)q$. If $I(J)$ and $I(K)$ are orthogonal, then it is clear that $J$ and $K$ are orthogonal. Conversely, suppose that $J$ and $K$ are orthogonal. Note that $I(J) \cap I(K) = I(A)r$, where $r \in Z(I(A))$ is the central projection $r = p \wedge q$. If $r \ne 0$, then $0 \ne A \cap I(A)r = J \cap K$ by the essentiality of the inclusion $A \subseteq I(A)$, which would contradict the fact that $J$ and $K$ are orthogonal. Hence $r = 0$, so $I(J) \cap I(K) = 0$ and $I(J)$ and $I(K)$ are orthogonal.
\end{proof}

\subsection{Derivations}

We will utilize the theory of *-derivations of C*-algebras. For a reference, we direct the reader to Pedersen's book \cite{Pedersen1979}*{Section 8.6}.

Let $A$ be a C*-algebra. A linear map $\delta : A \to A$ is a {\em derivation} on $A$ if
\[
\delta(ab) = a\delta(b) + \delta(a)b, \quad a,b \in A.
\]
If, in addition, $\delta$ preserves self-adjointness, then $\delta$ is said to be a {\em *-derivation}.

A derivation $\delta$ on $A$ is {\em inner} if there is $d \in A$ such that $\delta = \operatorname{ad}(d)$, where $\operatorname{ad}(d)(a) = da-ad$ for $a \in A$. Note that $\delta$ is a *-derivation if and only if $d = ic$ for some self-adjoint $c \in A$.

\subsection{Group theory}

We will restrict our attention to discrete groups in this paper, although we will typically not require any assumptions on their cardinality. Let $G$ be a discrete group. For an element $g \in G$, we will write $C_G(g)$ for the {\em centralizer subgroup} of $g$ in $G$,
\[
C_G(g) = \{h \in G : hgh^{-1} = g \},
\]
and we will write $\operatorname{Cl}_G(g)$ for the {\em conjugacy class} of $g$ in $G$,
\[
\operatorname{Cl}_G(g) = \{hgh^{-1} : h \in G\}.
\]
We will write $FC(G)$ for the FC-center of $G$, which is the normal subgroup of $G$ consisting of all elements in $G$ with a finite conjugacy class. If $G = FC(G)$, then $G$ is said to be {\em FC}.

The {\em FC-central series} of $G$ is obtained in the following way: Let $F_1 = FC(G)$, and for an ordinal number $i$, define $F_i$ by transfinite induction. If $i+1$ is a successor ordinal, then define $F_{i+1}$ to be the unique normal subgroup of $G$ such that $F_{i+1}/F_i = FC(G/F_i)$. If $j$ is a limit ordinal, then $F_j$ is the normal subgroup of $G$ defined by $F_j = \cup_{i < j} F_i$.  

We will write $FCH(G)$ for the {\em FC-hypercenter} of $G$, which is the normal subgroup $FCH(G) = \cup_i F_i$. If $G = FCH(G)$, then $G$ is said to be {\em FC-hypercentral}. The class of FC-hypercentral groups is quite large. For example, it follow from Gromov's characterization of finitely generated virtually nilpotent groups that it contains all finitely generated groups of polynomial growth. It is well known that $G$ is FC-hypercentral if and only if it has no non-trivial ICC quotients.

\subsection{C*-dynamical systems and reduced crossed products}
A {\em C*-dynamical system} is a triple $(A,G,\alpha)$ consisting of a C*-algebra $A$, a discrete group $G$ and an action $\alpha : G \to \operatorname{Aut}(A)$. For $g \in G$, we will write $\alpha(g)$ as $\alpha_g$. If $A$ is non-unital, then the action $\alpha$ has a unique extension to an action on the minimal unitization $\tilde{A}$ that we will also denote by $\alpha$, yielding the C*-dynamical system $(\tilde{A},G,\alpha)$. We will write $A^G$ for the $G$-fixed point C*-subalgebra of $A$,
\[
A^G = \{a \in A : \alpha_g(a) = a \text{ for all } g \in G \}.
\]

If $A$ is abelian, then letting $X$ denote the spectrum of $A$, $X$ is a locally compact Hausdorff space and $A \cong \rC_0(X)$. It follows from Gelfand duality that every C*-dynamical system $(A,G,\alpha)$ arises from a $G$-action $G \curvearrowright X$ via
\[
\alpha_g(f)(x) = f(g^{-1}x), \quad g \in G,\ f \in \rC_0(X),\ x \in X.
\]
Conversely, every $G$-action $G \curvearrowright X$ on a locally compact Hausdorff space $X$ gives rise to a C*-dynamical system $(\rC_0(X),G,\alpha)$.

From a C*-dynamical system $(A,G,\alpha)$, we can construct the corresponding reduced crossed product $A \times_{\alpha,\lambda} G$. This is the C*-algebra generated by a faithful representation of $A$ and a copy of the left regular representation $\lambda : G \to U(\ell^2(G))$ of $G$ such that, identifying $A$ with its image under this representation,
\[
\lambda_g a \lambda_g^* = \alpha_g(a),\quad g \in G,\ a \in A.
\]
Since the corresponding action $\alpha$ will always be clear from the context, we will suppress $\alpha$ and write $A \times_{\alpha,\lambda} G$ as $A \times_\lambda G$.

A characteristic property of the reduced crossed product is the existence of a faithful conditional expectation $E : A \times_\lambda G \to A$ satisfying
\[
E(a \lambda_g) = \delta_e(g) a,\quad a \in A,\ g \in G,
\]
where $\delta_e$ denotes the indicator function of the identity element $e \in G$. An element $b \in A \times_\lambda G$ is uniquely determined by its Fourier series
\[
b \sim \sum_{g \in G} a_g \lambda_g,
\]
where $a_g = E(b \lambda_g^*)$.

\begin{defn}
A C*-dynamical system $(A,G,\alpha)$ is {\em prime} if for every pair of nonzero $G$-invariant ideals $I,J \triangleleft A$, $I \cap J \ne \emptyset$.
\end{defn}

By \cite{Hamana1985}*{Proposition 6.4}, a C*-dynamical system $(A,G,\alpha)$ is prime if and only if every essential extension is prime. In particular, $(A,G,\alpha)$ is prime if and only if $(I(A),G,\alpha)$ is prime, or equivalently, if and only if $(I_G(A),G,\alpha)$ is prime. Note that an abelian C*-dynamical system $(\rC_0(X),G,\alpha)$ is prime if and only if every non-empty $G$-invariant open subset of $X$ is dense.

We will also require the definition of proper outerness, both for automorphisms and for C*-dynamical systems. For a comprehensive discussion of proper outerness, we refer the reader to~\cite{KwasniewskiMeyer2018}. The following definition is not standard, but it is equivalent to the definition given in \cite{Hamana1985}, and convenient for our purposes.

\begin{defn}
Let A be a C*-algebra. We will say that an automorphism $\alpha$ of $A$ is {\em quasi-inner} if there is a unitary $u \in I(A)$ such that $\alpha = \operatorname{Ad}(u)|_A$. We will say that $\alpha$ is {\em properly outer} if there is no non-trivial invariant regular ideal $J \trianglelefteq A$ such that the restriction $\alpha|_J$ is quasi-inner. We will say that a C*-dynamical system $(A,G,\alpha)$ is {\em properly outer} if for every $g \in G$, the corresponding automorphism $\alpha_g$ is properly outer.
\end{defn}

It is important to point out that there are intrinsic characterizations of properly outer automorphisms on a C*-algebra, i.e. characterizations not requiring any knowledge of the injective envelope (see \cite{Hamana1985}*{Theorem 7.4} and \cite{OlesenPedersen1982}*{Theorem 6.6}). For commutative C*-dynamical systems, proper outerness is equivalent to topological freeness.

It follows from \cite{KennedySchafhauser2019}*{Corollary~6.7} and \cite{Zarikian2019}*{Theorem~4.6} that if $(A,G,\alpha)$ is propertly outer, then $(A,G,\alpha)$ has the ideal intersection property (see Section \ref{sec:intersection-property}).

\subsection{Injective envelopes of C*-dynamical systems}

In order to introduce injective envelopes of C*-algebras in Section \ref{subsec:injective-envelopes}, it was necessary to discuss the category of unital operator systems. Similarly, in order to discuss injective envelopes of C*-dynamical systems, we will need to consider a category of dynamical systems where objects being acted on are unital operator systems.

An {\em nc (noncommutative) dynamical system} is a triple $(R,G,\alpha)$ consisting of a unital operator system $R$, a discrete group $G$ and an action $\alpha : G \to \operatorname{Aut}(R)$. For a fixed discrete group $G$, consider the category of nc dynamical systems with $G$-equivariant unital complete order homomorphisms as morphisms. A morphism $\phi : (R,G,\alpha) \to (S,G,\beta)$ is an {\em embedding} if it is a unital complete order embedding. 

An  {\em extension} of an nc dynamical system $(R,G,\alpha)$ is a pair $((S,G,\beta), \iota)$ consisting of an nc dynamical system $(S,G,\beta)$ and an embedding $\iota : (R,G,\alpha) \to (S,G,\beta)$. Note that if $\iota$ is the inclusion map, meaning that $R \subseteq S$, then $\beta|_R = \alpha$. We will refer to this as an inclusion of nc dynamical systems and write $(R,G,\alpha) \subseteq (S,G,\beta)$.

An extension $\iota : (R,G,\alpha) \to (S,G,\beta)$ is {\em essential} if whenever $\phi : (S,G,\beta) \to (T,G,\gamma)$ is a morphism such that $\phi \circ \iota$ is an embedding, then $\phi$ is an embedding.

For an nc dynamical system $(R,G,\alpha)$, consider the injective envelope $I(R)$ of $R$ in the category of unital operator systems from Section \ref{subsec:injective-envelopes}. Recall that $R \subseteq I(R)$. The action $\alpha$ extends uniquely to an action of $G$ on the injective envelope $I(R)$ that we continue to denote by $\alpha$, yielding an inclusion of nc dynamical systems $(R,G,\alpha) \subseteq (I(R),G,\alpha)$. 

An nc dynamical system $(R,G,\alpha)$ is {\em injective} if whenever $\iota : (S,G,\beta) \to (T,G,\gamma)$ is an embedding and $\phi : (S,G,\beta) \to (R,G,\alpha)$ is a morphism, then there is a morphism $\psi : (T,G,\gamma) \to (R,G,\alpha)$ such that $\psi \circ \iota = \phi$. 

Hamana \cite{Hamana1985} proved the existence of injective envelopes in the category of nc dynamical systems. Specifically, for an nc dynamical system $(R,G,\alpha)$, there is a unique injective essential extension $((I_G(R),G,I_G(\alpha)),\iota)$.

The operator system $I_G(R)$ is also injective in the category of unital operator systems, meaning in particular that it is a C*-algebra, although it is generally larger than $I(R)$. The injectivity of $(I_G(R),G,I_G(\alpha))$ and the essentiality of the inclusion $(R,G,\alpha) \subseteq (I(R),G,\alpha)$ implies that $\iota$ extends to an embedding of $(I(R),G,\alpha)$ into $(I_G(R),G,I_G(\alpha))$. In fact, this extension is a *-homomorphism. We will identify $I(R)$ with its image in $I_G(R)$, and we will write $\alpha$ for the action of $G$ on $I_G(R)$. This yields the following essential inclusions of nc dynamical systems
\[
(R,G,\alpha) \subseteq (I(R),G,\alpha) \subseteq (I_G(R),G,\alpha). 
\]

For a non-unital C*-dynamical system $(A,G,\alpha)$, the action $\alpha$ extends uniquely to an action of $G$ on the minimal unitization $\tilde{A}$ of $A$ that we continue to denote by $\alpha$. The inclusion
\[
(A,G,\alpha) \subseteq (\tilde{A},G,\alpha)
\]
is {\em essential} in the sense that whenever $\phi : (\tilde{A},G,\alpha) \to (R,G,\alpha)$ is a morphism in the category of nc dynamical systems such that $\phi|_A$ is an embedding, then $\phi$ is an embedding. We will write $I_G(A) := I_G(\tilde{A})$ and define the injective envelope of $(A,G,\alpha)$ to be $(I_G(\tilde{A}),G,\alpha)$. Equivalently, the injective envelope of $(A,G,\alpha)$ is the injective envelope of $(\tilde{A},G,\alpha)$. This yields essential inclusions
\[
(A,G,\alpha) \subseteq (\tilde{A},G,\alpha) \subseteq (I(A),G,\alpha) \subseteq (I_G(A),G,\alpha).
\]

For a C*-dynamical system $(A,G,\alpha)$, Hamana \cite{Hamana1985}*{Theorem 3.4} established the following relationship between injective envelopes and reduced crossed products:
\[
A \times_\lambda G \subseteq I(A) \times_\lambda G \subseteq I_G(A) \rtimes_\lambda G \subseteq I(A \times_\lambda G),
\]
and moreover each of these inclusions is essential. In particular,
\[
I(A \times_\lambda G) = I(I(A) \times_\lambda G) = I(I_G(A) \times_\lambda G).
\]

The action $\alpha$ restricts to an action of $G$ on the center $Z(A)$ that we continue to denote by $\alpha$, yielding a C*-dynamical system $(Z(A),G,\alpha)$. By Lemma 6.2 and Lemma 6.3 of \cite{Hamana1985},
\[
(Z(A),G,\alpha) \subseteq (Z(I(A)),G,\alpha) \subseteq (Z(I_G(A)),G,\alpha)
\]
and
\[
Z(I(A))^G = Z(I_G(A))^G.
\]

We will also require the definition of a normal map between C*-algebras. We will typically apply this definition to inclusions. 

\begin{defn}
Let $A$ and $B$ be C*-algebras. A positive linear map $\phi : A \to B$ is {\em normal} if whenever $a = \sup_A (a_i)$ for a bounded upward directed net $(a_i)$ in $A$, then $\phi(a) = \sup_B (\phi(a_i))$. 
\end{defn}
 
The following result asserts that many of the inclusions we will consider are normal. The proof follows as in the proof of Lemma 3.1 and Corollary 3.5 from \cite{Hamana1985}. 

\begin{lem} \label{lem:sup-preserving}
Let $(A,G,\alpha)$ be a C*-dynamical system. Then the inclusions
\[
A \subseteq I(A) \subseteq I_G(A) \subseteq I(A \times_\lambda G)
\]
are normal.
\end{lem}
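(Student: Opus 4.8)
The plan is to show that each of the three inclusions is normal and then invoke the elementary fact that a composition of normal positive maps is normal. All three will be handled by the single device underlying Hamana's \cite{Hamana1985}*{Lemma 3.1} and \cite{Hamana1985}*{Corollary 3.5}: realize the relevant injective envelope as the range of a minimal completely positive projection defined on an ambient von Neumann algebra, transport suprema along that projection using that a von Neumann algebra is monotone complete with suprema computed as strong-operator limits, and then use the rigidity (equivalently, essentiality) of the injective envelope to identify the value of the transported supremum.

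For $A \subseteq I(A)$, after the routine reduction to the unital case (replace $A$ by $\tilde A$, using $I(A) = I(\tilde A)$ and the normality of $A \subseteq \tilde A$), embed $A \subseteq B(H)$ faithfully and fix a minimal $A$-projection $\Phi \colon B(H) \to B(H)$, i.e.\ a unital completely positive idempotent restricting to the identity on $A$ and minimal among such, so that $I(A) = \Phi(B(H))$ with its Choi--Effros product, which restricts to the given product on $A$, and the inclusion $A \subseteq I(A)$ is rigid. Let $(a_i)$ be a bounded increasing net in $A$ with $a = \sup_A a_i$, and let $a_0 = \sup_{B(H)} a_i \le a$ be the strong-operator limit. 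Since $\Phi$ is a positive idempotent fixing $A$, the element $\Phi(a_0)$ is an upper bound of $(a_i)$ in $I(A)$ with $\Phi(a_0) \le a$; and any other upper bound $c \in I(A)$ satisfies $c \ge a_0$ in $B(H)$, hence $c = \Phi(c) \ge \Phi(a_0)$. Thus $\sup_{I(A)} a_i = \Phi(a_0)$, and the entire content of the assertion is that $\Phi(a_0) = a$, that is, that the positive element $a - \Phi(a_0) \in I(A)$ vanishes. Setting $d_i = a - a_i$, this is equivalent to: a bounded decreasing net $(d_i)$ in $A$ with $\inf_A d_i = 0$ has $\inf_{I(A)} d_i = 0$.

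This last step is the heart of the matter, and is where minimality of $\Phi$, equivalently essentiality of $A \subseteq I(A)$, enters. If $c := \inf_{I(A)} d_i$ were nonzero, the goal is to produce a nonzero positive element of $A$ lying below $c$; it would then be a nonzero positive lower bound of $(d_i)$ in $A$, contradicting $\inf_A d_i = 0$. In the commutative model this amounts to the classical fact that the canonical continuous surjection from the spectrum of $I(A)$ onto that of $A$ is irreducible. In general, one passes to the hereditary C*-subalgebra of $I(A)$ generated by $c$, uses its central cover together with essentiality of $A \subseteq I(A)$ to locate a nonzero positive element of $A$ in the ideal generated by $c$, and then applies a spectral/approximate-unit argument to replace it by one genuinely dominated by a scalar multiple of $c$. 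I expect this extraction step to be the main technical obstacle, particularly in the non-commutative and non-unital cases.

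The inclusion $I(A) \subseteq I_G(A)$ is treated by the equivariant version of the same argument, following Hamana's construction of the $G$-injective envelope: realize $I_G(A)$ as the range of a minimal $G$-equivariant $A$-projection on a von Neumann algebra carrying a compatible $G$-action (obtained by amplifying along $\ell^2(G)$), and repeat the transport-of-suprema argument, now using rigidity in the category of nc dynamical systems. Finally, for $I_G(A) \subseteq I(A \times_\lambda G)$, use the identification $I(A \times_\lambda G) = I(I_G(A) \rtimes_\lambda G)$ recorded in the preliminaries: realize this injective envelope as the range of a minimal projection on the von Neumann algebra generated by $I_G(A) \rtimes_\lambda G$ in a faithful normal representation, and note that the canonical faithful conditional expectation onto $I_G(A)$ is compatible with this picture, so that suprema of increasing nets coming from $I_G(A)$ are again transported correctly; this is the substance of \cite{Hamana1985}*{Corollary 3.5}. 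Composing the three normal inclusions then yields the lemma.
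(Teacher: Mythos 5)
Your skeleton is the right one, and it is essentially the approach behind the results the paper itself invokes (the paper's proof is a single sentence deferring to Lemma 3.1 and Corollary 3.5 of \cite{Hamana1985}): realize the injective envelope as the range of a minimal idempotent ucp map, transport suprema along it, and reduce the first inclusion to showing that a bounded decreasing net $(d_i)$ in $A$ with $\inf_A d_i = 0$ has $\inf_{I(A)} d_i = 0$. Your identification $\sup_{I(A)} a_i = \Phi(a_0)$ is correct, as is that reduction.

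The genuine gap is the step you yourself flag: producing a nonzero positive element of $A$ below $c := \inf_{I(A)} d_i$ when $c \neq 0$. The route you sketch does not work. Essentiality of $A \subseteq I(A)$ does give $A \cap J \neq 0$ for the closed two-sided ideal $J$ of $I(A)$ generated by $c$, but membership in $J$ carries no order information: positive elements of $J$ are approximated by sums of the form $\sum_k x_k c x_k^*$, which are Cuntz-subequivalent to $c$ but in general dominated by no multiple of $c$ (already in $M_2$, $e_{22}$ lies in the ideal generated by $e_{11}$, yet no nonzero positive element of $\mathbb{C}e_{22}$ sits below any multiple of $e_{11}$), and no spectral or approximate-unit manipulation bridges the difference between two-sided ideals and hereditary (order) domination. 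What is needed is an order-density statement, and the available one is Hamana's: the element $c$, being an infimum of a net from $A$, lies in the monotone closure $\overline{A}$ of $A$ inside $I(A)$ (the regular monotone completion), and by the regular-embedding results of \cite{Hamana1981} (see also \cite{SaitoWright2015}) $A$ sits order densely there, which supplies the nonzero positive lower bound in $A$ and hence the contradiction. Without this input (or without simply quoting the normality statements, which is all the paper does), the heart of the first inclusion is missing. The remaining two inclusions are also only gestured at, though they are genuinely easier because the smaller algebra is already monotone complete: for $I(A) \subseteq I_G(A)$ one uses Hamana's realization of $I_G(A)$ as the range of a minimal $G$-equivariant projection on $\ell^\infty(G, I(A))$, where suprema of nets from the embedded copy of $I(A)$ are computed pointwise, so the transport argument closes with no order-density input; for $I_G(A) \subseteq I(A \times_\lambda G)$ your statement that the canonical expectation is ``compatible with this picture'' is not an argument --- making it one is exactly the content of the cited Corollary 3.5.
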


\section{Induction and imprimitivity} \label{sec:imprimitivity}

In this section we will introduce a new notion of induction for C*-dynamical systems, along with a corresponding imprimitivity theory that will yield tensor product decompositions with important structural implications for induced systems and their reduced crossed products. However, a key point is that these tensor product decompositions are necessarily at the level of the injective envelope. We will see that a C*-dynamical system and its reduced crossed product may not decompose into tensor products, despite the fact that their injective envelopes do. The results in this section will be of central importance for the remainder of this paper. 

Consider the subcategory of C*-dynamical systems over a fixed group $G$ consisting of C*-dynamical systems $(A,G,\alpha)$ with $A$ injective. It is natural to consider a C*-dynamical system $(B,H,\beta)$ a subsystem of $(A,G,\alpha)$ if $H$ is (isomorphic to) a subgroup of $G$, $B$ is (isomorphic to) an $H$-invariant regular ideal of $A$ and $\beta$ is obtained by restricting $\alpha$. This notion naturally gives rise to restriction functors between appropriate categories. 

There are also natural induction functors. Specifically, let $(B,H,\beta)$ be a C*-dynamical system with $B$ injective. For any group embedding $H \leq G$, there is a natural way to induce a C*-dynamical system $(A,G,\alpha)$ from $(B,H,\beta)$ such that $A$ is injective, $B$ is (isomorphic to) a regular ideal of $A$ and $\alpha$ extends $\beta$. Namely, let $A := \ell^\infty(G/H) \overline{\otimes} B$, where $\overline{\otimes}$ denotes the monotone complete tensor product from Section \ref{sec:preliminaries}. The following result implies that $A$ is injective.

\begin{lem} \label{lem:easy-inj-env}
Let $X$ be a discrete space and let $B$ be an injective C*-algebra. Then the inclusions
\[
c_0(X) \otimes B \subseteq \ell^\infty(X) \otimes B \subseteq \ell^\infty(X) \overline{\otimes} B
\]
are essential. Hence
\[
I(c_0(X) \otimes B) = I(\ell^\infty(X) \otimes B) = \ell^\infty(X) \overline{\otimes} B.
\]
\end{lem}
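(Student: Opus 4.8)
The plan is to verify essentiality of the two inclusions $c_0(X)\otimes B \subseteq \ell^\infty(X)\otimes B \subseteq \ell^\infty(X)\overline{\otimes} B$ directly, and then invoke the uniqueness of the injective envelope together with injectivity of $\ell^\infty(X)\overline{\otimes} B$ to conclude. Injectivity of $\ell^\infty(X)\overline{\otimes}B$ is essentially built in: $\ell^\infty(X)$ is a W*-algebra and $B$ is injective (hence monotone complete), so $\ell^\infty(X)\overline{\otimes}B$ is a monotone complete tensor product; since $\ell^\infty(X)$ is itself injective, one gets $\ell^\infty(X)\overline{\otimes} B$ injective either by Hamana's results on the monotone complete tensor product or, more concretely, by writing $\ell^\infty(X)\overline{\otimes}B$ as a suitable product/direct integral of copies of $B$. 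I would record this as a one-line remark citing \cite{Hamana1982a} or \cite{Hamana1982b}. Given that, if both inclusions are essential, then $\ell^\infty(X)\overline{\otimes}B$ is an injective essential extension of $c_0(X)\otimes B$ (and of $\ell^\infty(X)\otimes B$), so by uniqueness of the injective envelope it equals $I(c_0(X)\otimes B)$ and $I(\ell^\infty(X)\otimes B)$.

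For the essentiality statements, the key structural fact is that $\ell^\infty(X)\overline{\otimes}B \cong \prod_{x\in X} B$ as a C*-algebra (the monotone complete tensor product over a discrete index set is just the $\ell^\infty$-product of copies of $B$), with $\ell^\infty(X)\otimes B$ sitting inside as the elements whose $x$-components lie in a fixed copy of $B$ and span a finite-dimensional subspace of that product in the obvious algebraic sense — actually, more simply, $\ell^\infty(X)\otimes_{\min}B$ is the norm closure of finite sums $\sum f_i\otimes b_i$, and $c_0(X)\otimes B$ is the subalgebra $\bigoplus_{x\in X}^{c_0} B$. The projections $p_x := \mathbbm 1_{\{x\}}\otimes 1$ are central in all three algebras. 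Now suppose $\phi:\ell^\infty(X)\overline{\otimes}B \to T$ is a ucp morphism whose restriction to $c_0(X)\otimes B$ is a complete order embedding. For each $x$, the corner $p_x(c_0(X)\otimes B)p_x \cong B$ is mapped completely isometrically, and since $B = I(B)$, rigidity/essentiality of the inclusion $B\subseteq B$ forces $\phi|_{p_xBp_x}$ to remain a complete order embedding in a way compatible with the corner structure; the point is that a ucp map that is isometric on each central corner $p_xBp_x$ and respects the central projections $\phi(p_x)$ must be isometric on the whole product. Formally, for $a\in\ell^\infty(X)\overline{\otimes}B$ one estimates $\|\phi(a)\| \geq \sup_x \|\phi(p_xap_x)\| = \sup_x\|p_xap_x\| = \|a\|$, using that $\phi(p_x)$ are orthogonal projections in $T$ (since $\phi$ is multiplicative on the commutative C*-algebra generated by the $p_x$, being ucp and hence a *-homomorphism there), and similarly at each matrix amplification level. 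This gives essentiality of $c_0(X)\otimes B\subseteq \ell^\infty(X)\overline{\otimes}B$, and the intermediate inclusion essentiality follows formally (an inclusion $R\subseteq S\subseteq T$ with $R\subseteq T$ essential forces $S\subseteq T$ essential, and then $R\subseteq S$ essential is automatic once $S\subseteq T$ and $R\subseteq T$ are, by a standard diagram chase).

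The main obstacle I anticipate is the step asserting that $\phi$ restricted to a single corner $p_x(\ell^\infty(X)\overline{\otimes}B)p_x\cong B$ is a complete order embedding given only that $\phi$ is a complete order embedding on the smaller algebra $p_x(c_0(X)\otimes B)p_x$ — but these two algebras are literally the same copy of $B$ (the corner at $x$ of $c_0(X)\otimes B$ is all of $B$), so this subtlety evaporates. The real content is packaging the "isometric on each central corner plus respecting central projections implies isometric" argument cleanly at all matrix levels; I would do this by noting that for $a = (a_x)_x\in\prod_x B$ we have $\|a\| = \sup_x\|a_x\|$, that $\phi$ is a *-homomorphism on the central subalgebra $\ell^\infty(X)\subseteq Z(\ell^\infty(X)\overline{\otimes}B)$ after compressing (or one can cite $c_0(X)\subseteq\ell^\infty(X)$ essential, i.e. the classical fact that $\ell^\infty(X)=I(c_0(X))$, to transfer), and that a ucp map isometric on enough corners whose images are mutually orthogonal is automatically isometric — this is exactly the kind of argument underlying the well-known fact that $\ell^\infty(X)=I(c_0(X))$, which I would cite (e.g. via Lemma~\ref{lem:equivalence-abelian-injectivity}, since $X$ discrete makes $\beta X$ — no: rather, the spectrum of $\ell^\infty(X)$ — extremally disconnected) and then bootstrap from the commutative case to the $B$-valued case using that $B$ is injective and $I(c_0(X)\otimes B) = I(I(c_0(X))\otimes I(B))$ by general properties of injective envelopes and tensor products.
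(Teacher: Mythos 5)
Your overall strategy (identify $\ell^\infty(X)\overline{\otimes}B$ with $\prod_{x\in X}B$, note that it is injective, and show directly that the inclusion of $c_0(X)\otimes B$ is essential) is viable, but the central step is unjustified and, as stated, false. You claim that a morphism $\phi$ on $\ell^\infty(X)\overline{\otimes}B$ whose restriction to $c_0(X)\otimes B$ is a complete order embedding must send the central projections $p_x=\delta_x\otimes 1$ to orthogonal projections, ``since $\phi$ is ucp and hence a *-homomorphism'' on the commutative C*-algebra they generate. A ucp map is not multiplicative on commutative C*-subalgebras (a state on $\ell^\infty(X)$ is the basic counterexample), and even a unital complete order embedding need not send projections to projections: $a\mapsto a\oplus\omega(a)$, with $\omega$ a state, is a unital complete order embedding sending a nontrivial projection $p$ to $p\oplus\omega(p)$, which is not a projection. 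Without knowing that the $p_x$ lie in the multiplicative domain of $\phi$, the key estimate $\|\phi(a)\|\geq\|\phi(p_xap_x)\|$ has no justification, and indeed it fails for general ucp maps (compress against a projection and apply a suitable state). So the essentiality of $c_0(X)\otimes B\subseteq\ell^\infty(X)\overline{\otimes}B$, which is the real content of the lemma, is not established by your argument.

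There are two standard repairs. First, since $\ell^\infty(X)\overline{\otimes}B$ is injective, it suffices to prove rigidity rather than essentiality: if $\phi$ is ucp on $\ell^\infty(X)\overline{\otimes}B$ and restricts to the identity on (the unitization of) $c_0(X)\otimes B$, then $\phi(p_x)=p_x$ is a projection, so $p_x$ genuinely lies in the multiplicative domain, and your corner argument now works: $p_x\phi(a)p_x=\phi(p_xap_x)=p_xap_x$ for every $x$, hence $\phi=\mathrm{id}$ because the $p_x$ are central with supremum $1$; rigidity over an injective superalgebra gives $I(c_0(X)\otimes B)=\ell^\infty(X)\overline{\otimes}B$, and essentiality of the intermediate inclusions then follows by the formal argument you sketch. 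Second, the paper's route avoids the issue entirely: $c_0(X)\otimes B$ is an essential ideal of $\ell^\infty(X)\otimes B$ (because $c_0(X)$ is essential in $\ell^\infty(X)$), and an essential ideal yields an essential extension, so $I(c_0(X)\otimes B)=I(\ell^\infty(X)\otimes B)$; the identification $I(\ell^\infty(X)\otimes B)=\ell^\infty(X)\overline{\otimes}B$ is then taken from the proof of Hamana's Proposition 3.11 --- which is also the reference you would need in order to legitimize your unproved identification $\ell^\infty(X)\overline{\otimes}B\cong\prod_{x\in X}B$ and the injectivity of this algebra.
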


\begin{proof}
It follows as in the proof of \cite{Hamana1982a}*{Proposition 3.11} that $I(\ell^\infty(X) \otimes B) = \ell^\infty(X) \overline{\otimes} B$. Also, the fact that $c_0(X)$ is an essential ideal in $\ell^\infty(X)$ implies that $c_0(X) \otimes B$ is an essential ideal in $\ell^\infty(X) \otimes B$. Hence the inclusion $c_0(X) \otimes B \subseteq \ell^\infty(X) \otimes B$ is essential, and it follows that $I(c_0(X) \otimes B) = I(\ell^\infty(X) \otimes B)$.
\end{proof}

Now let $p = \delta_e \otimes 1 \in Z(A)$. Then $p$ is a central projection and $Ap = \delta_e \otimes B$, so $\delta_e \otimes B \cong B$ is a regular ideal in $A$. The action $\alpha$ on $A$ is obtained by letting $T \subseteq G$ be a left transversal for $H$ in $G$ and defining $\alpha : G \to \operatorname{Aut}(c_0(G/H) \otimes B)$ by
\[
\alpha_g(\delta_{tH} \otimes b) = \delta_{sH} \otimes \beta_h(b), \quad g \in G,\ t \in T,\ b \in B,
\]
where $gt = sh$ for $s \in T$ and $h \in H$. Since the inclusion $\ell^\infty(G/H) \otimes B \subseteq A$ is essential by Lemma \ref{lem:easy-inj-env}, it follows that this action has a unique extension to an action on $A$.

\begin{defn} \label{defn:induced}
Let $(A,G,\alpha)$ be a C*-dynamical system.
\begin{enumerate}
\item We will say that a C*-dynamical system $(J,H,\beta)$ is a \emph{regular C*-dynamical subsystem} of $(A,G,\alpha)$ if $H \leq G$ is a subgroup, $J \trianglelefteq A$ is an $H$-invariant regular ideal and $\beta$ is obtained by restricting $\alpha$. 

\item If $A$ is injective, then we will say that $(A,G,\alpha)$ is {\em induced} if there is a regular C*-dynamical subsystem $(J,H,\beta)$ such that $(A,G,\alpha)$ is induced by $(J,H,\beta)$ as above, meaning that the inclusion $J \subseteq A$ extends to a $G$-equivariant isomorphism $A \cong \ell^\infty(G/H) \overline{\otimes} J$. 

\item Let $(J,H,\beta)$ be a regular C*-dynamical subsystem of $(A,G,\alpha)$. We will say that the ideals $\{\alpha_g(J)\}_{g \in G/H}$ form a {\em system of imprimitivity} for $(A,G,\alpha)$ relative to $(J,H,\beta)$ if $\{\alpha_g(J)\}_{g \in G/H}$ are pairwise orthogonal with $\vee_{g \in G/H} \alpha_g(J) = A$ and $H = \{g \in G : \alpha_g(J) = J\}$.
\end{enumerate}
\end{defn}

\begin{rem}
Note that (1) and (3) do not require $A$ to be injective. This fact will be important for Lemma \ref{lem:imprimitivity-detect-induced-injective-envelope}.
\end{rem}

As expected, the existence of a system of imprimitivity for a C*-dynamical system over an injective C*-algebra is equivalent to being induced. 

\begin{prop} \label{prop:imprimitivity}
Let $(A,G,\alpha)$ be a C*-dynamical system with $A$ injective and let $(J,H,\beta)$ be a regular C*-dynamical subsystem. The ideals $\{\alpha_g(J)\}_{g \in G/H}$ form a system of imprimitivity for $(A,G,\alpha)$ relative to $(J,H,\beta)$ if and only if $(A,G,\alpha)$ is induced from $(J,H,\beta)$, in which case
\[
A \cong \ell^\infty(G/H) \overline{\otimes} J.
\]
\end{prop}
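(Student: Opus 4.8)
The plan is to prove the two implications separately: the ``induced $\Rightarrow$ system of imprimitivity'' direction is a direct computation with the formula for the induced action, while the converse is the substantive part and will be obtained by locating a canonical essential ideal of $A$ and then passing to the injective envelope.

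For the forward direction, assume $(A,G,\alpha)$ is induced from $(J,H,\beta)$, i.e.\ there is a $G$-equivariant $*$-isomorphism $\theta\colon A\to\ell^\infty(G/H)\overline{\otimes}J$ extending the inclusion $J\subseteq A$, under which $J$ corresponds to $\delta_{eH}\otimes J$ and $G$ acts on the right-hand side by the induced action relative to a left transversal $T$ (which we may take to contain $e$). I would first read off from that formula that $\theta(\alpha_g(J))=\delta_{gH}\otimes J$ for every $g\in G$, so in particular this depends only on the coset $gH$. Pairwise orthogonality of $\{\alpha_g(J)\}_{g\in G/H}$ is then immediate from $(\delta_{gH}\otimes J)(\delta_{g'H}\otimes J)=0$ when $gH\ne g'H$. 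Since $\theta$ is a $*$-isomorphism it preserves annihilators, hence joins of regular ideals, so $\vee_{g\in G/H}\alpha_g(J)=\theta^{-1}\big((\bigcup_{gH}\delta_{gH}\otimes J)^{\perp\perp}\big)$; because $\overline{\operatorname{span}}\bigcup_{gH}(\delta_{gH}\otimes J)=c_0(G/H)\otimes J$ is an essential ideal of $\ell^\infty(G/H)\overline{\otimes}J$ by Lemma~\ref{lem:easy-inj-env} (note $J$ is injective, being a regular ideal of the injective algebra $A$), this join is all of $A$. Finally $\{g:\alpha_g(J)=J\}=\{g:\delta_{gH}=\delta_{eH}\}=H$, so $\{\alpha_g(J)\}_{g\in G/H}$ is a system of imprimitivity.

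For the converse, assume $\{\alpha_g(J)\}_{g\in G/H}$ is a system of imprimitivity. Since each $\alpha_g(J)$ is the image of the regular ideal $J$ under an automorphism it is again a regular ideal, and it depends only on $gH$ because $J$ is $H$-invariant. I would set $I_0:=\overline{\operatorname{span}}\bigcup_{g\in G/H}\alpha_g(J)$; as the closed linear span of pairwise orthogonal ideals this is a $G$-invariant ideal of $A$ with $I_0\cong\bigoplus^{c_0}_{g\in G/H}\alpha_g(J)$. The hypothesis $\vee_{g\in G/H}\alpha_g(J)=A$ unwinds to $I_0^\perp=\big(\bigcup_{g}\alpha_g(J)\big)^\perp=0$, i.e.\ $I_0$ is an essential ideal of $A$. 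Fixing a left transversal $T\ni e$, I would define $\Theta_0\colon I_0\to c_0(G/H)\otimes J$ as the $c_0$-direct sum over $t\in T$ of the isomorphisms $\alpha_t^{-1}\colon\alpha_t(J)\to J$, and verify by coset arithmetic (writing $gt=sh$ with $s\in T$, $h\in H$, so that $\alpha_g\circ\alpha_t=\alpha_s\circ\beta_h$ on $J$) that $\Theta_0$ intertwines $\alpha$ with the induced action on $c_0(G/H)\otimes J$, and that $\Theta_0$ restricts on $J=\alpha_e(J)$ to the canonical embedding $j\mapsto\delta_{eH}\otimes j$. Now, since $I_0$ is an essential ideal of the injective C*-algebra $A$, we have $A=I(A)=I(I_0)$, while $I_0\cong c_0(G/H)\otimes J$ with $J$ injective gives $I(I_0)\cong\ell^\infty(G/H)\overline{\otimes}J$ by Lemma~\ref{lem:easy-inj-env}. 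Composing yields a $*$-isomorphism $\Psi\colon A\to\ell^\infty(G/H)\overline{\otimes}J$ extending $\Theta_0$, and hence extending $J\subseteq A$ with image $\delta_{eH}\otimes J$. It remains to see that $\Psi$ is $G$-equivariant: the $G$-action on $A=I(I_0)$ is the unique automorphic extension of $\alpha|_{I_0}$, the induced action on $\ell^\infty(G/H)\overline{\otimes}J=I(c_0(G/H)\otimes J)$ is the unique extension of the action on $c_0(G/H)\otimes J$, these correspond under $\Theta_0$, and rigidity of the injective envelope forces $\Psi$ to intertwine them. Thus $(A,G,\alpha)$ is induced from $(J,H,\beta)$, and the displayed isomorphism is part of that conclusion.

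I expect the main difficulty to be the bookkeeping in the converse: arranging the identification $I_0\cong c_0(G/H)\otimes J$ so that it is genuinely $G$-equivariant for the induced action (the coset arithmetic must be matched against the transversal conventions of the construction preceding Definition~\ref{defn:induced}), and then the conceptual step of promoting the C*-algebra isomorphism $\Psi$ to an isomorphism of C*-dynamical systems via uniqueness of equivariant extensions to the injective envelope. A minor point needing care is that $I_0$ may be non-unital, so that $I(I_0)$ means $I(\widetilde{I_0})$; this is harmless because $I_0$ essential in $A$ implies $\widetilde{I_0}=I_0+\mathbb{C}1_A$ is essential in the injective algebra $A$, whence $A=I(\widetilde{I_0})$.
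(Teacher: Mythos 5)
Your proposal is correct, and both directions work as written: the forward direction is the same routine verification the paper leaves as "clear," and the converse ultimately rests, as in the paper, on Lemma~\ref{lem:easy-inj-env} together with the fact that an injective C*-algebra equals the injective envelope of any essential C*-subalgebra. Where you genuinely diverge is in which essential copy of a tensor product you locate inside $A$. The paper embeds the \emph{large} algebra $\ell^\infty(G/H)\otimes J$ into $A$ via the orthogonal central projections $\alpha_t(p)$ and order-limit sums $\iota(f)=\sum_t\alpha_t(f(tH))\alpha_t(p)$, and must then prove this inclusion is essential by a rigidity argument using multiplicative domains; after that, $A=I(\ell^\infty(G/H)\otimes J)=\ell^\infty(G/H)\overline{\otimes}J$, with equivariance built into $\iota$. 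You instead work with the \emph{small} ideal $I_0=\overline{\operatorname{span}}\bigcup_{g\in G/H}\alpha_g(J)\cong c_0(G/H)\otimes J$, for which essentiality in $A$ is literally the hypothesis $\vee_{g}\alpha_g(J)=A$ (i.e.\ $I_0^\perp=0$), so you avoid both the order-limit construction and the essentiality argument; the price is that you must extend the *-isomorphism $\Theta_0$ to the injective envelopes and then verify $G$-equivariance of $\Psi$ a posteriori via uniqueness of automorphic extensions (rigidity), plus the unitization point you correctly flag. Both routes need $J$ injective (a regular ideal of the injective $A$) to invoke Lemma~\ref{lem:easy-inj-env}, and your coset computation $gt=sh$ matches the transversal convention used to define the induced action, so the equivariance of $\Theta_0$ and hence of $\Psi$ goes through. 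In short: your argument trades the paper's explicit $\ell^\infty$-embedding and its rigidity-based essentiality proof for an automatic essentiality statement plus a standard extension-to-envelopes step; the two proofs are of comparable length and each is a legitimate proof of the proposition.
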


\begin{proof}
First, it is clear that if $(A,G,\alpha)$ is induced from $(J,H,\beta)$, then the ideals $\{\alpha_g(J)\}_{g \in G/H}$ are pairwise orthogonal with $\vee_{g \in G/H} \alpha_g(J) = A$ and $H = \{g \in G : \alpha_g(J) = J\}$. Hence $\{\alpha_g(J)\}_{g \in G/H}$ is a system of imprimitivity for $(A,G,\alpha)$ relative to $(J,H,\beta)$.

Conversely, suppose that $\{\alpha_g(J)\}_{g \in G/H}$ is a system of imprimitivity for $(A,G,\alpha)$ relative to $(J,H,\beta)$. Let $p \in Z(A)$ be the central projection such that $J = Ap$. Let $T \subseteq G$ be a left transversal of $H$ in $G$. Then by hypothesis the family $\{\alpha_g(p)\}_{g \in T}$ consists of mutually orthogonal central projections in $Z(A)$ that partition the identity. Viewing elements of $\ell^\infty(G/H) \otimes J$ as uniformly bounded functions from $G/H$ to $J$, there is a natural equivariant embedding $\iota$ from $\ell^\infty(G/H) \otimes J$ into $A$ defined by
\[
\iota(f) = \sum_{t \in T} \alpha_t(f(tH)) \alpha_t(p), \quad f\in \ell^\infty(G/H) \otimes J,
\]
where the sum on the right hand side denotes the order limit of the family of sums taken over finite subsets of $T$. The fact that $\iota$ is a well-defined embedding follows from the orthogonality of the projections $\{\alpha_t(p)\}_{t \in T}$ and \cite{Hamana1982a}*{Lemma 1.5}. We identify $\ell^\infty(G/H) \otimes J$ with its image under $\iota$.

We claim that the inclusion $\ell^\infty(G/H) \otimes J \subseteq A$ is essential. To prove this claim, it suffices  to prove that if $\phi : A \to A$ is unital and completely positive and $\phi|_{\ell^\infty(G/H) \otimes J} = \id_A|_{\ell^\infty(G/H) \otimes J}$, then $\phi = \id_A$. For this, since the projections $\{\alpha_t(p)\}_{t \in T}$ are central and orthogonal with $\vee_{t \in T} \alpha_t(p) = 1$, it suffices to prove that for $a \in A$ and $t \in T$, $\phi(a)\alpha_t(p) = a \alpha_t(p)$. But $\phi(\alpha_t(p)) = \alpha_t(p)$ by assumption, so $\alpha_t(p)$ belongs to the multiplicative domain of $\phi$. Hence for $a \in A$ and $t \in T$, $\phi(a)\alpha_t(p) = \phi(a \alpha_t(p)) = a \alpha_t(p)$. Therefore, $\phi = \id_A$ as claimed. Since the inclusion $\ell^\infty(G/H) \otimes J \subseteq I(A)$ is essential, it now follows from Lemma \ref{lem:easy-inj-env} that
\[
A = I(A) = I(\ell^\infty(G/H) \otimes J) = \ell^\infty(G/H) \overline{\otimes} J. \qedhere
\]
\end{proof}

The definition of an induced C*-dynamical system from Definition \ref{defn:induced} only applies to C*-dynamical systems over injective C*-algebras. However, the definition of a regular C*-dynamical subsystem and the definition of a system of imprimitivity relative to a regular C*-dynamical subsystem do not require injectivity. The next result implies that for a C*-dynamical system $(A,G,\alpha)$, the existence of a system of imprimitivity relative to a regular C*-dynamical subsystem implies that the C*-dynamical system $(I(A),G,\alpha)$ is induced. Conversely, if $(I(A),G,\alpha)$ is induced, then this can be detected from the existence of a system of imprimitivity relative to a regular C*-dynamical subsystem for $(A,G,\alpha)$. 

\begin{lem} \label{lem:imprimitivity-detect-induced-injective-envelope}
Let $(A,G,\alpha)$ be a C*-dynamical system and let $(J,H,\beta)$ be a regular C*-dynamical subsystem. If $\{\alpha_g(J)\}_{g \in G/H}$ is a system of imprimitivity for $(A,G,\alpha)$ relative to $(J,H,\beta)$, then the C*-dynamical system $(I(A),G,\alpha)$ is induced from the regular C*-dynamical subsystem $(I(J),H,\beta)$. Conversely, if $(I(A),G,\alpha)$ is induced from a regular C*-dynamical subsystem $(K,H,\beta)$, then letting $J = A \cap K$, $J$ is an $H$-invariant regular ideal in $A$ and $\{\alpha_g(J)\}_{g \in G/H}$ is a system of imprimitivity for $(A,G,\alpha)$ relative to the regular C*-dynamical subsystem $(J,H,\beta)$. 
\end{lem}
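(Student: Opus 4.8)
\medskip

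The plan is to reduce both implications to Proposition~\ref{prop:imprimitivity}, applied to the injective C*-algebra $I(A)$, via the bijective correspondence between $\operatorname{Reg}(A)$ and $\operatorname{Reg}(I(A))$ recalled in Section~\ref{sec:preliminaries}. Before starting I would record the three features of this correspondence that the argument needs. First, it is given by $J' \mapsto I(J')$ with inverse $K' \mapsto A \cap K'$, and it is equivariant: for any $g \in G$, $\alpha_g(I(J')) = I(\alpha_g(J'))$, since $\alpha_g(I(J'))$ is a regular ideal of $I(A)$ whose intersection with $A$ is $\alpha_g(J')$. Consequently $J'$ is $H$-invariant if and only if $I(J')$ is, and $\{g \in G : \alpha_g(J') = J'\} = \{g \in G : \alpha_g(I(J')) = I(J')\}$. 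Second, by Lemma~\ref{lem:orthogonal-regular-ideals}, a family of regular ideals of $A$ is pairwise orthogonal if and only if the corresponding family in $I(A)$ is. Third, since $A \subseteq I(A)$ is essential and every annihilator is a regular ideal, whenever a family $\{L_i\}$ of ideals of $A$ satisfies $\bigl(\bigcup_i L_i\bigr)^\perp = 0$ computed in $A$, the same equality holds in $I(A)$: the annihilator computed in $I(A)$ is a regular ideal $I(A)q$, and $A \cap I(A)q$ annihilates $\bigcup_i L_i$ inside $A$, hence is zero, forcing $q = 0$.

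For the forward implication, assume $\{\alpha_g(J)\}_{g \in G/H}$ is a system of imprimitivity for $(A,G,\alpha)$ relative to $(J,H,\beta)$. Then $(I(J),H,\beta)$ is a regular C*-dynamical subsystem of $(I(A),G,\alpha)$ (with $\beta = \alpha|_{I(J)}$), since $I(J)$ is a regular ideal of $I(A)$ and is $H$-invariant by equivariance. I would check the three defining conditions for $\{\alpha_g(I(J))\}_{g \in G/H}$: pairwise orthogonality is immediate from Lemma~\ref{lem:orthogonal-regular-ideals}; the stabilizer equality $H = \{g : \alpha_g(I(J)) = I(J)\}$ is equivariance; and $\bigvee_{g \in G/H} \alpha_g(I(J)) = I(A)$ follows from the third observation applied to $L_g = \alpha_g(J)$, because $\bigcup_g \alpha_g(J) \subseteq \bigcup_g \alpha_g(I(J))$ and $\bigl(\bigcup_g \alpha_g(J)\bigr)^\perp = \bigl(\bigvee_g \alpha_g(J)\bigr)^\perp = A^\perp = 0$ in $A$. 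Proposition~\ref{prop:imprimitivity} then gives that $(I(A),G,\alpha)$ is induced from $(I(J),H,\beta)$.

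For the converse, assume $(I(A),G,\alpha)$ is induced from a regular C*-dynamical subsystem $(K,H,\beta)$. By Proposition~\ref{prop:imprimitivity}, $\{\alpha_g(K)\}_{g \in G/H}$ is a system of imprimitivity for $(I(A),G,\alpha)$ relative to $(K,H,\beta)$; write $K = I(A)p$ for a central projection $p \in Z(I(A))$. Set $J := A \cap K$, so $J$ is a regular ideal of $A$ with $I(J) = K$, and $J$ is $H$-invariant because $K$ is. I would verify that $\{\alpha_g(J)\}_{g \in G/H} = \{A \cap \alpha_g(K)\}_{g \in G/H}$ is a system of imprimitivity relative to $(J,H,\beta)$: pairwise orthogonality is Lemma~\ref{lem:orthogonal-regular-ideals}, the stabilizer equality is equivariance, and for $\bigvee_{g \in G/H} \alpha_g(J) = A$ I would take $a \in A$ annihilating each $\alpha_g(J)$ and argue $a = 0$ (which shows $\bigl(\bigcup_g\alpha_g(J)\bigr)^\perp = 0$, i.e. $\bigvee_g\alpha_g(J) = A$). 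Since $\alpha_g(J) = A \cap I(A)\alpha_g(p)$ is essential in $I(A)\alpha_g(p) = \alpha_g(K)$ (a regular ideal is essential in its injective envelope, and automorphisms preserve this), and $a\alpha_g(p) \in \alpha_g(K)$ also annihilates $\alpha_g(J)$, essentiality forces $a\alpha_g(p) = 0$; centrality of $\alpha_g(p)$ then makes $a$ annihilate all of $\alpha_g(K)$, so $a \in \bigl(\bigcup_g \alpha_g(K)\bigr)^\perp = 0$.

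The step I expect to require the most care is the translation of the spanning conditions $\bigvee_{g} \alpha_g(J) = A$ and $\bigvee_{g} \alpha_g(I(J)) = I(A)$ between $A$ and $I(A)$: because joins in the boolean algebra of regular ideals are formed via double annihilators rather than closed linear spans, they cannot simply be transported along the correspondence formally, and this is exactly where the essentiality of $A \subseteq I(A)$---and of a regular ideal inside its injective envelope---does the real work. Everything else is bookkeeping with the regular ideal correspondence and Proposition~\ref{prop:imprimitivity}.
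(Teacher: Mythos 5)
Your overall route is the same as the paper's: transfer the system of imprimitivity across Hamana's correspondence between regular ideals of $A$ and of $I(A)$ and invoke Proposition~\ref{prop:imprimitivity}; the equivariance, stabilizer and pairwise-orthogonality bookkeeping (via Lemma~\ref{lem:orthogonal-regular-ideals}) is fine. The gap sits exactly at the step you flag as delicate, and it appears in both directions in the same disguise. In the forward direction you assert that the annihilator computed in $I(A)$ of $\bigcup_g\alpha_g(J)$ is a regular ideal of the form $I(A)q$. That is not formal: the two-sided annihilator in $I(A)$ of a family of ideals of $A$ (rather than of $I(A)$) need not absorb multiplication by $I(A)$, so it is not obviously an ideal at all. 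This instance is easy to repair: work instead with $Q:=\bigl(\bigcup_g\alpha_g(I(J))\bigr)^{\perp}$ computed in $I(A)$, which is an intersection of annihilators of ideals of $I(A)$ and hence a regular ideal $I(A)q$; then $A\cap I(A)q$ annihilates $\bigcup_g\alpha_g(J)$, hence is $0$ by your hypothesis, hence $q=0$ by essentiality of $A\subseteq I(A)$, giving $\bigvee_g\alpha_g(I(J))=I(A)$.

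The converse is the genuine problem. Your argument rests on the claim that if $a\alpha_g(p)\in\alpha_g(K)$ annihilates $\alpha_g(J)$ then it is zero, justified by ``a regular ideal is essential in its injective envelope.'' But $\alpha_g(J)$ is not an ideal of $\alpha_g(K)$, so essentiality of ideals does not apply, and the paper's notion of an essential hereditary C*-subalgebra does not yield trivial annihilators either (a hereditary subalgebra can generate an essential ideal and still have nonzero two-sided annihilator; think of a corner of $M_2$). What you actually need is that no nonzero element of $I(J)=I(A)p$ annihilates $J$, equivalently that the central projection attached to $J$ by Hamana's correspondence is the supremum in $I(A)$ of an approximate unit of $J$; this is true but is a genuine lemma requiring a proof or citation, not a consequence of the word ``essential.'' Alternatively you can bypass it: $J':=\bigl(\bigcup_g\alpha_g(J)\bigr)^{\perp}$ computed in $A$ is a regular ideal of $A$ orthogonal to every $\alpha_g(J)$, so by Lemma~\ref{lem:orthogonal-regular-ideals} and equivariance $I(J')$ is orthogonal to every $\alpha_g(K)$, hence to $\bigvee_g\alpha_g(K)=I(A)$, forcing $J'=0$ and thus $\bigvee_g\alpha_g(J)=A$. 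With one of these repairs your proof does what the paper's (very terse) proof does: assert the transfer of the imprimitivity system and apply Proposition~\ref{prop:imprimitivity}.
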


\begin{proof}
Let $(J,H,\beta)$ be a regular C*-dynamical subsystem of $(A,G,\alpha)$ such that $\{\alpha_g(J)\}_{g \in G/H}$ forms a system of imprimitivity for $(A,G,\alpha)$ relative to $(J,H,\beta)$. Then $\{\alpha_g(I(J))\}_{g \in G/H}$ forms a system of imprimitivity for $(I(A),G,\alpha)$ relative to $(I(J),H,\beta)$. Hence by Proposition \ref{prop:imprimitivity}, $(I(A),G,\alpha)$ is induced from $(I(J),H,\beta)$. 

Conversely, if $(I(A),G,\alpha)$ is induced from a regular C*-dynamical subsystem $(K,H,\beta)$, then by Proposition \ref{prop:imprimitivity}, $\{\alpha_g(K)\}_{g \in G/H}$ is a system of imprimitivity for $(I(A),G,\alpha)$ relative to $(K,H,\beta)$. Let $J = A \cap K$. Then $J$ is an $H$-invariant regular ideal of $A$,  $(J,H,\beta)$ is a regular C*-dynamical subsystem of $(A,G,\alpha)$ and $\{\alpha_g(J)\}_{g \in G/H}$ is a system of imprimitivity for $(A,G,\alpha)$ relative to $(J,H,\beta)$.
\end{proof}

Lemma \ref{lem:imprimitivity-detect-induced-injective-envelope} justifies the following extension of the notion of an induced C*-dynamical system from Definition \ref{defn:induced} to C*-dynamical systems where the underlying C*-algebra is not necessarily injective.

\begin{defn} \label{defn:induced-general} We will say that a C*-dynamical system $(A,G,\alpha)$ is \emph{induced} from a regular C*-dynamical subsystem $(J,H,\beta)$ if the ideals $\{\alpha_g(J)\}_{g \in G/H}$ form a system of imprimitivity for $(A,G,\alpha)$ relative to $(J,H,\beta)$. In this case we will say that $(A,G,\alpha)$ is {\em induced}.
\end{defn}

The next result is an analogue of Green's imprimitivity theorem \cite{Green1980}*{Theorem 2.13}.

\begin{thm} \label{thm:imprimitivity-reduced-crossed-product}
Let $(A,G,\alpha)$ be a C*-dynamical system. Suppose that $(A,G,\alpha)$ is induced from a regular C*-dynamical subsystem $(J,H,\beta)$. Then
\[
I(A) \cong \ell^\infty(G/H) \overline{\otimes} I(J)
\]
and
\[
I(A \times_\lambda G) \cong \mathcal{B}(\ell^2(G/H)) \overline{\otimes} I(J \times_\lambda H).
\]
\end{thm}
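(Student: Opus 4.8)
The first isomorphism is essentially immediate from what has already been established. By Definition~\ref{defn:induced-general}, the hypothesis that $(A,G,\alpha)$ is induced from $(J,H,\beta)$ says precisely that $\{\alpha_g(J)\}_{g \in G/H}$ forms a system of imprimitivity for $(A,G,\alpha)$ relative to $(J,H,\beta)$. Hence Lemma~\ref{lem:imprimitivity-detect-induced-injective-envelope} applies and shows that $(I(A),G,\alpha)$ is induced from the regular C*-dynamical subsystem $(I(J),H,\beta)$, and Proposition~\ref{prop:imprimitivity} then delivers a $G$-equivariant isomorphism $I(A) \cong \ell^\infty(G/H) \overline{\otimes} I(J)$, carrying the action of $G$ on $I(A)$ to the induced action described just before Definition~\ref{defn:induced}.

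For the second isomorphism, the plan is to reduce to the classical (reduced) imprimitivity theorem of Green on a suitable essential ideal, and then to pass to injective envelopes. Since $I(A \times_\lambda G) = I(I(A) \times_\lambda G)$, it suffices to compute $I(I(A) \times_\lambda G)$, and by the first part we may write $I(A) = \ell^\infty(G/H) \overline{\otimes} B$ with $B := I(J)$ and with the induced $G$-action. Inside $I(A)$, the ideal $c_0(G/H) \otimes B$ is $G$-invariant (the induced action merely permutes coordinates) and, by the argument in the proof of Lemma~\ref{lem:easy-inj-env}, essential in $\ell^\infty(G/H) \overline{\otimes} B$; moreover, with the restricted action it is exactly the classically induced C*-dynamical system $(c_0(G/H) \otimes B, G, \alpha)$ of the discussion preceding Definition~\ref{defn:induced}, i.e.\ $\operatorname{Ind}_H^G(B,H,\beta)$. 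A $G$-invariant essential ideal gives rise to an essential ideal of the reduced crossed product: if $b \in I(A) \times_\lambda G$ annihilates $(c_0(G/H) \otimes B) \times_\lambda G$, then reading off the Fourier coefficients of $b$ with the canonical conditional expectation and using $G$-invariance shows that every coefficient lies in the annihilator of the essential ideal $c_0(G/H) \otimes B$ of $I(A)$, hence is zero, so $b = 0$. Therefore $I(I(A) \times_\lambda G) = I\big((c_0(G/H) \otimes B) \times_\lambda G\big)$, and the reduced version of Green's imprimitivity theorem for discrete groups (cf.\ \cite{Green1980}*{Theorem~2.13}; for discrete groups this also admits a direct verification in the spirit of \cite{Rieffel1980}) yields
\[
(c_0(G/H) \otimes B) \times_\lambda G \;\cong\; \mathcal{K}(\ell^2(G/H)) \otimes (B \times_\lambda H).
\]

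It remains to identify the injective envelope of the right-hand side. For this I would establish the auxiliary identity that, for every C*-algebra $C$ and every Hilbert space $\mathcal{H}$,
\[
I(\mathcal{K}(\mathcal{H}) \otimes C) \;\cong\; \mathcal{B}(\mathcal{H}) \overline{\otimes} I(C).
\]
This is the operator-algebra analogue of Lemma~\ref{lem:easy-inj-env} and should be proved in the same way: factor the inclusion as $\mathcal{K}(\mathcal{H}) \otimes C \subseteq \mathcal{K}(\mathcal{H}) \otimes I(C) \subseteq \mathcal{B}(\mathcal{H}) \overline{\otimes} I(C)$; the second inclusion is essential and $\mathcal{B}(\mathcal{H}) \overline{\otimes} I(C)$ is injective by arguing as in \cite{Hamana1982a}*{Proposition~3.11} (with $\mathcal{K}(\mathcal{H})$ and $\mathcal{B}(\mathcal{H})$ in place of $c_0(X)$ and $\ell^\infty(X)$), while the first inclusion is essential because the matrix amplifications $M_n(C) \subseteq M_n(I(C))$ are essential (since $I(M_n(C)) = M_n(I(C))$) and essentiality passes through the relevant inductive limit; and an essential inclusion of a C*-algebra into an injective C*-algebra is an injective envelope. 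Applying this with $C = B \times_\lambda H = I(J) \times_\lambda H$ and $\mathcal{H} = \ell^2(G/H)$, and using $I(I(J) \times_\lambda H) = I(J \times_\lambda H)$ (the preliminary identity applied with $J,H$ in place of $A,G$), we obtain
\[
I(A \times_\lambda G) = I(I(A) \times_\lambda G) \cong \mathcal{B}(\ell^2(G/H)) \overline{\otimes} I(J \times_\lambda H),
\]
as claimed.

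I expect the main obstacle to be the auxiliary identity $I(\mathcal{K}(\mathcal{H}) \otimes C) \cong \mathcal{B}(\mathcal{H}) \overline{\otimes} I(C)$ — in particular, verifying essentiality of $\mathcal{K}(\mathcal{H}) \otimes C \subseteq \mathcal{K}(\mathcal{H}) \otimes I(C)$ for a general, possibly non-separable, Hilbert space $\mathcal{H}$, and keeping careful track of the monotone complete tensor product throughout. A possible alternative is to route through multiplier algebras: $\mathcal{K}(\mathcal{H}) \otimes C$ is an essential ideal in $\mathcal{B}(\mathcal{H}) \otimes M(C)$ and $I(M(C)) = I(C)$, so one needs only the (cleaner) identity $I(\mathcal{B}(\mathcal{H}) \otimes M(C)) \cong \mathcal{B}(\mathcal{H}) \overline{\otimes} I(M(C))$. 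Everything else — the essential ideal $J_0$ of $A$ (or, equivalently, $c_0(G/H) \otimes B$ of $I(A)$), the essentiality of its crossed product, and the appeal to classical Green — is routine given the machinery already in place.
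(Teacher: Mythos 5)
Your proposal is correct and takes essentially the same route as the paper: reduce to $I(A) \times_\lambda G$ via the first isomorphism, pass to the essential $G$-invariant ideal $c_0(G/H) \otimes I(J)$ (where the paper cites \cite{Hamana1985}*{Theorem~3.4} in place of your Fourier-coefficient argument), apply Green's imprimitivity theorem, and identify the injective envelope of the stabilized algebra via Hamana's monotone tensor product result. For the step you flag as the main obstacle, the paper sidesteps it by using only that $\mathcal{K}(\ell^2(G/H)) \otimes (I(J) \times_\lambda H)$ is an essential ideal in $\mathcal{B}(\ell^2(G/H)) \otimes (I(J) \times_\lambda H)$ and then invoking \cite{Hamana1982a}*{Proposition~3.11}, so the essentiality of $\mathcal{K}(\mathcal{H}) \otimes C \subseteq \mathcal{K}(\mathcal{H}) \otimes I(C)$ is never needed.
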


\begin{proof}
Since $I(A \times_\lambda G) = I(I(A) \times_\lambda G)$, we can assume that $A$ is injective. Then by Proposition \ref{prop:imprimitivity}, $A \cong \ell^\infty(G/H) \overline{\otimes} J$. Identify these C*-algebras. By Lemma \ref{lem:easy-inj-env}, the inclusion
\[
c_0(G/H) \otimes J \subseteq \ell^\infty(G/H) \otimes J
\]
is essential, and clearly $c_0(G/H) \otimes J$ is $G$-invariant. Hence by \cite{Hamana1985}*{Theorem~3.4},
\[
I((c_0(G/H) \otimes J) \times_\lambda G) = I((\ell^\infty(G/H) \otimes J) \times_\lambda G).
\]
Now by Green's imprimitivity theorem \cite{Green1980}*{Theorem 2.13},
\[
(c_0(G/H) \otimes J) \times_\lambda G \cong \mathcal{K}(\ell^2(G/H)) \otimes (J \times_\lambda H).
\]
Also, the inclusion
\[
\mathcal{K}(\ell^2(G/H)) \otimes (J \times_\lambda H) \subseteq \mathcal{B}(\ell^2(G/H)) \otimes (J \times_\lambda H)
\]
is essential. Hence taking injective envelopes on both sides and applying \cite{Hamana1982a}*{Proposition~3.11} gives
\begin{align*}
I(A \times_\lambda G) &= I(\ell^\infty(G/H) \otimes J) \times_\lambda G) = I((c_0(G/H) \otimes J) \times_\lambda G) \\
&\cong I(\mathcal{K}(\ell^2(G/H)) \otimes (J \times_\lambda H)) = \mathcal{B}(\ell^2(G/H)) \overline{\otimes} I(J \times_\lambda H). \qedhere
\end{align*}

\end{proof}

In practice, we will need to work with C*-dynamical systems $(A,G,\alpha)$ that are not themselves induced in a non-trivial way, but that instead have a $G$-invariant regular ideal $K \trianglelefteq A$ such that the C*-dynamical system $(K,G,\alpha)$ is induced by a regular C*-dynamical subsystem.

Recall that if $A$ is injective and $K \trianglelefteq A$ is a regular ideal, then there is a central projection $p \in Z(A)$ such that $K = Ap$. In particular, $K$ is a summand of $A$. However, if $A$ is not injective, then it is not necessarily true that a regular ideal in $A$ is a summand of $A$. So in general, it is necessary to refer to regular ideals instead of summands. This motivates the following definition.

\begin{defn} \label{defn:sub-induced} We will say that a C*-dynamical system $(A,G,\alpha)$ is {\em sub-induced} from a regular C*-dynamical subsystem if there is a nonzero $G$-invariant regular ideal $K \trianglelefteq A$ such that the corresponding C*-dynamical subsystem $(K,G,\alpha)$ is induced from a regular C*-dynamical subsystem $(J,H,\beta)$. In this case, we will say that $(A,G,\alpha)$ is sub-induced from the regular C*-dynamical subsystem $(J,H,\beta)$.
\end{defn}

\begin{rem}
Let $(A,G,\alpha)$ be a C*-dynamical system, let $K \trianglelefteq A$ be a $G$-invariant regular ideal and let $(J,H,\beta)$ be a regular C*-dynamical subsystem of the C*-dynamical system $(K,G,\alpha)$. Then $J$ is also a regular ideal in $A$, so that $(J,H,\beta)$ is also a regular C*-dynamical subsystem of $(A,G,\alpha)$. Hence there is no ambiguity in Definition \ref{defn:sub-induced}. 
\end{rem}

\begin{prop} \label{prop:sub-induced}
A C*-dynamical subsystem $(A,G,\alpha)$ is sub-induced from a regular C*-dynamical subsystem $(J,H,\beta)$ if and only if $H = \{g \in G : \alpha_g(J) = J\}$ and the ideals $\{\alpha_g(J)\}_{g \in G/H}$ are pairwise orthogonal. 
\end{prop}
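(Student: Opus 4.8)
The forward implication is immediate from the definitions: if $(A,G,\alpha)$ is sub-induced from $(J,H,\beta)$, then by Definition~\ref{defn:sub-induced} and Definition~\ref{defn:induced-general} there is a nonzero $G$-invariant regular ideal $K \trianglelefteq A$ for which $\{\alpha_g(J)\}_{g \in G/H}$ is a system of imprimitivity for $(K,G,\alpha)$ relative to $(J,H,\beta)$. Unpacking Definition~\ref{defn:induced}(3), this already forces $H = \{g \in G : \alpha_g(J) = J\}$ and the pairwise orthogonality of $\{\alpha_g(J)\}_{g \in G/H}$, since these two conditions are part of the definition of a system of imprimitivity and refer only to $G$, $J$ and $\alpha$, not to the ambient algebra $K$.

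For the converse, the plan is to work inside the injective envelope $I(A)$ and use Hamana's correspondence between regular ideals and central projections. We may assume $J \neq 0$. Let $p \in Z(I(A))$ be the central projection with $J = A \cap I(A)p$, so that $I(J) = I(A)p$. Since this correspondence between regular ideals of $A$ and central projections of $I(A)$ is $G$-equivariant and, by Lemma~\ref{lem:orthogonal-regular-ideals}, detects orthogonality, the hypotheses amount to saying that the projections $\{\alpha_g(p)\}_{g \in G/H}$ are pairwise orthogonal in $Z(I(A))$ (note that $\alpha_g(p)$ depends only on the coset $gH$ because $H$ is the stabilizer of $p$) and that $H = \{g \in G : \alpha_g(p) = p\}$.

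Next I would set $q := \sup_{g \in G/H} \alpha_g(p) \in Z(I(A))$, which exists because $Z(I(A))$ is monotone complete; by the pairwise orthogonality $q$ is the order-sum of the $\alpha_g(p)$, and $q \neq 0$ since $0 \neq p \leq q$. Because each $\alpha_h$ restricts to an order automorphism of $Z(I(A))$ and left translation by $h$ permutes the cosets $G/H$, the projection $q$ is $G$-invariant. Hence $K := A \cap I(A)q$ is a nonzero $G$-invariant regular ideal of $A$ with $I(K) = I(A)q$; and since $p \leq q$, the ideal $I(J) = I(A)p = I(K)p$ is a regular ideal of the injective C*-algebra $I(K)$, its translates $\{\alpha_g(I(J))\}_{g \in G/H} = \{I(K)\alpha_g(p)\}_{g \in G/H}$ are pairwise orthogonal, their join in $\operatorname{Reg}(I(K))$ equals $I(K)$ by the choice of $q$, and their common stabilizer is exactly $H$. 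By Proposition~\ref{prop:imprimitivity}, $(I(K),G,\alpha)$ is induced from $(I(J),H,\beta)$.

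It then remains to descend from $I(K)$ to $K$. Applying the converse half of Lemma~\ref{lem:imprimitivity-detect-induced-injective-envelope} with $A$ replaced by $K$, and using that $K \cap I(J) = A \cap I(A)(q \wedge p) = A \cap I(A)p = J$, we obtain that $\{\alpha_g(J)\}_{g \in G/H}$ is a system of imprimitivity for $(K,G,\alpha)$ relative to $(J,H,\beta)$, i.e. $(K,G,\alpha)$ is induced from $(J,H,\beta)$; hence $(A,G,\alpha)$ is sub-induced from $(J,H,\beta)$. I expect the main point requiring care to be the bookkeeping in the third paragraph --- that $q$ is $G$-invariant and that the identifications $I(K) = I(A)q$ and $I(J) = I(K)p$ are compatible with the injective-envelope dictionary --- but there is no genuine obstacle; routing the argument through Proposition~\ref{prop:imprimitivity} and Lemma~\ref{lem:imprimitivity-detect-induced-injective-envelope}, rather than checking the system-of-imprimitivity conditions for $K$ directly (which would force one to control joins of regular ideals computed inside the possibly non-injective algebra $K$), is precisely what keeps this step routine.
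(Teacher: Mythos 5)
Your proof is correct, but it takes a different route from the paper's, which is essentially a one-liner: the paper simply sets $K := \vee_{g \in G/H} \alpha_g(J) = \bigl(\bigcup_{g \in G/H} \alpha_g(J)\bigr)^{\perp\perp}$, the join in $\operatorname{Reg}(A)$, observes that $K$ is a nonzero $G$-invariant regular ideal, and notes that the three conditions defining a system of imprimitivity for $(K,G,\alpha)$ relative to $(J,H,\beta)$ then hold by hypothesis and by construction, so $(K,G,\alpha)$ is induced from $(J,H,\beta)$ by Definition~\ref{defn:induced-general}. You instead translate everything into $Z(I(A))$ via Hamana's correspondence between regular ideals and central projections, build $q = \sup_{g \in G/H}\alpha_g(p)$, verify imprimitivity at the level of the injective algebra $I(K) = I(A)q$ using Proposition~\ref{prop:imprimitivity}, and then descend to $K$ with the converse half of Lemma~\ref{lem:imprimitivity-detect-induced-injective-envelope}, using $K \cap I(J) = J$. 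Your route is longer but it does buy something: it sidesteps the (admittedly routine, and left implicit by the paper) verification that the join of the translates $\alpha_g(J)$ computed in $\operatorname{Reg}(K)$ agrees with the join computed in $\operatorname{Reg}(A)$, i.e.\ that $\vee_{g \in G/H}\alpha_g(J) = K$ holds in the sense required by Definition~\ref{defn:induced}(3) inside the possibly non-injective algebra $K$; the paper's construction makes $K$ the join in $\operatorname{Reg}(A)$ and treats this compatibility as immediate, whereas you only ever compute joins of regular ideals in injective algebras, where they are just suprema of central projections. Conversely, the paper's argument is purely intrinsic and avoids any appeal to $I(A)$. Both proofs establish the statement; your forward direction coincides with the paper's (implicit) one.
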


\begin{proof}
Let $K = \vee_{g \in G/H} \alpha_g(J)$. Then $K$ is a $G$-invariant regular ideal of $A$ and the ideals $\{\alpha_g(J)\}_{g \in G/H}$ form a system of imprimitivity for $(K,G,\alpha)$ relative to $(J,H,\beta)$, so $(K,G,\alpha)$ is induced from $(J,H,\beta)$. 
\end{proof}

\begin{rem} \label{rem:abelian-sub-induced}
Let $(\rC_0(X),G,\alpha)$ be an abelian C*-dynamical system. A regular C*-dynamical subsystem is of the form $(\rC_0(U),H,\beta)$, where $U \subseteq X$ is a non-empty $H$-invariant regular open subset. By Proposition \ref{prop:sub-induced}, $(\rC_0(X),G,\alpha)$ is sub-induced from $(\rC_0(U),H,\beta)$ if and only if $H = \{h \in G : hU = U\}$ and the ideals $\{\alpha_g(\rC_0(U))\}_{g \in G/H}$ are pairwise orthogonal. The latter condition is equivalent to the subsets $\{gU\}_{g \in G/H}$ being pairwise disjoint.

Hence $(\rC_0(X),G,\alpha)$ is sub-induced if and only if there is a non-empty regular open subset $U \subseteq X$ and a subgroup $H \leq G$ with $H = \{g \in G : gU = U\}$ such that the subsets $\{gU\}_{g \in G/H}$ are pairwise disjoint. In this case, $V = \vee_{g \in G/H} gU \subseteq X$ is a non-empty $G$-invariant regular open subset and the C*-dynamical subsystem $(\rC_0(V),G,\alpha)$ is induced from the regular C*-dynamical subsystem $(\rC_0(U),H,\beta)$.
\end{rem}

The next result is an easy consequence of the above results.

\begin{cor} \label{cor:detect-induced-quotient}
Let $(A,G,\alpha)$ be a C*-dynamical system. If $(A,G,\alpha)$ is sub-induced from a regular C*-dynamical subsystem $(J,H,\beta)$, then $(I(A),G,\alpha)$ is sub-induced from the regular C*-dynamical subsystem $(I(J),H,\beta)$. Conversely, if $(I(A),G,\alpha)$ is sub-induced from a regular C*-dynamical subsystem $(J,H,\beta)$, then $(A,G,\alpha)$ is sub-induced from the regular C*-dynamical subsystem $(A \cap J, H, \beta)$.
\end{cor}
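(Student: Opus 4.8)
The plan is to combine Definition \ref{defn:sub-induced} with the two characterizations already established, namely Lemma \ref{lem:imprimitivity-detect-induced-injective-envelope} (which detects when $(I(A),G,\alpha)$ is induced) and Proposition \ref{prop:sub-induced} (which characterizes sub-induction purely in terms of orthogonality of the translates $\{\alpha_g(J)\}$ and the stabilizer condition $H = \{g : \alpha_g(J) = J\}$). The key observation is that Proposition \ref{prop:sub-induced} reformulates sub-induction into two conditions that are both visibly preserved under passing to injective envelopes, since by Lemma \ref{lem:orthogonal-regular-ideals} orthogonality of regular ideals $\{\alpha_g(J)\}$ in $A$ is equivalent to orthogonality of the corresponding regular ideals $\{\alpha_g(I(J))\}$ in $I(A)$, and the stabilizer of $I(J)$ under the (uniquely extended) action equals the stabilizer of $J$.

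For the forward direction, suppose $(A,G,\alpha)$ is sub-induced from $(J,H,\beta)$. By Proposition \ref{prop:sub-induced}, $H = \{g \in G : \alpha_g(J) = J\}$ and the ideals $\{\alpha_g(J)\}_{g \in G/H}$ are pairwise orthogonal. Passing to the injective envelope, the ideals $I(\alpha_g(J)) = \alpha_g(I(J))$ are regular ideals in $I(A)$, and by Lemma \ref{lem:orthogonal-regular-ideals} they are pairwise orthogonal. Moreover $\alpha_g(I(J)) = I(J)$ holds in $I(A)$ precisely when $\alpha_g(J) = J$ in $A$ (the unique extension of an automorphism fixing $J$ as a set must fix $I(J)$ as a set, by rigidity/uniqueness of extension, and conversely $\alpha_g(I(J)) = I(J)$ forces $\alpha_g(J) = \alpha_g(A \cap I(J)) = A \cap I(J) = J$). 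Hence $H = \{g \in G : \alpha_g(I(J)) = I(J)\}$, and applying Proposition \ref{prop:sub-induced} again shows $(I(A),G,\alpha)$ is sub-induced from $(I(J),H,\beta)$.

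For the converse, suppose $(I(A),G,\alpha)$ is sub-induced from a regular C*-dynamical subsystem $(J,H,\beta)$ (here $J$ is a regular ideal of $I(A)$). Set $J_0 = A \cap J$. Since $J$ is regular in $I(A)$, there is a central projection $p \in Z(I(A))$ with $J = I(A)p$, so $J_0 = A \cap I(A)p$ is a regular ideal of $A$ by Hamana's characterization of regular ideals (\cite{Hamana1982c}*{Lemma 1.3}), and $I(J_0) = J$ by the bijective correspondence between regular ideals of $A$ and of $I(A)$. The subgroup $H$ stabilizes $J = I(J_0)$, hence stabilizes $J_0 = A \cap I(J_0)$, so $(J_0,H,\beta)$ is a regular C*-dynamical subsystem of $(A,G,\alpha)$. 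By Proposition \ref{prop:sub-induced} applied in $I(A)$, the ideals $\{\alpha_g(J)\}_{g\in G/H}$ are pairwise orthogonal and $H = \{g \in G : \alpha_g(J)=J\}$; translating these back to $A$ via $\alpha_g(J) = \alpha_g(I(J_0)) = I(\alpha_g(J_0))$ and Lemma \ref{lem:orthogonal-regular-ideals}, the ideals $\{\alpha_g(J_0)\}_{g \in G/H}$ are pairwise orthogonal and $H = \{g \in G : \alpha_g(J_0) = J_0\}$. By Proposition \ref{prop:sub-induced}, $(A,G,\alpha)$ is sub-induced from $(J_0,H,\beta) = (A \cap J, H, \beta)$, as required.

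There is no substantial obstacle here: the corollary is genuinely an immediate consequence once Proposition \ref{prop:sub-induced} has recast sub-induction in terms of orthogonality and stabilizers, since both of these features transfer cleanly between $A$ and $I(A)$. The only point requiring any care is the bookkeeping identity $\alpha_g(I(J)) = I(\alpha_g(J))$ for the uniquely extended action together with the stabilizer equivalence $\alpha_g(I(J)) = I(J) \iff \alpha_g(J) = J$; both follow from the uniqueness of the extension of automorphisms to the injective envelope and the bijection between regular ideals of $A$ and regular ideals of $I(A)$ recorded in Section \ref{sec:preliminaries}.
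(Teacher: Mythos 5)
Your proof is correct and follows the route the paper intends: the paper gives no written argument, calling the corollary an easy consequence of the preceding results, and your derivation via Proposition \ref{prop:sub-induced}, Lemma \ref{lem:orthogonal-regular-ideals}, and the bijective correspondence between regular ideals of $A$ and of $I(A)$ is exactly that intended argument, with the bookkeeping identity $\alpha_g(I(J)) = I(\alpha_g(J))$ and the stabilizer equivalence handled properly.
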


\section{Meandering projections} \label{sec:projections}

In this section, we will introduce the notion of a meandering projection for a C*-dynamical system. This will provide a useful technical device for detecting when a given C*-dynamical system is sub-induced by a regular C*-dynamical subsystem in the sense of Section \ref{sec:imprimitivity}, with the additional requirement that the restriction of some non-trivial automorphism of the subsystem is implemented by a unitary in the injective envelope satisfying a strong invariance property. This condition will play an important role throughout this paper. However, the precise relationship between the meandering projection and the subsystem is potentially quite complicated, and it will take a significant amount of work to establish this result.

The following definition arose naturally from our analysis of relative commutants in injective envelopes of reduced crossed products (see Section~\ref{sec:regular-ideal-intersection-property}), and from our analysis of pseudo-expectations (see Section~\ref{sec:pseudo-expectations}).

\begin{defn} \label{defn:meandering-projection}
Let $(A,G,\alpha)$ be a C*-dynamical system. We will say that a nonzero central projection $p \in Z(A)$ is {\em meandering} for $(A,G,\alpha)$ if there is a group element $r \in G \setminus \{e\}$ and a unitary $u \in Ap$ such that 
\begin{enumerate}
\item $\alpha_r|_{Ap} = \operatorname{Ad}(u)$ and $u$ is $C_G(r)$-invariant;
\item $\sup_F \| \sum_{t \in F} \alpha_t(p) \| < \infty$, where the supremum is taken over all finite subsets $F \subseteq T$ for any left transversal $T$ of $C_G(r)$ in $G$.
\end{enumerate}
\end{defn}

\begin{rem}
Note that since $u^*u = p = uu^*$, the $C_G(r)$-invariance of $u$ implies the $C_G(r)$-invariance of $p$.
\end{rem}

We will require the following technical result.

\begin{lem} \label{lem:collection-of-functions}
Let $X$ be a compact extremally disconnected Hausdorff space and let $\{f_i\}_{i \in I}$ be a collection of functions in $C(X)$. Suppose there is $\lambda > 0$ such that $f_i(x) \geq \lambda$ for all $i \in I$ and $x \in \operatorname{supp}(f_i)$, and that
\[
\sup_F \|\sum_{i \in F} f_i\| < \infty,
\]
where the supremum is taken over all finite subsets $F \subseteq I$. Then
\begin{enumerate}
\item the bounded upward directed net $( \sum_{i \in F} f_i )_F$ has a supremum $f \in C(X)$,
\item the set $\{i \in I : f_i(x) \ne 0\}$ is finite for each $x \in X$,
\item for each finite subset $F \subseteq I$, the set $\{ x \in X : f(x) = \sum_{i \in F} f_i \}$ is clopen, and 
\item the set $\{x \in X : f(x) = \sum_{i \in I} f_i(x) \}$ is open and comeager, where $\sum_{i \in I} f_i(x)$ denotes the pointwise supremum of the net $(\sum_{i \in F} f_i)_F$.
\end{enumerate} 
\end{lem}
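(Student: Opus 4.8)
The plan is to work entirely inside the abelian C*-algebra $C(X)$, exploiting the fact that $X$ is compact extremally disconnected, so $C(X)$ is monotone complete and its lattice of clopen sets is a complete Boolean algebra. The uniform bound $M := \sup_F \|\sum_{i \in F} f_i\| < \infty$ immediately gives that the net $(\sum_{i \in F} f_i)_F$, indexed by finite subsets $F \subseteq I$ ordered by inclusion, is bounded and upward directed (each $f_i \geq 0$ since $f_i(x) \geq \lambda > 0$ on its support and $f_i = 0$ off the support), hence has a supremum $f \in C(X)$ by monotone completeness. This proves (1). Note the order supremum $f$ dominates every pointwise partial sum, so $f(x) \geq \sum_{i \in F} f_i(x)$ for all $x$ and all finite $F$.

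For (2), fix $x \in X$ and suppose $\{i : f_i(x) \neq 0\}$ contains distinct indices $i_1, \dots, i_n$. Then $\sum_{k=1}^n f_{i_k}(x) \geq n\lambda$ since each nonzero value is at least $\lambda$, but also $\sum_{k=1}^n f_{i_k}(x) \leq \|\sum_{k=1}^n f_{i_k}\| \leq M$, forcing $n \leq M/\lambda$. So the set is finite, with cardinality bounded uniformly by $\lfloor M/\lambda\rfloor$. For (3), fix a finite $F \subseteq I$ and consider $g_F := f - \sum_{i \in F} f_i \in C(X)$, which is $\geq 0$ everywhere. The set in question is $g_F^{-1}(0)$, the zero set of a continuous nonnegative function; in an extremally disconnected space the closure of the open set $\{g_F \neq 0\}$ is clopen, so its complement $\operatorname{int} g_F^{-1}(0)$ is clopen — but I need the zero set itself to be clopen. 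Here I would instead argue as follows: $g_F$ is the order supremum of $(\sum_{i \in E \setminus F} f_i)_{E \supseteq F}$, each term a nonnegative continuous function; the set $\{g_F = 0\}$ equals $\{x : f_i(x) = 0 \text{ for all } i \notin F\}$ by (2) and the fact that $f$ agrees pointwise with the finite sum $\sum_{i : f_i(x) \neq 0} f_i(x)$ wherever that set is contained in $F$. Actually the cleanest route: since each $f_i$ is continuous and, by (2), locally the sum is finite, the pointwise sum function $h(x) := \sum_{i \in I} f_i(x)$ is lower semicontinuous; and I will show in (4) that $h = f$ on a dense set. The clopenness in (3) then follows because $\{x : f(x) = \sum_{i \in F} f_i(x)\}$, being a decreasing-in-$F$ family whose intersection-type behavior is controlled by the complete Boolean algebra of clopen sets — I would identify it with $\bigcap_{i \notin F} \{f_i = 0\}$ and observe each $\{f_i = 0\}$ is clopen since $f_i \geq \lambda$ on its support means $f_i^{-1}(0) = f_i^{-1}([0,\lambda/2))$ is both closed and open.

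For (4), set $Y := \{x \in X : f(x) = h(x)\}$ where $h(x) = \sum_{i \in I} f_i(x)$ is the (finite, by (2)) pointwise sum. Write $Y = \bigcup_{F} \{x : f(x) = \sum_{i \in F} f_i(x) \text{ and } f_i(x) = 0 \ \forall i \notin F\}$; by (3) and the previous paragraph each such set is clopen, hence open, so $Y$ is open. For comeagerness, I would show the complement $X \setminus Y$ is meager: if $Y$ were not dense, its complement would contain a nonempty clopen set $W$; but then on $W$ we would have $f(x) > h(x) \geq \sum_{i \in F} f_i(x)$ strictly for every finite $F$, contradicting that $f = \sup_F \sum_{i \in F} f_i$ is the \emph{least} upper bound — restricting to $W$, the function $f - \varepsilon \mathbbm{1}_W$ for small $\varepsilon$ (using that $f - h \geq \varepsilon$ on a further clopen subset of $W$, obtained since $f - h$ is the infimum of the clopen-valued net and extremal disconnectedness lets us extract a uniform gap) would still be an upper bound, contradicting minimality. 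So $Y$ is dense open, hence comeager.

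The main obstacle I anticipate is part (3)/(4): the delicate point is converting the \emph{order} supremum $f$ (an abstract lattice-theoretic object in a monotone complete C*-algebra) into \emph{pointwise} information, since in general the order supremum in $C(X)$ strictly dominates the pointwise supremum on a meager set. The hypothesis $f_i \geq \lambda$ on $\operatorname{supp}(f_i)$ is exactly what makes each $f_i^{-1}(0)$ clopen and forces local finiteness of the sum via (2), and this is the lever that makes the agreement set clopen-generated and hence open; the comeagerness is then the standard fact that in a Baire space (extremally disconnected compact Hausdorff spaces are Baire), a dense open set is comeager, combined with the least-upper-bound characterization to force density. I would make sure to invoke only monotone completeness of $C(X)$, the clopen = regular open characterization in extremally disconnected spaces, and the Baire property, all of which are available from the preliminaries.
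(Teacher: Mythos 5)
Parts (1) and (2) are fine and agree with the paper. The genuine gap is in your proof of (3), and it propagates into (4). You identify $\{x : f(x) = \sum_{i \in F} f_i(x)\}$ with $\bigcap_{i \notin F}\{f_i = 0\}$, but only the inclusion $\subseteq$ is true: the order supremum $f$ can strictly exceed the pointwise supremum at points where every $f_i$ with $i \notin F$ vanishes. Concretely, take $X = \beta\NN$, $f_i = \chi_{\{i\}}$ for $i \in \NN$, $\lambda = 1$, $F = \emptyset$: here $f \equiv 1$, so $\{x : f(x) = \sum_{i \in \emptyset} f_i(x)\} = \emptyset$, while $\bigcap_{i}\{f_i = 0\} = \beta\NN \setminus \NN \neq \emptyset$, a set which is moreover closed but not open. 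This also shows that, even if the identification held, it would not help: an infinite intersection of clopen sets (each $\{f_i = 0\}$ is indeed clopen, as you say) is merely closed, and the complete Boolean algebra structure only produces the \emph{interior} of such an intersection. You flagged the difficulty yourself (``I need the zero set itself to be clopen''), but the proposed resolution is exactly the false equality above. Since your (4) obtains openness from (3) and comeagerness from ``dense open,'' neither claim in (4) is established as written.

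What actually makes (3) work, and is how the paper argues, is a quantized-gap phenomenon coming from the $\lambda$-hypothesis on the order-supremum side: the nonnegative function $g_F = f - \sum_{i \in F} f_i$ omits the values in $(0,\lambda/2)$, so the complement of its zero set is the closed set $\{g_F \geq \lambda/2\}$ and the zero set is clopen. (One way to see the quantization: apply to $g_F$ a continuous nondecreasing $\phi \leq \operatorname{id}$ that vanishes on $[0,\lambda/2]$ and fixes $[\lambda,\infty)$; then $\phi(g_F)$ is still an upper bound for the partial sums $\sum_{i \in E \setminus F} f_i$, so $\phi(g_F) = g_F$ by leastness of the supremum.) For (4), the paper gets comeagerness directly, without first knowing openness, from Gonshor's isomorphism $\rC(X) \cong \mathrm{B}(X)/M$ (bounded Borel functions modulo those supported on meager sets), applied to the lower semicontinuous pointwise sum; openness then follows from (3). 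Your alternative density argument (Baire category applied to the upper semicontinuous function $f - h$ on a clopen subset of the complement, then subtracting $\varepsilon\chi_{W'}$ to contradict leastness) is salvageable and genuinely different from the paper's route, but it only yields comeagerness once openness, i.e.\ (3), has been repaired.
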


\begin{proof}
The net $(\sum_{i \in F} f_i)_F$ is bounded and upward directed by assumption. Since $X$ is extremally disconnected, $C(X)$ is injective by \cite{Gleason1958}*{Theorem 2.5}. Hence $f := \sup_F \sum_{i \in F} f_i \in C(X)$ exists. The norm boundedness of the net $(\sum_{i \in F} f_i)_F$ also clearly implies that $\{i \in I : f_i(x) \ne 0\}$ is finite for each $x \in X$. 

For a finite subset $F \subseteq I$, it is clear that the set $\{ x \in X : f(x) = \sum_{i \in F} f_i \}$ is closed. To see that it is open, observe that its complement is
\[
\{x \in X : f(x) - \sum_{i \in F} f_i(x) \geq \lambda/2 \},
\]
which is clearly closed.

Finally, the function $\sum_{i \in I} f_i$ is bounded and lower semicontinuous, and in particular Borel. Hence it belongs to the C*-algebra $\mathrm{B}(X)$ of bounded Borel functions on $X$. Since $\rC(X)$ is injective, it follows from \cite{Gonshor1970}*{Theorem 1} that $C(X) \cong \mathrm{B}(X)/M$, where $M \trianglelefteq \mathrm{B}(X)$ denotes the ideal of functions in $\mathrm{B}(X)$ supported on a meager set. Further, the composition of the quotient map $q : \mathrm{B}(X) \to \rC(X)$ with the inclusion $\rC(X) \subseteq \mathrm{B}(X)$ is the identity on $\rC(X)$.

Identify $C(X)$ with its image in $\mathrm{B}(X)$. Then $\sum_{i \in I} f_i$ is the supremum of the net $(\sum_{i \in F} f_i)_F$ in $\mathrm{B}(X)$. Hence for finite $F \subseteq I$,
\[
\sum_{i \in F} f_i \leq \sum_{i \in I} f_i \leq f
\]
in $\mathrm{B}(X)$. Applying the quotient map $q$ then gives
\[
\sum_{i \in F} f_i = q\left(\sum_{i \in F} f_i\right) \leq q\left(\sum_{i \in I} f_i\right) \leq q(f) = f
\]
in $\rC(X)$. In particular, $q(\sum_{i \in I} f_i)$ is an upper bound for the net $(\sum_{i \in F} f_i)_F$ such that $q(\sum_{i \in I} f_i) \leq f$. Since $f$ is the least upper bound in $\rC(X)$, it follows that $q(\sum_{i \in I} f_i) = f$. Hence $f$ agrees with $\sum_{i \in I} f_i$ on a comeager set. From above, this set is $\cup_F \{x \in X : f(x) = \sum_{i \in F} f_i(x)\}$, where the union is taken over all finite subsets $F \subseteq I$. From above, this is a union of clopen sets, and hence is open.
\end{proof}

\begin{lem} \label{lem:projection-to-induced}
Let $(A,G,\alpha)$ be a C*-dynamical system. Suppose there is a meandering projection for $(I(A),G,\alpha)$. Then $(I(A),G,\alpha)$ is sub-induced from a regular C*-dynamical subsystem $(J,H,\beta)$ with $r \in FC(H) \setminus \{e\}$ such that $\beta_r = \operatorname{Ad}(u_r)$ for a $C_H(r)$-invariant unitary $u_r \in J$.
\end{lem}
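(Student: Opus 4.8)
\emph{Plan.} The plan is to realize the required regular C*-dynamical subsystem as $(I(A)q,H,\alpha|)$ for a carefully chosen central projection $q\le p$ in $Z(I(A))$, working throughout in the abelian C*-algebra $Z(I(A))\cong C(X)$, where $X$ is the spectrum of $Z(I(A))$ --- a compact, extremally disconnected, Hausdorff space.

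First I would unpack the meandering projection: it supplies $r\in G\setminus\{e\}$ and a unitary $u\in I(A)p$ with $\alpha_r|_{I(A)p}=\operatorname{Ad}(u)$ and $u$ invariant under $C:=C_G(r)$. Since $u^*u=uu^*=p$, invariance of $u$ forces $\alpha_h(p)=p$ for $h\in C$, and since $\operatorname{Ad}(u)$ preserves $I(A)p$ we also get $\alpha_r(p)=p$. Hence, with $H_p:=\{g\in G:\alpha_g(p)=p\}$, we have $C\le H_p$ and $r\in H_p$, so $C_{H_p}(r)=C$. Next I would extract the quantitative content of condition~(2): in a left transversal $T$ of $C$, each coset of $H_p$ splits into $[H_p:C]$ cosets of $C$, whose representatives $t$ all satisfy $\alpha_t(p)=\alpha_g(p)$ for the corresponding $gH_p$; choosing $F$ to be these representatives gives $\|\sum_{t\in F}\alpha_t(p)\|=[H_p:C]$, so condition~(2) forces $[H_p:C]=|\operatorname{Cl}_{H_p}(r)|<\infty$, i.e.\ $r\in FC(H_p)$. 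The same bookkeeping shows the distinct translates $\{\alpha_g(p)\}_{gH_p\in G/H_p}$ have uniformly bounded partial sums, say by $M$.

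Then I would apply Lemma~\ref{lem:collection-of-functions} to the projections $\{\alpha_g(p)\}_{gH_p\in G/H_p}$ in $C(X)$ (here $\lambda=1$): this yields $f:=\bigvee_{gH_p}\alpha_g(p)$; the pattern set $S_x:=\{gH_p:\alpha_g(p)(x)\ne 0\}$ is finite for every $x$; $f$ is $G$-invariant and takes integer values in $\{0,\dots,M\}$ (it agrees with the pointwise sum off a meager set, and continuity plus a dense-set argument make it integer-valued); and $|S_x|=f(x)$ off a meager $G$-invariant set. Thus $X$ is partitioned into $G$-invariant clopen layers $X_k:=\{f=k\}$, and each ``partial-sum level set'' is clopen. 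By Proposition~\ref{prop:sub-induced} the goal reduces to producing a nonzero central $q\le p$ with $\alpha_r(q)=q$ such that, with $H:=\{g:\alpha_g(q)=q\}$, the family $\{\alpha_g(q)\}_{gH\in G/H}$ is pairwise orthogonal and $r\in FC(H)$: then $(J,H,\beta):=(I(A)q,H,\alpha|)$ is a regular C*-dynamical subsystem from which $(I(A),G,\alpha)$ is sub-induced, $\beta_r=\alpha_r|_{I(A)q}=\operatorname{Ad}(uq)$ (as $q\le p$ and $\alpha_r(q)=q$), and $u_r:=uq\in J$ is a $C_H(r)$-invariant unitary, since $u$ is $C$-invariant, $C_H(r)\subseteq C$, and $q$ is $H$-invariant; also $r\in FC(H)\setminus\{e\}$.

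When $p\wedge X_1\ne 0$ this is immediate: take $q:=p\wedge X_1$. On $X_1$ distinct translates are orthogonal --- if $\alpha_g(p)\wedge\alpha_{g'}(p)\ne 0$ then $2\,\alpha_g(p)\alpha_{g'}(p)\le f$, forcing this projection to vanish on $\{f=1\}$ --- and this yields both that $\{\alpha_g(q)\}_{gH_p}$ is pairwise orthogonal and that $\operatorname{Stab}(q)=H_p$; $q$ is $C$-invariant so $\alpha_r(q)=q$, and $r\in FC(H_p)$ from above. The hard case is $p\wedge X_1=0$, i.e.\ $X_1=\varnothing$: every point of $p$ then lies in at least two translates, and the overlap must be genuinely resolved. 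The plan here is to pass to the least layer $X_{m_0}$ met by $p$ (so $m_0\ge 2$), decompose $p\wedge X_{m_0}$ into the clopen pieces $\{x\in X_{m_0}:S_x=\omega\}=\big(\bigwedge_{c\in\omega}\alpha_c(p)\big)\wedge X_{m_0}$ indexed by the realized $m_0$-patterns $\omega\ni eH_p$, and extract an $\langle r\rangle$-invariant projection $q$ built from these pieces whose distinct $G$-translates are pairwise orthogonal; the stabilizer $H:=\operatorname{Stab}_G(q)$ then fixes a combinatorial object of controlled size, and the finiteness $[H_p:C]<\infty$ together with the pattern control from Lemma~\ref{lem:collection-of-functions} is leveraged to conclude $[H:C_H(r)]<\infty$. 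Carrying out this resolution --- producing the $r$-invariant $q$ with pairwise orthogonal translates and bounding $\operatorname{Stab}_G(q)$ --- is the main obstacle, and is where the bulk of the work lies.
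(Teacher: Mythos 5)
Your proposal is incomplete where it matters most. The reduction is set up correctly: the observations that $C_G(r)\leq H_p:=\{g:\alpha_g(p)=p\}$, that condition (2) of Definition~\ref{defn:meandering-projection} forces $[H_p:C_G(r)]<\infty$, and the treatment of the case $p\wedge\chi_{X_1}\neq 0$ (where $q:=p\wedge\chi_{X_1}$ has stabilizer $H_p$ and pairwise orthogonal translates) are all sound. But the case in which every point of $\operatorname{supp}(p)$ lies in at least two translates is not a degenerate corner --- it is the generic situation and the heart of the lemma --- and for it you offer only a plan (``decompose into pattern pieces, extract an $r$-invariant $q$ with pairwise orthogonal translates, and bound $[H:C_H(r)]$''), explicitly conceding that ``the bulk of the work lies'' there. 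Nothing in the proposal produces the projection $q$, verifies the orthogonality of its $G/\operatorname{Stab}_G(q)$-translates, or proves $r\in FC(\operatorname{Stab}_G(q))$ in that case, so the proof is not complete. Note also that your earlier finiteness statement $r\in FC(H_p)$ does not transfer to the eventual stabilizer $H=\operatorname{Stab}_G(q)$, which need not be comparable to $H_p$; that finiteness must be argued for $H$ itself.

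For comparison, the paper resolves the overlap in one stroke, and along lines close to what you gesture at, but with the pattern taken over cosets of $C_G(r)$ rather than of $H_p$. Using Lemma~\ref{lem:collection-of-functions} (3) and (4) applied to $\{\alpha_t(p)\}_{t\in T}$, $T$ a transversal of $C_G(r)$ with $e\in T$, one finds a finite $F\subseteq T$ with $e\in F$ such that the clopen set $U:=\{x: f(x)=\sum_{t\in F}\alpha_t(p)(x)\}\cap\bigcap_{t\in F}\operatorname{supp}(\alpha_t(p))$ is non-empty; on $U$ exactly the translates indexed by $F\cdot C_G(r)$ are on. Then $H:=\{h: hF\cdot C_G(r)=F\cdot C_G(r)\}=\{h: hU=U\}=\{h: hU\cap U\neq\emptyset\}$, so distinct $G/H$-translates of $J:=I(A)\chi_U$ are automatically orthogonal (different translates carry different patterns) and Proposition~\ref{prop:sub-induced} applies; $r\in H$ because $r$ fixes $\operatorname{supp}(p)$ pointwise and $U\subseteq\operatorname{supp}(p)$; for $h\in H$ one has $hr\in F\cdot C_G(r)$, whence $hrh^{-1}=trt^{-1}$ for some $t\in F$, so $\operatorname{Cl}_H(r)\subseteq\{trt^{-1}:t\in F\}$ is finite with no need for $[H_p:C_G(r)]<\infty$; and $u_r:=u\chi_U$ is $C_H(r)$-invariant since $u$ is $C_G(r)$-invariant and $\chi_U$ is $H$-invariant (the $\langle r\rangle$-invariance of $q$ you were aiming to engineer is automatic here). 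If you want to salvage your outline, this ``fix one pattern and take its locus'' device is exactly the missing resolution; as written, the proposal has a genuine gap.
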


\begin{proof}
Let $p \in Z(I(A))$ be a meandering projection with corresponding group element $r \in G \setminus \{e\}$ and unitary $u \in I(A)p$. Let $T \subseteq G$ be a left transversal of the centralizer $C_G(r)$ in $G$ with $e \in T$. Let $X := \operatorname{Glimm}(Z(I(A)))$, so that $Z(I(A)) \cong \rC(X)$. Note that since $Z(I(A))$ is injective, $X$ is extremally disconnected. Since $p$ is meandering, the collection of functions $\{\alpha_t(p)\}_{t \in T}$ satisfies the hypotheses of Lemma \ref{lem:collection-of-functions}. Let
\[
f = \textstyle{\sup_{\rC(X)}} \{\sum_{t \in F} \alpha_t(p) : F \subseteq T \text{ finite} \}.
\]
Note that $f$ does not depend on $T$ because $p$ is $C_G(r)$-invariant. Since $p \ne 0$, it follows that there is a finite subset $F \subseteq T$ with $e \in F$ such that the set
\begin{align*}
U &:= \left\{x \in X : f(x) = \sum_{t \in F} \alpha_t(p)(x) \right\} \bigcap \left(\bigcap_{t \in F} \operatorname{supp}(\alpha_t(p))\right) \\
&\phantom{:}\subseteq \left(\bigcap_{t \in F} \operatorname{supp}(\alpha_t(p))\right) \setminus \left( \bigcup_{t \in T \setminus F} \operatorname{supp}(\alpha_t(p)) \right)
\end{align*}
is clopen and non-empty. Note that $U$ is contained in the support of $p$.

Define the subgroup $H \leq G$ by
\[
H := \{h \in G : hU = U \},
\]
and note that $r \in H$ because $r$ acts trivially on the support of $p$. It follows from the definition of $U$ and the $C_G(r)$-invariance of $p$ that we can also write
\[
H = \{h \in G : h F \cdot C_G(r) = F \cdot C_G(r) \} = \{h \in G : hU \cap U \ne \emptyset \}.
\]
Let $q = \chi_U \in Z(I(A))$ and note that $q \leq p$. Define the regular ideal $J$ in $I(A)$ by $J := I(A)q$. It follows immediately from above that
\[
H = \{h \in G : \alpha_h(J) = J \} = \{h \in G : \alpha_h(J) \cap J \ne 0 \}.
\]
By the $H$-invariance of $J$, we obtain a regular C*-dynamical subsystem $(J,H,\beta)$ such that the ideals $\{\alpha_g(J)\}_{g \in G/H}$ are pairwise orthogonal. Hence by Proposition \ref{prop:sub-induced}, $(A,G,\alpha)$ is sub-induced from $(J,H,\beta)$.

It remains to prove that $r \in FC(H)$ and $\beta_r = \operatorname{Ad}(u_r)$ for a $C_H(r)$-invariant unitary $u_r \in J$. For this, first note that since $q \leq p$ and $r$ acts trivially on the support of $p$, $q$ is also $r$-invariant. In particular, $\alpha_r(J) = J$. Next, choose $h \in H$. Then since $r \in F$, $hr \in F \cdot C_G(r)$, so $h = t s r^{-1}$ for some $t \in F$ and $s \in C_G(r)$, giving $hrh^{-1} = trt^{-1}$. Hence the $H$-conjugacy class $\operatorname{Cl}_H(r)$ of $r$ satisfies
\[
\operatorname{Cl}_H(r) = \{t r t^{-1} : t \in F\}.
\]
In particular, since $F$ is finite, $r \in FC(H)$. Finally, since $u$ is $C_G(r)$-invariant and $q$ is $H$-invariant, the unitary $u_r := uq \in J$ is $C_H(r)$-invariant and satisfies $\beta_r = \operatorname{Ad}(u_r)$. 
\end{proof}

\begin{lem} \label{lem:induced-to-relative-commutant}
Let $(A,G,\alpha)$ be a C*-dynamical system. Suppose that $(I(A),G,\allowbreak\alpha)$ is sub-induced from a regular C*-dynamical subsystem $(J,H,\beta)$ with $r \in FC(H) \setminus \{e\}$ such that $\beta_r = \operatorname{Ad}(u_r)$ for a $C_H(r)$-invariant unitary $u_r \in J$. Then $Z(I(A))^G \ne Z(I(A \times_\lambda G))$.
\end{lem}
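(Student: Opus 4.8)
The plan is to exhibit an element of $Z(I(A\times_\lambda G))$ that does not lie in $Z(I(A))^G$. Write $B := I(A)$; since $I(A\times_\lambda G) = I(B\times_\lambda G)$ and $Z(I(A))^G = Z(B)^G$, it suffices to produce $z\in Z(I(B\times_\lambda G))\setminus Z(B)^G$. By hypothesis $(B,G,\alpha)$ is sub-induced from $(J,H,\beta)$, so there is a nonzero $G$-invariant regular ideal $K\trianglelefteq B$ with $(K,G,\alpha)$ induced from $(J,H,\beta)$. As $B$ is injective, $K = Bp_0$ for a central projection $p_0\in Z(B)$, which is $G$-invariant, hence central in $B\times_\lambda G$ and in $I(B\times_\lambda G)$; therefore $I(B\times_\lambda G)p_0 = I(K\times_\lambda G)$, and Theorem~\ref{thm:imprimitivity-reduced-crossed-product} supplies an isomorphism $\psi\colon I(B\times_\lambda G)p_0 \to \mathcal{B}(\ell^2(G/H))\overline{\otimes}I(J\times_\lambda H)$. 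The idea is to build a nonzero central element $w$ of $J\times_\lambda H$ that lies outside $J$, and to set $z := \psi^{-1}(1\otimes w)$; since $1\otimes w$ is central in the tensor product, $z$ is central in $I(B\times_\lambda G)p_0$, and since $z = zp_0$ with $p_0$ central, $z$ is central in $I(B\times_\lambda G)$.

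To construct $w$, note that $J$, being a regular ideal of the injective algebra $B$, is itself injective and in particular unital, so $\lambda_r\in J\times_\lambda H$ and $v := u_r^*\lambda_r$ makes sense. Using $\beta_r = \operatorname{Ad}(u_r)$ one checks that $v$ commutes with $J$, and using the $C_H(r)$-invariance of $u_r$ one checks that $v$ commutes with $\lambda_h$ for every $h\in C_H(r)$. Since $r\in FC(H)$, the index $n := [H:C_H(r)]$ is finite; fixing a transversal $h_1,\dots,h_n$ of $C_H(r)$ in $H$ we set
\[
w := \sum_{i=1}^n \lambda_{h_i}\,v\,\lambda_{h_i}^* = \sum_{i=1}^n \beta_{h_i}(u_r^*)\,\lambda_{h_i r h_i^{-1}}.
\]
The commutation of $v$ with $\lambda_s$ for $s\in C_H(r)$ makes $w$ independent of the chosen transversal, and a direct computation then shows that $w$ commutes with $J$ and with every $\lambda_k$, $k\in H$, so that $w\in Z(J\times_\lambda H)\subseteq Z(I(J\times_\lambda H))$. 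The elements $h_i r h_i^{-1}$ are precisely the distinct $H$-conjugates of $r$, and none equals $e$ because $r\ne e$; comparing Fourier coefficients shows $w\ne 0$ while $E(w) = 0$ for the canonical conditional expectation $E\colon J\times_\lambda H\to J$, so in particular $w\notin J$.

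To finish, suppose toward a contradiction that $z\in Z(B)^G$. Then $z\in Z(B)\subseteq B$; since also $z = zp_0$ by construction, $z\in Bp_0 = K$, so $\psi(z) = 1\otimes w$ lies in $\psi(K)$. The crucial point is that $\psi(K)\subseteq \mathcal{B}(\ell^2(G/H))\overline{\otimes}J$, where $J\subseteq I(J\times_\lambda H)$ sits in the natural way. Indeed, as in the proof of Theorem~\ref{thm:imprimitivity-reduced-crossed-product}, $c_0(G/H)\otimes J$ is an essential $G$-invariant ideal of $K\cong\ell^\infty(G/H)\overline{\otimes}J$, so $I(K\times_\lambda G) = I((c_0(G/H)\otimes J)\times_\lambda G)$; under Green's isomorphism \cite{Green1980}*{Theorem~2.13}, $(c_0(G/H)\otimes J)\times_\lambda G\cong \mathcal{K}(\ell^2(G/H))\otimes(J\times_\lambda H)$, the copy of $c_0(G/H)\otimes J$ is carried onto the diagonal compact operators (with respect to the standard basis of $\ell^2(G/H)$) with entries in $J$; passing to monotone closures, which is legitimate by the normality of the inclusions in Lemma~\ref{lem:sup-preserving}, yields $\psi(K)\subseteq\mathcal{B}(\ell^2(G/H))\overline{\otimes}J$. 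Hence $1\otimes w\in\mathcal{B}(\ell^2(G/H))\overline{\otimes}J$; compressing by a rank-one projection $\theta$ of $\mathcal{B}(\ell^2(G/H))$ gives $\theta\otimes w\in\theta\otimes J$, whence $w\in J$, contradicting the previous paragraph. Therefore $z\in Z(I(B\times_\lambda G))\setminus Z(B)^G$, i.e.\ $Z(I(A))^G\ne Z(I(A\times_\lambda G))$.

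The main obstacle is the inclusion $\psi(K)\subseteq\mathcal{B}(\ell^2(G/H))\overline{\otimes}J$ in the last paragraph: this requires unwinding Green's imprimitivity theorem explicitly to locate the image of $c_0(G/H)\otimes J$ among the diagonal operators, and then controlling monotone limits through the identification. By comparison, the reduction to the injective case and the construction and centrality of $w$ are routine.
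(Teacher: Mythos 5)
Your proposal is correct and follows essentially the same route as the paper: the element $w=\sum_i \beta_{h_i}(u_r^*)\lambda_{h_irh_i^{-1}}$ is exactly the paper's central element $b=\sum_{t\in T}\alpha_t(u_r^*\lambda_r)$ of $Z(J\times_\lambda H)$ with vanishing canonical expectation, and the conclusion is drawn, as in the paper, from the tensor decomposition of Theorem~\ref{thm:imprimitivity-reduced-crossed-product}. The only difference is that you spell out details the paper leaves implicit (cutting to the corner $I(A\times_\lambda G)p_0$ in the sub-induced case, and locating the image of $K$ among diagonal operators under Green's isomorphism to see that the resulting central element cannot lie in $Z(I(A))^G$), which is a faithful elaboration rather than a different argument.
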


\begin{proof}
Let $T \subseteq H$ be a left transversal of the centralizer $C_H(r)$ in $H$. Note that $T$ is finite because $r \in FC(H)$. Define $b \in I(A) \times_\lambda H$ by
\[
b = \sum_{t \in T} \alpha_t(u_r^* \lambda_r).
\]
Then as in the proof of \cite{GeffenUrsu2023}*{Theorem 4.7}, $b \notin I(A)$, but $b \in Z(J \times_\lambda H) \subseteq Z(I(J \times_\lambda H))$. 

By Theorem \ref{thm:imprimitivity-reduced-crossed-product}, we can identify $I(A \times_\lambda G)$ with $I(J \times_\lambda H) \overline{\otimes} \mathcal{B}(\ell^2(G/H))$. Then from above, $b \otimes 1 \notin I(A)$, but $b \otimes 1 \in Z(I(J \times_\lambda H) \overline{\otimes} \mathcal{B}(\ell^2(G/H)))$. In particular, $Z(I(A))^G \ne Z(I(A \times_\lambda G))$.
\end{proof}

The next result is inspired by \cite{GeffenUrsu2023}*{Lemma 4.5}.

\begin{lem} \label{lem:relative-commutant-reduction}
Let $A \subseteq B \subseteq I(B)$ and $B \subseteq C$ be inclusions of C*-algebras with $C$ monotone complete. If $Z(I(B)) \setminus A \ne \emptyset$, then $(B' \cap C) \setminus A \ne \emptyset$. 
\end{lem}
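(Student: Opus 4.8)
The plan is to reduce to a convenient normalization, extract a projection, build the desired element of $C$ as a support projection, and use injectivity of $I(B)$ to push it away from $A$. First, a reduction: if $B$ is unital with $1_B\ne 1_C$, then $1_C-1_B$ is a nonzero element of $B'\cap C$ not lying in $B\supseteq A$, so we are done; and if $B$ is non-unital, then replacing $B$ by $B+\mathbb{C}1_C\cong\tilde{B}$ — which changes neither $I(B)$, nor $B'\cap C$, nor the inclusion $A\subseteq B$ — lets us assume $B$ is unital. So assume $B$ is unital with $1_B=1_C$. Now pick $z\in Z(I(B))\setminus A$. Rigidity of $B\subseteq I(B)$ gives $Z(B)\subseteq Z(I(B))$, hence $Z(I(B))\cap B=Z(B)$; if $z\in B$ then $z\in Z(B)\subseteq B'\cap C$ and we are done. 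Otherwise $Z(B)$ is a proper unital C*-subalgebra of $Z(I(B))$, a commutative monotone complete C*-algebra whose spectrum is extremally disconnected, hence totally disconnected, so that the linear span of the projections is dense; therefore $Z(B)$ omits some projection $p\in Z(I(B))$, with $0\ne p\ne 1$ and $p\notin B\supseteq A$.

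Working with this $p$, put $J:=B\cap pI(B)$ and $J^\perp:=B\cap(1-p)I(B)$, the annihilator of $J$ in $B$; these are complementary regular ideals of $B$ with $I(J)=pI(B)$ and $I(J^\perp)=(1-p)I(B)$, and since $0\ne p\ne 1$ neither is zero. A short computation of annihilators inside $I(B)$ (using essentiality of $J\subseteq I(J)$) shows that $p$ is the unique element of $I(B)$ that acts as the identity on $J$ and annihilates $J^\perp$. Now fix an increasing approximate unit $(e_\lambda)$ of $J$ and set $q:=\sup_C e_\lambda\in C$, which exists since $C$ is monotone complete. Using monotone completeness and the normality of the maps $z\mapsto aza^*$ on $C$, one checks routinely that $0\le q\le 1$ and $q^2=q$; that $q$ acts as the identity on $J$ (since $j^*qj=\sup_\lambda j^*e_\lambda j=j^*j$); and that $q$ annihilates $J^\perp$ (since $aqa^*=\sup_\lambda ae_\lambda a^*=0$ for $a\in J^\perp$). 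The crucial point is that $q$ commutes with the idealizer of $J$ in $C$, in particular with $B$: for self-adjoint $x$ in that idealizer, $xe_\lambda$ and $e_\lambda x$ lie in $J$, so $(qx-x)e_\lambda=0$ and $e_\lambda(xq-x)=0$ for every $\lambda$; feeding $q=\sup_\lambda e_\lambda$ into the normal maps $z\mapsto(qx-x)z(qx-x)^*$ and $z\mapsto(xq-x)^*z(xq-x)$ yields $(qx-x)q=0$ and $q(xq-x)=0$, whence $qx=qxq=xq$. Thus $q\in B'\cap C$.

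It remains to see that $q\notin B$, and this is where boundary theory enters. By injectivity of $I(B)$, the inclusion $\id_B:B\to I(B)$ extends to a unital completely positive map $\Phi:C\to I(B)$, and $B$ lies in its multiplicative domain. For $j\in J$ we get $\Phi(q)j=\Phi(qj)=\Phi(j)=j$ and $j\Phi(q)=j$, and for $j'\in J^\perp$ we get $\Phi(q)j'=\Phi(qj')=0$ and $j'\Phi(q)=0$; so $\Phi(q)$ acts as the identity on $J$ and annihilates $J^\perp$, and by the uniqueness above $\Phi(q)=p$. If $q$ were in $B$, then $q=\Phi(q)=p\in B$, contradicting $p\notin B$; hence $q\in(B'\cap C)\setminus B\subseteq(B'\cap C)\setminus A$, as required. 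The step I expect to demand the most care is showing that the support projection $q$ commutes with $B$: when $C$ is a von Neumann algebra this is immediate, since then the weak closure of $J$ is a von Neumann subalgebra with unit $q$ that is normalized by the idealizer of $J$; but for a general monotone complete $C$, where no faithful family of normal states is available, it hinges on the normality of the maps $z\mapsto aza^*$ together with the supremum computation above.
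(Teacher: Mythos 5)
Your argument is correct, but it takes a genuinely different and considerably heavier route than the paper's. The paper's proof is short: since $Z(I(B))$ is abelian and monotone complete, its projections span densely, so the hypothesis yields a projection $q \in Z(I(B)) \setminus A$; one then takes the upward directed set $X = \{b \in B : 0 \leq b \leq q,\ \|b\| < 1\}$, whose supremum in $I(B)$ is $q$, and sets $p := \sup_C X$. Invariance of $X$ under conjugation by unitaries of $B$ gives $p \in B' \cap C$ immediately (automorphisms preserve suprema), and $p \notin A$ follows from normality of the inclusion $B \subseteq I(B)$ (Lemma \ref{lem:sup-preserving}): if $p$ lay in $A \subseteq B$ it would equal $\sup_{I(B)} X = q$. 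In particular the paper lands outside $A$ directly, needs no unitization, and never uses order-continuity of compressions. You instead treat the case $z \in B$ by hand, extract a projection $p \in Z(I(B)) \setminus B$, pass to the regular ideals $J = B \cap pI(B)$ and $J^{\perp}$, take $q := \sup_C e_\lambda$ over an approximate unit of $J$, and exclude $q \in B$ via a ucp extension $\Phi : C \to I(B)$ together with a uniqueness-of-support argument; this gives the stronger conclusion $q \notin B$, which suffices because $A \subseteq B$. What your route buys is an explicit identification of the commuting element as the support projection of a concrete regular ideal and of its image $\Phi(q) = p$ under a pseudo-expectation-type map; what it costs is reliance on two facts you assert rather than prove. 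First, normality of the maps $x \mapsto axa^*$ on a monotone complete C*-algebra: this is true and standard, but it should be cited (or derived; it reduces, via the identity $exe + (1-e)x(1-e) = \tfrac{1}{2}(x + sxs)$ with $s = 2e-1$ and a Julia-unitary dilation in $M_2(C)$, to invariance of suprema under automorphisms), and it is precisely what the paper's choice of the set $X$ is engineered to avoid. Second, the claimed ``short computation'' that $p$ is the unique element of $I(B)$ acting as the identity on $J$ and annihilating $J^{\perp}$ is not merely a computation: it requires the identifications $I(J) = pI(B)$ and $I(J^{\perp}) = (1-p)I(B)$ from Hamana's correspondence between regular ideals of $B$ and central projections of $I(B)$, plus the fact that a C*-algebra has zero two-sided annihilator in its injective envelope (equivalently, the supremum in $I(J)$ of an approximate unit of $J$ is the unit of $I(J)$, proved by a rigidity argument). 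With those two ingredients made precise or cited, your proof is complete and correct; the paper's argument achieves the same conclusion in a few lines without either.
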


\begin{proof}
The center $Z(I(B))$ is an abelian monotone complete C*-algebra, so the span of the projections is dense in $Z(I(B))$. Hence there is a nonzero projection $q \in Z(I(B)) \setminus A$. The set
\[
X = \{b \in B : 0 \leq b \leq q \text{ and } \|b\| < 1\}
\]
is non-empty, bounded and upward directed (see e.g. \cite{SaitoWright2015}*{Proposition 2.1.15}) and $q = \sup_{I(B)} X$.

Define $p \in C$ by $p = \sup_C X$. Then $p$ is a projection, but we will not require that fact here. Since $X$ is invariant under conjugation by unitaries in $B$, $p \in B' \cap C$. Also, $p \notin A$, since otherwise if $p \in A$, then since $A \subseteq B$ and the inclusion $B \subseteq I(B)$ is normal by Lemma \ref{lem:sup-preserving}, it would follow that $p = \sup_{I(B)} X = q$, contradicting the fact that $q \notin A$. 
\end{proof}

\begin{lem} \label{lem:relative-commutant-to-projection}
Let $(A,G,\alpha)$ be a C*-dynamical system. If $Z(I(A))^G \ne Z(I(A \times_\lambda G))$, then $(I(A),G,\alpha)$ has a meandering projection.
\end{lem}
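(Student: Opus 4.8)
The plan is to study a central projection of $I(A\times_\lambda G)$ that does not belong to $I(A)$ by means of its Fourier coefficients with respect to a conditional expectation onto $I(A)$, and to produce a meandering projection from one of those coefficients. As preparation, observe that $Z(I(A))^G\subseteq Z(I(A\times_\lambda G))$ --- an element of $Z(I(A))^G$ commutes with $I(A)$ and with every $\lambda_g$, hence with $I(A)\times_\lambda G$, hence with $I(A\times_\lambda G)$ by rigidity of the inclusion $I(A)\times_\lambda G\subseteq I(A\times_\lambda G)=I(I(A)\times_\lambda G)$ --- and conversely $Z(I(A\times_\lambda G))\cap I(A)=Z(I(A))^G$. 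Since $Z(I(A\times_\lambda G))$ is abelian and monotone complete, the hypothesis yields a nonzero projection $b\in Z(I(A\times_\lambda G))\setminus I(A)$. Using injectivity of $I(A)$, extend the canonical faithful conditional expectation $E\colon I(A)\times_\lambda G\to I(A)$ to a conditional expectation $\Phi\colon I(A\times_\lambda G)\to I(A)$; since $\Phi|_{I(A)}=\mathrm{id}$, the subalgebra $I(A)$ lies in the multiplicative domain of $\Phi$. Set $w_g:=\Phi(b\lambda_g^*)\in I(A)$ for $g\in G$.

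From the centrality of $b$ one computes $(b\lambda_r^*)\,y=\alpha_{r^{-1}}(y)\,(b\lambda_r^*)$ for $y\in I(A)$; applying $\Phi$ and using the multiplicative-domain property gives the intertwining relation $w_r\,\alpha_r(y)=y\,w_r$ for all $y\in I(A)$, and from $\lambda_h b\lambda_h^*=b$ one gets the equivariance $\alpha_h(w_g)=w_{hgh^{-1}}$. In particular $w_r$ is $C_G(r)$-invariant, and since $r\in C_G(r)$ we have $\alpha_r(w_r)=w_r$, which together with the intertwining relation forces $w_r$ to be normal with $d_r:=w_rw_r^*=w_r^*w_r\in Z(I(A))$. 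The step I expect to be the main obstacle is showing that $w_r\neq 0$ for some $r\in G\setminus\{e\}$, equivalently that $\Phi$ detects the non-triviality of $b$ and behaves equivariantly with respect to $\alpha$. This is delicate: a conditional expectation obtained from an abstract injectivity argument need not be faithful, normal or $G$-equivariant, and $I(A\times_\lambda G)$ is not literally a reduced crossed product, so one cannot simply invoke the Fourier faithfulness of reduced crossed products. I would approach it by first replacing $b$ with $b-\Phi(b)$ (noting $\Phi(b)\in Z(I(A))^G$), and then, working inside the C*-subalgebra $C^*(I(A)\times_\lambda G,b)$ in which $b$ is central, transferring the question to a genuine, hence Fourier-faithful, conditional expectation on a minimal tensor product of $I(A)\times_\lambda G$ with an abelian C*-algebra, using the normality results of Lemma~\ref{lem:sup-preserving} to reconcile the two expectations on the monotone nets that occur; a variant of Lemma~\ref{lem:relative-commutant-reduction} may also be useful here.

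Granted such an $r$, the remainder is bookkeeping. Fix a left transversal $T$ of $C_G(r)$ in $G$, and for finite $F\subseteq T$ put $b'_F:=b-\sum_{t\in F}w_{trt^{-1}}\lambda_{trt^{-1}}$. Using centrality of $b$, the identities $w_{trt^{-1}}=\alpha_t(w_r)$ and $w_{trt^{-1}}w_{trt^{-1}}^*=\alpha_t(d_r)$, and $\Phi(z\lambda_k)=0$ for $z\in I(A)$ and $k\neq e$, one obtains $\Phi\big(b'_F(b'_F)^*\big)=\Phi(bb^*)-\sum_{t\in F}\alpha_t(d_r)$, so $\sum_{t\in F}\alpha_t(d_r)\leq\Phi(bb^*)\leq 1$. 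Choose $\lambda\in(0,\|d_r\|)$ with $p:=\chi_{[\lambda,\|d_r\|]}(d_r)\neq 0$; then $d_r\geq\lambda p$ gives $\|\sum_{t\in F}\alpha_t(p)\|\leq\lambda^{-1}$ uniformly in $F$, and $p$, being a spectral projection of the $C_G(r)$-invariant element $d_r$, is $C_G(r)$-invariant --- this is condition (2) of Definition~\ref{defn:meandering-projection}. For condition (1), $\alpha_r(d_r)=d_r$ (because $\alpha_r(w_r)=w_r$) makes $p$ $\alpha_r$-invariant; taking the polar decomposition $w_r=u\,d_r^{1/2}$ in the injective C*-algebra $I(A)$, the intertwining relation shows that $\alpha_r$ restricted to $I(A)\operatorname{supp}(d_r)$ equals $\operatorname{Ad}(u^*)$, whence $u^*p$ is a $C_G(r)$-invariant unitary in $I(A)p$ with $\alpha_r|_{I(A)p}=\operatorname{Ad}(u^*p)$. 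Therefore $p$ is a meandering projection for $(I(A),G,\alpha)$, as desired.
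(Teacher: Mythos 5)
Your overall architecture (Fourier coefficients, the intertwining relation, centrality of $w_r^*w_r$, polar decomposition, a spectral cut-off, and the finite-sum norm bound) matches the paper's proof, but the step you yourself flag as ``the main obstacle'' is exactly the step that is missing, and your sketched fix does not close it. An extension $\Phi$ of $E$ obtained from bare injectivity of $I(A)$ need not be faithful, normal, or $G$-equivariant, and nothing in your argument rules out that $\Phi(b\lambda_g^*)=0$ for every $g\ne e$ even though $b\notin I(A)$; replacing $b$ by $b-\Phi(b)$ does not help (and the parenthetical claim $\Phi(b)\in Z(I(A))^G$ already uses equivariance you do not have, since $\lambda_g$ is not in the multiplicative domain of $\Phi$). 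The lack of equivariance also undercuts the later bookkeeping: the identities $\alpha_h(w_g)=w_{hgh^{-1}}$, hence the $C_G(r)$-invariance of $w_r$, of $d_r$, of $p$ and of the implementing unitary, and the identity $w_{trt^{-1}}w_{trt^{-1}}^*=\alpha_t(d_r)$ used in the norm estimate, all require $\Phi\circ\operatorname{Ad}(\lambda_h)=\alpha_h\circ\Phi$, which an abstract extension does not provide. The proposed detour through $C^*(I(A)\times_\lambda G,b)$ and a minimal tensor product with an abelian C*-algebra is only a hope: that subalgebra is not of crossed-product form, so no ``genuine, Fourier-faithful'' expectation is available on it without further argument.

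The paper's proof resolves precisely this point by changing the ambient algebra rather than the expectation: it applies Lemma \ref{lem:relative-commutant-reduction} to the inclusions $I(A)\subseteq I(A)\times_\lambda G$ and $I(A)\times_\lambda G\subseteq M(I(A),G)$, where $M(I(A),G)$ is Hamana's monotone complete crossed product. The hypothesis $Z(I(A))^G\ne Z(I(A\times_\lambda G))$ gives an element of $Z(I(A\times_\lambda G))\setminus I(A)$ (your observation that $Z(I(A\times_\lambda G))\cap I(A)=Z(I(A))^G$ is the right reduction here), and the lemma converts it into a self-adjoint $b\in\bigl((I(A)\times_\lambda G)'\cap M(I(A),G)\bigr)\setminus I(A)$. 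Inside $M(I(A),G)$ the canonical expectation is faithful and normal, elements are determined by their Fourier series, so $b\notin I(A)$ forces $a_r\ne 0$ for some $r\ne e$, and the relations $\alpha_t(a_g)=a_{tgt^{-1}}$ and $a_g a=\alpha_{g^{-1}}(a)a_g$ come from the fact that $b$ genuinely commutes with $I(A)\times_\lambda G$ in $M(I(A),G)$ (Hamana's Lemma 10.3), not from any equivariance of an extended map. From there the argument proceeds as in your final paragraph. So the gap is real and is exactly the absence of this device; the rest of your proposal is sound modulo that missing input.
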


\begin{proof}
For this proof, we will require the monotone complete crossed product $M(I(A),G)$ (see \cite{Hamana1985}). Applying Lemma \ref{lem:relative-commutant-reduction} to the inclusions $I(A) \subseteq I(A) \times_\lambda G$ and $I(A) \times_\lambda G \subseteq M(I(A),G)$, we obtain self-adjoint $b \in ((I(A) \times_\lambda G)' \cap M(I(A),G)) \setminus I(A)$. Let $b \sim \sum_{g \in G} a_g \lambda_g$ be the Fourier series for $b$, so that $a_g := E(b \lambda_g^*) \in I(A)$ for $g \in G$.

The fact that $b$ commutes with $A \times_\lambda G$ implies that for $t \in G$, $\alpha_t(a_g) = a_{tgt^{-1}}$, and that for $a \in I(A)$, $a_g a = \alpha_{g^{-1}}(a) a_g$ (see \cite{Hamana1985}*{Lemma 10.3}). In particular, $a_g^* a_g a = a a_g^* a_g$, implying $a_g^* a_g \in Z(I(A))$ and hence that $|a_g| \in Z(I(A))$.

Let $u_{g^{-1}} \in I(A)$ be the partial isometry obtained from the polar decomposition of $a_g$, so that $a_g = u_{g^{-1}} |a_g|$, and let $p_g \in Z(I(A))$ be the support projection for $|a_g|$. Then it follows from above that for $t \in G$, $\alpha_t(u_g) = u_{tgt^{-1}}$, and for $a \in I(A)$, $u_g a = \alpha_g(a) u_g$. Furthermore, $\alpha_g|_{I(A)p_g} = \operatorname{Ad}(u_g)$, so in particular $\alpha_g$ acts trivially on $Z(I(A))p_g$. 

Since $b \notin I(A)$, there is $r \in G \setminus \{e\}$ such that $a_r \ne 0$. Choose $\lambda > 0$ such that $2\lambda$ is in the range of $|a_r|$, and let $U_r \subseteq \operatorname{Glimm}(I(A))$ denote the closure of the open set
\[
\{x \in \operatorname{Glimm}(I(A)) : |a_r|(x) > \lambda \}. 
\]
Then $U_r$ is clopen and $C_G(r)$-invariant. Hence letting $p = \chi_{U_r} \in Z(I(A))$ denote the corresponding central projection, $p$ is also $C_G(r)$-invariant.

We claim that $p$ is meandering for $(I(A),G,\alpha)$. To see this, let $u = u_r p$. Then $p \leq p_r$, so $u$ is $C_G(r)$-invariant and $\alpha_r|_{I(A)p} = \operatorname{Ad}(u)$. Let $T \subseteq G$ be a left transversal of $C_G(r)$ in $G$. Then for a finite subset $F \subseteq T$,
\[
\left\| \sum_{t \in F} \alpha_t(p) \right\| \leq \lambda^{-2} \left\| \sum_{t \in F} a_{trt^{-1}}^* a_{trt^{-1}} \right\| \leq \lambda^{-2} \|E(b^2)\|.
\]
Since $p$ is $C_G(r)$-invariant, this upper bound does not depend on $T$ nor $F$. It follows that $p$ is meandering for $(I(A),G,\alpha)$.
\end{proof}

\begin{prop} \label{prop:projection-equivalences}
Let $(A,G,\alpha)$ be a C*-dynamical system. The following are equivalent:
\begin{enumerate}
\item $(I(A),G,\alpha)$ has a meandering projection;
\item $(I_G(A),G,\alpha)$ has a meandering projection;
\item $(I(A),G,\alpha)$ is sub-induced from a regular C*-dynamical subsystem $(J,H,\beta)$ with $r \in FC(H) \setminus \{e\}$ such that $\beta_r = \operatorname{Ad}(u_r)$ for a $C_H(r)$-invariant unitary $u_r \in J$;
\item $Z(I(A))^G \ne Z(I(A \times_\lambda G))$.
\end{enumerate}
\end{prop}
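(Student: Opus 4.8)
The strategy is to close a cycle of implications among conditions (1), (3) and (4) using the three preceding lemmas, and then to deduce the equivalence with (2) by applying the resulting statement to the C*-dynamical system $(I_G(A),G,\alpha)$ in place of $(A,G,\alpha)$.

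First I would record that Lemma~\ref{lem:projection-to-induced} is exactly the implication (1) $\Rightarrow$ (3): a meandering projection for $(I(A),G,\alpha)$ produces a regular C*-dynamical subsystem $(J,H,\beta)$ with $r \in FC(H)\setminus\{e\}$ and $\beta_r = \operatorname{Ad}(u_r)$ for a $C_H(r)$-invariant unitary $u_r \in J$. Next, Lemma~\ref{lem:induced-to-relative-commutant} gives (3) $\Rightarrow$ (4), since being sub-induced from such a subsystem forces $Z(I(A))^G \ne Z(I(A \times_\lambda G))$. Finally, Lemma~\ref{lem:relative-commutant-to-projection} gives (4) $\Rightarrow$ (1). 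As these three lemmas apply to an arbitrary C*-dynamical system, we obtain the equivalence of (1), (3) and (4) for every C*-dynamical system.

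It then remains to bring in condition (2). For this I would apply the equivalence (1) $\Leftrightarrow$ (4) --- now known for every C*-dynamical system --- to the C*-dynamical system $(I_G(A),G,\alpha)$. Since $I_G(A)$ is injective we have $I(I_G(A)) = I_G(A)$, so condition (1) for $(I_G(A),G,\alpha)$ is precisely condition (2). On the other hand, condition (4) for $(I_G(A),G,\alpha)$ reads $Z(I(I_G(A)))^G \ne Z(I(I_G(A)\times_\lambda G))$; using $I(I_G(A)) = I_G(A)$ together with the identities $Z(I_G(A))^G = Z(I(A))^G$ and $I(I_G(A)\times_\lambda G) = I(A\times_\lambda G)$ recorded in Section~\ref{sec:preliminaries}, this is exactly condition (4) for $(A,G,\alpha)$. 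Hence (2) $\Leftrightarrow$ (4), which closes the chain and completes the argument.

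The proof is essentially a bookkeeping argument once the three lemmas are in hand; the only point requiring a moment of care is the last step, namely confirming that condition (4) is genuinely invariant under replacing $(A,G,\alpha)$ by $(I_G(A),G,\alpha)$, which is guaranteed by precisely the boundary identities $I(I_G(A)) = I_G(A)$, $Z(I_G(A))^G = Z(I(A))^G$ and $I(I_G(A)\times_\lambda G) = I(A\times_\lambda G)$. I do not anticipate any genuine obstacle beyond this verification.
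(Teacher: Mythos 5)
Your proposal is correct and takes essentially the same route as the paper: the cycle (1) $\Rightarrow$ (3) $\Rightarrow$ (4) $\Rightarrow$ (1) via Lemmas \ref{lem:projection-to-induced}, \ref{lem:induced-to-relative-commutant} and \ref{lem:relative-commutant-to-projection}, and then the equivalence with (2) by running the same lemmas for $(I_G(A),G,\alpha)$ together with the identities $I(I_G(A)) = I_G(A)$, $Z(I(A))^G = Z(I_G(A))^G$ and $I(I_G(A)\times_\lambda G) = I(A\times_\lambda G)$. The only difference is packaging: you invoke the already-established equivalence (1) $\Leftrightarrow$ (4) for the system $(I_G(A),G,\alpha)$, whereas the paper applies the individual lemmas directly to that system, which amounts to the same argument.
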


\begin{proof}
The implication (1) implies (3) is Lemma \ref{lem:projection-to-induced}, The implication (3) implies (4) is Lemma \ref{lem:induced-to-relative-commutant}, and the implication (4) implies (1) is Lemma \ref{lem:relative-commutant-to-projection}.

For the implication (2) implies (4), suppose that $(I_G(A),G,\alpha)$ has a meandering projection. Then since $I_G(A)$ is injective, Lemma \ref{lem:projection-to-induced} implies that $(I_G(A),G,\alpha)$ is sub-induced from a regular C*-dynamical subsystem $(J,H,\beta)$ with $r \in FC(H) \setminus \{e\}$ such that $\beta_r = \operatorname{Ad}(u_r)$ for a $C_H(r)$-invariant unitary $u_r \in J$. Applying Lemma \ref{lem:induced-to-relative-commutant} yields
\[
Z(I(A))^G = Z(I_G(A))^G \ne Z(I(I_G(A) \times_\lambda G)) = Z(I(A \times_\lambda G)).
\]
Finally, the implication (4) implies (2) follows from the equality $Z(I(A))^G = Z(I_G(A))^G$ and another application of \ref{lem:relative-commutant-to-projection}. 
\end{proof}

\section{Derivations and approximating unitaries} \label{sec:intrinsic}

In this section we will establish two characterizations of a C*-dynamical system $(A,G,\alpha)$ with the property that the C*-dynamical system $(I(A),G,\alpha)$ is sub-induced by a regular C*-dynamical subsystem $(J,H,\beta)$ with $r \in FC(H) \setminus \{e\}$ such that $\beta_r = \operatorname{Ad}(u_r)$ for a $C_H(r)$-invariant unitary $u_r \in J$. We will need to consider this property throughout this paper.

This property was already characterized in Proposition \ref{prop:projection-equivalences} from Section~\ref{sec:projections}. However, the characterization provided there is ``extrinsic,'' in terms of the existence of meandering projections in the injective envelope. The characterizations that we will establish here are ``intrinsic,'' in terms of the C*-dynamical system itself, and do not require any knowledge of the injective envelope. 

This presents two difficulties. First, we require an intrinsic characterization of $(A,G,\alpha)$ with the property that $(I(A),G,\alpha)$ is sub-induced as above. Fortunately, such a characterization is provided by Corollary \ref{cor:detect-induced-quotient} from Section \ref{sec:imprimitivity}. Second, assuming that $(A,G,\alpha)$ is sub-induced by $(J,H,\beta)$ as above, we require an intrinsic characterization of the existence of $r \in FC(H) \setminus \{e\}$ such that $\beta_r = \operatorname{Ad}(u_r)$ for a $C_H(r)$-invariant unitary $u_r \in J$. The main difficulty here is the $C_H(r)$-invariance of the unitary implementing the automorphism $\beta_r$ above. Without this condition, we would essentially be seeking an intrinsic characterization of the existence of an automorphism that extends to an inner automorphism on a piece of the injective envelope, and such characterizations are already known (see e.g. \cite{Hamana1985}*{Theorem 7.3}). However, as we will see in Example \ref{ex:cant-drop-invariance-condition}, it is generally not possible to drop this invariance condition.

For the case when $A$ is separable, Geffen and Ursu \cite{GeffenUrsu2023} established a characterization of this property in terms of distance estimates to inner automorphisms on $A$ implemented by unitaries approximately commuting with the subgroup $C_H(r)$. This can be seen as an equivariant analogue of Elliott's characterization of proper outerness for separable C*-algebras from \cite{Elliott1980}*{Theorem 2.3}. See Lemma \ref{lem:geffen-ursu-unitary} below.

We will establish another characterization of this property in terms of the existence of $C_H(r)$-commuting *-derivations on $A$. This characterization does not require the separability of $A$, and it can be seen as an equivariant counterpart of Olesen and Pedersen's characterization of proper outerness from \cite{OlesenPedersen1982}*{Theorem 4.3}.

\begin{lem} \label{lem:derivation-spectrum}
Let $A$ be an injective C*-algebra and let $u \in A$ be a unitary. Let $H \leq \operatorname{Aut}(A)$ be a subgroup of automorphisms that leave $u$ invariant. For $\epsilon > 0$, there is an $H$-invariant open projection $p \in A \cap \{u\}'$ and a point $\lambda$ in the spectrum of $u$ such that $\|up - \lambda p\| < \epsilon$.
\end{lem}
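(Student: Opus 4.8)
The plan is to exploit the fact that the C*-algebra $C^*(u) \cong C(\sigma(u))$ generated by the unitary $u$ lies in the fixed-point algebra $A^H$, which is again an injective (hence monotone complete) C*-algebra, and to run a localization argument on its spectrum. Write $Z := C(\sigma(u))$, realized inside $A^H$. First I would cover the circle-subset $\sigma(u)$ by finitely many open arcs $V_1, \dots, V_n$, each of diameter less than $\epsilon$, and choose points $\lambda_1, \dots, \lambda_n$ with $V_k \subseteq \{z : |z - \lambda_k| < \epsilon\}$. The characteristic functions of a clopen refinement of this cover give orthogonal projections $q_1, \dots, q_n \in Z$ summing to $1$, each satisfying $\|uq_k - \lambda_k q_k\| < \epsilon$; here I use that $\sigma(u)$ is (or embeds in) an extremally disconnected space after passing to $I(Z)$, but since $Z \subseteq A^H$ and $A^H$ is monotone complete, the relevant spectral projections of $u$ already lie in $A^H \cap \{u\}'$, not merely in its injective envelope. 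Since $u \ne 0$ one of these projections, say $p := q_k$, is nonzero, and it is automatically $H$-invariant because it is a spectral projection of the $H$-invariant element $u$ and lies in $A^H$.

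The one subtlety is the requirement that $p$ be an \emph{open} projection. The spectral projection described above is genuinely a projection in $A^H \subseteq A$, and in a monotone complete C*-algebra every projection is open (it equals the supremum of the positive elements it dominates, which by monotone completeness lies in the algebra). So in fact, since $A$ is injective and hence monotone complete, the openness of $p$ is automatic. To be safe I would instead produce $p$ directly as an open projection: let $g \in C(\sigma(u))$ be a continuous function, $0 \le g \le 1$, supported in the arc $V_k$ and equal to $1$ on a slightly smaller arc still meeting $\sigma(u)$; then $g(u) \in A^H \cap \{u\}'$, and $p := \sup_A \{ a \in A : 0 \le a \le g(u),\ \|a\| < 1\}$ is an open projection. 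Since $g(u)$ is $H$-fixed and the defining set is $H$-invariant and closed under conjugation by unitaries commuting with $u$, we get $p \in A^H \cap \{u\}'$, i.e. $p$ is $H$-invariant and commutes with $u$. Choosing the arc of diameter $< \epsilon$ gives $\|up - \lambda p\| = \|(u - \lambda)p\| \le \|(u-\lambda)g(u)\| < \epsilon$ where $\lambda = \lambda_k$, using $p \le \mathrm{supp}(g(u))$ and functional calculus.

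The main obstacle, such as it is, is bookkeeping: ensuring that the chosen arc actually meets $\sigma(u)$ (so that $g(u) \ne 0$ and hence $p \ne 0$), and ensuring the open projection $p$ is nonzero — this follows because $g(u) \ne 0$ and $p$ is the support projection of (a cutdown of) $g(u)$ in the monotone complete algebra $A$. I would also remark that one does not even need all of $A$ injective for this argument, only that $C^*(u) \subseteq A^H$ with $A$ monotone complete, but since the lemma is stated for injective $A$ I would not belabor this. No deep input is required beyond monotone completeness of $A^H$ (which follows from injectivity of $A$, as recalled in Section \ref{subsec:injective-envelopes}) and the elementary functional calculus for the single unitary $u$.
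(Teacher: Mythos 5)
Your overall strategy is the same as the paper's: fix $\lambda$ in $\sigma(u)$, localize to a small arc, and realize the corresponding projection inside $A$ using monotone completeness; $H$-invariance and commutation with $u$ then come for free because the construction only involves $u$ (the paper implements this by mapping the open projection $\chi_U \in C^*(u)^{**}$ into $I(C^*(u))$, embedded in a MASA of $A$, and quoting Hamana's Lemma 6.4 for openness). However, two steps in your write-up fail as stated. First, the displayed definition $p := \sup_A\{a \in A : 0 \le a \le g(u),\ \|a\| < 1\}$ does not produce a projection: since $(1-\tfrac1n)g(u)$ lies in that set for every $n$, the supremum is exactly $g(u)$, which is a projection only when $g$ is $\{0,1\}$-valued on $\sigma(u)$ --- impossible when, say, $\sigma(u)$ is connected and the arc is proper. (The same issue defeats the ``clopen refinement'' of $\sigma(u)$ in your first paragraph, and also the slogan that every projection in a monotone complete C*-algebra is open: the openness the lemma needs, and which the paper extracts from Hamana's lemma, is openness relative to $C^*(u)$, i.e.\ $p$ is the supremum of an increasing net from $C^*(u)$, not the vacuous statement relative to $A$.) The repair is standard and stays inside your framework: take $p := \sup_n f_n(g(u))$ with $f_n(t) = \min(nt,1)$, and to see that this supremum is a projection lying in $A \cap \{u\}'$ pass to a MASA $B \subseteq A$ containing $u$ --- $B$ is monotone complete because $A$ is, hence Stonean, and suprema of subsets of $B$ computed in $B$ and in $A$ agree --- which is exactly the device the paper uses. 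Monotone completeness of $A^H$ alone does not hand you spectral projections of $u$, since $A^H$ need not be abelian.

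Second, the inequality $\|(u-\lambda)p\| \le \|(u-\lambda)g(u)\|$ is false in general: replacing $g(u)$ by its support projection undoes the taper of $g$ near the edge of its support, so the left-hand side can exceed the right-hand side. The conclusion survives, but the correct bound is $\|(u-\lambda)p\| \le \sup\{|z-\lambda| : z \in \overline{\{g>0\}}\} < \epsilon$, obtained for instance by noting $p \le k(u)$ for a continuous $k$ equal to $1$ on $\operatorname{supp}(g)$ and supported in an arc of diameter less than $\epsilon$, or by computing directly in the Stonean picture of the MASA; you should also center the arc at a point $\lambda$ actually lying in $\sigma(u)$, as the statement requires. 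With these repairs your argument is correct and is a somewhat more hands-on alternative to the paper's route through $C^{**}$, $I(C^*(u))$ and Hamana's Lemma 6.4: you construct the open projection directly as the supremum of an increasing sequence from $C^*(u)$, which in fact makes the required openness explicit.
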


\begin{proof}
Let $C = \mathrm{C}^*(u)$. Let $X \subseteq \mathbb{T}$ denote the spectrum of $u$ and identify $C$ with $\mathrm{C}(X)$. Choose $\lambda \in X$ and let $U \subseteq X$ be an open subset with $\lambda \in U$ and $\operatorname{diam}(U) < \epsilon$.  Let $q = \chi_U \in C^{**}$ denote the corresponding open projection. Then $\|uq - \lambda q\| < \epsilon$.

We claim that $I(C)$ embeds as a C*-subalgebra of $A$. To see this, first let $B \subseteq A$ be a MASA containing $\mathrm{C}^*(u)$. Then since $A$ is injective, and hence monotone complete, $B$ is monotone complete (cf. \cite{Hamana1981}*{Remark 4.15}). Therefore, since $B$ is abelian, it is injective.

By Lemma \ref{lem:equivalence-abelian-injectivity}, $B$ is also injective in the category of unital abelian C*-algebras with unital *-homomorphisms as morphisms. Hence the inclusion map $C \subseteq B$ extends to a *-homomorphism from $I(C) \to B$. By the essentiality of the inclusion $C \subseteq I(C)$, this *-homomorphism is injective, and we identify $I(C)$ with its image.

Let $\phi : C^{**} \to I(C)$ be a morphism extending the inclusion $C \subseteq I(C)$ and let $p = \phi(q)$. Then by \cite{Hamana1982a}*{Lemma 6.4}, $p$ is an open projection in $I(C)$, meaning that there is an increasing net $\{c_i\}$ in $C$ such that $p = \sup c_i$. Since elements in $C$ are pointwise $H$-invariant, $p$ is $H$-invariant. Furthermore, since $u$ belongs to the multiplicative domain of $\phi$, it follows from above that
\[
\|up - \lambda p\| = \|u\phi(q) - \lambda \phi(q)\| = \|\phi(uq - \lambda q)\| \leq \|uq - \lambda q\| < \epsilon. \qedhere
\]
\end{proof}

The proof of the next result is based on the proof of \cite{Pedersen1979}*{Proposition 8.9.3}.

\begin{lem} \label{lem:derivation-existence}
Let $A$ be an injective C*-algebra and let $\alpha \in \operatorname{Aut}(A)$ be an inner automorphism of $A$. Let $H \leq \operatorname{Aut}(A)$ be a subgroup of automorphisms of $A$ with $\alpha \in Z(H)$ and let $u \in A$ be an $H$-invariant unitary such that $\operatorname{Ad}(u) = \alpha$. Then there is an $H$-invariant open projection $p \in A \cap \{u\}'$ such that $pAp$ is an $H$-invariant essential hereditary C*-subalgebra of $A$, and an $H$-commuting *-derivation $\delta$ of $pAp$ such that $\alpha|_{pAp} = \exp(\delta)$.
\end{lem}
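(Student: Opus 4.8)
The plan is to follow the strategy of \cite{Pedersen1979}*{Proposition 8.9.3}, using Lemma \ref{lem:derivation-spectrum} as the key input and a maximality argument to pass from a ``local'' logarithm to a derivation defined on the essential hereditary subalgebra demanded by the statement. Fix $\epsilon \in (0,2)$.

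First I would carry out the single-piece construction. Applying Lemma \ref{lem:derivation-spectrum} produces an $H$-invariant open projection $q \in A \cap \{u\}'$ and a point $\lambda$ in the spectrum of $u$ with $\|uq - \lambda q\| < \epsilon$. Since $q$ commutes with $u$, the automorphism $\alpha = \operatorname{Ad}(u)$ fixes $q$, hence restricts to $\operatorname{Ad}(uq)$ on the corner $qAq$ (with unit $q$), where $uq$ is a unitary. The estimate forces $\operatorname{sp}_{qAq}(uq) \subseteq \mathbb{T} \cap \{z : |z-\lambda| < \epsilon\}$, an arc omitting $-\lambda$; applying a continuous real branch of $-i\log$ on this arc to $\lambda^{-1}uq$ yields a self-adjoint $h \in qAq$ with $\|h\| \le 2\arcsin(\epsilon/2) < \pi$ and $e^{ih} = \lambda^{-1}uq$. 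Because $h$ is a norm-continuous function of the $H$-invariant element $uq$, it is itself $H$-invariant, so the inner $*$-derivation $\delta_q := i[h,\,\cdot\,]$ of $qAq$ commutes with the $H$-action, and $\exp(\delta_q) = \operatorname{Ad}(e^{ih}) = \operatorname{Ad}(uq) = \alpha|_{qAq}$.

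Next I would run a Zorn's lemma argument to make the corner essential. Consider all families $\{(q_i,\lambda_i)\}$ of pairwise orthogonal $H$-invariant open projections $q_i \in A \cap \{u\}'$ together with scalars $\lambda_i$ in the spectrum of $u$, such that $\|uq_i - \lambda_i q_i\| < \epsilon$ and such that the central covers $\{c(q_i)\}$ in $A$ are pairwise orthogonal, ordered by inclusion. Chains have upper bounds, so fix a maximal family. If $e := \sup_i c(q_i) \ne 1$, then $1-e$ is a nonzero $H$-invariant central projection, $A(1-e)$ is again injective, $u(1-e)$ is an $H$-invariant unitary implementing $\alpha|_{A(1-e)} = \operatorname{Ad}(u(1-e))$, and Lemma \ref{lem:derivation-spectrum} applied there produces a further nonzero projection with central cover below $1-e$, hence orthogonal to everything so far, contradicting maximality; thus $\sup_i c(q_i) = 1$. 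Set $p := \sup_i q_i$. Using the central partition of unity $\{c(q_i)\}$ and the monotone-complete direct-sum decomposition $A \cong \overline{\bigoplus}_i Ac(q_i)$, one checks that $pc(q_i) = q_i$, so $pAp \cong \overline{\bigoplus}_i q_iAq_i$, that $p$ commutes with $u$, that $c(p) = \sup_i c(q_i) = 1$ (so the hereditary C*-subalgebra $pAp$ is essential, and it is $H$-invariant since $p$ is), and that $\alpha$ respects the decomposition because it fixes each $c(q_i)$. Finally, the elements $h_i \in q_iAq_i$ from the single-piece step satisfy $\sup_i \|h_i\| < \pi$, so $h := (h_i)_i$ is a self-adjoint element of $pAp$; then $\delta := i[h,\,\cdot\,]$ is a bounded inner $*$-derivation of $pAp$ commuting with the $H$-action (each $h_i$ being $H$-invariant), and $\exp(\delta)$ acts on the $i$-th summand as $\operatorname{Ad}(e^{ih_i}) = \operatorname{Ad}(uq_i) = \alpha|_{q_iAq_i}$, so $\exp(\delta) = \alpha|_{pAp}$, as required.

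The main obstacle is organizational rather than conceptual: one must be careful that the pieces $q_i$ produced in successive steps of the maximality argument have pairwise orthogonal \emph{central} covers — only then are the corners $q_iAq_i$ genuinely orthogonal and do they assemble, via the monotone-complete direct sum, into $pAp$ with full central cover — and one must choose the logarithms with a uniform norm bound so that the inner derivations $\delta_i$ patch to a single bounded derivation whose exponential is again $\alpha$. The hypothesis that $A$ is injective, hence monotone complete with its center having extremally disconnected spectrum and decomposing along central partitions of unity, is precisely what makes all of these steps available.
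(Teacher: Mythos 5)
Your construction is correct, and its skeleton is the same as the paper's: apply Lemma \ref{lem:derivation-spectrum}, run a maximality (Zorn) argument over families of $H$-invariant open projections in $A \cap \{u\}'$ with mutually orthogonal central covers summing to $1$, and take $p$ to be the supremum, so that $c(p)=1$ gives essentiality and $H$-invariance of $p$ gives equivariance. The only real divergence is the final step. The paper finishes with a norm estimate: since each summand satisfies $\|uq_i-\lambda_i q_i\|<\epsilon$, one gets $\|\alpha|_{pAp}-\id_{pAp}\|<2$, and then \cite{Pedersen1979}*{Theorem 8.7.7} produces $\delta=\log(\alpha|_{pAp})$ directly, with $H$-commutation coming from $\alpha\in Z(H)$ and the $H$-invariance of $p$; this avoids any bookkeeping with the central decomposition. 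You instead take componentwise logarithms $h_i$ of the unitaries $\lambda_i^{-1}uq_i$ (with a uniform bound $<\pi$), assemble them into a self-adjoint $h\in pAp$ via the monotone complete direct sum over the central partition, and set $\delta=i[h,\cdot]$; this is slightly more hands-on but yields a bit more, namely that $\delta$ can be taken to be an inner derivation of $pAp$ implemented by an $H$-invariant element, which the paper's appeal to Pedersen does not make explicit. One shared implicit point, equally present in the paper's "standard maximality argument": Lemma \ref{lem:derivation-spectrum} as stated does not assert that the projection it produces is nonzero, though its proof does give a nonzero one; both your argument and the paper's need this to force $\sup_i c(q_i)=1$.
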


\begin{proof}
For $0 < \epsilon < 1$, it follows from Lemma \ref{lem:derivation-spectrum} and a standard maximality argument that there is a family $\{p_i\}$ of $H$-invariant open projections in $A \cap \{u\}'$ with mutually orthogonal central covers $\{c(p_i)\}$ satisfying $\vee_i c(p_i) = 1$ and a corresponding family $\{\lambda_i\}$ of points in the spectrum of $u$ such that $\|up_i - \lambda_i p_i\| < \epsilon$ for each $i$.

Let $p = \vee_i p_i$. Then $p$ is open and $H$-invariant, and $c(p) = 1$. For $a \in pAp$ with $\|a\| \leq 1$,
\[
\|\alpha(a) - a\| = \|ua - au\| = \sup_i \|p_i u a - a p_i u\| < 2.
\]
Hence $\|\alpha|_{pAp} - \id_{pAp}\| < 2$. It now follows from \cite{Pedersen1979}*{Theorem 8.7.7} that $\delta = \log(\alpha|_{pAp})$ is a *-derivation of $pAp$. Since $\alpha \in Z(H)$ and $p$ is $H$-invariant, $\delta$ is $H$-commuting and $\alpha|_{pAp} = \exp(\delta)$.
\end{proof}

We will also require the following result of Geffen and Ursu, which is contained in the proof of \cite{GeffenUrsu2023}*{Theorem 7.15}. Note that separability is an essential hypothesis here.

\begin{lem} \label{lem:geffen-ursu-unitary}
Let $(A,G,\alpha)$ be a C*-dynamical system with $A$ separable. Consider the following conditions:
\begin{enumerate}
\item There is $r \in FC(G) \setminus \{e\}$, a nonzero $r$-invariant ideal $I \trianglelefteq A$ and a unitary $u \in M(I)$ such that
\begin{enumerate}
\item $I \cap \alpha_g(I)$ is an essential ideal in both $I$ and $\alpha_g(I)$ for all $g \in C_G(r)$ and
\item $C := \|\alpha_r|_I - \operatorname{Ad}(u)\|$ and $D := \sup_{g \in C_G(r)} \|\alpha_g(u) - u\|$ satisfy $2\sqrt{2 - \sqrt{4-C^2}} + D < \sqrt{2}$.
\end{enumerate}
\item There is $r \in FC(G) \setminus \{e\}$, a nonzero $r$-invariant regular ideal $K \trianglelefteq I(A)$ and a $C_G(r)$-invariant unitary $u_r \in K$ such that $\alpha_r|_K = \operatorname{Ad}(u_r)$;
\item $Z(I(A))^G \ne Z(I(A \times_\lambda G))$.
\end{enumerate}
Then (1) is equivalent to (2), and (1) and (2) imply (3).
\end{lem}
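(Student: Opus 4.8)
The plan is to separate the equivalence $(1)\Leftrightarrow(2)$ from the implication $(1),(2)\Rightarrow(3)$. The equivalence $(1)\Leftrightarrow(2)$ is the equivariant analogue of Elliott's characterization of proper outerness \cite{Elliott1980}*{Theorem~2.3}, and I would prove it following \cite{GeffenUrsu2023}*{Theorem~7.15}; the implication $(2)\Rightarrow(3)$, on the other hand, follows quickly from the meandering-projection machinery of Section~\ref{sec:projections} once we exploit the hypothesis $r\in FC(G)$.

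\emph{The implication $(2)\Rightarrow(1)$.} Given the $r$-invariant regular ideal $K\trianglelefteq I(A)$ and the $C_G(r)$-invariant unitary $u_r\in K$ with $\alpha_r|_K=\operatorname{Ad}(u_r)$, write $K=I(A)p$ for the central projection $p\in Z(I(A))$. Since $u_r^*u_r=p$, the $C_G(r)$-invariance of $u_r$ forces $p$, and hence the regular ideal $I:=A\cap K$ of $A$, to be $r$- and $C_G(r)$-invariant; thus $I$ is nonzero (essentiality of $A\subseteq I(A)$), $r$-invariant, and $I\cap\alpha_g(I)=I$ is essential in both $I$ and $\alpha_g(I)$ for every $g\in C_G(r)$, so hypothesis~(a) is automatic. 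It remains to produce, on $I$ (or a suitable nonzero $r$-invariant sub-ideal still satisfying (a)), a unitary $u\in M(I)$ approximating $u_r$ well enough that $C=\|\alpha_r|_I-\operatorname{Ad}(u)\|$ and $D=\sup_{g\in C_G(r)}\|\alpha_g(u)-u\|$ are arbitrarily small; since $u_r$ is genuinely $C_G(r)$-invariant and implements $\alpha_r$ exactly on $I$, this is a Kaplansky-density-type approximation of an element of $I(I)=K$ by multipliers of $I$, and this is where separability of $A$ enters, as in \cite{GeffenUrsu2023}*{Theorem~7.15}. With $C,D$ small the inequality in (b) holds (indeed it holds with $C=D=0$).

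\emph{The implication $(2)\Rightarrow(3)$, hence $(1)\Rightarrow(3)$.} With $K=I(A)p$ and $u_r$ as above, I claim $p$ is a meandering projection for $(I(A),G,\alpha)$ in the sense of Definition~\ref{defn:meandering-projection}. Condition~(1) of that definition holds with the given $r$ and $u_r$. Condition~(2) holds because $r\in FC(G)$ forces $[G:C_G(r)]=|\operatorname{Cl}_G(r)|<\infty$, so any left transversal $T$ of $C_G(r)$ in $G$ is finite and
\[
\sup_F\ \Bigl\|\sum_{t\in F}\alpha_t(p)\Bigr\|\ =\ \Bigl\|\sum_{t\in T}\alpha_t(p)\Bigr\|\ \le\ |T|\ <\ \infty .
\]
By the equivalence of conditions~(1) and~(4) in Proposition~\ref{prop:projection-equivalences}, this gives $Z(I(A))^G\ne Z(I(A\times_\lambda G))$, which is condition~(3). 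As $(1)$ and $(2)$ are equivalent, $(1)\Rightarrow(3)$ as well.

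\emph{The implication $(1)\Rightarrow(2)$, and the main obstacle.} This is the substantial direction. Starting from the data in (1), I would pass to the injective envelope: $\alpha_r|_I$ extends uniquely to $\tilde\alpha_r\in\operatorname{Aut}(I(I))$, where $I(I)$ is a regular ideal $I(A)p\trianglelefteq I(A)$, injective and $r$-invariant (as $I$ is), and $u\in M(I)\subseteq I(I)$. Since $\operatorname{Ad}(u^*)\tilde\alpha_r$ extends $\operatorname{Ad}(u^*)\alpha_r|_I$, which lies within $C<\sqrt2<2$ of $\operatorname{id}_I$, the quasi-inner/derivation theory (in the spirit of \cite{Pedersen1979}*{Theorem~8.7.7}, \cite{OlesenPedersen1982}*{Theorem~4.3}, and Lemmas~\ref{lem:derivation-spectrum}--\ref{lem:derivation-existence}) applied in the injective C*-algebra $I(I)$ produces a nonzero $r$-invariant regular ideal $K'\trianglelefteq I(A)p$ on which $\alpha_r$ is inner, implemented by a unitary $u_0\in K'$ with $\|u_0-u\|\le\sqrt{2-\sqrt{4-C^2}}$; consequently
\[
\sup_{g\in C_G(r)}\|\alpha_g(u_0)-u_0\|\ \le\ 2\sqrt{2-\sqrt{4-C^2}}+D\ <\ \sqrt2 .
\]
For $g\in C_G(r)$ the unitary $\alpha_g(u_0)$ also implements $\tilde\alpha_r$ (because $rg=gr$), so on the regular ideal $K'':=\bigwedge_{g\in C_G(r)}\alpha_g(K')$ — which is nonzero and essential in each translate by hypothesis~(a) — the map $g\mapsto z_g:=\alpha_g(u_0)u_0^*$ is a $1$-cocycle valued in the unitary group of the center of $K''$ with $\|z_g-1\|<\sqrt2$ for all $g$; this bound confines $\operatorname{sp}(z_g)$ to the open arc $\{e^{i\theta}:|\theta|<\tfrac{\pi}{2}\}$, and the $\tfrac{\pi}{2}$-bound on $h_g:=-i\log z_g$ forces the cocycle identity for $\{z_g\}$ to lift to an honest additive cocycle identity for $\{h_g\}$; trivializing this cocycle (possibly after shrinking the regular ideal, or working inside $I_G(A)$) yields a central unitary $y$ with $\alpha_g(y)y^*=z_g$, so that $u_r:=y^*u_0$ is $C_G(r)$-invariant and implements $\alpha_r$ on the corresponding regular ideal $K\trianglelefteq I(A)$. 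The hard part is exactly this last correction: the displayed inequality, with the precise constant $2\sqrt{2-\sqrt{4-C^2}}+D<\sqrt2$, is calibrated so that the cocycle $\{z_g\}$ stays in a small enough neighbourhood of the identity for the logarithm/trivialization argument to close, and making this rigorous (including the interplay between $I(A)$ and $I_G(A)$) is the technical core of \cite{GeffenUrsu2023}*{Theorem~7.15}.
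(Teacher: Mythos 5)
The paper does not actually prove this lemma: it is quoted verbatim as being ``contained in the proof of \cite{GeffenUrsu2023}*{Theorem 7.15},'' so the only fair comparison is between your proposal and that citation. Your treatment is consistent with this. For the substantive equivalence $(1)\Leftrightarrow(2)$ you give a heuristic outline (distance estimate from $\operatorname{Ad}(u)$ to an exactly implementing unitary $u_0$ in a regular ideal of $I(A)$, then a central cocycle $z_g=\alpha_g(u_0)u_0^*$ with $\|z_g-1\|<\sqrt2$ trivialized by a logarithm-plus-supremum argument in the monotone complete center) but you explicitly defer the rigor to \cite{GeffenUrsu2023}*{Theorem 7.15}, exactly as the paper does; so no gap is created that the paper itself fills. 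Where you go beyond the paper is the implication $(2)\Rightarrow(3)$: your argument --- the support projection $p$ of $K$ is $C_G(r)$-invariant because $u_ru_r^*=p$, the transversal of $C_G(r)$ is finite since $r\in FC(G)$, hence $p$ is meandering in the sense of Definition~\ref{defn:meandering-projection}, and Proposition~\ref{prop:projection-equivalences} gives $Z(I(A))^G\ne Z(I(A\times_\lambda G))$ --- is complete and correct, and it is in fact the same mechanism the paper deploys elsewhere (Remark~\ref{rem:recover-geffen-ursu-primality}, Lemma~\ref{lem:pseudo-expectation-r-projection}), so this part is a genuine improvement over a bare citation. One caveat: your description of $(2)\Rightarrow(1)$ as a ``Kaplansky-density-type approximation'' of an element of $I(I)$ by multipliers of $I$ is misleading --- $I(A)$ is only monotone complete, not a von Neumann algebra, and no density theorem of that kind is available; the separability of $A$ enters in Geffen--Ursu through a different (sequential, Elliott-style) argument, so this gloss should not be presented as the actual mechanism, though since you defer to their proof it does not invalidate your proposal.
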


\begin{prop} \label{prop:intrinsic-equivalences}
Let $(A,G,\alpha)$ be a C*-dynamical system. The following are equivalent:
\begin{enumerate}
\item $(A,G,\alpha)$ is sub-induced from a regular C*-dynamical subsystem $(J,H,\beta)$ with $r \in FC(H) \setminus \{e\}$ and a $C_H(r)$-invariant essential hereditary C*-subalgebra $B \subseteq J$ such that $\alpha_r|_{B} = \exp(\delta)$ for a $C_H(r)$-commuting *-derivation $\delta$ of $B$.

\item $(I(A),G,\alpha)$ is sub-induced from a regular C*-dynamical subsystem $(J,H,\beta)$ with $r \in FC(H) \setminus \{e\}$ such that $\beta_r = \operatorname{Ad}(u_r)$ for a $C_H(r)$-invariant unitary $u_r \in J$.
\end{enumerate}

If $A$ is separable, then these conditions are additionally equivalent to:
\begin{enumerate}[resume]
\item $(A,G,\alpha)$ is sub-induced from a regular C*-dynamical subsystem $(J,H,\beta)$ with $r \in FC(H) \setminus \{e\}$, a nonzero $r$-invariant ideal $I \trianglelefteq J$ and a unitary $u \in M(I)$ such that
\begin{enumerate}
\item $I \cap \alpha_h(I)$ is an essential ideal in both $I$ and $\alpha_h(I)$ for all $h \in C_H(r)$ and
\item $C := \|\alpha_r|_I - \operatorname{Ad}(u)\|$ and $D := \sup_{h \in C_H(r)} \|\alpha_h(u) - u\|$ satisfy $2\sqrt{2 - \sqrt{4-C^2}} + D < \sqrt{2}$.
\end{enumerate}
\end{enumerate}
\end{prop}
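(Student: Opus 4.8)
The pivot is the observation that condition $(2)$ is, verbatim, the third clause of Proposition~\ref{prop:projection-equivalences}; hence $(2)$ is equivalent to the existence of a meandering projection for $(I(A),G,\alpha)$ and to the inequality $Z(I(A))^G\neq Z(I(A\times_\lambda G))$, which I take as the hub condition, and I will show that each of $(1)$ and $(3)$ is also equivalent to it. Throughout I use Corollary~\ref{cor:detect-induced-quotient} to transfer the ``sub-induced'' hypothesis between $(A,G,\alpha)$ and $(I(A),G,\alpha)$: if $(A,G,\alpha)$ is sub-induced from a regular C*-dynamical subsystem $(J,H,\beta)$ then $(I(A),G,\alpha)$ is sub-induced from $(I(J),H,\beta)$, and conversely $(A,G,\alpha)$ is sub-induced from $(A\cap K,H,\beta)$ whenever $(I(A),G,\alpha)$ is sub-induced from $(K,H,\beta)$; here one uses that for a regular ideal $J\trianglelefteq A$ the injective envelope $I(J)$ is the corresponding regular ideal of $I(A)$.

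\emph{Descending ($(2)\Rightarrow(1)$ and $(2)\Rightarrow(3)$).} Given $(2)$, pass to the regular subsystem $(J_0,H,\beta)$ of $(A,G,\alpha)$ with $J_0=A\cap J$, so $I(J_0)=J$ and $\beta_r=\operatorname{Ad}(u_r)$ on the injective C*-algebra $I(J_0)$ with $u_r$ a $C_H(r)$-invariant unitary. For $(1)$: since $r\in FC(H)\setminus\{e\}$, the automorphism $\beta_r$ lies in the centre of the group generated by $\beta_r$ and $\beta(C_H(r))$ and $u_r$ is invariant under this group, so Lemma~\ref{lem:derivation-existence} yields a $C_H(r)$-invariant open projection $p\in I(J_0)\cap\{u_r\}'$ with central cover $1$ and a $C_H(r)$-commuting $*$-derivation $\delta_0$ of $pI(J_0)p$ with $\beta_r|_{pI(J_0)p}=\exp(\delta_0)$. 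One then descends to $J_0$ by taking $B:=pJ_0p$: because $J_0$ is an ideal of $I(J_0)$ this is a genuine hereditary C*-subalgebra of $J_0$, it is $C_H(r)$-invariant (as $p$ and $J_0$ are), it is essential in $J_0$ (because $c(p)=1$ and an ideal of $J_0$ is still an ideal of $I(J_0)$), and $\beta_r$, hence also $\delta_0$, preserves $B$; setting $\delta:=\delta_0|_B$ gives $\alpha_r|_B=\exp(\delta)$ as required. For $(3)$ in the separable case, the same reduction makes $(2)$ coincide with clause $(2)$ of Lemma~\ref{lem:geffen-ursu-unitary} for the separable system $(J_0,H,\beta)$, with $K=I(J_0)$; the equivalence $(1)\!\iff\!(2)$ there furnishes the ideal $I\trianglelefteq J_0$ and unitary $u\in M(I)$ satisfying the stated distance estimates, which together with the sub-induced structure is $(3)$.

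\emph{Ascending ($(3)\Rightarrow(2)$ and $(1)\Rightarrow(2)$).} For $(3)\Rightarrow(2)$: condition $(3)$ is clause $(1)$ of Lemma~\ref{lem:geffen-ursu-unitary} for the separable system $(J_0,H,\beta)$, so $Z(I(J_0))^H\neq Z(I(J_0\times_\lambda H))$; taking the $G$-invariant regular ideal $L\trianglelefteq A$ for which $(L,G,\alpha)$ is induced from $(J_0,H,\beta)$ and invoking the decompositions $I(L)\cong\ell^\infty(G/H)\overline{\otimes}I(J_0)$ and $I(L\times_\lambda G)\cong\mathcal B(\ell^2(G/H))\overline{\otimes}I(J_0\times_\lambda H)$ of Theorem~\ref{thm:imprimitivity-reduced-crossed-product}, exactly as in the proof of Lemma~\ref{lem:induced-to-relative-commutant} one transports a central element of $I(J_0\times_\lambda H)$ lying outside $I(J_0)$ to a central element of $I(L\times_\lambda G)$ outside $I(L)$, and thence — since $I(L)$ and $I(L\times_\lambda G)$ sit as $G$-invariant regular ideals of $I(A)$ and $I(A\times_\lambda G)$ — to a witness of $Z(I(A))^G\neq Z(I(A\times_\lambda G))$. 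For $(1)\Rightarrow(2)$: the $*$-derivation $\delta$ of $B$ is bounded, extends to a bounded $C_H(r)$-commuting $*$-derivation $\bar\delta$ of $I(B)=I(J)$ (essentiality of $B$ in $J$), and is inner since $I(J)$ is monotone complete, so $\bar\delta=\operatorname{ad}(ih)$ and, by rigidity of the injective envelope, $\beta_r=\exp(\bar\delta)=\operatorname{Ad}(e^{ih})$ on $I(J)$; since $I(J)^{C_H(r)}$ is again monotone complete and $\bar\delta$ restricts to a $*$-derivation of it, $\bar\delta|_{I(J)^{C_H(r)}}=\operatorname{ad}(ik)$ for a self-adjoint $C_H(r)$-invariant $k$, so $e^{ik}$ is a $C_H(r)$-invariant unitary with $\operatorname{Ad}(e^{ik})=\beta_r$ on $I(J)^{C_H(r)}$, and one propagates this agreement to a nonzero $C_H(r)$-invariant regular ideal of $I(J)$ and concludes with Corollary~\ref{cor:detect-induced-quotient} and Proposition~\ref{prop:sub-induced}.

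\emph{The main obstacle.} The hard point is the last step of $(1)\Rightarrow(2)$: upgrading an implementing unitary that is $C_H(r)$-invariant but a priori only agrees with $\beta_r$ on the fixed-point subalgebra — equivalently, whose correction to a global implementer is controlled by a central-valued $1$-cocycle over $C_H(r)$ — to one implementing $\beta_r$ on a genuine nonzero regular ideal. I expect this to require a maximality and gluing argument over the extremally disconnected spectrum of $Z(I(J))$, in the style of Lemmas~\ref{lem:derivation-spectrum} and~\ref{lem:derivation-existence}, and this is the place where the finiteness of the conjugacy class $\operatorname{Cl}_H(r)$ (that is, $r\in FC(H)$) is essential.
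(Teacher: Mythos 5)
Your reduction of (2) to (1) and your treatment of (3) are close in spirit to the paper's proof (which applies Lemma~\ref{lem:derivation-existence} to the injective ideal and then intersects with $A$, and applies Lemma~\ref{lem:geffen-ursu-unitary} directly to the subsystem $(J,H,\beta)$), but the direction $(1)\Rightarrow(2)$ — the heart of the proposition — is genuinely incomplete in your write-up, and you say so yourself. Two specific problems: first, the claim $I(B)=I(J)$ is false in general; essentiality of a hereditary C*-subalgebra only means the ideal it generates is essential, and e.g.\ a full corner $e_{11}\rC([0,1],M_2)e_{11}$ is essential hereditary while its injective envelope is strictly smaller than $I(\rC([0,1],M_2))$, so the asserted extension of $\delta$ to a derivation of $I(J)$ is not justified as stated. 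Second, even granting an inner extension $\operatorname{ad}(ih)$ on $I(J)$, your device of restricting to the fixed-point algebra $I(J)^{C_H(r)}$ only produces a $C_H(r)$-invariant unitary implementing $\beta_r$ on that fixed-point algebra, and the promised ``propagation'' to a nonzero $C_H(r)$-invariant regular ideal of $I(J)$ is exactly the missing content; you flag it as the main obstacle but do not supply it. The paper closes this gap differently: after passing to $(I(J),H,\beta)$ via Lemma~\ref{lem:imprimitivity-detect-induced-injective-envelope}, it obtains the $C_H(r)$-invariant unitary $u_r\in I(J)$ with $\beta_r=\operatorname{Ad}(u_r)$ by arguing as in the proof of Hamana's Theorem~7.3 from \cite{Hamana1985}; that is the key input your argument lacks, and without it (or a worked-out substitute) condition (2) is not reached from (1).

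Two smaller points. In your descent $(2)\Rightarrow(1)$ you set $B:=pJ_0p$ with $J_0=A\cap J$ and justify $B\subseteq J_0$ by saying $J_0$ is an ideal of $I(J_0)$; it is not ($A$ is almost never an ideal of $I(A)$), so $pJ_0p$ need not even sit inside $A$. The correct choice, as in the paper, is $B:=A\cap pJp$, which is a $C_H(r)$-invariant hereditary C*-subalgebra of $I=A\cap J$, is essential there by essentiality of $A\subseteq I(A)$, and is preserved by $\delta$. Finally, your route $(3)\Rightarrow(2)$ through the hub $Z(I(A))^G\ne Z(I(A\times_\lambda G))$ is workable but heavier than needed: rather than transporting an abstract central element through the tensor decompositions of Theorem~\ref{thm:imprimitivity-reduced-crossed-product} (which requires identifying the center of the monotone complete tensor product and locating $I(L\times_\lambda G)$ inside $I(A\times_\lambda G)$), you can take the regular ideal $K\trianglelefteq I(J)$ and invariant unitary furnished by Lemma~\ref{lem:geffen-ursu-unitary}(2), observe that its support projection is meandering for $(I(A),G,\alpha)$ (orthogonality of the translates over $G/H$ plus finiteness of $\operatorname{Cl}_H(r)$ give the bounded-sum condition), and conclude with Lemma~\ref{lem:projection-to-induced}, which is the mechanism the paper's citation of Lemma~\ref{lem:geffen-ursu-unitary} relies on.
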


\begin{proof}
We first prove the equivalence between (1) and (2). Suppose (1) holds. By Lemma \ref{lem:imprimitivity-detect-induced-injective-envelope}, $(I(A),G,\alpha)$ is sub-induced from the regular C*-dynamical subsystem $(I(J),H,\beta)$. Arguing as in the proof of \cite{Hamana1985}*{Theorem 7.3} implies the existence of a $C_H(r)$-invariant unitary $u_r \in I(J)$ such that $\beta_r = \operatorname{Ad}(u_r)$, giving (2).

Conversely, suppose (2) holds. By Lemma \ref{lem:derivation-existence}, there is a $C_H(r)$-invariant open projection $p \in J \cap \{u_r\}'$ such that $pJp$ is an $C_H(r)$-invariant essential hereditary C*-subalgebra of $J$, and a $C_H(r)$-commuting *-derivation $\delta$ of $pJp$ such that $\alpha_r|_{pJp} = \exp(\delta)$. Let $I = A \cap J$ and let $B = A \cap pJp$. Then $B$ is a $C_H(r)$-invariant essential hereditary C*-algebra of $I$. Furthermore, $B$ is $\delta$-invariant and $\alpha_r|_B = \exp(\delta|_B)$, giving (1).

Finally, suppose that $A$ is separable. For the equivalence between (2) and (3), first suppose that (3) holds. Then applying Lemma \ref{lem:geffen-ursu-unitary} to the regular C*-dynamical subsystem $(J,H,\beta)$, we obtain (2). Conversely, if (2) holds, then applying the implication (2) implies (1) from Lemma \ref{lem:geffen-ursu-unitary} to the C*-dynamical system $(J,H,\beta)$ implies that (3) holds. 
\end{proof}

Sait{\^o and Wright \cite{SaitoWright1984}*{Theorem 3.6} proved that automorphisms of simple C*-algebras that extend to inner automorphisms on the injective envelope are actually inner. The following result is a slightly stronger version of their result. 

\begin{lem} \label{lem:simple-quasi-inner}
Let $A$ be a simple C*-algebra and let $\alpha \in \operatorname{Aut}(A)$ be an automorphism. Let $H \leq \operatorname{Aut}(A)$ be a subgroup of automorphisms of $A$ with $\alpha \in Z(H)$. Suppose there is an $H$-invariant unitary $v \in I(A)$ such that $\operatorname{Ad}(v) = \alpha$. Then there is an $H$-invariant unitary $u \in M(A)$ such that $\operatorname{Ad}(u) = \alpha$.
\end{lem}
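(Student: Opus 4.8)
The plan is to obtain the $H$-invariant unitary in $M(A)$ with no extra work beyond the non-equivariant theory, by exploiting that simplicity of $A$ forces $Z(I(A))$ to be trivial, which in turn pins down, up to a scalar, \emph{any} unitary implementing $\alpha$ on a dense/essential enough piece. So the strategy is: produce some unitary $u \in M(A)$ with $\operatorname{Ad}(u)|_A = \alpha$ by the non-equivariant result, then show $u$ must be a scalar multiple of $v$, and conclude that $u$ inherits the $H$-invariance of $v$.

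First I would record the two inputs. Since $A$ is simple it is prime, so by \cite{Hamana1985}*{Proposition 6.4} (applied to the trivial action) the C*-algebra $I(A)$ is prime; being unital, it has trivial center, i.e.\ $Z(I(A)) = \mathbb{C}1$. Second, the hypothesis that $\operatorname{Ad}(v)|_A = \alpha$ for a unitary $v \in I(A)$ says exactly that $\alpha$ extends to an inner automorphism of $I(A)$, so the theorem of Sait\^{o}--Wright \cite{SaitoWright1984}*{Theorem 3.6} provides a unitary $u \in M(A)$ with $\operatorname{Ad}(u)|_A = \alpha$. Since $A$ is an essential ideal of $M(A)$, the inclusion $A \subseteq M(A)$ is essential, hence $I(M(A)) = I(A)$ and we may view $M(A) \subseteq I(A)$; in particular $u$ is a unitary of $I(A)$.

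Now I would compare $u$ and $v$ inside $I(A)$. Both $\operatorname{Ad}(u)$ and $\operatorname{Ad}(v)$ are $*$-automorphisms of $I(A)$ whose restriction to $A$ is $\alpha$, so for $w := u^*v$ one has $w a w^* = u^*\alpha(a)u = a$ for all $a \in A$, i.e.\ $\operatorname{Ad}(w)|_A = \id_A$. By rigidity of the inclusion $A \subseteq I(A)$, the morphism $\operatorname{Ad}(w)$ of $I(A)$ must be $\id_{I(A)}$, so $w \in Z(I(A)) = \mathbb{C}1$; being unitary, $w = \zeta 1$ with $\zeta \in \mathbb{T}$, whence $v = \zeta u$. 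Finally, $v$ is fixed by the unique extension to $\operatorname{Aut}(I(A))$ of every $h \in H$, hence so is $u = \bar{\zeta} v$; since those extensions restrict on $M(A)$ to the unique extensions of the $h$ to $\operatorname{Aut}(M(A))$, the unitary $u \in M(A)$ is $H$-invariant and implements $\alpha$, as required.

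The step doing the real work is the triviality $Z(I(A)) = \mathbb{C}1$ (plus the invocation of Sait\^{o}--Wright): once that is in hand, equivariance is essentially automatic, since $v$ is determined up to a scalar by its adjoint action and a scalar cannot disturb invariance. It is worth noting that this argument does not in fact use the hypothesis $\alpha \in Z(H)$.
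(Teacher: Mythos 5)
Your proposal is correct and follows essentially the same route as the paper: invoke Sait\^{o}--Wright to get a unitary $u \in M(A)$ implementing $\alpha$, use rigidity/uniqueness of the extension of $\alpha$ to $I(A)$ to see that $u^*v$ is central, and conclude from primeness of $I(A)$ (so $Z(I(A)) = \mathbb{C}$) that $u$ is a scalar multiple of $v$ and hence $H$-invariant. Your closing observation that the hypothesis $\alpha \in Z(H)$ is not actually needed (indeed it already follows from the $H$-invariance of $v$) is accurate.
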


\begin{proof}
Because $A$ is simple, it follows from \cite{SaitoWright1984}*{Theorem 3.6} (see also \cite{SaitoWright1983}*{Theorem 8}) that there is a unitary $u \in M(A)$ such that $\operatorname{Ad}(u) = \alpha$. Note that, since the extension of $\alpha$ to $I(A)$ is unique, $\operatorname{Ad}(u) = \alpha = \operatorname{Ad}(v)$ on $I(A)$. It follows that $uv^* \in Z(I(A))$. But $I(A)$ is prime since $A$ is simple, and hence $Z(I(A)) = \mathbb{C}$. Therefore, $u$ is a scalar multiple of $v$, and in particular is $H$-invariant.
\end{proof}

\section{The regular ideal intersection property} \label{sec:regular-ideal-intersection-property}

In this section we will establish a characterization of the regular ideal intersection property for C*-dynamical systems. In general, the regular ideal intersection property is slightly weaker than the ideal intersection property. But we will see in Section \ref{sec:intersection-property} that these properties coincide for C*-dynamical systems over FC-hypercentral groups.

\begin{defn}
An inclusion $A \subseteq B$ of C*-algebras has the \emph{regular ideal intersection property} if for every nonzero regular ideal $J \trianglelefteq B$, $A \cap J \ne 0$.
\end{defn}

Let $A \subseteq B$ be an inclusion of C*-algebras. Then there is an inclusion $A \subseteq I(B)$, so by the injectivity of $B$ and the essentiality of the inclusion $A \subseteq I(A)$, there is always at least one operator system embedding $I(A) \subseteq I(B)$. However, this embedding is not necessarily unique, and is not necessarily a *-homomorphism (cf. \cite{Hamana1981}*{Remark 3.9}). For the next result, we assume that there is a C*-algebra inclusion of the injective envelopes $I(A) \subseteq I(B)$ that is compatible with the inclusion $A \subseteq B$.

\begin{lem} \label{lem:regular-ideal-intersection-property-iff-injective-envelopes}
Let $A \subseteq B$ be an inclusion of C*-algebras and let $I(A) \subseteq I(B)$ be an inclusion of C*-algebras extending the previous inclusion. The inclusion $A \subseteq B$ has the regular ideal intersection property if and only if the inclusion $I(A) \subseteq I(B)$ does.
\end{lem}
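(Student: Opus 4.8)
The plan is to reduce the statement to Hamana's description of regular ideals recorded in Section~\ref{sec:preliminaries}: an ideal $J \trianglelefteq B$ is regular if and only if $J = B \cap I(B)p$ for some central projection $p \in Z(I(B))$, and the regular ideals of the injective C*-algebra $I(B)$ are exactly the ideals $I(B)q$ with $q \in Z(I(B))$ a central projection. Besides this, the only input needed is that the inclusion $A \subseteq I(A)$ is essential in the strong sense that every nonzero (closed two-sided) ideal of $I(A)$ meets $A$ nontrivially (this is precisely the property used in the proof of Lemma~\ref{lem:orthogonal-regular-ideals}), and, since $I(A)$ is a C*-subalgebra of $I(B)$, that $I(A) \cap L$ is an ideal of $I(A)$ whenever $L$ is an ideal of $I(B)$. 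The hypothesis that $I(A) \subseteq I(B)$ extends $A \subseteq B$ is used only to know that the square of inclusions $A \subseteq B$, $A \subseteq I(A)$, $B \subseteq I(B)$, $I(A) \subseteq I(B)$ commutes, so that $A = A \cap B = A \cap I(A)$ and nested intersections may be regrouped freely.

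For the implication ``$I(A) \subseteq I(B)$ has the property $\Rightarrow$ $A \subseteq B$ has the property'', I would take a nonzero regular ideal $J \trianglelefteq B$, write $J = B \cap I(B)p$ with $p \in Z(I(B))$ a central projection, and note that $J \ne 0$ forces $I(B)p \ne 0$, where $I(B)p$ is a (nonzero) regular ideal of $I(B)$. Applying the hypothesis, $L := I(A) \cap I(B)p$ is a nonzero ideal of $I(A)$; by essentiality of $A \subseteq I(A)$ we get $A \cap L \ne 0$, and since $A \cap L = A \cap I(B)p = (A \cap B) \cap I(B)p = A \cap J$, we conclude $A \cap J \ne 0$.

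For the converse, I would take a nonzero regular ideal $K \trianglelefteq I(B)$, so $K = I(B)q$ for a central projection $q \in Z(I(B))$, and set $J := B \cap K = B \cap I(B)q$. By Hamana's characterization $J$ is a regular ideal of $B$, and it is nonzero because $B \subseteq I(B)$ is essential. The hypothesis gives $A \cap J \ne 0$, and since $A \cap J = A \cap B \cap K = A \cap K \subseteq I(A) \cap K$, we conclude $I(A) \cap K \ne 0$. Conceptually, both implications are the same observation: the identity $A \cap I(B)p = A \cap J$ transports the intersection property across the Hamana bijection between regular ideals of $B$ and of $I(B)$ (resp. of $A$ and of $I(A)$).

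I do not expect a genuine obstacle here; the argument is short. The two points that require a little care are (i) that the embedding $I(A) \subseteq I(B)$ is a priori non-canonical, so every step must invoke only that it is a C*-subalgebra inclusion compatible with $A \subseteq B$ (in particular one must not assume it carries $Z(I(A))$ into $Z(I(B))$, and indeed my argument never needs this), and (ii) the routine but repeatedly used fact that a nonzero ideal of $I(A)$ has nonzero intersection with $A$ — which should either be cited as part of essentiality of $A \subseteq I(A)$ or else proved in one line via the quotient map $I(A) \to I(A)/L$ and rigidity/essentiality of the injective envelope.
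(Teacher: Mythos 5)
Your proof is correct and follows essentially the same route as the paper's: both directions transport ideals across Hamana's correspondence between regular ideals of $B$ (resp.\ $A$) and of $I(B)$ (resp.\ $I(A)$), then invoke essentiality of $A \subseteq I(A)$ and $B \subseteq I(B)$ exactly as the paper does (your $I(B)p$ is just the ideal the paper denotes $I(J)$). The two points of care you flag are handled identically in the paper, so nothing further is needed.
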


\begin{proof}
Suppose that the inclusion $A \subseteq B$ has the regular ideal intersection property and let $K \trianglelefteq I(B)$ be a nonzero regular ideal. Then by the essentiality of the inclusion $B \subseteq I(B)$, $B \cap K$ is a nonzero regular ideal of $B$. Hence by assumption, $A \cap K \ne 0$, implying $I(A) \cap K \ne 0$.

Conversely, suppose that the inclusion $I(A) \subseteq I(B)$ has the regular ideal intersection property. Let $J \trianglelefteq B$ be a nonzero regular ideal. Then $I(J) \triangleleft I(B)$ is a nonzero regular ideal, so by assumption, $I(A) \cap I(J)$ is a nonzero ideal of $I(A)$. Hence by the essentiality of the inclusion $A \subseteq I(A)$, $A \cap J = A \cap (I(A) \cap I(J)) \ne 0$.
\end{proof}

\begin{lem} \label{lem:monotone-inclusion-projection}
Let $A \subseteq B$ be a normal inclusion of monotone complete C*-algebras and let $J \trianglelefteq B$ be a regular ideal of $B$. Then $A \cap J$ is a regular ideal of $A$. Let $p \in Z(A)$ denote the corresponding central projection, so that $A \cap J = Ap$. If $\beta \in \operatorname{Aut}(B)$ is an automorphism leaving $A$ and $J$ invariant, then $\beta(p) = p$.
\end{lem}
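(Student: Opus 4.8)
The plan is to realise the central projection $p$ as a supremum, taken in $A$, of positive contractions lying in $A \cap J$, and then to use normality of the inclusion to transport this supremum into $B$, where it is controlled by the central projection that cuts out $J$.

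I would begin by collecting the structural facts needed. Since $A$ and $B$ are monotone complete, they are unital. Since $B$ is monotone complete and $J \trianglelefteq B$ is regular, there is a central projection $q \in Z(B)$ with $J = Bq$, by the same reasoning as for injective C*-algebras recalled in Section~\ref{sec:preliminaries}. The summand $Bq$ is monotone closed in $B$: if $(b_i)$ is a bounded increasing net of positive elements of $Bq$ with $b = \sup_B b_i$, then centrality of $q$ gives $b = qbq + (1-q)b(1-q) \ge qbq \ge qb_iq = b_i$, so $qbq$ is an upper bound for $(b_i)$, forcing $b = qbq \in Bq$. Moreover $A \cap J$ is an ideal of $A$, and $X := \{a \in A : 0 \le a \le q,\ \|a\| < 1\}$ is exactly the positive part of its open unit ball: if $0 \le a \le q$ then $(1-q)a(1-q) \le 0$, so $a = aq \in Bq$ and hence $a \in A \cap Bq = A \cap J$, while conversely a positive $a \in A \cap Bq$ with $\|a\| < 1$ satisfies $a = qaq \le \|a\|\, q < q$.

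Next I would set $p := \sup_A X \in A$, which exists because $A$ is monotone complete; as in the proof of Lemma~\ref{lem:relative-commutant-reduction}, $X$ is bounded and upward directed (\cite{SaitoWright2015}*{Proposition 2.1.15}) and $p$ is a projection. Because $q \in Z(B)$, every $u \in U(A)$ satisfies $uqu^* = q$, so $X$ is invariant under conjugation by unitaries of $A$; hence $p$ commutes with $U(A)$ and therefore with $A$, i.e.\ $p \in Z(A)$. By the assumed normality of $A \subseteq B$ we have $p = \sup_B X$, and since $X \subseteq Bq$ with $Bq$ monotone closed in $B$ this gives $p \in Bq = J$, so $p \in A \cap J$. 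It then follows that $A \cap J = Ap$: the inclusion $Ap \subseteq A \cap J$ holds because $p \in A \cap J$ and $A \cap J$ is an ideal of $A$; conversely, if $a \in A \cap J$ is positive with $\|a\| = c$, then $(c+1)^{-1}a \in X$, so $(c+1)^{-1}a \le p$, and since $p$ is a central projection this forces $a = ap \in Ap$, so $A \cap J \subseteq Ap$ as positive elements span $A \cap J$. Thus $A \cap J = Ap$ is a regular ideal of $A$ with corresponding central projection $p$, which proves the first two assertions.

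For the final assertion, let $\beta \in \operatorname{Aut}(B)$ with $\beta(A) = A$ and $\beta(J) = J$. Applying this to $\beta$ and to $\beta^{-1}$ yields $\beta(A \cap J) = A \cap J$, and since $\beta$ is a $*$-automorphism it preserves positivity and norm, so $\beta(X) = X$. As a $*$-automorphism of $B$, $\beta$ is an order automorphism and hence normal, so $\beta(p) = \beta(\sup_B X) = \sup_B \beta(X) = \sup_B X = p$. I expect the only genuine subtlety to be the step $p \in J$: this is precisely where both hypotheses enter --- normality of $A \subseteq B$ to identify $\sup_A X$ with $\sup_B X$, and monotone completeness of $B$ (through $J = Bq$) to know this supremum stays inside $J$; dropping normality would allow $\sup_A X$ to overshoot $q$ in $B$ and the conclusion to fail.
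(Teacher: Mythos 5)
Your argument is correct and follows essentially the same strategy as the paper's proof: realize $p$ as the supremum in $A$ of an approximate unit of $A \cap J$ (in your case the positive open unit ball $X$), use normality of $A \subseteq B$ to identify $\sup_A X$ with $\sup_B X$, dominate it by the central projection $q$ with $J = Bq$ to get $p \in A \cap J$ and $A \cap J = Ap$, and deduce $\beta(p)=p$ from the canonical description of $p$. The one difference is that the paper obtains the central projection $p$ directly from Hamana's theorem on monotone completions of hereditary C*-subalgebras (\cite{Hamana1982a}*{Theorem 6.5}), which yields in one stroke that the supremum of any approximate unit of $A \cap J$ is a central projection, whereas you build $p$ by hand. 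The only step you assert rather than prove is that $p = \sup_A X$ is a projection: your citation points to a parenthetical remark in the proof of Lemma~\ref{lem:relative-commutant-reduction}, where this is stated but neither proved nor needed, and your verification that $A \cap J \subseteq Ap$ (the step $0 \le a' \le p \Rightarrow a' = a'p$) genuinely uses $p^2 = p$. This is easy to close: either cite \cite{Hamana1982a}*{Theorem 6.5} as the paper does, or observe that for $a \in X$ also $a^{1/n} \in X$, so $p$ coincides with the supremum of the upward directed family of range projections $r(a)$, $a \in X$, and an increasing supremum of projections in a monotone complete C*-algebra is a projection (if $e_i \uparrow e$ then $e_i(1-e)e_i = 0$, hence $e_i = e_i e$ and $e_i = e e_i e \le e^2$, giving $e \le e^2 \le e$). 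With that supplied, the remaining steps — identifying $X$ with the positive open unit ball of $A \cap J$, centrality of $p$ via unitary conjugation, monotone closedness of $Bq$, and the normality/order-automorphism argument for $\beta(p)=p$ — are all correct.
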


\begin{proof}
Since $J$ is regular, there is a central projection $q \in Z(B)$ such that $J = Bq$. By \cite{Hamana1982a}*{Theorem 6.5}, there is a central projection $p \in Z(A)$ such that $Ap$ is the monotone completion of $A \cap J$ and $p = \sup_A a_i$ for any approximate unit $\{a_i\}$ of $A \cap J$. Since the inclusion $A \subseteq B$ is normal by assumption, $p = \sup_B a_i$, so in particular $p \leq q$, implying $p \in A \cap J$. Hence $A \cap J = Ap$, so $A \cap J$ is regular. If $\beta \in \operatorname{Aut}(B)$ is an automorphism leaving $A$ and $J$ invariant, then it also leaves $A \cap J$ invariant, so it follows that $\beta(p) = p$. 
\end{proof} 

The following result is likely well known, but we were unable to find a reference.

\begin{lem} \label{lem:ideals-from-center}
Let $A$ be a C*-algebra and let $K \trianglelefteq Z(A)$ be an ideal. Then $AK$ is an ideal of $A$ satisfying $Z(A) \cap AK = K$. 
\end{lem}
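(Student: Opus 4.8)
The plan is to verify the two assertions in turn. First I would show that $AK$ is a closed two-sided ideal of $A$. Since $K \trianglelefteq Z(A)$ is a (closed) ideal of the abelian C*-algebra $Z(A)$, it has an approximate unit, and in particular $K = K^2 = \overline{KK}$; consequently $AK = \overline{AK}$ is closed by Cohen's factorization theorem (exactly as in the remark about Glimm ideals earlier in the paper, where $AK_x$ is closed). It is clearly a right ideal, and it is a left ideal because $K$ is central: $a(bk) = (ab)k \in AK$ for $a,b \in A$, $k \in K$; self-adjointness of $AK$ follows since $K = K^*$ and $A = A^*$ together with centrality. So $AK \trianglelefteq A$.

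Next I would prove $Z(A) \cap AK = K$. The inclusion $K \subseteq Z(A) \cap AK$ is immediate: using an approximate unit $(e_i)$ of $K$, any $k \in K$ satisfies $k = \lim e_i k \in AK$, and $k \in Z(A)$ by hypothesis. For the reverse inclusion, take $z \in Z(A) \cap AK$. The point is that multiplication by a central element, restricted to $AK$, still lands in $K$: for $a \in A$ and $k \in K$ we have $z' (ak) = a(z'k) \in AK$ whenever $z' \in Z(A)$, but more to the point, writing $z = \lim_n \sum_j a_j^{(n)} k_j^{(n)}$ with $a_j^{(n)} \in A$, $k_j^{(n)} \in K$, and using an approximate unit $(e_i)$ of $K$, one has $z e_i = \lim_n \sum_j a_j^{(n)} (k_j^{(n)} e_i)$, and $k_j^{(n)} e_i \in K$, so $a_j^{(n)} k_j^{(n)} e_i = k_j^{(n)} e_i a_j^{(n)} \in \dots$ — here I would instead argue more cleanly: since $z \in Z(A)$, $z e_i = e_i z$, and $e_i z \in K z \subseteq K$ because $z \in AK \subseteq \overline{AZ(A)} = A$ and $K$ absorbs $Z(A)$-multiplication... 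The clean way: $z \in AK$ means $z$ is a limit of finite sums $\sum_j a_j k_j$; multiply on the left by $e_i \in K$; then $e_i a_j k_j = a_j e_i k_j$ lies in $A \cdot K$ but with the $K$-factor $e_i k_j$ — this does not obviously stay in $K$. So instead I would use centrality of $z$ directly: $e_i z = z e_i$, and $z e_i \in AK \cdot K = A K$, but also $z e_i = \lim_n (\sum_j a_j^{(n)} k_j^{(n)}) e_i = \lim_n \sum_j a_j^{(n)} (k_j^{(n)} e_i)$; since $z$ is central, $\sum_j a_j^{(n)}(k_j^{(n)} e_i) = \sum_j (k_j^{(n)} e_i) a_j^{(n)}$ approximates $e_i z = z e_i \in Z(A)$... this is getting circular, so I would finish with the following observation: for $z \in Z(A)$ and any $k \in K$, $zk \in K$ (as $K$ is an ideal of $Z(A)$); hence $z e_i \in K$ for every $i$; and $z = \lim_i z e_i$ since $(e_i)$ is an approximate unit for $AK \ni z$ — wait, $(e_i)$ is an approximate unit for $K$, not for $AK$.

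The honest resolution, and the step I expect to be the only real subtlety, is to observe that $(e_i)$ \emph{is} a two-sided approximate unit for $AK$: indeed for $a \in A$, $k \in K$ we have $(ak)e_i = a(ke_i) \to ak$, and $e_i(ak) = (e_i a) k = (a e_i) k = a(e_i k) \to ak$, using centrality of $K$ to move $e_i$ past $a$; by a standard $\varepsilon/3$ argument this extends to all of $AK = \overline{AK}$. Therefore any $z \in Z(A) \cap AK$ satisfies $z = \lim_i z e_i$, and each $z e_i = z e_i \in Z(A) \cdot K \subseteq K$ because $z \in Z(A)$ and $e_i \in K \trianglelefteq Z(A)$; since $K$ is closed, $z \in K$. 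This gives $Z(A) \cap AK \subseteq K$ and completes the proof. The main obstacle, as the discussion above indicates, is being careful that the approximate unit of $K$ also serves as an approximate unit of $AK$; everything else is a direct unwinding of definitions using the centrality of $K$.
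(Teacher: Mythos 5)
Your proof is correct, but it takes a genuinely different route from the paper. The paper proves $Z(A)\cap AK = K$ by writing $K=\bigcap_{x\in C}K_x$ as an intersection of maximal ideals of $Z(A)$ (with $C$ the closed complement of the support of $K$ in $\operatorname{Glimm}(Z(A))$) and then invoking the Dauns--Hoffman theorem, which gives $Z(A)\cap AK_x=K_x$ for each Glimm ideal $AK_x$; the inclusion $Z(A)\cap AK\subseteq Z(A)\cap AK_x$ for every $x\in C$ then forces $Z(A)\cap AK\subseteq K$. You instead argue directly: an approximate unit $(e_i)$ of $K$ is, by centrality, a two-sided approximate unit for $AK$, so any $z\in Z(A)\cap AK$ satisfies $z=\lim_i ze_i$ with $ze_i\in Z(A)K\subseteq K$, whence $z\in K$ by closedness of $K$. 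This is a complete and more elementary argument: it avoids Dauns--Hoffman and the Glimm-space decomposition entirely, at the cost of the (easy) $\varepsilon/3$ verification that $(e_i)$ approximately units $AK$; the paper's route is shorter only because Dauns--Hoffman and Glimm ideals are already in use elsewhere in the paper. Both proofs handle the ideal property and closedness of $AK$ identically (centrality of $K$ plus Cohen factorization). Two cosmetic remarks: in checking that $AK$ is an ideal you swapped left and right --- it is $(ak)b=(ab)k$ that uses centrality, while $b(ak)=(ba)k$ is automatic --- and the exploratory false starts in the middle of your write-up should be deleted, since only the final approximate-unit argument is needed.
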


\begin{proof}
It is clear that $AK$ is an (algebraic) ideal of $A$. The fact that it is closed follows from Cohen's factorization theorem. For $x \in \operatorname{Glimm}(Z(A))$, let $K_x \trianglelefteq Z(A)$ denote the corresponding maximal ideal, i.e. $K_x = \{f \in Z(A) : f(x) = 0\}$. The corresponding Glimm ideal is $AK_x \trianglelefteq A$. The Dauns-Hoffman theorem implies that $AK_x \cap Z(A) = K_x$. Let $C \subseteq \operatorname{Glimm}(Z(A))$ denote the complement of the support of $K$ in $\operatorname{Glimm}(Z(A))$, so that $C$ is a closed subset with $K = \cap_{x \in C} K_x$. Clearly $K \subseteq Z(A) \cap AK$. On the other hand, for every $x \in C$,
\[
Z(A) \cap AK \subseteq Z(A) \cap AK_x = K_x. 
\]
Hence $Z(A) \cap AK \subseteq \cap_{x \in C} K_x = K$, giving $Z(A) \cap AK = K$.
\end{proof}

\begin{prop} \label{prop:necessary-intersection-property}
Let $A \subseteq B$ be a normal inclusion of monotone complete C*-algebras with an inclusion $Z(A) \subseteq Z(B)$. The inclusion $A \subseteq B$ has the regular ideal intersection property if and only if $Z(A) = Z(B)$. 
\end{prop}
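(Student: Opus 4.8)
The plan is to prove both implications using the structural correspondence between regular ideals and central projections in monotone complete C*-algebras, which is available because the inclusion $A \subseteq B$ is normal and $Z(A) \subseteq Z(B)$.

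For the forward direction, suppose the inclusion $A \subseteq B$ has the regular ideal intersection property, and suppose toward a contradiction that $Z(A) \subsetneq Z(B)$. The idea is to find a nonzero central projection in $B$ that is ``orthogonal'' to $A$ in a suitable sense, generating a nonzero regular ideal of $B$ meeting $A$ trivially. Concretely, since $Z(A)$ and $Z(B)$ are abelian monotone complete C*-algebras with $Z(A) \subseteq Z(B)$, the span of projections is dense in each, so there is a projection $q \in Z(B) \setminus Z(A)$. One would like to ``project away'' the part of $q$ that sits over $Z(A)$. Using Lemma~\ref{lem:monotone-inclusion-projection} (or the construction underlying it, via \cite{Hamana1982a}*{Theorem 6.5}) applied to the regular ideal $Bq$ of $B$, we get a central projection $p \in Z(A)$ with $A \cap Bq = Ap$ and $p \leq q$. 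If $q = p$ then $q \in Z(A)$, contradicting the choice of $q$; hence $q - p \ne 0$. Now $B(q-p)$ is a nonzero regular ideal of $B$, and $A \cap B(q-p) = (A \cap Bq)(1-p) = Ap(1-p) = 0$, contradicting the regular ideal intersection property. This gives $Z(A) = Z(B)$.

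For the converse, suppose $Z(A) = Z(B)$ and let $J \trianglelefteq B$ be a nonzero regular ideal. Since $B$ is monotone complete, $J = Bq$ for a nonzero central projection $q \in Z(B) = Z(A)$. Then $q \in A$, and $Aq$ is a nonzero ideal of $A$ contained in $J$, so $A \cap J \supseteq Aq \ne 0$. This direction is essentially immediate.

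The main obstacle is the forward direction, specifically verifying that the projection $p \in Z(A)$ produced by Lemma~\ref{lem:monotone-inclusion-projection} genuinely satisfies $p \leq q$ and $A \cap Bq = Ap$ in the monotone complete setting, and then correctly computing $A \cap B(q-p) = 0$ — one must be careful that $B(q-p)$ is indeed regular (true, since $q-p$ is a central projection in a monotone complete C*-algebra, so $B(q-p) = (B(q-p))^{\perp\perp}$) and that the intersection with $A$ really vanishes rather than merely being small. The normality of the inclusion is exactly what makes the supremum defining $p$ computed in $A$ agree with the one computed in $B$, which is the crux of the argument; without it the projection $p$ could fail to lie below $q$. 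Lemma~\ref{lem:ideals-from-center} is not strictly needed for this argument but confirms that the ideals $Aq$ appearing above behave as expected with respect to the center.
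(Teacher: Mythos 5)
Your proof is correct, and the nontrivial direction takes a genuinely different route from the paper's. The paper argues via the injectivity of $Z(A)$ in the category of unital abelian C*-algebras: it produces a retraction $\pi \colon Z(B) \to Z(A)$ fixing $Z(A)$, passes to the Glimm space of $Z(B)$ to find a clopen set avoiding the hull of $\ker \pi$, obtains a regular ideal $J = Bq$ with $Z(A) \cap J = 0$ (using Lemma \ref{lem:ideals-from-center}), and only then invokes Lemma \ref{lem:monotone-inclusion-projection} to upgrade this to $A \cap J = 0$. You instead pick a projection $q \in Z(B) \setminus Z(A)$ (legitimate, since projections span a dense subspace of the monotone complete abelian algebra $Z(B)$ and $Z(A)$ is norm closed), apply Lemma \ref{lem:monotone-inclusion-projection} to $Bq$ to get $p \in Z(A)$ with $A \cap Bq = Ap$ — and indeed $p \le q$, since $p \in A \cap Bq$, while $p \ne q$ because $q \notin Z(A)$ — and then exhibit $B(q-p)$, which is an ideal precisely because the hypothesis $Z(A) \subseteq Z(B)$ puts $p$ in $Z(B)$, as a nonzero regular ideal meeting $A$ trivially: any $x$ in the intersection lies in $A \cap Bq = Ap$, so $x = xp$, while $x = x(q-p)$ forces $xp = 0$, hence $x = 0$ (this direct verification is cleaner than the intermediate identity $(A \cap Bq)(1-p)$ you wrote, though the conclusion is the same). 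Your route is more elementary and self-contained, resting only on Lemma \ref{lem:monotone-inclusion-projection}, normality, and the central inclusion, and dispensing with the retraction and the Glimm-space bookkeeping; the paper's argument buys a formulation that localizes the failure of the intersection property inside $\ker\pi$, which is closer in spirit to how retractions onto $Z(I(A))^G$ are used elsewhere in the paper, but for this statement your shorter argument suffices. The easy direction is the same in both.
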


\begin{proof}
If $Z(A) = Z(B)$, then since every nonzero regular ideal $J \trianglelefteq B$ is of the form $B = Bq$ for a nonzero central projection $q \in Z(B) = Z(A)$, it follows immediately that the inclusion $A \subseteq B$ has the regular ideal intersection property. 

Conversely, suppose that the inclusion $Z(A) \subseteq Z(B)$ is proper. Since $A$ is monotone complete, $Z(A)$ is injective in the category of unital commutative C*-algebras with *-homomorphisms as morphisms (cf. \cite{Gonshor1968}). Hence there is a *-homomorphism $\pi : Z(B) \to Z(A)$ such that $\pi|_{Z(A)} = \id_{Z(A)}$. Let $C \subseteq \operatorname{Glimm}(Z(B))$ denote the complement of the support of $\ker \pi$. By assumption, $C$ is proper, so there is a non-empty clopen subset $U \subseteq \operatorname{Glimm}(Z(B))$ with $U \cap C = \emptyset$. Let $q = \chi_U \in Z(B)$ denote the corresponding central projection and let $K \trianglelefteq Z(B)$ denote the corresponding ideal, so that $K = Z(B)q$. Then letting $J = BK = Bq$, $J \trianglelefteq B$ is a nonzero proper regular ideal. By Lemma \ref{lem:ideals-from-center},
\[
Z(A) \cap J = Z(A) \cap (Z(B) \cap J) = Z(A) \cap K \subseteq Z(A) \cap \ker \pi = 0.
\]
It follows from Lemma \ref{lem:monotone-inclusion-projection} that $A \cap J = 0$.
\end{proof}

The following result establishes a characterization of the regular ideal intersection property for C*-dynamical systems in terms of injective envelopes. 

\begin{prop} \label{prop:relative-commutant-regular-ideal-intersection-property}
A C*-dynamical system $(A,G,\alpha)$ has the regular ideal intersection property if and only if $Z(I(A))^G = Z(I(A \times_\lambda G))$.
\end{prop}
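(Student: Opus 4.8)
The plan is to transfer the question to the injective envelopes and then run an orthogonality argument built on Lemma~\ref{lem:monotone-inclusion-projection}. By definition $(A,G,\alpha)$ has the regular ideal intersection property exactly when the inclusion $A\subseteq A\times_\lambda G$ does, and by Lemma~\ref{lem:regular-ideal-intersection-property-iff-injective-envelopes}, applied via the canonical C*-inclusion $I(A)\subseteq I(A\times_\lambda G)$ coming from the chain $A\times_\lambda G\subseteq I(A)\times_\lambda G\subseteq I_G(A)\rtimes_\lambda G\subseteq I(A\times_\lambda G)$, this happens exactly when $I(A)\subseteq I(A\times_\lambda G)$ has the regular ideal intersection property. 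Now $I(A)$ and $I(A\times_\lambda G)$ are both injective, hence monotone complete; the inclusion is normal by Lemma~\ref{lem:sup-preserving}; and each $\operatorname{Ad}(\lambda_g)$ is an inner automorphism of $I(A\times_\lambda G)$ that leaves $I(A)$ invariant with $\operatorname{Ad}(\lambda_g)|_{I(A)}=\alpha_g$ (this is built into the crossed product $I(A)\times_\lambda G$). I would first record the unconditional containment $Z(I(A))^G\subseteq Z(I(A\times_\lambda G))$: a $G$-fixed element of $Z(I(A))$ commutes with $I(A)$ and with every $\lambda_g$, hence with $I(A)\times_\lambda G$, and since the center of any C*-algebra embeds centrally into its injective envelope and $I(A\times_\lambda G)=I(I(A)\times_\lambda G)$, it is central in $I(A\times_\lambda G)$. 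It therefore remains to prove that $I(A)\subseteq I(A\times_\lambda G)$ has the regular ideal intersection property if and only if $Z(I(A\times_\lambda G))\subseteq Z(I(A))^G$.

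The key step is the following consequence of Lemma~\ref{lem:monotone-inclusion-projection}: for a central projection $q\in Z(I(A\times_\lambda G))$ with corresponding regular ideal $J=I(A\times_\lambda G)q$, one has $I(A)\cap J=I(A)p$ for a central projection $p\in Z(I(A))$ with $p\le q$, and in fact $p\in Z(I(A))^G$. The inequality $p\le q$ holds because $p\in I(A)\cap J$; and each $\operatorname{Ad}(\lambda_g)$ leaves $I(A)$ invariant and fixes $q$ (a central element), hence leaves $J$ invariant, so the equivariance clause of Lemma~\ref{lem:monotone-inclusion-projection} yields $\alpha_g(p)=\operatorname{Ad}(\lambda_g)(p)=p$.

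For the backward implication ($Z(I(A\times_\lambda G))=Z(I(A))^G$ implies the property), every nonzero regular ideal of $I(A\times_\lambda G)$ is $I(A\times_\lambda G)q$ with $0\ne q\in Z(I(A))^G\subseteq I(A)$, so $q\in I(A)\cap I(A\times_\lambda G)q$ is nonzero. For the converse, assume the property holds and let $q\in Z(I(A\times_\lambda G))$ be a nonzero central projection; take $p\in Z(I(A))^G$ with $I(A)\cap I(A\times_\lambda G)q=I(A)p$ and $p\le q$ as above. If $p\ne q$, then $q':=q-p$ is a nonzero central projection of $I(A\times_\lambda G)$ (here the unconditional containment shows $p$ is central in $I(A\times_\lambda G)$) orthogonal to $p$; applying the key step to $q'$ gives $I(A)\cap I(A\times_\lambda G)q'=I(A)p'$ with $p'\in Z(I(A))^G$ and $p'\le q'$. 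Since $p'\le q'\le q$ and $p'\in I(A)$, we get $p'\in I(A)\cap I(A\times_\lambda G)q=I(A)p$, so $p'\le p$; but $p'\le q'$ and $q'p=0$ give $p'p=0$, forcing $p'=0$. Thus $I(A)\cap I(A\times_\lambda G)q'=0$ with $q'\ne 0$, contradicting the regular ideal intersection property. Hence $q=p\in Z(I(A))^G$. Since $Z(I(A\times_\lambda G))$ is an injective abelian C*-algebra, it is the norm-closed linear span of its projections, and $Z(I(A))^G$ is norm-closed, so $Z(I(A\times_\lambda G))\subseteq Z(I(A))^G$, completing the proof.

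I expect the main obstacle to be the converse direction just sketched --- specifically, extracting from the regular ideal intersection property that \emph{no} nonzero central projection of $I(A\times_\lambda G)$ lies outside $Z(I(A))^G$. The passage from $q$ to $q-p$ is the crux, and it works precisely because Lemma~\ref{lem:monotone-inclusion-projection} produces a projection that is $G$-fixed, not merely central in $I(A)$; losing track of the $G$-action here would break the orthogonality argument. The only other ingredient is the routine fact that the center of a C*-algebra embeds centrally into its injective envelope, which one verifies by applying rigidity of $B\subseteq I(B)$ to $\operatorname{Ad}(\exp(itz))$ for $z\in Z(B)$.
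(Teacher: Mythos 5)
Your argument is correct, and the first half coincides with the paper's: reduce to the inclusion $I(A)\subseteq I(A\times_\lambda G)$ via Lemma~\ref{lem:regular-ideal-intersection-property-iff-injective-envelopes}, then use normality (Lemma~\ref{lem:sup-preserving}) and Lemma~\ref{lem:monotone-inclusion-projection} --- with the automorphisms $\operatorname{Ad}(\lambda_g)$ supplying the equivariance clause --- to write $I(A)\cap I(A\times_\lambda G)q=I(A)p$ with $p\in Z(I(A))^G$. Where you diverge is the endgame: the paper converts the problem into the regular ideal intersection property for the inclusion $Z(I(A))^G\subseteq I(A\times_\lambda G)$ and then invokes Proposition~\ref{prop:necessary-intersection-property}, whose proof runs through the injectivity of monotone complete abelian C*-algebras in the commutative category (a retraction $\pi:Z(I(A\times_\lambda G))\to Z(I(A))^G$ plus Lemma~\ref{lem:ideals-from-center}), whereas you bypass that proposition entirely with a hands-on complementation argument: given $p\le q$, the difference $q-p$ is again a central projection, a second application of the key step forces it to meet $I(A)$ trivially, and density of projections in the Stonean center finishes the containment $Z(I(A\times_\lambda G))\subseteq Z(I(A))^G$. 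Your route is more elementary and self-contained at this step (no Gonshor-type injectivity), at the cost of making explicit the a priori containment $Z(I(A))^G\subseteq Z(I(A\times_\lambda G))$ --- which the paper also needs, as the hypothesis ``with an inclusion $Z(A)\subseteq Z(B)$'' of Proposition~\ref{prop:necessary-intersection-property}, but leaves implicit since it is standard from Hamana; your verification via centrality in $I(A)\times_\lambda G$ and rigidity is fine. The paper's route, in exchange, isolates a more general statement about arbitrary normal inclusions of monotone complete C*-algebras, which is of independent interest. Both proofs are complete; yours trades that generality for a shorter chain of prerequisites.
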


\begin{proof}
Lemma \ref{lem:regular-ideal-intersection-property-iff-injective-envelopes} implies that $(A,G,\alpha)$ has the regular ideal intersection property if and only if the inclusion $I(A) \subseteq I(A \times_\lambda G)$ has the regular ideal intersection property. Let $J \trianglelefteq I(A \times_\lambda G)$ be a nonzero regular ideal. Then there is a nonzero central projection $q \in Z(I(A \times_\lambda G))$ such that $J = I(A \times_\lambda G)q$. Since the inclusion $I(A) \subseteq I(A \times_\lambda G)$ is normal by Lemma~\ref{lem:sup-preserving}, Lemma \ref{lem:monotone-inclusion-projection} yields the existence of a $G$-invariant central projection $p \in Z(I(A))^G$ such that $I(A)p = I(A) \cap J$. Hence the inclusion $I(A) \subseteq I(A \times_\lambda G)$ has the regular ideal intersection property if and only if the inclusion $Z(I(A))^G \subseteq I(A \times_\lambda G)$ has the regular ideal intersection property. By Lemma \ref{prop:necessary-intersection-property}, this is equivalent to the equality $Z(I(A))^G = Z(I(A \times_\lambda G))$.
\end{proof}

The next result is the main result in this section. It establishes several characterizations of the regular ideal intersection property. Two of these characterizations are intrinsic, and do not refer to injective envelopes.

\begin{thm} \label{thm:regular-ideal-intersection-property}
Let $(A,G,\alpha)$ be a C*-dynamical system. The following are equivalent:
\begin{enumerate}
\item $(A,G,\alpha)$ has the regular ideal intersection property.

\item Whenever $(A,G,\alpha)$ is sub-induced from a regular C*-dynamical subsystem $(J,H,\beta)$ with $r \in H \setminus \{e\}$ and a $C_H(r)$-invariant essential hereditary C*-subalgebra $B \subseteq J$ such that $\alpha_r|_B = \exp(\delta)$ for a $C_H(r)$-commuting *-derivation $\delta$ of $B$, then the $H$-conjugacy class $\operatorname{Cl}_H(r)$ of $r$ is infinite.

\item Whenever $(I(A),G,\alpha)$ is sub-induced from a regular C*-dynamical subsystem $(J,H,\beta)$ with $r \in H \setminus \{e\}$ such that $\beta_r = \operatorname{Ad}(u_r)$ for a $C_H(r)$-invariant unitary $u_r \in J$, then the $H$-conjugacy class $\operatorname{Cl}_H(r)$ of $r$ is infinite.

\item $Z(I(A))^G = Z(I(A \times_\lambda G))$.
\end{enumerate}

If $A$ is separable, then these conditions are additionally equivalent to:
\begin{enumerate}[resume]
\item Whenever $(A,G,\alpha)$ is sub-induced from a regular C*-dynamical subsystem $(J,H,\beta)$ with $r \in H \setminus \{e\}$, a nonzero $r$-invariant ideal $I \trianglelefteq J$ and a unitary $u \in M(I)$ such that
\begin{enumerate}
\item $I \cap \alpha_h(I)$ is an essential ideal in both $I$ and $\alpha_h(I)$ for all $h \in C_H(r)$ and 
\item $C := \|\alpha_r|_I - \operatorname{Ad}(u)\|$ and $D := \sup_{h \in C_H(r)} \|\alpha_h(u) - u\|$ satisfy $2\sqrt{2 - \sqrt{4-C^2}} + D < \sqrt{2}$,
\end{enumerate}
then the $H$-conjugacy class $\operatorname{Cl}_H(r)$ of $r$ is infinite.
\end{enumerate}
\end{thm}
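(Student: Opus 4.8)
The plan is to assemble the theorem from results already in hand: Proposition~\ref{prop:relative-commutant-regular-ideal-intersection-property} supplies the equivalence (1) $\Leftrightarrow$ (4) directly, and the remaining equivalences will come from Propositions~\ref{prop:projection-equivalences} and~\ref{prop:intrinsic-equivalences} once we pass to contrapositives. The key bookkeeping observation is that each of conditions (2), (3) and (5) has the shape ``whenever $(A,G,\alpha)$ (resp.\ $(I(A),G,\alpha)$) is sub-induced from a regular C*-dynamical subsystem $(J,H,\beta)$ with $r \in H \setminus \{e\}$ and some prescribed extra structure, then $\operatorname{Cl}_H(r)$ is infinite.'' Negating such a statement produces a sub-induction with $r \in H \setminus \{e\}$ and $\operatorname{Cl}_H(r)$ \emph{finite}; but a nontrivial element of $H$ with finite conjugacy class is exactly an element of $FC(H) \setminus \{e\}$, and conversely. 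So the negations of (2), (3) and (5) translate verbatim into conditions (1), (2) and (3), respectively, of Proposition~\ref{prop:intrinsic-equivalences} (with condition (2) of that proposition also being condition (3) of Proposition~\ref{prop:projection-equivalences}).

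First I would record (1) $\Leftrightarrow$ (4) as Proposition~\ref{prop:relative-commutant-regular-ideal-intersection-property}. Next, for (3) $\Leftrightarrow$ (4): the negation of (3) asserts that $(I(A),G,\alpha)$ is sub-induced from some $(J,H,\beta)$ with $r \in H \setminus \{e\}$, a $C_H(r)$-invariant unitary $u_r \in J$ implementing $\beta_r$, and $\operatorname{Cl}_H(r)$ finite; by the conjugacy-class remark this is precisely condition (3) of Proposition~\ref{prop:projection-equivalences}, which that proposition shows is equivalent to $Z(I(A))^G \ne Z(I(A \times_\lambda G))$, i.e.\ to the negation of (4). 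Hence (3) $\Leftrightarrow$ (4), and combined with the previous step all of (1), (3), (4) are equivalent.

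For (2) $\Leftrightarrow$ (3): the negation of (2) is, after the same translation, exactly condition (1) of Proposition~\ref{prop:intrinsic-equivalences}, while the negation of (3) is condition (2) of that proposition; since Proposition~\ref{prop:intrinsic-equivalences} gives (1) $\Leftrightarrow$ (2), we conclude (2) $\Leftrightarrow$ (3). Finally, when $A$ is separable, the negation of (5) is condition (3) of Proposition~\ref{prop:intrinsic-equivalences}, which is equivalent to conditions (1) and (2) there, so (5) joins the chain of equivalences. This closes the proof.

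The work here is essentially organizational; there is no new analytic input beyond Propositions~\ref{prop:relative-commutant-regular-ideal-intersection-property}, \ref{prop:projection-equivalences} and~\ref{prop:intrinsic-equivalences}. The one point that genuinely needs care --- rather than a real obstacle --- is verifying that the contrapositives line up exactly: one must check that negating the ``whenever $\ldots$, then $\operatorname{Cl}_H(r)$ is infinite'' statements really does land precisely on the hypotheses of the cited propositions, that the interchange of ``$r \in H \setminus \{e\}$ with $\operatorname{Cl}_H(r)$ finite'' and ``$r \in FC(H) \setminus \{e\}$'' is harmless in both directions, and that the sub-inducing subsystem, the element $r$, and the implementing unitary / derivation / ideal data carried along in each condition are literally the same objects appearing in those propositions.
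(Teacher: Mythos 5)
Your proposal is correct and follows essentially the same route as the paper: (1) $\Leftrightarrow$ (4) from Proposition~\ref{prop:relative-commutant-regular-ideal-intersection-property}, (3) $\Leftrightarrow$ (4) from Proposition~\ref{prop:projection-equivalences}, (2) $\Leftrightarrow$ (3) and the separable case (5) from Proposition~\ref{prop:intrinsic-equivalences}. Your explicit check that negating the ``whenever \ldots, then $\operatorname{Cl}_H(r)$ is infinite'' statements lands exactly on the $FC(H)$-formulated hypotheses of those propositions is the same translation the paper leaves implicit.
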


\begin{proof}
The equivalence between (1) and (4) is Proposition \ref{prop:relative-commutant-regular-ideal-intersection-property}, the equivalence between (3) and (4) is Proposition \ref{prop:projection-equivalences} and the equivalence between (2) and (3) is Proposition \ref{prop:intrinsic-equivalences}. If $A$ is separable, then the equivalence between (3) and (5) is also Proposition \ref{prop:intrinsic-equivalences}.
\end{proof}

Specializing Theorem \ref{thm:regular-ideal-intersection-property} to the case of abelian C*-dynamical systems yields the following result.

\begin{cor} \label{cor:abelian-regular-ideal-intersection-property}
Let $(\rC_0(X), G, \alpha)$ be an abelian C*-dynamical system. The following are equivalent:
\begin{enumerate}
\item $(\rC_0(X),G,\alpha)$ has the regular ideal intersection property;
\item Whenever there is a non-empty regular open subset $U \subseteq X$ and a subgroup $H \leq G$ with $H = \{g \in G : gU = U\}$ such that the subsets $\{gU\}_{g \in G/H}$ are pairwise disjoint, then for any $r \in H \setminus \{e\}$ that acts trivially on $U$, the $H$-conjugacy class $\operatorname{Cl}_H(r)$ of $r$ is infinite.
\end{enumerate}
\end{cor}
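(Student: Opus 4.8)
The plan is to obtain this as the commutative specialization of Theorem~\ref{thm:regular-ideal-intersection-property}. That theorem already gives the equivalence of (1) with its condition (2), so the only work is to rewrite that condition concretely when $A = \rC_0(X)$.

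The first step is to unwind ``sub-induced from a regular C*-dynamical subsystem $(J,H,\beta)$'' using Remark~\ref{rem:abelian-sub-induced}. In the commutative setting a regular C*-dynamical subsystem of $(\rC_0(X),G,\alpha)$ is precisely a triple $(\rC_0(U),H,\beta)$ with $U \subseteq X$ a non-empty $H$-invariant regular open set, and $(\rC_0(X),G,\alpha)$ is sub-induced from it exactly when $H = \{g \in G : gU = U\}$ and the translates $\{gU\}_{g \in G/H}$ are pairwise disjoint (the $H$-invariance of $U$ being automatic once $H$ is the full stabilizer of $U$). This is precisely the topological hypothesis appearing in part (2) of the corollary.

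The second step is to reduce the remaining data --- an element $r \in H \setminus \{e\}$, a $C_H(r)$-invariant essential hereditary C*-subalgebra $B \subseteq \rC_0(U)$, and a $C_H(r)$-commuting *-derivation $\delta$ of $B$ with $\alpha_r|_B = \exp(\delta)$ --- to the single requirement that $r$ act trivially on $U$. For this I would use three facts. First, a hereditary C*-subalgebra $B$ of a commutative C*-algebra is automatically a closed ideal (if $0 \le a \le 1$ and $b \in B_+$ then $0 \le ab = a^{1/2}ba^{1/2} \le \|a\|\,b$, so $ab \in B$), hence of the form $\rC_0(V)$ for an open $V \subseteq U$, and it is essential in $\rC_0(U)$ if and only if $V$ is dense in $U$. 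Second, every *-derivation of a commutative C*-algebra vanishes, so $\alpha_r|_B = \exp(\delta) = \id_B$. Third, since $r \in C_H(r)$ and $B$ is $C_H(r)$-invariant one has $rV = V$, so $\alpha_r|_{\rC_0(V)} = \id$ forces $r$ to fix every point of $V$, hence by continuity of the action every point of $\overline{V} \supseteq U$; conversely, if $r \in H \setminus \{e\}$ acts trivially on $U$ one takes $B = \rC_0(U)$ and $\delta = 0$. Feeding these two translations into Theorem~\ref{thm:regular-ideal-intersection-property}(2) produces statement (2) of the corollary, and hence its equivalence with (1).

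The routine verifications aside, the points that need genuine care are the identification of essential hereditary C*-subalgebras of $\rC_0(U)$ with dense open subsets of $U$, and the vanishing of *-derivations of commutative C*-algebras (which follows from Sakai's automatic continuity of derivations together with the Singer--Wermer theorem); I expect this second step to be the main, though not difficult, obstacle, while the passage from ``$r$ fixes $V$'' to ``$r$ fixes $U$'' is immediate from density and continuity.
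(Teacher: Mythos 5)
Your proposal is correct and follows the paper's own route: specialize condition (2) of Theorem~\ref{thm:regular-ideal-intersection-property} using Remark~\ref{rem:abelian-sub-induced}, and observe that in the commutative setting the derivation/inner condition collapses to the element acting trivially (your verifications that hereditary C*-subalgebras of $\rC_0(U)$ are ideals, that *-derivations vanish, and the density argument are exactly the details the paper leaves implicit). No gaps.
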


\begin{proof}
For an abelian C*-dynamical system, an element acts trivially if and only if it is inner. Hence the result follows immediately from Theorem \ref{thm:regular-ideal-intersection-property} and Remark \ref{rem:abelian-sub-induced}.
\end{proof}

Specializing Theorem \ref{thm:regular-ideal-intersection-property} to the case when the underlying C*-algebra is simple yields the following result.

\begin{cor} \label{cor:simple-regular-ideal-intersection-property}
Let $(A,G,\alpha)$ be a C*-dynamical system with $A$ simple. The following are equivalent:
\begin{enumerate}
\item $(A,G,\alpha)$ has the regular ideal intersection property;
\item There is $r \in FC(G) \setminus \{e\}$ and a $C_G(r)$-invariant essential hereditary C*-subalgebra $B \subseteq A$ such that $\alpha_r|_B = \exp(\delta)$ for a $C_G(r)$-commuting *-derivation of $B$;
\item There is $r \in FC(G) \setminus \{e\}$ such that $\alpha_r = \operatorname{Ad}(u_r)$ for a $C_G(r)$-invariant unitary $u_r \in M(A)$. 
\end{enumerate}
\end{cor}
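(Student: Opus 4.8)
The plan is to deduce the corollary from Theorem~\ref{thm:regular-ideal-intersection-property} by exploiting the rigidity that simplicity imposes on the sub-inducing data. The first step is the observation that when $A$ is simple its only regular ideals are $0$ and $A$, and likewise the only regular ideals of $I(A)$ are $0$ and $I(A)$: indeed, $A$ simple forces $I(A)$ to be prime, and a prime C*-algebra has no pair of nonzero orthogonal ideals, hence no nontrivial regular ideal $J = J^{\perp\perp}$. Consequently, in every instance of sub-inducing that appears in the conditions of Theorem~\ref{thm:regular-ideal-intersection-property}, the regular $H$-invariant ideal $J$ must be the whole algebra, which by the definition of a system of imprimitivity (Definition~\ref{defn:induced}) forces $H = \{g \in G : \alpha_g(J) = J\} = G$ and a single coset $G/H$. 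The sub-inducing hypotheses therefore collapse completely: the only available subsystem is the whole system, the distinguished element $r$ ranges over $G \setminus \{e\}$, and $\operatorname{Cl}_H(r) = \operatorname{Cl}_G(r)$. This is precisely the reduction that turns the conditions of Theorem~\ref{thm:regular-ideal-intersection-property} into the conditions of the corollary, with the membership $r \in FC(G)$ recording the relevant conjugacy-class behaviour.

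With the sub-inducing trivialized, the equivalence of (2) and (3) is Proposition~\ref{prop:intrinsic-equivalences} specialized to $J = A$ and $H = G$, which matches the *-derivation formulation of (2) with the existence of a $C_G(r)$-invariant unitary implementing $\alpha_r$, together with the passage of this unitary from $I(A)$ down to $M(A)$. For that passage I would invoke Lemma~\ref{lem:simple-quasi-inner}: letting $H_0 \le \operatorname{Aut}(A)$ be the subgroup generated by $\{\alpha_h : h \in C_G(r)\}$, the automorphism $\alpha_r$ lies in the center of $H_0$ because $r$ commutes with every element of $C_G(r)$, so the lemma upgrades a $C_G(r)$-invariant implementing unitary in $I(A)$ to a $C_G(r)$-invariant unitary $u_r \in M(A)$ with $\operatorname{Ad}(u_r) = \alpha_r$. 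Invariance is preserved precisely because $Z(I(A)) = \mathbb{C}$ (again using that $I(A)$ is prime), so that any two unitaries implementing $\alpha_r$ on $I(A)$ differ by a scalar and invariance of one forces invariance of the other. The reverse passage is immediate from the inclusion $M(A) \subseteq I(A)$.

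Finally, I would connect (1) to (2) and (3) by specializing the equivalence between the regular ideal intersection property and the intrinsic conditions of Theorem~\ref{thm:regular-ideal-intersection-property}. Concretely, Proposition~\ref{prop:relative-commutant-regular-ideal-intersection-property} expresses (1) through the centers $Z(I(A))^G$ and $Z(I(A \times_\lambda G))$, and Proposition~\ref{prop:projection-equivalences} translates this into the meandering-projection and sub-inducing language that the first paragraph has already trivialized in the simple case. The main obstacle, and the only point requiring genuine work beyond bookkeeping, is the transport of the implementing unitary between the injective-envelope level, where the meandering-projection machinery naturally produces it, and the multiplier level demanded by (3), all while retaining $C_G(r)$-invariance; this is handled exactly as in the second paragraph, via Lemma~\ref{lem:simple-quasi-inner} and the primality of $I(A)$. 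A secondary point to verify carefully is that the element $r$ delivered by the machinery has finite conjugacy class, i.e.\ $r \in FC(G)$; this is where the uniform norm bound built into the meandering condition of Definition~\ref{defn:meandering-projection} is used, through the finiteness argument in Lemma~\ref{lem:projection-to-induced}.
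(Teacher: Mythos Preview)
Your approach is correct and is precisely the unpacking of the paper's one-line proof, which says the result ``follows immediately from Theorem~\ref{thm:regular-ideal-intersection-property} and Lemma~\ref{lem:simple-quasi-inner}.'' Your observation that simplicity of $A$ forces every nonzero regular ideal of $A$ (and of $I(A)$) to be the whole algebra, thereby collapsing the sub-inducing data in Theorem~\ref{thm:regular-ideal-intersection-property} to $J=A$, $H=G$, and $r\in FC(G)$, is exactly the reduction intended, and your use of Lemma~\ref{lem:simple-quasi-inner} to transport the implementing unitary from $I(A)$ down to $M(A)$ while retaining $C_G(r)$-invariance is the only nontrivial extra ingredient, just as in the paper.
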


\begin{proof}
The result follows immediately from Theorem \ref{thm:regular-ideal-intersection-property} and Lemma \ref{lem:simple-quasi-inner}.
\end{proof}

In Section \ref{sec:intersection-property}, we will prove that for C*-dynamical systems over FC-hypercentral groups, the regular ideal intersection property always coincides with the ideal intersection property. It is natural to wonder if this could be true for other classes of C*-dynamical systems.

Recall that a topology is said to be {\em semiregular} if it has a basis of regular open subsets. Brown, Fuller, Pitts and Reznikoff \cite{BrownFullerPittsReznikoff2024} proved that if $(A,G,\alpha)$ is a C*-dynamical system such that the primitive ideal space of the reduced crossed product $A \times_\lambda G$ is semiregular, then the regular ideal intersection property for $(A,G,\alpha)$ coincides with the ideal intersection property (see Section~\ref{sec:regular-ideal-intersection-property} and Section~\ref{sec:intersection-property}).

\section{Primality of reduced crossed products} \label{sec:primality}

In this section we will apply our characterization of the regular ideal intersection property for C*-dynamical systems from Section \ref{sec:regular-ideal-intersection-property} to obtain a characterization of primality for reduced crossed product C*-algebras in terms of the underlying C*-dynamical system.

Recall from Section \ref{sec:preliminaries} that a C*-dynamical system $(A,G,\alpha)$ is prime if for every pair of nonzero $G$-invariant ideals $I,J \trianglelefteq A$, the intersection $I \cap J$ is nonzero. By \cite{Hamana1985}*{Proposition 6.4}, the primality of $(A,G,\alpha)$ is equivalent to both the primality of $(I(A),G,\alpha)$ and the primality of $(I_G(A),G,\alpha)$. It is clear that if the reduced crossed product $A \times_\lambda G$ is prime, then $(A,G,\alpha)$ is necessarily prime.

We will require the following result. It follows easily from \cite{Hamana1985}*{Proposition 6.4}, but since it is not explicitly stated there, we provide a proof for convenience.

\begin{lem} \label{lem:G-prime-iff}
A C*-dynamical system $(A,G,\alpha)$ is prime if and only if $Z(I(A))^G = \mathbb{C}$.
\end{lem}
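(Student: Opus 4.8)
The statement to prove is: a C*-dynamical system $(A,G,\alpha)$ is prime if and only if $Z(I(A))^G = \mathbb{C}$. The plan is to reduce the primality of $(A,G,\alpha)$ to a statement about the injective envelope, and then to translate primality of the injective envelope into a statement about its $G$-fixed center. The main tools are the cited \cite{Hamana1985}*{Proposition 6.4}, which says that primality of $(A,G,\alpha)$ is equivalent to primality of $(I(A),G,\alpha)$, together with the fact, recorded in Section \ref{sec:preliminaries}, that $A$ is prime as a C*-dynamical system precisely when every nonzero $G$-invariant ideal is essential, which in turn for an injective C*-algebra can be phrased via central projections.

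\emph{First step.} By \cite{Hamana1985}*{Proposition 6.4}, $(A,G,\alpha)$ is prime if and only if $(I(A),G,\alpha)$ is prime, so it suffices to prove the equivalence when $A = I(A)$ is injective. So assume $A$ is an injective C*-algebra for the rest of the argument.

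\emph{Second step.} When $A$ is injective, every ideal that arises as a $G$-invariant regular ideal is of the form $Ap$ for a $G$-invariant central projection $p \in Z(A)^G$. I would argue the forward direction contrapositively: if $Z(A)^G \ne \mathbb{C}$, then since $Z(A)^G$ is itself an abelian monotone complete (indeed injective) C*-algebra, it contains a nontrivial projection $p$, i.e.\ a $G$-invariant central projection $p \in Z(A)$ with $0 \ne p \ne 1$. Then $Ap$ and $A(1-p)$ are nonzero $G$-invariant ideals with $Ap \cap A(1-p) = 0$, so $(A,G,\alpha)$ is not prime. For the converse, suppose $Z(A)^G = \mathbb{C}$ and let $I, J \trianglelefteq A$ be nonzero $G$-invariant ideals; I want $I \cap J \ne 0$. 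Passing to regular closures $I^{\perp\perp}, J^{\perp\perp}$ (which are still $G$-invariant, since the annihilator operation is $G$-equivariant, and which contain $I,J$ respectively), it is enough to show $I^{\perp\perp} \cap J^{\perp\perp} \ne 0$; but $I^{\perp\perp} = Ap$ and $J^{\perp\perp} = Aq$ for $G$-invariant central projections $p,q \in Z(A)^G = \mathbb{C}$, and since $I,J \ne 0$ we must have $p = q = 1$, so $I^{\perp\perp} = J^{\perp\perp} = A$ and in particular $I \cap J \ne 0$. Alternatively, and perhaps cleaner, I would invoke the characterization of regular ideals in an injective C*-algebra recorded in Section \ref{sec:preliminaries} (via central projections in $Z(A)$) and use that an injective, hence monotone complete, C*-algebra is prime iff its center is $\mathbb{C}$ (an easy exercise already flagged in the preliminaries: a prime C*-algebra has trivial center, and conversely a monotone complete C*-algebra with center $\mathbb{C}$ has every nonzero regular ideal equal to the whole algebra, whence prime since every nonzero ideal contains a nonzero regular ideal, namely its double annihilator) — and then lift trivial center to trivial $G$-fixed center using the contrapositive argument above to handle the $G$-invariance.

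\emph{Main obstacle.} The one point requiring care is the passage between $G$-invariant ideals and $G$-invariant central projections: I need that for an injective C*-algebra $A$, a $G$-invariant regular ideal corresponds to a $G$-invariant central projection, and that the double annihilator of a $G$-invariant ideal is again $G$-invariant. The latter is immediate from $\alpha_g(X^\perp) = \alpha_g(X)^\perp$. The former follows since the central cover construction is canonical and hence $G$-equivariant, so $\alpha_g(I^{\perp\perp}) = I^{\perp\perp}$ forces $\alpha_g(p) = p$ where $p$ is the corresponding central projection (by uniqueness of the central projection generating a given regular ideal). Everything else is routine once this correspondence is in hand; no genuinely hard analytic input beyond the already-cited \cite{Hamana1985}*{Proposition 6.4} and the standard facts about injective/monotone complete C*-algebras from Section \ref{sec:preliminaries} is needed.
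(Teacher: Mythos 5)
Your proof is correct, and it takes a slightly different route than the paper's. The paper proves the lemma directly at the level of the inclusion $A \subseteq I(A)$: for one direction it intersects the ideals $I(A)p$ and $I(A)(1-p)$ back down to $A$ using essentiality of $A \subseteq I(A)$, and for the other it passes from $I,J$ to $I^{\perp\perp},J^{\perp\perp}$, invokes Hamana's correspondence between regular ideals of $A$ and central projections of $Z(I(A))$, and uses Lemma \ref{lem:orthogonal-regular-ideals} to see the two projections are orthogonal. You instead first reduce to the case $A=I(A)$ injective via \cite{Hamana1985}*{Proposition 6.4} together with $I(I(A))=I(A)$ (a reduction the paper explicitly flags as available but does not use in its written proof), after which all the bookkeeping happens inside one algebra: regular ideals are exactly the $Ap$ with $p \in Z(A)$ central, equivariance of $X \mapsto X^{\perp}$ and uniqueness of the generating projection give $G$-invariance of $p$, and $Z(A)^G=\mathbb{C}$ forces $I^{\perp\perp}=A$, i.e. $I$ essential, whence $I\cap J\neq 0$. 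What your route buys is that you never need the essentiality of $A\subseteq I(A)$ to produce nonzero ideals of $A$, nor the orthogonality transfer of Lemma \ref{lem:orthogonal-regular-ideals}; the cost is the appeal to Proposition 6.4, which the paper's self-contained argument avoids. Two cosmetic points: the step ``$I^{\perp\perp}=A$ hence $I\cap J\neq 0$'' deserves the one-line remark that $I^{\perp\perp}=A$ means $I^{\perp}=0$, i.e. $I$ is essential; and in your parenthetical alternative the phrase ``every nonzero ideal contains a nonzero regular ideal, namely its double annihilator'' has the containment backwards ($I \subseteq I^{\perp\perp}$), though your main argument does not rely on it.
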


\begin{proof}
If $Z(I(A))^G \ne \mathbb{C}$, then since $Z(I(A))^G$ is monotone complete, there is a non-trivial $G$-invariant projection $p \in Z(I(A))^G$. The ideals $I,J \trianglelefteq A$ defined by $I = A \cap Ap$ and $J = A \cap A(1-p)$ are both nonzero and $G$-invariant, and satisfy $I \cap J = 0$. Hence $(A,G,\alpha)$ is not $G$-prime. 

Conversely, if $(A,G,\alpha)$ is not prime, then there are nonzero $G$-invariant ideals $I,J \trianglelefteq A$ satisfying $I \cap J = 0$. Then $I^{\perp \perp},J^{\perp \perp} \trianglelefteq A$ are nonzero  $G$-invariant regular ideals also satisfying $I^{\perp \perp} \cap J^{\perp \perp} = 0$. Letting $p,q \in Z(I(A))^G$ denote the corresponding $G$-invariant central projections, $p$ and $q$ are orthogonal by Lemma \ref{lem:orthogonal-regular-ideals}. In particular, $Z(I(A))^G \ne \mathbb{C}$.
\end{proof}

\begin{rem}
For prime C*-dynamical systems, being induced and being sub-induced in the sense of Section \ref{sec:imprimitivity} are equivalent. Indeed, it follows from Lemma \ref{lem:G-prime-iff} that if a prime C*-dynamical system $(A,G,\alpha)$ is sub-induced from a regular C*-dynamical subsystem $(J,H,\beta)$, then it is necessarily induced from $(J,H,\beta)$.
\end{rem}

\begin{thm}\label{thm:primeness} Let $(A, G, \alpha)$ be a prime C*-dynamical system. The following are equivalent:
\begin{enumerate}
\item $A \times_\lambda G$ is prime;
\item $(A,G,\alpha)$ has the regular ideal intersection property;
\item Whenever $(A,G,\alpha)$ is induced from a regular C*-dynamical subsystem $(J,H,\beta)$ with $r \in H \setminus \{e\}$ and a $C_H(r)$-invariant essential hereditary C*-subalgebra $B \subseteq J$ such that $\alpha_r|_B = \exp(\delta)$ for a $C_H(r)$-commuting *-derivation $\delta$ of $B$, then the $H$-conjugacy class $\operatorname{Cl}_H(r)$ of $r$ is infinite.
\item $I(A \times_\lambda G)$ is prime;
\item Whenever $(I(A),G,\alpha)$ is induced from a regular C*-dynamical subsystem $(J,H,\beta)$ with $r \in H \setminus \{e\}$ such that $\beta_r = \operatorname{Ad}(u_r)$ for a $C_H(r)$-invariant unitary $u_r \in J$, then the $H$-conjugacy class $\operatorname{Cl}_H(r)$ of $r$ is infinite;
\item $Z(I(A \times_\lambda G)) = \mathbb{C}$.
\end{enumerate}
If $A$ is separable, then these conditions are additionally equivalent to:
\begin{enumerate}[resume]
\item Whenever $(A,G,\alpha)$ is induced from a regular C*-dynamical subsystem $(J,H,\beta)$ with $r \in H \setminus \{e\}$, a nonzero $r$-invariant ideal $I \trianglelefteq J$ and a unitary $u \in M(I)$ such that
\begin{enumerate}
\item $I \cap \alpha_h(I)$ is an essential ideal in both $I$ and $\alpha_h(I)$ for all $h \in C_H(r)$ and 
\item $C := \|\alpha_r|_I - \operatorname{Ad}(u)\|$ and $D := \sup_{h \in C_H(r)} \|\alpha_h(u) - u\|$ satisfy $2\sqrt{2 - \sqrt{4-C^2}} + D < \sqrt{2}$,
\end{enumerate}
then the $H$-conjugacy class $\operatorname{Cl}_H(r)$ of $r$ is infinite.
\end{enumerate}
\end{thm}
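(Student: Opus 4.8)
The plan is to obtain this theorem as a consequence of our characterization of the regular ideal intersection property, Theorem~\ref{thm:regular-ideal-intersection-property}, together with a few elementary facts about primality and injective envelopes; the substantive content is already in place, and what remains is to rephrase everything under the standing assumption that $(A,G,\alpha)$ is prime.

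First I would establish the ``boundary-theoretic'' reformulations $(1)\iff(4)\iff(6)$. For $(1)\iff(4)$ I would argue, exactly as in the proof of Lemma~\ref{lem:G-prime-iff} but with the trivial group, that an arbitrary C*-algebra $B$ is prime if and only if $I(B)$ is prime: a nontrivial central projection $p\in Z(I(B))$ produces the nonzero (by essentiality of $B\subseteq I(B)$) orthogonal ideals $B\cap I(B)p$ and $B\cap I(B)(1-p)$ of $B$, while conversely nonzero ideals $I,J\trianglelefteq B$ with $I\cap J=0$ give nonzero orthogonal regular ideals $I^{\perp\perp},J^{\perp\perp}\trianglelefteq B$, whose orthogonality is transported up to $I(B)$ by Lemma~\ref{lem:orthogonal-regular-ideals}. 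Taking $B=A\times_\lambda G$ gives $(1)\iff(4)$. For $(4)\iff(6)$, I would use that $I(A\times_\lambda G)$ is injective, and that an injective C*-algebra $B$ is prime if and only if $Z(B)=\CC$: the regular ideals of $B$ are exactly the summands $Bq$ with $q\in Z(B)$ a projection, so triviality of $Z(B)$ forces every nonzero ideal of $B$ to have $I^{\perp\perp}=B$, hence to be essential, while the converse is the standard fact that a prime C*-algebra has trivial center.

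Next I would tie this chain to condition (2). By Proposition~\ref{prop:relative-commutant-regular-ideal-intersection-property}, $(A,G,\alpha)$ has the regular ideal intersection property exactly when $Z(I(A))^G=Z(I(A\times_\lambda G))$; since $(A,G,\alpha)$ is prime, Lemma~\ref{lem:G-prime-iff} gives $Z(I(A))^G=\CC$, so this condition reads $Z(I(A\times_\lambda G))=\CC$, i.e.\ $(2)\iff(6)$. The remaining equivalences $(2)\iff(3)\iff(5)$, and $(2)\iff(7)$ when $A$ is separable, I would then read off directly from Theorem~\ref{thm:regular-ideal-intersection-property}. The only point to verify is that ``sub-induced'' may be replaced by ``induced'' throughout, which holds because both $(A,G,\alpha)$ and $(I(A),G,\alpha)$ are prime: as recorded in the remark preceding this theorem, a $G$-invariant regular ideal $\vee_{g\in G/H}\alpha_g(J)\trianglelefteq A$ corresponds to a $G$-invariant central projection in $Z(I(A))^G=\CC$ and is therefore all of $A$. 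With this substitution, conditions (2), (3) and (5) of Theorem~\ref{thm:regular-ideal-intersection-property} become verbatim conditions (3), (5) and (7) of the present statement.

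I do not anticipate a serious obstacle: the real work has been done in Theorem~\ref{thm:regular-ideal-intersection-property} and the meandering-projection analysis underlying it, so this argument is essentially bookkeeping. The step that most warrants care is the first one — that primality passes between a C*-algebra and its injective envelope — which relies on Lemma~\ref{lem:orthogonal-regular-ideals}, and the accompanying check that the ``induced'' forms of conditions (3), (5) and (7) genuinely coincide with the ``sub-induced'' forms from Theorem~\ref{thm:regular-ideal-intersection-property} once primality is in force.
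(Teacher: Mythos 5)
Your proposal is correct and follows essentially the same route as the paper: the equivalence of (1), (4) and (6) via ``prime iff the injective envelope is prime iff it has trivial center'' (which the paper simply cites from Hamana, while you supply the short direct argument using Lemma~\ref{lem:orthogonal-regular-ideals} and essentiality), and then (2), (3), (5), (6) and, in the separable case, (7) from Lemma~\ref{lem:G-prime-iff} together with Theorem~\ref{thm:regular-ideal-intersection-property}, with the induced/sub-induced identification handled exactly as in the remark preceding the theorem. No gaps.
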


\begin{proof}
The equivalence between (1), (4) and (6) follows from the fact that a C*-algebra is prime if and only if its injective envelope is prime, or equivalently, has trivial center (cf. Theorem 6.3 and Theorem 7.1 of \cite{Hamana1981}).

Since $(A,G,\alpha)$ is prime, Lemma \ref{lem:G-prime-iff} implies $Z(I(A))^G = \mathbb{C}$. It follows from this and Theorem \ref{thm:regular-ideal-intersection-property} that (2), (3), (5) and (6) are equivalent. If $A$ is separable, then Theorem \ref{thm:regular-ideal-intersection-property} implies that these conditions are additionally equivalent to (7). 
\end{proof}

\begin{rem} \label{rem:recover-geffen-ursu-primality}
If $(A,G,\alpha)$ is minimal, meaning that $A$ has no non-trivial $G$-invariant ideals, then the equivalence between (1) and (5) recovers \cite{GeffenUrsu2023}*{Theorem 6.2}. To see this, first suppose that $(I(A),G,\alpha)$ is induced from a regular C*-dynamical subsystem $(J,H,\beta)$ with $r \in FC(H) \setminus \{e\}$ such that $\beta_r = \operatorname{Ad}(u_r)$ for a $C_H(r)$-invariant unitary $u_r \in J$. Note that $H$ necessarily has finite index in $G$, so $FC(H) \leq FC(G)$. Letting $T \subseteq C_G(r)$ be a left transversal of $C_H(r)$ in $C_G(r)$, $T$ is finite. Let $p \in Z(I(A))$ denote the central projection corresponding to $J$, so that $J = I(A)p$. Then letting $q = \sum_{g \in T} \alpha_g(p)$, and $v = \sum_{g \in T} \alpha_g(u)$, $q$ is a nonzero $C_G(r)$-invariant central projection in $I(A)$, $v$ is a $C_G(r)$-invariant unitary in $I(A)q$ and $\alpha_r|_{I(A)q} = \operatorname{Ad}(v)$

For the converse, suppose there is $r \in FC(G) \setminus \{e\}$, a nonzero $C_G(r)$-invariant central projection $q \in Z(I(A))$ and a unitary $v \in I(A)q$ such that $\alpha_r|_{I(A)q} = \operatorname{Ad}(v)$. Then $C_G(r)$ has finite index in $G$, so $q$ is a meadering projection for $(A,G,\alpha)$ in the sense of Definition \ref{defn:meandering-projection}. It follows from Proposition \ref{prop:projection-equivalences} and Theorem \ref{thm:primeness} that $A \times_\lambda G$ is not prime.
\end{rem}

Recall that an abelian C*-dynamical system $(\rC_0(X),G,\alpha)$ is prime if and only if every non-empty invariant open subset is dense. Specializing Theorem \ref{thm:primeness} to the case of abelian C*-dynamical systems yields the following result, the proof of which is essentially the same as the proof of Corollary \ref{cor:abelian-regular-ideal-intersection-property}.

\begin{cor} \label{cor:abelian-primeness}
Let $(\rC_0(X), G, \alpha)$ be a prime abelian C*-dynamical system. The following are equivalent:
\begin{enumerate}
\item $\rC_0(X) \times_\lambda G$ is prime;
\item Whenever there is a non-empty regular open subset $U \subseteq X$ and a subgroup $H \leq G$ with $H = \{g \in G : gU = U\}$ such that the subsets $\{gU\}_{g \in G/H}$ are pairwise disjoint, then for any $r \in H \setminus \{e\}$ that acts trivially on $U$, the $H$-conjugacy class $\operatorname{Cl}_H(r)$ of $r$ is infinite.
\end{enumerate}
\end{cor}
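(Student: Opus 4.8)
The plan is to deduce the corollary from Theorem \ref{thm:primeness} by specializing its intrinsic condition to the abelian setting, in exactly the same spirit as the proof of Corollary \ref{cor:abelian-regular-ideal-intersection-property}. First I would invoke the equivalence of Conditions (1) and (3) in Theorem \ref{thm:primeness}, so that it suffices to show that Condition (3) there is equivalent to Condition (2) here. Since $(\rC_0(X),G,\alpha)$ is prime, being sub-induced and being induced coincide (by the remark following Lemma \ref{lem:G-prime-iff}), so I may freely pass between the two and use the topological description of sub-induced abelian systems from Remark \ref{rem:abelian-sub-induced}.

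Next I would translate the data appearing in Condition (3) of Theorem \ref{thm:primeness} into topological language. By Remark \ref{rem:abelian-sub-induced}, a regular C*-dynamical subsystem from which $(\rC_0(X),G,\alpha)$ is sub-induced is exactly a triple $(\rC_0(U),H,\beta)$ with $U \subseteq X$ a non-empty regular open set, $H = \{g \in G : gU = U\}$, and $\{gU\}_{g \in G/H}$ pairwise disjoint. Given such a subsystem and $r \in H \setminus \{e\}$, a $C_H(r)$-invariant essential hereditary C*-subalgebra $B \subseteq \rC_0(U)$ is, since hereditary C*-subalgebras of commutative C*-algebras are ideals, of the form $\rC_0(V)$ for a $C_H(r)$-invariant open subset $V \subseteq U$ that is dense in $U$ (this density being precisely the essentiality of $B$).

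The key simplification is that every *-derivation of a commutative C*-algebra vanishes, so the requirement $\alpha_r|_B = \exp(\delta)$ for a $C_H(r)$-commuting *-derivation $\delta$ of $B$ reduces to $\alpha_r|_B = \id_B$, i.e.\ to $r$ acting trivially on $V$, hence — by density and continuity — trivially on $U$. Conversely, if $r \in H \setminus \{e\}$ acts trivially on $U$, one takes $B = \rC_0(U)$ and $\delta = 0$, and the $C_H(r)$-invariance of $B$ is automatic. Substituting this dictionary into Condition (3) of Theorem \ref{thm:primeness} produces exactly Condition (2) of the corollary, completing the argument.

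I do not anticipate any genuine obstacle: the proof is a bookkeeping exercise that combines Remark \ref{rem:abelian-sub-induced}, the vanishing of derivations on commutative C*-algebras, and the prime/sub-induced/induced coincidence for prime systems. The only mildly delicate points — that an essential hereditary subalgebra of $\rC_0(U)$ corresponds to a dense open subset of $U$, and that an automorphism trivial on a dense open subset is trivial on $U$ — are immediate by continuity, and the $C_H(r)$-invariance hypothesis on $B$ plays no real role in the direction where it is needed.
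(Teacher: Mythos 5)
Your proposal is correct and follows essentially the same route as the paper: the corollary is obtained by specializing Theorem \ref{thm:primeness} through Remark \ref{rem:abelian-sub-induced}, using the coincidence of induced and sub-induced for prime systems, and observing that in the abelian case the condition on $\alpha_r$ reduces to triviality on $U$ (the paper phrases this as ``inner if and only if trivial,'' you phrase it via the vanishing of *-derivations on commutative C*-algebras --- the same specialization). Your additional bookkeeping --- that an essential hereditary C*-subalgebra of $\rC_0(U)$ is $\rC_0(V)$ for $V$ open and dense in $U$, and that fixing a dense subset pointwise forces fixing $U$ pointwise --- is exactly what the paper leaves implicit.
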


Corollary \ref{cor:abelian-primeness} should be compared to Vaes' characterization of factoriality for crossed products of abelian W*-dynamical systems \cite{Vaes2020}.

If $(A,G,\alpha)$ is a C*-dynamical system with $A$ simple, then it is trivially prime. Specializing Theorem \ref{thm:primeness} to this case yields the following result, the proof of which is identical to the proof of Corollary \ref{cor:simple-regular-ideal-intersection-property}.

\begin{cor} \label{cor:simple-primeness}
Let $(A,G,\alpha)$ be a C*-dynamical system with $A$ simple. The following are equivalent:
\begin{enumerate}
\item $A \times_\lambda G$ is prime;
\item There is $r \in FC(G) \setminus \{e\}$ and a $C_G(r)$-invariant essential hereditary C*-subalgebra $B \subseteq A$ such that $\alpha_r|_B = \exp(\delta)$ for a $C_G(r)$-commuting *-derivation of $B$;
\item There is $r \in FC(G) \setminus \{e\}$ such that $\alpha_r = \operatorname{Ad}(u_r)$ for a $C_G(r)$-invariant unitary $u_r \in M(A)$.
\end{enumerate}
\end{cor}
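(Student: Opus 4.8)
The plan is to obtain the corollary as the specialization of Theorem~\ref{thm:primeness} to a simple underlying algebra, using Lemma~\ref{lem:simple-quasi-inner} to replace the injective-envelope unitary that appears there by one in the multiplier algebra $M(A)$. Since $A$ is simple it is prime, so Theorem~\ref{thm:primeness} applies, and moreover $Z(I(A)) = \mathbb{C}$, whence $Z(I(A))^G = \mathbb{C}$ (consistent with Lemma~\ref{lem:G-prime-iff}). The first point I would make is that simplicity forces every induction to be trivial: the only nonzero $G$-invariant regular ideal of $A$ is $A$ itself, so if $(A,G,\alpha)$ is induced from a regular C*-dynamical subsystem $(J,H,\beta)$ then $J = A$, and pairwise orthogonality of $\{\alpha_g(J)\}_{g \in G/H}$ forces $G/H$ to be a single point, i.e.\ $H = G$. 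Consequently the induced-subsystem conditions (3) and (5) of Theorem~\ref{thm:primeness} collapse to assertions quantified directly over $r \in G \setminus \{e\}$ with $H = G$, so that $\operatorname{Cl}_H(r) = \operatorname{Cl}_G(r)$, and $\operatorname{Cl}_G(r)$ is finite precisely when $r \in FC(G)$; this is what lets the corollary dispense with $H$ and refer only to $FC(G)$ and $C_G(r)$.

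Next I would establish the equivalence of conditions (2) and (3), which is the orientation-independent core. By Proposition~\ref{prop:intrinsic-equivalences} applied in the trivial induction $J = A$, $H = G$, condition (2)---a $C_G(r)$-commuting $*$-derivation $\delta$ on a $C_G(r)$-invariant essential hereditary subalgebra $B \subseteq A$ with $\alpha_r|_B = \exp(\delta)$---is equivalent to the injective-envelope form: there is a $C_G(r)$-invariant unitary $v \in I(A)$ implementing $\alpha_r$ on a $C_G(r)$-invariant regular summand of $I(A)$. Since $A$ is simple, $I(A)$ is prime with $Z(I(A)) = \mathbb{C}$, so the only nonzero summand is all of $I(A)$ and $\alpha_r$ is quasi-inner. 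As $\alpha_r$ commutes with $\{\alpha_h : h \in C_G(r)\}$, Lemma~\ref{lem:simple-quasi-inner} upgrades the $C_G(r)$-invariant unitary $v \in I(A)$ to a $C_G(r)$-invariant unitary $u_r \in M(A)$ with $\operatorname{Ad}(u_r) = \alpha_r$, which is exactly condition (3); the reverse implication is immediate from $M(A) \subseteq I(A)$. This gives (2) $\iff$ (3).

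Finally I would tie these intrinsic conditions to the primality of $A \times_\lambda G$ via Theorem~\ref{thm:primeness}. Since $Z(I(A))^G = \mathbb{C}$, Proposition~\ref{prop:projection-equivalences} shows that the relevant behaviour of $A \times_\lambda G$ is governed entirely by whether $(I(A),G,\alpha)$ carries a meandering projection in the sense of Definition~\ref{defn:meandering-projection}, and by the first two steps such a projection is present exactly when there is an $r \in FC(G) \setminus \{e\}$ implemented as in (2) and (3). Feeding this identification into the equivalence of (1) with (3) and (5) in Theorem~\ref{thm:primeness}, together with the collapse $\operatorname{Cl}_G(r)$ finite $\iff r \in FC(G)$ from the first step, yields the corollary's equivalences among (1), (2) and (3). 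The main obstacle is the passage effected by Lemma~\ref{lem:simple-quasi-inner}: moving the implementing unitary from $I(A)$ into $M(A)$ while preserving $C_G(r)$-invariance, which depends on the primality of $I(A)$ to determine the implementing unitary up to a scalar; the remaining work is the routine bookkeeping that reduces the finite-index and finite-conjugacy-class considerations of the general theorem to the trivial induction $H = G$ forced by simplicity.
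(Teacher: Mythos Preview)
Your proposal is correct and follows essentially the same approach as the paper: the paper's proof simply says the result follows from Theorem~\ref{thm:primeness} and Lemma~\ref{lem:simple-quasi-inner}, and you have spelled out precisely why---simplicity forces every induction to be trivial (so $H=G$, $J=A$, and the finite-conjugacy-class condition becomes membership in $FC(G)$), and Lemma~\ref{lem:simple-quasi-inner} moves the implementing unitary from $I(A)$ to $M(A)$ while retaining $C_G(r)$-invariance.
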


\section{Pseudo-expectations} \label{sec:pseudo-expectations}

In this section we will relate the regular ideal intersection property from Section \ref{sec:regular-ideal-intersection-property} to an important class of maps on reduced crossed products called pseudo-expectations. This notion was introduced in \cite{KennedySchafhauser2019} in order to study the ideal intersection property, and can be seen as an equivariant generalization of Pitts' notion of pseudo-expectation from \cite{Pitts2017}.

Recall that if $(A,G,\alpha)$ is a C*-dynamical system, then there is always a conditional expectation $E : A \times_\lambda G \to A$ satisfying
\[
E(a \lambda_g) = a \delta_e(g), \quad a \in A,\ g \in G.
\]
The conditional expectation $E$ is the prototypical example of a pseudo-expectation. Note that $E$ is $G$-equivariant. Furthermore, identifying $A$ with its image in $I_G(A)$, we can view $E$ as a map from $A \times_\lambda G$ to $I_G(A)$.

\begin{defn}
Let $(A,G,\alpha)$ be a C*-dynamical system. A {\em pseudo-expectation} for $(A,G,\alpha)$ is a $G$-equivariant ucp map $\phi : A \times_\lambda G \to I_G(A)$ with the property that $\phi|_A = \id_A$. The conditional expectation $E$ above is the {\em canonical pseudo-expectation} on $A \times_\lambda G$. 
\end{defn}

\begin{rem}
Note that a $G$-equivariant conditional expectation $\phi : I_G(A) \times_\lambda G \to I_G(A)$ restricts to a pseudo-expectation for $(A,G,\alpha)$. Conversely, if $\phi : A \times_\lambda G \to I_G(A)$ is a pseudo-expectation for $(A,G,\alpha)$, then it follows from the essentiality of the inclusion $(A,G,\alpha) \subseteq (I_G(A),G,\alpha)$ that $\phi$ has a unique extension to a $G$-equivariant conditional expectation $\phi : I_G(A) \times_\lambda G \to I_G(A)$. This gives a bijective correspondence between pseudo-expectations for $(A,G,\alpha)$ and $G$-equivariant conditional expectations from $I_G(A) \times_\lambda G$ to $I_G(A)$. 
\end{rem}

\begin{defn}
A C*-dynamical system $(A,G,\alpha)$ has the {\em unique pseudo-expectation property} if the only pseudo-expectation for $(A,G,\alpha)$ is the canonical one.
\end{defn}

The idea underlying the proof of the following result originates in work of B\'{e}dos and Omland \cite{BedosOmland2016} on twisted group C*-algebras. This idea was also utilized by Geffen and Ursu \cite{GeffenUrsu2023}*{Lemma 5.1}, and our proof is similar to theirs. Since the statement of our result is slightly different and we do not require minimality, we will provide a proof for convenience.

\begin{lem} \label{lem:pseudo-expectation-bounded-sum}
Let $(A,G,\alpha)$ be a C*-dynamical system and let $\phi : A \times_\lambda G \to I_G(A)$ be a pseudo-expectation for $(A,G,\alpha)$. Let $N \trianglelefteq G$ be a normal subgroup such that the restriction $\phi|_{A \times_\lambda N}$ agrees with the canonical pseudo-expectation for $(A,N,\alpha|_N)$. Then for $gN \in G/N$,
\[
\sup_F \textstyle{\| \sum_{s \in F} \phi(\lambda_s)^* \phi(\lambda_s) \|} \leq 1,
\]
where the supremum is taken over all finite subsets $F \subseteq gN$.
\end{lem}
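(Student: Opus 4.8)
The plan is to derive the estimate from the Kadison--Schwarz inequality for the pseudo-expectation, applied to one carefully chosen element of the crossed product that converts the hypothesis on $N$ into exactly the orthogonality relations needed.

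First I would replace $\phi$ by its canonical extension. By the remark preceding the statement, $\phi$ extends uniquely to a $G$-equivariant conditional expectation $\widehat{\phi}\colon I_G(A)\times_\lambda G \to I_G(A)$, which by Tomiyama's theorem satisfies $\widehat{\phi}(bxc) = b\,\widehat{\phi}(x)\,c$ for $b,c \in I_G(A)$ and $x \in I_G(A)\times_\lambda G$; set $u_g := \widehat{\phi}(\lambda_g) = \phi(\lambda_g) \in I_G(A)$ for $g \in G$. A short approximate-unit argument (needed only because $A$ may be non-unital) converts the hypothesis on $N$ into the relations $\widehat{\phi}(\lambda_n) = \delta_e(n)\cdot 1$ for all $n \in N$.

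Now fix a coset $gN$ and a finite subset $F = \{s_1,\dots,s_m\} \subseteq gN$. The combinatorial point is that, by normality of $N$, $s_i s_j^{-1} \in gNg^{-1} = N$ for all $i,j$, with $s_i s_j^{-1} = e$ precisely when $i = j$. Consider $z := \sum_{i=1}^{m} u_{s_i}^{*}\lambda_{s_i} \in I_G(A)\times_\lambda G$. Then $\widehat{\phi}(z) = \sum_i u_{s_i}^{*}\widehat{\phi}(\lambda_{s_i}) = \sum_i u_{s_i}^{*}u_{s_i} =: P \ge 0$. On the other hand, $zz^{*} = \sum_{i,j} u_{s_i}^{*}\lambda_{s_i s_j^{-1}}u_{s_j}$, so applying $\widehat{\phi}$ and using $\widehat{\phi}(\lambda_{s_i s_j^{-1}}) = \delta_e(s_i s_j^{-1})\cdot 1$ kills every term with $i \neq j$, leaving $\widehat{\phi}(zz^{*}) = \sum_i u_{s_i}^{*}u_{s_i} = P$ as well. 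The Kadison--Schwarz inequality in the form $\widehat{\phi}(z)\widehat{\phi}(z)^{*} \le \widehat{\phi}(zz^{*})$ now reads $P^{2} \le P$; since $P \ge 0$, this forces $\sigma(P) \subseteq [0,1]$, that is, $\|P\| \le 1$. As $F$ ranges over all finite subsets of $gN$ we obtain $\sup_F \|\sum_{s \in F}\phi(\lambda_s)^{*}\phi(\lambda_s)\| \le 1$, which is the claim.

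The argument involves no real obstacle; the only points requiring attention are the passage to $\widehat{\phi}$ on $I_G(A)\times_\lambda G$ — so that the coefficients $u_{s_i}^{*} \in I_G(A)$ are legitimate crossed-product coefficients even when $A$ is non-unital — and the choice of $z$, which is rigged precisely so that $\widehat{\phi}(z)$ and $\widehat{\phi}(zz^{*})$ both collapse to $P = \sum_i u_{s_i}^{*}u_{s_i}$.
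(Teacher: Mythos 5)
Your proposal is correct and takes essentially the same route as the paper: both arguments hinge on the element $z=\sum_{s\in F}\phi(\lambda_s)^*\lambda_s$ and the identical key computation using the $I_G(A)$-bimodule (multiplicative domain) property of the extended expectation together with $\widehat{\phi}(\lambda_{st^{-1}})=0$ for $s\neq t$ in $gN$; the paper simply applies positivity to $\phi\bigl((1-z)(1-z)^*\bigr)$ to get $1-\sum_{s\in F}\phi(\lambda_s)^*\phi(\lambda_s)\geq 0$, whereas you apply Kadison--Schwarz to $z$ to get $P^2\leq P$, which is a cosmetic difference. Your explicit passage to the extension $\widehat{\phi}$ on $I_G(A)\times_\lambda G$ and the remark about the non-unital case only make visible a step the paper leaves implicit.
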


\begin{proof}
Fix a finite subset $F \subseteq gN$ and let
\[
x = 1 - \sum_{s \in F} \phi(\lambda_s)^*\lambda_s.
\]
Then
\[
xx^* = 1 - \sum_{s \in F} \phi(\lambda_s)^* \lambda_s - \sum_{t \in F} \lambda_t^* \phi(\lambda_t) + \sum_{s \in F} \sum_{t \in F} \phi(\lambda_s)^* \lambda_{st^{-1}} \phi(\lambda_t).
\]
For $s \in F$, using the fact that $I_G(A)$ belongs to the multiplicative domain of $\phi$ gives
\[
\phi(\phi(\lambda_s)^* \lambda_s) = \phi(\lambda_s)^* \phi(\lambda_s) = \phi(\lambda_s^* \phi(\lambda_s)).
\]
Also, for $s,t \in F$ with $s \ne t$, $st^{-1} \in N \setminus \{e\}$, so a similar computation, using the assumption that $\phi|_{A \times_\lambda N}$ restricts to the canonical pseudo-expectation for $(A,N,\alpha|_N)$, gives
\[
\phi(\phi(\lambda_s)^* \lambda_{st^{-1}} \phi(\lambda_t)) = \phi(\lambda_s)^* \phi(\lambda_{st^{-1}}) \phi(\lambda_t) = 0.
\]
Applying $\phi$ and the above equalities to the above expression for $xx^*$ gives
\[
0 \leq \phi(xx^*) = 1 - \sum_{s \in F} \phi(\lambda_s)^* \phi(\lambda_s). \qedhere
\]
\end{proof}

\begin{lem} \label{lem:pseudo-expectation-r-projection}
Let $(A,G,\alpha)$ be a C*-dynamical system and let $\phi : A \times_\lambda G \to I_G(A)$ be a pseudo-expectation for $(A,G,\alpha)$. Let $N \trianglelefteq G$ be a normal subgroup such that the restriction $\phi|_{A \times_\lambda N}$ agrees with the canonical pseudo-expectation for $(A,N,\alpha|_N)$. Suppose there is $r \in G \setminus \{e\}$ such that $rN \in FC(G/N)$ and $\phi(\lambda_r) \ne 0$. Then $(A,G,\alpha)$ does not have the regular ideal intersection property.
\end{lem}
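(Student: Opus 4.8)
The plan is to extract a meandering projection for $(I_G(A),G,\alpha)$ in the sense of Definition \ref{defn:meandering-projection} from the single off-diagonal coefficient $w := \phi(\lambda_r) \in I_G(A)$, and then to conclude via Proposition \ref{prop:projection-equivalences} and Proposition \ref{prop:relative-commutant-regular-ideal-intersection-property}. Throughout I would work with the extension of $\phi$ to a $G$-equivariant conditional expectation $\phi : I_G(A) \times_\lambda G \to I_G(A)$ (recorded in the remark after the definition of pseudo-expectation), so that $I_G(A)$ lies in the multiplicative domain of $\phi$.

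\emph{Relations and a unitary on a corner.} From $\lambda_r a = \alpha_r(a)\lambda_r$ and the multiplicative domain property one gets $wa = \alpha_r(a) w$ for all $a \in I_G(A)$, and, by taking adjoints, $w^* a = \alpha_r^{-1}(a) w^*$. Hence $w^*w$ and $ww^*$ both lie in $Z(I_G(A))$; since they are central, the identities $(w^*w - ww^*)w = 0 = (w^*w - ww^*)w^*$ force $(w^*w)^2 = (w^*w)(ww^*) = (ww^*)^2$, so $w^*w = ww^* =: c$. By $G$-equivariance of $\phi$, $\alpha_h(w) = \phi(\lambda_{hrh^{-1}}) = w$ for every $h \in C_G(r)$, so $w$ and $c$ are $C_G(r)$-invariant. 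Fix $0 < \lambda < \|c\|$, let $p := \chi_{[\lambda,\infty)}(c) \in Z(I_G(A))$ (nonzero, $C_G(r)$-invariant, hence $r$-invariant, with $c \geq \lambda p$), and take the polar decomposition $w = v\, c^{1/2}$ in the monotone complete C*-algebra $I_G(A)$. Since $w^*w = ww^*$ one has $v^*v = vv^* = \operatorname{supp}(c) \geq p$, so $v$ is $C_G(r)$-invariant (uniqueness of polar decomposition), and, arguing exactly as in the proof of Lemma \ref{lem:relative-commutant-to-projection} (multiplying $(va - \alpha_r(a)v)c^{1/2} = 0$ and its $w^*$-analogue by the inverse of $cp$ in $Z(I_G(A))p$), one obtains that $p$ commutes with $v$ and that $u := vp$ is a $C_G(r)$-invariant unitary in $I_G(A)p$ with $\operatorname{Ad}(u)|_{I_G(A)p} = \alpha_r|_{I_G(A)p}$.

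\emph{The bounded sum, via $rN \in FC(G/N)$.} This is where the FC hypothesis enters. Let $L \leq G$ be the preimage of $C_{G/N}(rN)$ under $G \to G/N$, so that $[G:L] = |\operatorname{Cl}_{G/N}(rN)| < \infty$, $C_G(r) \leq L$, and $\operatorname{Cl}_L(r) \subseteq rN$. For a left transversal $T_L$ of $C_G(r)$ in $L$, the elements $\alpha_t(c) = \phi(\lambda_{trt^{-1}})^*\phi(\lambda_{trt^{-1}})$, $t \in T_L$, are indexed by distinct elements of the single coset $rN$; hence Lemma \ref{lem:pseudo-expectation-bounded-sum} applied to $rN$ gives $\sup_F \|\sum_{t \in F} \alpha_t(c)\| \leq 1$ over finite $F \subseteq T_L$. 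Writing a left transversal $T$ of $C_G(r)$ in $G$ as a union of $[G:L]$ translates of $T_L$ and using the triangle inequality and $G$-equivariance, this upgrades to $\sup_F \|\sum_{t \in F} \alpha_t(c)\| \leq [G:L] < \infty$ over finite $F \subseteq T$. Since $\alpha_t(p) \leq \lambda^{-1}\alpha_t(c)$, we get $\sup_F \|\sum_{t \in F} \alpha_t(p)\| \leq \lambda^{-1}[G:L] < \infty$. Together with the previous paragraph this shows $p$ is meandering for $(I_G(A),G,\alpha)$, so Proposition \ref{prop:projection-equivalences} yields $Z(I(A))^G \neq Z(I(A \times_\lambda G))$, and then Proposition \ref{prop:relative-commutant-regular-ideal-intersection-property} shows $(A,G,\alpha)$ does not have the regular ideal intersection property.

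\emph{Main obstacle.} The technical heart is the first displayed step, namely passing from the intertwining relation $wa = \alpha_r(a)w$ to a genuine \emph{unitary} implementing $\alpha_r$ on a corner of $I_G(A)$: one must check $w^*w = ww^*$ (so the polar part is a partial isometry with equal initial and final space) and that the truncating projection $p$ commutes with $v$, and this requires some care with supports in the monotone complete setting, following the pattern of Lemma \ref{lem:relative-commutant-to-projection}. The second subtle point is the bounded-sum estimate: $C_G(r)$ may have infinite index in $G$, and it is precisely the hypothesis $rN \in FC(G/N)$ (forcing $[G:L] < \infty$) that makes the sum over a transversal of $C_G(r)$ bounded, as required by the definition of a meandering projection.
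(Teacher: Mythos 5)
Your proposal is correct and takes essentially the same route as the paper's proof: bound $\sum_{t}\alpha_t\bigl(\phi(\lambda_r)^*\phi(\lambda_r)\bigr)$ over a transversal of $C_G(r)$ via Lemma \ref{lem:pseudo-expectation-bounded-sum} and the finiteness of $\operatorname{Cl}_{G/N}(rN)$, produce a $C_G(r)$-invariant unitary implementing $\alpha_r$ on a central corner by polar decomposition (the paper cites \cite{GeffenUrsu2023}*{Proposition 3.8} for what you verify by hand), and conclude that the resulting projection is meandering, finishing with Proposition \ref{prop:projection-equivalences} and Theorem \ref{thm:regular-ideal-intersection-property}. The only cosmetic differences are that the paper partitions $\operatorname{Cl}_G(r)$ into finitely many $N$-cosets via a finite set $E$ rather than passing through the subgroup $L$ (the bounds agree, $|E|=[G:L]$), and it realizes your truncating projection concretely as the indicator of the clopen closure of $\{c>\lambda\}$ in the extremally disconnected Glimm space, which is how $\chi_{[\lambda,\infty)}(c)$ should be interpreted.
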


\begin{proof}
We will prove that there is a meandering projection for $(I_G(A),G,\alpha)$ in the sense of Definition \ref{defn:meandering-projection}. The result will then follow from Proposition \ref{prop:projection-equivalences} and Theorem \ref{thm:regular-ideal-intersection-property}.

Since $rN \in FC(G/N)$, there is a finite subset $E \subseteq \operatorname{Cl}_G(r)$ such that $\operatorname{Cl}_G(r) \subseteq E N$. It follows from Lemma \ref{lem:pseudo-expectation-bounded-sum} that
\[
\sup_F \left\| \sum_{g \in F} \phi(\lambda_g)^* \phi(\lambda_g) \right\| \leq \left\| \sum_{s \in E} \sum_{t \in sN} \phi(\lambda_t)^*\phi(\lambda_t) \right\| \leq |E| < \infty,
\]
where the supremum is taken over all finite subsets $F \subseteq \operatorname{Cl}(r)$.

For $g \in G$, let $x_g = \phi(\lambda_g)$. Then since $A$ belongs to the multiplicative domain of $\phi$, $x_g a = \alpha_g(a) x_g$ for $a \in A$, i.e. $x_g$ is a dependent element for $r$ in the terminology of Kallman \cite{Kallman1969}. By taking the polar decomposition of $x_g$, we obtain a central projection $p_g \in Z(I_G(A))$ and a unitary $u_g \in I_G(A)p_r$ such that $\alpha_g|_{I_G(A)p_g} = \operatorname{Ad}(u_g)$ (see e.g. \cite{GeffenUrsu2023}*{Proposition 3.8} for details). Furthermore, it follows from the $G$-equivariance of $\phi$ that $\alpha_h(p_g) = p_{hgh^{-1}}$ and $\alpha_h(u_g) = u_{hgh^{-1}}$ for all $h \in G$.

Let $f = x_r^*x_r$. Then it follows from above that $0 \ne f \in Z(I_G(A))$. Choose $\lambda > 0$ with $2\lambda$ in the range of $f$ and let $U \subseteq \operatorname{Glimm}(I_G(A))$ denote the closure of the non-empty open set
\[
\{x \in \operatorname{Glimm}(I_G(A)) : f(x) > \lambda\}.
\]
Since $\operatorname{Glimm}(I_G(A))$ is extremally disconnected, $U$ is clopen. Also, $U$ is $C_G(r)$-invariant since $f$ is. Let $p = \chi_U \in Z(I_G(A))$ denote the corresponding central projection. Then $p \ne 0$ and $p$ is $C_G(r)$-invariant. We will show that $p$ is meandering for $(I_G(A),G,\alpha)$.

Let $u = u_r p$. Then $u \in I_G(A)p$ and since $p \leq p_r$, $\alpha_r|_{I_G(A)p} = \operatorname{Ad}(u)$. Furthermore, from above $u$ is $C_G(r)$-invariant. Finally, let $T \subseteq G$ be a left transversal of $C_G(r)$ in $G$. Then for a finite subset $F \subseteq T$, it follows from above that
\[
\left\| \sum_{t \in F} \alpha_t(p) \right\| \leq \lambda^{-1} \left\| \sum_{t \in F} \alpha_t(f) \right\| = \lambda^{-1} \left\| \sum_{t \in F} \phi(\lambda_{trt^{-1}})^* \phi(\lambda_{trt^{-1}}) \right\| \leq \lambda^{-1} |E|.
\]
Hence $p$ is meandering for $(I_G(A),G,\alpha)$.
\end{proof}

\section{The ideal intersection property} \label{sec:intersection-property}

In this section we will prove that a C*-dynamical system over an FC-hypercentral group has the ideal intersection property if and only if it has the regular ideal intersection property. Combined with our characterization of the regular ideal intersection property in Section \ref{sec:regular-ideal-intersection-property}, this will yield a characterization of the ideal intersection property for C*-dynamical systems over FC-hypercentral groups.

We will see that for prime C*-dynamical systems over FC-hypercentral groups, the ideal intersection property is equivalent to the primality of the corresponding reduced crossed product. This extends a result of Echterhoff \cite{Echterhoff1990}*{Theorem 3.1} for minimal C*-dynamical systems over FC-hypercentral groups.

\begin{defn}
A C*-dynamical system $(A,G)$ has the \emph{ideal intersection property} if every nonzero ideal of $A \times_{\lambda} G$ has nonzero intersection with $A$. 
\end{defn}

The following result relates the ideal intersection property to the theory of pseudo-expectations from Section~\ref{sec:pseudo-expectations}. We will utilize this result in our characterization of the ideal intersection property for FC-hypercentral groups.

\begin{prop} \label{prop:regular-ideal-intersection-property-implies-unique-pseudo-expectation-property}
Let $(A,G,\alpha)$ be a C*-dynamical system. If $(A,G,\alpha)$ has the unique pseudo-expectation property, then it has the ideal intersection property. If $G$ is FC-hypercentral and $(A,G,\alpha)$ has the regular ideal intersection property, then it has the unique pseudo-expectation property, and hence the ideal intersection property.
\end{prop}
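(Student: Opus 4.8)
The plan is to take the first statement from prior work and to prove the second by transfinite induction along the FC-central series of $G$, with Lemma~\ref{lem:pseudo-expectation-r-projection} as the engine. For the first statement: if $(A,G,\alpha)$ fails the ideal intersection property there is a nonzero ideal $I \trianglelefteq A \times_\lambda G$ with $I \cap A = 0$, and from $I$ one manufactures, using the injectivity of $I_G(A)$, a pseudo-expectation for $(A,G,\alpha)$ distinct from the canonical expectation $E$ — here one uses that $E$ is faithful, hence cannot ``vanish on'' $I$. This implication is contained in \cite{KennedySchafhauser2019} (cf.\ also \cite{Zarikian2019}*{Theorem~4.6}), so I will concentrate on the second statement.

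So assume $G$ is FC-hypercentral and that $(A,G,\alpha)$ has the regular ideal intersection property, and let $\phi \colon A \times_\lambda G \to I_G(A)$ be an arbitrary pseudo-expectation; the goal is to show $\phi = E$. Since $\phi|_A$ is a *-homomorphism, $A$ lies in the multiplicative domain of $\phi$; consequently $\phi$ is determined on $A \times_\lambda G$ by its values on the canonical unitaries, and for any subgroup $H \leq G$ the restriction $\phi|_{A \times_\lambda H}$ is the canonical conditional expectation onto $A$ if and only if $\phi(\lambda_h) = 0$ for every $h \in H \setminus \{e\}$. In particular it suffices to prove that $\phi(\lambda_g) = 0$ for all $g \in G \setminus \{e\}$.

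Let $\{F_i\}$ be the FC-central series of $G$, with $F_0 := \{e\}$ (so $F_1 = FC(G) = FC(G/F_0)$ fits the recursion). I claim that for every ordinal $i$ the restriction $\phi|_{A \times_\lambda F_i}$ is the canonical conditional expectation onto $A$; as each $F_i$ is characteristic, hence normal, in $G$, this is exactly a statement to which Lemma~\ref{lem:pseudo-expectation-r-projection} applies. The base case $i = 0$ is trivial. For the successor step, assume the claim for $F_i$; since $(A,G,\alpha)$ has the regular ideal intersection property, the contrapositive of Lemma~\ref{lem:pseudo-expectation-r-projection} applied with the normal subgroup $N = F_i$ shows that there is no $r \in G \setminus \{e\}$ with $rF_i \in FC(G/F_i)$ and $\phi(\lambda_r) \neq 0$. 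Because $F_{i+1}/F_i = FC(G/F_i)$, this says exactly that $\phi(\lambda_r) = 0$ for all $r \in F_{i+1} \setminus \{e\}$, i.e.\ $\phi|_{A \times_\lambda F_{i+1}}$ is the canonical conditional expectation onto $A$. For a limit ordinal $j$, the *-algebra generated by $A$ together with $\{\lambda_g : g \in F_j\}$ is the increasing union over $i < j$ of the corresponding *-algebras, hence is dense in $A \times_\lambda F_j$; since $\phi$ and the canonical conditional expectation are contractive and agree on this dense set by the inductive hypothesis, they agree on $A \times_\lambda F_j$. Finally, as $G$ is FC-hypercentral, $G = FCH(G) = \bigcup_i F_i$, so $F_\kappa = G$ for some ordinal $\kappa$; the claim at stage $\kappa$ gives $\phi = E$. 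Hence $(A,G,\alpha)$ has the unique pseudo-expectation property, and by the first statement it has the ideal intersection property.

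The genuinely difficult analytic work is already isolated in Lemma~\ref{lem:pseudo-expectation-r-projection} — producing a meandering projection in $I_G(A)$, hence (via Proposition~\ref{prop:projection-equivalences} and Theorem~\ref{thm:regular-ideal-intersection-property}) a violation of the regular ideal intersection property, out of a pseudo-expectation with $\phi(\lambda_r) \neq 0$ for some $r$ whose image in $G/N$ has finite conjugacy class. Granting that, the one point that needs care in the argument above is lining up the induction so that Lemma~\ref{lem:pseudo-expectation-r-projection} is invoked with $N = F_i$ and so that its deliberately weak hypothesis $rN \in FC(G/N)$ is met on the nose by $F_{i+1}/F_i = FC(G/F_i)$; it is exactly this weakness that lets the induction climb the whole FC-central series, which is why FC-hypercentrality of $G$ — rather than merely the FC property, where a single application of the lemma would already finish — is the natural hypothesis.
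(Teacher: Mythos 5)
Your proof is correct and follows essentially the same route as the paper: the first implication is delegated to \cite{KennedySchafhauser2019} (Theorem~6.6 there), and the second is established by transfinite induction along the FC-central series, invoking Lemma~\ref{lem:pseudo-expectation-r-projection} with $N = F_i$ at each successor step and a density argument at limit ordinals. No gaps.
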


\begin{proof}
The fact that $(A,G,\alpha)$ has the ideal intersection property if it has the unique pseudo-expectation property follows from \cite{KennedySchafhauser2019}*{Theorem 6.6}.

Conversely, suppose that $G$ is FC-hypercentral and that $(A,G,\alpha)$ has the regular ideal intersection property. Let $\phi : A \times_\lambda G \to I_G(A)$ be a pseudo-expectation for $(A,G,\alpha)$. We must prove that $\phi$ agrees with the canonical pseudo-expectation for $(A,G,\alpha)$, which is equivalent to proving that $\phi(\lambda_g) = 0$ for all $g \in G \setminus \{e\}$.

Let $(F_i)_i$ be the FC-central series of $G$, so that $F_1 = FC(G)$ and $G = \cup_i F_i$. It will be convenient to let $F_0$ denote the trivial group. It suffices to prove that for each $i$, the restriction $\phi|_{A \times_\lambda F_i}$ agrees with the canonical pseudo-expectation for $(A,F_i,\alpha|_{F_i})$. We will proceed via transfinite induction. 

The above statement is trivially true for $i = 0$. For a successor ordinal $i+1$, suppose that $\phi|_{A \times_\lambda F_i}$ agrees with the canonical-pseudo-expectation for $(A,F_i,\alpha|_{F_i})$. For $r \in F_{i+1} \setminus F_i$, $F_i$ is normal in $G$ and $rF_i \in FC(G/Fi)$. Therefore, if $\phi(\lambda_r) \ne 0$, then Lemma \ref{lem:pseudo-expectation-r-projection} implies that $(A,G,\alpha)$ does not have the regular ideal intersection property, which would be a contradiction. Hence $\phi(\lambda_r) = 0$ for all $r \in F_{i+1}$, so $\phi|_{A \times_\lambda F_{i+1}}$ agrees with the canonical pseudo-expectation for $(A,F_{i+1},\alpha|_{F_{i+1}})$.

Finally, if $j$ is a limit ordinal and $\phi|_{A \times_\lambda F_i}$ agrees with the canonical pseudo-expectation for $(A,F_i,\alpha|_{F_i})$ for all $i < j$, then since $F_j = \cup_{i < j} F_i$, it is clear that $\phi|_{A \times_\lambda F_j}$ agrees with the canonical pseudo-expectation for $(A,F_j,\alpha|_{F_j})$.
\end{proof}

The next result is our characterization of the ideal intersection property for C*-dynamical systems over FC-hypercentral groups.

\begin{thm} \label{thm:FC-hypercentral-ideal-intersection-property}
Let $(A,G,\alpha)$ be a C*-dynamical system with $G$ FC-hypercentral. The following are equivalent:
\begin{enumerate}
\item $(A,G,\alpha)$ has the ideal intersection property;

\item $(A,G,\alpha)$ has the regular ideal intersection property;

\item $(A,G,\alpha)$ has the unique pseudo-expectation property;

\item Whenever $(A,G,\alpha)$ is sub-induced from a regular C*-dynamical subsystem $(J,H,\beta)$ with $r \in H \setminus \{e\}$ and a $C_H(r)$-invariant essential hereditary C*-subalgebra $B \subseteq J$ such that $\alpha_r|_B = \exp(\delta)$ for a $C_H(r)$-commuting *-derivation $\delta$ of $B$, then the $H$-conjugacy class $\operatorname{Cl}_H(r)$ of $r$ is infinite;

\item Whenever $(I(A),G,\alpha)$ is sub-induced from a regular C*-dynamical subsystem $(J,H,\beta)$ with $r \in H \setminus \{e\}$ such that $\beta_r = \operatorname{Ad}(u_r)$ for a $C_H(r)$-invariant unitary $u_r \in J$, then the $H$-conjugacy class $\operatorname{Cl}_H(r)$ of $r$ is infinite;

\item $Z(I(A))^G = Z(I(A \times_\lambda G))$.
\end{enumerate}
If, in addition, $A$ is separable, then these conditions are equivalent to:
\begin{enumerate}[resume]
\item Whenever $(A,G,\alpha)$ is sub-induced from a regular C*-dynamical subsystem $(J,H,\beta)$ with $r \in H \setminus \{e\}$, a nonzero $r$-invariant ideal $I \trianglelefteq J$ and a unitary $u \in M(I)$ such that
\begin{enumerate}
\item $I \cap \alpha_h(I)$ is an essential ideal in both $I$ and $\alpha_h(I)$ for all $h \in C_H(r)$ and 
\item $C := \|\alpha_r|_I - \operatorname{Ad}(u)\|$ and $D := \sup_{h \in C_H(r)} \|\alpha_h(u) - u\|$ satisfy $2\sqrt{2 - \sqrt{4-C^2}} + D < \sqrt{2}$,
\end{enumerate}
then the $H$-conjugacy class $\operatorname{Cl}_H(r)$ of $r$ is infinite.
\end{enumerate}

\end{thm}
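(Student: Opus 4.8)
The strategy is to reduce the statement to two results already established: Theorem \ref{thm:regular-ideal-intersection-property}, which characterizes the regular ideal intersection property with no restriction on $G$, and Proposition \ref{prop:regular-ideal-intersection-property-implies-unique-pseudo-expectation-property}, which is where the FC-hypercentrality of $G$ enters. First I would observe that the equivalences among conditions (2), (4), (5) and (6) — and, when $A$ is separable, condition (7) — are immediate from Theorem \ref{thm:regular-ideal-intersection-property}: these are verbatim the characterizations of the regular ideal intersection property recorded there (the intrinsic $C_H(r)$-commuting \mbox{*-derivation} condition, the extrinsic sub-induced condition in the injective envelope, the center equality $Z(I(A))^G = Z(I(A \times_\lambda G))$, and the separable distance-estimate condition). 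This part uses no hypothesis on $G$.

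It then remains to splice conditions (1) and (3) into the chain, which I would do via the cycle (1) $\Rightarrow$ (2) $\Rightarrow$ (3) $\Rightarrow$ (1). The implication (1) $\Rightarrow$ (2) is trivial, since a regular ideal of $A \times_\lambda G$ is in particular an ideal, so the ideal intersection property forces the regular one. The implication (2) $\Rightarrow$ (3) is the second assertion of Proposition \ref{prop:regular-ideal-intersection-property-implies-unique-pseudo-expectation-property}, and this is the only place FC-hypercentrality is used: given a pseudo-expectation $\phi : A \times_\lambda G \to I_G(A)$, one shows by transfinite induction along the FC-central series $(F_i)$ of $G$ that $\phi|_{A \times_\lambda F_i}$ is canonical for every $i$; at a successor step one uses that $F_i \trianglelefteq G$ with $rF_i \in FC(G/F_i)$ for $r \in F_{i+1}$, together with Lemma \ref{lem:pseudo-expectation-r-projection} (whose hypothesis, if satisfied, would produce a meandering projection in $I_G(A)$ and hence, via Proposition \ref{prop:projection-equivalences} and Theorem \ref{thm:regular-ideal-intersection-property}, contradict the regular ideal intersection property), forcing $\phi(\lambda_r) = 0$. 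Since $G = \cup_i F_i$, this yields the unique pseudo-expectation property. Finally, (3) $\Rightarrow$ (1) is the first assertion of the same proposition, which invokes \cite{KennedySchafhauser2019}*{Theorem 6.6}.

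Because both Theorem \ref{thm:regular-ideal-intersection-property} and Proposition \ref{prop:regular-ideal-intersection-property-implies-unique-pseudo-expectation-property} are already in place, assembling this theorem is essentially bookkeeping, with no genuine remaining obstacle. If one were instead proving the whole package from the ground up, the main difficulty would lie in the implication (2) $\Rightarrow$ (3): upgrading the regular ideal intersection property to the unique pseudo-expectation property for FC-hypercentral $G$. This rests on the boundedness estimate $\sup_F \|\sum_{s \in F} \phi(\lambda_s)^* \phi(\lambda_s)\| \leq 1$ of Lemma \ref{lem:pseudo-expectation-bounded-sum}, the polar-decomposition analysis of the dependent elements $\phi(\lambda_r)$ producing partial implementing unitaries in $I_G(A)$, and the need to climb the entire FC-central series by transfinite induction rather than stopping at $FC(G)$; this is also, in turn, why FC-hypercentral groups are the natural level of generality, as the companion counterexample (Example \ref{ex:non-fc-case}) shows.
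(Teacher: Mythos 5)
Your proposal is correct and follows the paper's proof essentially verbatim: the equivalence of (2), (4), (5), (6) (and (7) in the separable case) is quoted from Theorem \ref{thm:regular-ideal-intersection-property}, and (1), (2), (3) are linked exactly as in Proposition \ref{prop:regular-ideal-intersection-property-implies-unique-pseudo-expectation-property}, with the trivial implication (1) $\Rightarrow$ (2) closing the cycle. Your added commentary on the transfinite induction along the FC-central series accurately reflects where FC-hypercentrality is used, but introduces nothing beyond the paper's argument.
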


\begin{proof}
The equivalence between (1), (2) and (3) follows from Proposition \ref{prop:regular-ideal-intersection-property-implies-unique-pseudo-expectation-property}. The equivalence between (2), (4), (5), and (6) Theorem \ref{thm:regular-ideal-intersection-property}. If $A$ is separable, then Theorem \ref{thm:regular-ideal-intersection-property} additionally implies that these conditions are equivalent to (7).
\end{proof}

\begin{cor}
Let $(A,G,\alpha)$ be a prime C*-dynamical system with $G$ FC-hypercentral. Then $(A,G,\alpha)$ has the intersection property if and only if $A \times_\lambda G$ is prime.
\end{cor}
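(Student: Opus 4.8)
The plan is to simply chain together the two main theorems already established. The key observation is that for a prime C*-dynamical system the regular ideal intersection property is the common hinge: Theorem~\ref{thm:primeness} says that when $(A,G,\alpha)$ is prime, $A \times_\lambda G$ is prime if and only if $(A,G,\alpha)$ has the regular ideal intersection property (this is the equivalence of conditions (1) and (2) there), while Theorem~\ref{thm:FC-hypercentral-ideal-intersection-property} says that when $G$ is FC-hypercentral, $(A,G,\alpha)$ has the ideal intersection property if and only if it has the regular ideal intersection property (the equivalence of conditions (1) and (2) there). Since both hypotheses are in force, both equivalences apply simultaneously.

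Concretely, I would argue as follows. Assume $(A,G,\alpha)$ is prime and $G$ is FC-hypercentral. By Theorem~\ref{thm:FC-hypercentral-ideal-intersection-property}, $(A,G,\alpha)$ has the ideal intersection property if and only if it has the regular ideal intersection property. By Theorem~\ref{thm:primeness}, since $(A,G,\alpha)$ is prime, it has the regular ideal intersection property if and only if $A \times_\lambda G$ is prime. Composing these two biconditionals gives that $(A,G,\alpha)$ has the ideal intersection property if and only if $A \times_\lambda G$ is prime, which is the claim. There is essentially no obstacle here, since the corollary is by design a direct synthesis of the two headline results; the only thing to check is that the hypotheses ``prime'' and ``FC-hypercentral'' are exactly the hypotheses needed to invoke Theorem~\ref{thm:primeness} and Theorem~\ref{thm:FC-hypercentral-ideal-intersection-property} respectively, which they are. (One could also note, for emphasis, that this recovers and generalizes Echterhoff's theorem for minimal systems over FC-hypercentral groups, since minimal systems are prime.)
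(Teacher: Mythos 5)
Your argument is exactly the paper's proof: both invoke Theorem~\ref{thm:FC-hypercentral-ideal-intersection-property} to identify the ideal intersection property with the regular ideal intersection property for FC-hypercentral $G$, and then Theorem~\ref{thm:primeness} (using primality of the system) to identify the latter with primality of $A \times_\lambda G$. The proposal is correct and takes essentially the same route.
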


\begin{proof}
Theorem \ref{thm:FC-hypercentral-ideal-intersection-property} implies that $(A,G,\alpha)$ has the ideal intersection property if and only if it has the regular ideal intersection property. Since $(A,G,\alpha)$ is prime, Theorem \ref{thm:primeness} implies that this is equivalent to the primality of $A \times_\lambda G$.
\end{proof}

Kawabe \cite{Kawabe2017}*{Theorem 1.6} obtained a characterization of the ideal intersection property for abelian C*-dynamical systems over arbitrary discrete groups in terms of the dynamics of stabilizer subgroups. For the class of abelian C*-dynamical systems over amenable groups, which includes the class of FC-hypercentral groups, Kawamura and Tomiyama \cite{KawamuraTomiyama1990}*{Theorem 4.1} proved that the ideal intersection property is equivalent to the topological freeness of the action. Specializing Theorem \ref{thm:FC-hypercentral-ideal-intersection-property} to the case of abelian C*-dynamical systems over FC-hypercentral groups yields the following alternative characterization. 

\begin{cor}
    Let $(\rC_0(X), G, \alpha)$ be an abelian C*-dynamical system with $G$ FC-hypercentral. The following are equivalent:
    \begin{enumerate}
    \item $(\rC_0(X), G, \alpha)$ has the ideal intersection property;
    \item Whenever there is a non-empty regular open subset $U \subseteq X$ and a subgroup $H \leq G$ with $H = \{g \in G : gU = U\}$ such that the subsets $\{gU\}_{g \in G/H}$ are pairwise disjoint, then for any $r \in H \setminus \{e\}$ that acts trivially on $U$, the $H$-conjugacy class $\operatorname{Cl}_H(r)$ of $r$ is infinite.
    \end{enumerate}
\end{cor}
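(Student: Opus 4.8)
The plan is to deduce the corollary from the equivalence of conditions (1) and (4) in Theorem~\ref{thm:FC-hypercentral-ideal-intersection-property}, by checking that, in the abelian setting, condition (4) of that theorem is precisely condition (2) of the corollary. The only real work is the translation of the data in condition (4) into topological terms, and this rests on two elementary facts: every hereditary C*-subalgebra of a commutative C*-algebra is an ideal, so an essential hereditary C*-subalgebra of $\rC_0(U)$ has the form $\rC_0(V)$ for a dense open subset $V \subseteq U$; and every *-derivation of a commutative C*-algebra vanishes (such derivations are automatically bounded, and bounded derivations of commutative C*-algebras are zero), so the hypothesis $\alpha_r|_B = \exp(\delta)$ collapses to $\alpha_r|_B = \id_B$.

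Concretely, I would first recall from Remark~\ref{rem:abelian-sub-induced} that $(\rC_0(X), G, \alpha)$ is sub-induced from a regular C*-dynamical subsystem $(J,H,\beta)$ exactly when $J = \rC_0(U)$ for a non-empty $H$-invariant regular open subset $U \subseteq X$ with $H = \{g \in G : gU = U\}$ and with the translates $\{gU\}_{g \in G/H}$ pairwise disjoint. Next, given such data together with $r \in H \setminus \{e\}$ and a $C_H(r)$-invariant essential hereditary C*-subalgebra $B \subseteq J$ satisfying $\alpha_r|_B = \exp(\delta)$ for a $C_H(r)$-commuting *-derivation $\delta$ of $B$, write $B = \rC_0(V)$ with $V \subseteq U$ dense open; since $\delta = 0$ we obtain $\alpha_r(f) = f$ for every $f \in \rC_0(V)$, and choosing $f$ supported in arbitrarily small neighbourhoods of a given point of $V$ forces $r$ to fix $V$ pointwise. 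Because the fixed-point set $\{x \in X : rx = x\}$ is closed and $V$ is dense in $U$, it follows that $r$ acts trivially on all of $U$. Conversely, if $r \in H \setminus \{e\}$ acts trivially on $U$, then taking $B = J = \rC_0(U)$ (which is $H$-invariant, hence $C_H(r)$-invariant, and trivially essential in itself) and $\delta = 0$ supplies exactly the data required in condition (4). Hence condition (4) of Theorem~\ref{thm:FC-hypercentral-ideal-intersection-property} asserts precisely that for every such $U$ and $H$, every $r \in H \setminus \{e\}$ acting trivially on $U$ has infinite $H$-conjugacy class; that is, it coincides with condition (2) of the corollary.

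Finally, invoking the equivalence of (1) and (4) in Theorem~\ref{thm:FC-hypercentral-ideal-intersection-property} finishes the proof. There is no substantial obstacle here: the only point requiring (standard) care is the vanishing of *-derivations of abelian C*-algebras, which is what reduces the dynamical hypothesis $\alpha_r|_B = \exp(\delta)$ to the purely topological statement that $r$ fixes a dense open subset of $U$, and the closedness of fixed-point sets, which then upgrades this to triviality of $r$ on all of $U$.
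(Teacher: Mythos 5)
Your proof is correct and takes essentially the same route as the paper: the paper likewise deduces the corollary immediately from Theorem \ref{thm:FC-hypercentral-ideal-intersection-property} combined with the abelian translation of the sub-induced condition (Remark \ref{rem:abelian-sub-induced}), packaged as Corollary \ref{cor:abelian-regular-ideal-intersection-property}. The only cosmetic difference is that the paper translates the inner-unitary condition (using that an inner automorphism of an abelian algebra is trivial), whereas you translate the derivation condition via the vanishing of *-derivations on commutative C*-algebras together with the density/closed fixed-point-set argument; both translations are immediate and equivalent.
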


\begin{proof}
The result follows immediately from Theorem \ref{thm:FC-hypercentral-ideal-intersection-property} and Corollary~\ref{cor:abelian-regular-ideal-intersection-property}.
\end{proof}

Specializing Theorem \ref{thm:FC-hypercentral-ideal-intersection-property} to the case when the underlying C*-algebra is simple yields the following result.

\begin{cor}
Let $(A,G,\alpha)$ be a C*-dynamical system with $A$ simple and $G$ FC-hypercentral. The following are equivalent:
\begin{enumerate}
\item $(A,G,\alpha)$ has the ideal intersection property;
\item There is $r \in FC(G) \setminus \{e\}$ and a $C_G(r)$-invariant essential hereditary C*-subalgebra $B \subseteq A$ such that $\alpha_r|_B = \exp(\delta)$ for a $C_G(r)$-commuting *-derivation of $B$;
\item There is $r \in FC(G) \setminus \{e\}$ such that $\alpha_r = \operatorname{Ad}(u_r)$ for a $C_G(r)$-invariant unitary $u_r \in M(A)$. 
\end{enumerate}
\end{cor}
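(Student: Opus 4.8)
The plan is to deduce the corollary directly from Theorem \ref{thm:FC-hypercentral-ideal-intersection-property} by specializing it to simple $A$, and then to invoke Lemma \ref{lem:simple-quasi-inner} to pass from a unitary in the injective envelope to one in the multiplier algebra. The three conditions of the corollary are the specializations of conditions (1), (4) and (5) of that theorem; since $G$ is FC-hypercentral, the theorem already supplies (1) $\Leftrightarrow$ (4) $\Leftrightarrow$ (5), so the only work is to compute what the sub-induction data in (4) and (5) become when $A$ is simple, and to replace the injective-envelope unitary of (5) by a multiplier unitary.

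First I would carry out the reduction for the intrinsic condition (4). Since $A$ is simple, $\operatorname{Reg}(A) = \{0, A\}$. If $(A,G,\alpha)$ is sub-induced from a regular C*-dynamical subsystem $(J,H,\beta)$ in the sense of Definition \ref{defn:sub-induced}, then $J$ is a nonzero regular ideal of $A$, so $J = A$, and consequently $H = \{g \in G : \alpha_g(J) = J\} = G$. Hence the data in (4) collapse to a single $r \in G \setminus \{e\}$ together with a $C_G(r)$-invariant essential hereditary C*-subalgebra $B \subseteq A$ and a $C_G(r)$-commuting *-derivation $\delta$ with $\alpha_r|_B = \exp(\delta)$, and the conclusion that $\operatorname{Cl}_H(r)$ is infinite becomes the statement that $\operatorname{Cl}_G(r)$ is infinite, that is, that $r \notin FC(G)$. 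These are exactly the ingredients appearing in the corollary's condition (2), so specializing the theorem's equivalence (1) $\Leftrightarrow$ (4) yields (1) $\Leftrightarrow$ (2).

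Next I would treat the extrinsic condition (5), which concerns sub-induction of $(I(A),G,\alpha)$. The subtlety is that $I(A)$ is typically far from simple, so a priori its central projection lattice allows nontrivial sub-induction. The key point is the essentiality of $A \subseteq I(A)$: as used in the proof of Lemma \ref{lem:orthogonal-regular-ideals}, $A$ meets every nonzero regular ideal of $I(A)$. By the correspondence between regular ideals of $A$ and of $I(A)$ recalled in Section \ref{sec:preliminaries}, any nonzero regular ideal $K \trianglelefteq I(A)$ satisfies $K = I(A \cap K)$ with $A \cap K$ a nonzero regular ideal of $A$, hence $A \cap K = A$ and $K = I(A)$. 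Applying this to the defining regular ideal $J \trianglelefteq I(A)$ of the subsystem forces $J = I(A)$ and again $H = G$. Thus (5) reduces to an assertion about $r \in G \setminus \{e\}$ admitting a $C_G(r)$-invariant unitary $u_r \in I(A)$ with $\alpha_r = \operatorname{Ad}(u_r)$, with conclusion $r \notin FC(G)$. Finally, since every $h \in C_G(r)$ commutes with $r$, the automorphism $\alpha_r$ lies in the center of the subgroup $\alpha(C_G(r)) \leq \operatorname{Aut}(A)$, so Lemma \ref{lem:simple-quasi-inner} applies and shows that such a $C_G(r)$-invariant $u_r \in I(A)$ exists if and only if there is a $C_G(r)$-invariant unitary in $M(A)$ implementing $\alpha_r$. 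This matches the corollary's condition (3) and gives (1) $\Leftrightarrow$ (3).

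The step I expect to be the main obstacle is the extrinsic reduction: verifying that the sub-induction of $(I(A),G,\alpha)$ genuinely collapses to $H = G$ even though $I(A)$ carries a rich lattice of central projections. This is exactly where the essentiality of the inclusion $A \subseteq I(A)$ and the regular-ideal correspondence are needed, and it is the only point at which the gap between $A$ and its injective envelope plays a role. The remaining bookkeeping — rewriting ``$\operatorname{Cl}_G(r)$ infinite'' as ``$r \notin FC(G)$'' and checking that Lemma \ref{lem:simple-quasi-inner} preserves the $C_G(r)$-invariance of the implementing unitary — is routine.
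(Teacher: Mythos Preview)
Your proposal is correct and follows essentially the same route as the paper. The paper's one-line proof cites Theorem~\ref{thm:FC-hypercentral-ideal-intersection-property} together with Corollary~\ref{cor:simple-regular-ideal-intersection-property}, and you have simply unpacked the latter (which is itself Theorem~\ref{thm:regular-ideal-intersection-property} plus Lemma~\ref{lem:simple-quasi-inner}): the collapsing of the sub-induction data to $J=A$, $H=G$ on the $A$ side, and to $J=I(A)$, $H=G$ on the injective-envelope side via the regular-ideal correspondence, followed by the appeal to Lemma~\ref{lem:simple-quasi-inner}, is exactly what makes that corollary ``immediate.''
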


\begin{proof}
The proof follows immediately from Theorem \ref{thm:FC-hypercentral-ideal-intersection-property} and Corollary \ref{cor:simple-regular-ideal-intersection-property}. 
\end{proof}

\begin{example} \label{ex:non-fc-case}
For non-FC-hypercentral groups, the regular ideal intersection property does not necessarily coincide with the ideal intersection property. To see this, consider the amenable group $S_\infty$ consisting of all finitely supported permutations of the natural numbers. This group is ICC, and it is well known that the reduced C*-algebra of an ICC group is prime. Hence the reduced C*-algebra $\mathrm{C}_\lambda^*(S_\infty)$, is prime. Since $\mathrm{C}_\lambda^*(S_\infty)$ is also the reduced crossed product of the trivial C*-dynamical system $(\mathbb{C},S_\infty,\alpha)$, Theorem \ref{thm:regular-ideal-intersection-property} implies that $(\mathbb{C},S_\infty,\alpha)$ has the regular ideal intersection property. However, since $S_\infty$ is amenable, $\mathrm{C}_\lambda^*(S_\infty) = \mathrm{C}^*(S_\infty)$, where $\mathrm{C}^*(S_\infty)$ denotes the full C*-algebra of $S_\infty$. In particular, the trivial representation of $S_\infty$ extends to a representation of $\mathrm{C}_\lambda^*(S_\infty)$ with nonzero kernel. Hence $\mathrm{C}_\lambda^*(S_\infty)$ is not simple, so $(\mathbb{C},S_\infty,\alpha)$ does not have the ideal intersection property.
\end{example}

\section{Examples and applications} \label{sec:examples}

In this section we will present examples and applications of our main results. For many groups, the subgroup structure is restrictive enough that the conditions in these results become significantly easier to verify. We will demonstrate this for free products of cyclic groups and, in particular, the projective special linear group $\mathrm{PSL}_2(\mathbb{Z})$, as well as for the special linear group $\mathrm{SL}_2(\mathbb{Z})$. 

\subsection{First examples}

We begin with an important example illustrating that the invariance of the unitary required in Condition (5) of Theorem \ref{thm:primeness} is generally necessary. This example also appeared in \cite{KennedySchafhauser2019}*{Example 5.6} and \cite{GeffenUrsu2023}*{Example 9.4}.

\begin{example} \label{ex:cant-drop-invariance-condition}
 Let $A = M_2(\mathbb{C})$ and $G = \mathbb{Z}_2^2 = \{e,u,v,uv\}$. Define an action $\alpha : G \to \operatorname{Aut}(A)$ by
\[
\alpha_e = \id_A,\ \alpha_u = \operatorname{Ad}(U),\ \alpha_v = \operatorname{Ad}(V),\ \alpha_{uv} = \operatorname{Ad}(UV),
\]
where $U,V \in A$ are the unitaries
\[
U = \left[\begin{matrix} 1 & 0 \\ 0 & -1 \end{matrix}\right],\ V = \left[\begin{matrix} 0 & 1 \\ 1 & 0 \end{matrix}\right].
\]
It is easy to check that $A \times_\lambda G \cong M_4(\mathbb{C})$, which is simple, so in particular $(A,G,\alpha)$ has the ideal intersection property. 

Note that $A = I(A)$ and $A$ is simple. By definition, the action of every non-trivial element in $FC(G) = G$ is inner. However, $C_G(u) = G$, and the unitary $U$ is not $G$-invariant. Similarly, $V$ and $UV$ are not $G$-invariant. This demonstrates the necessity of the (approximate) invariance condition on the unitary in conditions (5) and (6) in Theorem \ref{thm:FC-hypercentral-ideal-intersection-property}.
\end{example}

The following example appeared in \cite{Ursu2022}*{Example 4.4.1}, where it was used to illustrate that the W*-crossed product of an ergodic abelian W*-dynamical system over an ICC group is not necessarily a factor. We will apply Theorem \ref{thm:primeness} to show that the corresponding reduced crossed product is not prime. Consequently, the W*-dynamical system does not have the regular ideal intersection property. In contrast, the reduced crossed product of a minimal C*-dynamical system over an ICC group is always prime by \cite{GeffenUrsu2023}*{Corollary 6.4}.

\begin{example} \label{ex:ursu} Let $G$ be an infinite group with only two conjugacy classes (see e.g. \cite{HigmanNeumannHeuman1949}*{Theorem III}). Then $G$ is ICC. Let $A\coloneqq \ell^\infty(G\setminus\{e\})$. Since $A$ is an abelian W*-algebra, it is monotone complete, so Lemma \ref{lem:equivalence-abelian-injectivity} implies that it is injective. Define $\alpha : G \to \operatorname{Aut}(A)$ by
\[
\alpha_g(f)(t) = f(gtg^{-1}), \quad f\in A,\ t\in G.
\]
It follows from the properties of $G$ that  $(A, G, \alpha)$ is a prime C*-dynamical system. However, it is not minimal, since $\mathrm{c}_0(G \setminus \{e\})$ is a non-trivial $G$-invariant ideal of $A$. 

For $r \in G \setminus \{e\}$, let $p := \chi_{\{r\}} \in A$, where $\chi_{\{r\}}$ denotes the indicator function for the singleton $\{r\}$. We can also view $p$ as a unitary in $Ap$, and $\alpha_r|_{Ap} = \operatorname{Ad}(p)$. Consider the regular C*-dynamical subsystem $(Ap,C_G(r),\beta)$, where $\beta$ is obtained by restricting $\alpha$ in the obvious way. It is easy to verify that $(A,G,\alpha)$ is induced from $(Ap,C_G(r),\beta)$ in the sense of Definition \ref{defn:induced}. Clearly $r \in FC(C_G(r))$. Therefore, by Theorem \ref{thm:primeness}, the reduced crossed product $A \times_\lambda G$ is not prime. 

Theorem \ref{thm:imprimitivity-reduced-crossed-product} yields isomorphisms
\[
A \cong \ell^\infty(G/C_G(r)), \quad I(A \times_\lambda G) \cong \mathcal{B}(\ell^2(G/C_G(r))) \overline{\otimes} I(\mathrm{C}^*_\lambda(C_G(r))).
\]
The *-derivation $\delta$ on $Ap \cong \mathbb{C}$, obtained from Condition (3) of Theorem \ref{thm:primeness} and satisfying $\exp(\delta) = \id_{Ap}$, is simply zero.
\end{example} 

    Recall that a group is C*-simple if its reduced C*-algebra is simple. Bryder and the first author proved in \cite{BryderKennedy2018}*{Theorem 4.3} that if $(A,G,\alpha)$ is a C*-dynamical system with $A$ unital and $G$ C*-simple, then for a maximal ideal $J \trianglelefteq A \times_\lambda G$, the intersection $A \cap I$ is a maximal $G$-invariant ideal of $A$, and moreover that this correspondence is a bijection. In particular, if $(A,G,\alpha)$ is minimal, then $A \times_\lambda G$ is simple, which recovers the result from \cite{BreuillardKalantarKennedyOzawa2017}*{Theorem 7.1} that reduced crossed products of minimal C*-dynamical systems over C*-simple groups are simple, and in particular prime. 

    However, as the following example demonstrates, if $(A,G,\alpha)$ is a non-prime C*-dynamical system with $G$ C*-simple, then it is not necessarily true that $(A,G,\alpha)$ is prime, even if $A$ is unital. This example was motivated by \cite{delaHarpeSkandalis1986}.
    
    \begin{example}
    Let $G$ be a C*-simple group and let $H \leq G$ be a non-trivial abelian subgroup. For example, take $G = \mathbb{F}_2 = \langle a,b \rangle$, the free group on two generators, and take $H = \langle a \rangle \cong \mathbb{Z}$. Let $A$ denote the minimal unitization of $\mathrm{c}_0(G/H)$. Define $\alpha : G \to \operatorname{Aut}(A)$ by extending the left translation action and consider the C*-dynamical system $(A,G,\alpha)$. Since $A$ has a unique non-trivial $G$-invariant ideal, namely $\mathrm{c}_0(G/H)$, $(A,G,\alpha)$ is prime.
    
    It follows from Lemma \ref{lem:easy-inj-env} that $I(A) = \ell^\infty(G/H)$. Since $(A,G,\alpha)$ is prime, it follows from \cite{Hamana1985}*{Proposition 6.4} that $(I(A),G,\alpha)$ is prime. Let $J$ denote the regular ideal in $\ell^\infty(G/H)$ consisting of elements supported on the singleton $\{H\}$. Then $J$ is $H$-invariant, so we obtain a regular C*-dynamical subsystem $(J,H,\beta)$ of $(I(A),G,\alpha)$ by restricting $\alpha$. It is clear that $(I(A),G,\alpha)$ is induced by $(J,H,\beta)$. Since the action on $J$ is trivial and $H$ is abelian, it follows from Theorem \ref{thm:primeness} that the reduced crossed product $A \times_\lambda G$ is not prime. 
    
    Theorem \ref{thm:imprimitivity-reduced-crossed-product} implies that
    \[
    I(A \times_\lambda G) \cong \mathcal{B}(\ell^2(G/H)) \overline{\otimes} I(\mathrm{C}_\lambda^*(H)). 
    \]
    It is straightforward, using this decomposition of $I(A \times_\lambda G)$, combined with the fact that $H$ is non-trivial and abelian, to exhibit two non-trivial ideals of $I(A \times_\lambda G)$ with trivial intersection.
    \end{example}

\subsection{Cyclic centralizers} \label{sec:cyclic-subgroups}

In this subsection, we will establish conditions on elements in the FC-center of a group implying that for any C*-dynamical system over the group with the regular ideal intersection property, the element necessarily acts properly outer. These conditions will be in terms of the index and the centralizer of the cyclic subgroup generated by the element. In particular, by Theorem \ref{thm:primeness}, this result will apply when the corresponding reduced crossed product is prime. 

For many groups, these conditions will be satisfied by most or even all of the elements of the group. Thus for C*-dynamical systems over these groups, this will lead to a very simple characterization of the regular ideal intersection property. We will demonstrate this in later subsections for the groups $\mathrm{PSL}_2(\ZZ)$ and $\mathrm{SL}_2(\ZZ)$, as well as for free products of cyclic groups. Additionally, as an application in Corollary \ref{cor:cyclic-groups} below, we will give a simple proof that for C*-dynamical systems over $\ZZ$ or $\ZZ_n$, with $n$ a square-free positive integer, the ideal intersection property is equivalent to the proper outerness of the system.

This latter result is not new. For separable C*-dynamical systems over $\mathbb{Z}$, it was proved by Olesen and Pedersen \cite{OlesenPedersen1982}*{Theorem 10.4}, and for separable C*-dynamical systems over $\mathbb{Z}_n$, with $n$ a square-free positive integer, this was proved by Kwa\'{s}niewski and Meyer \cite{KwasniewskiMeyer2018}*{Theorem 2.5}. The general result was obtained recently by Geffen and Ursu \cite{GeffenUrsu2023}*{Corollary 9.6}.

\begin{lem}\label{lem:conditions-for-invariance} Let $(A, G,\alpha)$ be a unital C*-dynamical system with $g\in G \setminus \{e\}$ such that $\alpha_g=\operatorname{Ad}(u_g)$ for a unitary $u\in A$. Let $C_g$ denote the corresponding cyclic subgroup. Suppose that the index $[C_G(g) : C_g]$ of $C_g$ in $C_G(g)$ is finite, $g^{[C_G(g) : C_g]} \ne e$ and $C_G(g)=C_G(g^n)$ for all $n \geq 1$ such that $g^n \ne e$. Then letting $h\coloneqq g^{[C_G(g) : C_g]}$, there is a $C_G(h)$-invariant unitary $u_h \in A$ such that $\alpha_h = \operatorname{Ad}(u_h)$.
\end{lem}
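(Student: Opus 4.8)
The plan is to correct the given implementing unitary $u_g$ by a central unitary, exploiting that the quotient $Q \coloneqq C_G(g)/C_g$ is finite. Write $N \coloneqq [C_G(g):C_g]$, so that $h = g^{N}$ and $\alpha_h = \alpha_g^{N} = \operatorname{Ad}(u_g)^{N} = \operatorname{Ad}(u_g^{N})$; by hypothesis $h \ne e$. First I would produce a cocycle: for $t \in C_G(g)$, conjugating $\alpha_g = \operatorname{Ad}(u_g)$ by $\alpha_t$ gives $\operatorname{Ad}(\alpha_t(u_g)) = \alpha_t\alpha_g\alpha_t^{-1} = \alpha_{tgt^{-1}} = \alpha_g$, so $z_t \coloneqq u_g^{*}\alpha_t(u_g)$ is a unitary in $Z(A)$. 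Expanding $\alpha_{ts}(u_g) = \alpha_t(\alpha_s(u_g)) = \alpha_t(u_g z_s) = \alpha_t(u_g)\alpha_t(z_s)$ shows $z_{ts} = z_t\,\alpha_t(z_s)$, i.e.\ $t \mapsto z_t$ is a $1$-cocycle for the action of $C_G(g)$ on the abelian group $U(Z(A))$. Since $\alpha_g(u_g) = u_g$ we have $z_g = 1$, and since $\alpha_g = \operatorname{Ad}(u_g)$ fixes $Z(A)$ pointwise, the cocycle relation forces $z|_{C_g} \equiv 1$.

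Next I would pass to $Q$. As $\operatorname{Ad}(u_g)$, hence the whole (central, hence normal) subgroup $C_g$, acts trivially on $Z(A)$, the action of $C_G(g)$ on $U(Z(A))$ factors through the finite group $Q$ of order $N$; and since $z$ vanishes on $C_g$, it descends to a $1$-cocycle $\bar z \colon Q \to U(Z(A))$, with $z_t$ (and the action of $\alpha_t$ on $Z(A)$) depending only on the coset $\bar t = tC_g$. Now set
\[
c \coloneqq \prod_{\bar t \in Q} \bar z_{\bar t} \in U(Z(A)),
\]
a well-defined finite product. Using the cocycle identity $\bar z_{\bar t\bar s} = \bar z_{\bar t}\,\alpha_t(\bar z_{\bar s})$ to rewrite $\alpha_t(\bar z_{\bar s}) = \bar z_{\bar t}^{-1}\bar z_{\bar t\bar s}$ and reindexing the product over $Q$ yields the key identity $\alpha_t(c) = z_t^{-N}\,c$ for every $t \in C_G(g)$ — this is precisely the classical fact that $|Q|$ annihilates $H^{1}(Q,U(Z(A)))$, made explicit by a coboundary.

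Finally, put $u_h \coloneqq c\,u_g^{N}$. This is a unitary; since $c$ is central, $\operatorname{Ad}(u_h) = \operatorname{Ad}(u_g^{N}) = \alpha_h$; and for $t \in C_G(g)$, using that $z_t$ is central and $\alpha_t(u_g) = u_g z_t$,
\[
\alpha_t(u_h) = \alpha_t(c)\,\alpha_t(u_g)^{N} = z_t^{-N}c\,(u_g z_t)^{N} = z_t^{-N}c\,u_g^{N}z_t^{N} = c\,u_g^{N} = u_h,
\]
so $u_h$ is $C_G(g)$-invariant. Since $h = g^{N} \ne e$, the hypothesis $C_G(g^{n}) = C_G(g)$ gives $C_G(h) = C_G(g)$, and therefore $u_h$ is the required $C_G(h)$-invariant unitary with $\alpha_h = \operatorname{Ad}(u_h)$.

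The step needing the most care is the identity $\alpha_t(c) = z_t^{-N}c$: this is where finiteness of the index $N$ is genuinely used, and it requires tracking the cocycle relation through the reindexed product; everything else is routine verification. It is also worth recording that the hypotheses $g^{N}\ne e$ and $C_G(g^{n}) = C_G(g)$ enter only at the very end, precisely to transfer the invariance of $u_h$ from $C_G(g)$ to $C_G(h)$.
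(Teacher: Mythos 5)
Your proof is correct and is essentially the paper's argument in cohomological dress: the paper directly sets $u_h=\prod_{t\in T}\alpha_t(u_g)$ over a finite left transversal $T$ of $C_g$ in $C_G(g)$, and since $\alpha_t(u_g)=u_g z_t$ with $z_t\in U(Z(A))$ depending only on the coset $tC_g$, that unitary is literally your $c\,u_g^{N}$. The only cosmetic difference is that the paper verifies invariance by observing that left translation permutes the cosets (using $C_g$-invariance of $u_g$), whereas you verify it via the cocycle identity and the coboundary trick killing $H^1$ by the index $N$.
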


\begin{proof} The proof is very similar to the proofs of the above-mentioned results (see e.g. \cite{GeffenUrsu2023}*{Lemma 9.5}). Let $T \subseteq C_G(g)$ be a left transversal of $C_g$ in $C_G(g)$. Then since $C_g$ has finite index in $C_G(g)$, $T$ is finite. Note that $h \ne e$ by assumption.

For $t \in T$, $\alpha_t(u_g) \in A$ is a unitary satisfying $\operatorname{Ad}(\alpha_t(u_g)) = \alpha_g$.  It follows that $\{\alpha_t(u_g) : t \in T\}$ is a finite set of pairwise commuting unitaries in $A$. Let $u_h := \prod_{t \in T} \alpha_t(u_g) \in A$. Then $u_h$ is a unitary satisfying $\operatorname{Ad}(u_h) = \alpha_h$. The assumption that $C_G(g) = C_G(g^n)$ for all $n \geq 1$ such that $g^n \ne e$ implies that $u_h$ is $C_G(h)$-invariant, because $u_g$ is $C_g$-invariant.
\end{proof}

\begin{prop}\label{prop:conditions-for-proper-outerness}
 $(A, G,\alpha)$ be a C*-dynamical system. For $g \in FC(G) \setminus \{e\}$, let $C_g$ denote the corresponding cyclic subgroup. Suppose that the index $[C_G(g) : C_g]$ of $C_g$ in $C_G(g)$ is finite, $g^{[C_G(g) : C_g]} \ne e$ and $C_G(g)=C_G(g^n)$ for all $n \geq 1$ such that $g^n \ne e$. If $(A,G,\alpha)$ has the regular ideal intersection property, then $\alpha_g$ is properly outer. In particular, this applies if $A \times_\lambda G$ is prime.
\end{prop}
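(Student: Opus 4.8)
The plan is to argue by contradiction: assume $\alpha_g$ is not properly outer while $(A,G,\alpha)$ has the regular ideal intersection property, and derive a contradiction by exhibiting a meandering projection for $(I(A),G,\alpha)$ in the sense of Definition \ref{defn:meandering-projection}. By Proposition \ref{prop:projection-equivalences} this forces $Z(I(A))^G \ne Z(I(A\times_\lambda G))$, hence by Proposition \ref{prop:relative-commutant-regular-ideal-intersection-property} (equivalently Theorem \ref{thm:regular-ideal-intersection-property}) the failure of the regular ideal intersection property. First I would unwind the hypothesis: since $\alpha_g$ is not properly outer there is a nonzero $\alpha_g$-invariant regular ideal $J \trianglelefteq A$ and a unitary $u \in I(J)$ with $\alpha_g|_J = \operatorname{Ad}(u)|_J$. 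Identifying the regular ideal of $I(A)$ corresponding to $J$ with $I(A)q_0$ for a nonzero central projection $q_0 \in Z(I(A))$, uniqueness of the extension of $\alpha_g$ to $I(A)$ shows $q_0$ is $\alpha_g$-invariant, and rigidity of the inclusion $J \subseteq I(J)$ upgrades $\alpha_g|_J = \operatorname{Ad}(u)|_J$ to $\alpha_g|_{I(A)q_0} = \operatorname{Ad}(u)$. In particular $q_0$ is $C_g$-invariant, and $\alpha_g(u) = u$.

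The heart of the argument is to pass from this $C_g$-invariant piece to a $C_G(g)$-invariant one on which $\alpha_h$ is implemented by a $C_G(h)$-invariant unitary, where $m := [C_G(g):C_g] < \infty$ and $h := g^m \ne e$ (both finiteness of the index and $h \ne e$ are hypotheses). Fix a left transversal $S = \{t_1 = e,\dots,t_m\}$ of $C_g$ in $C_G(g)$ and set $p := \bigvee_{t\in S}\alpha_t(q_0) \in Z(I(A))$. Using that each $t \in C_G(g)$ commutes with $g$ and that $q_0$ is $C_g$-invariant, $p$ is $C_G(g)$-invariant; since the hypothesis $C_G(g^n) = C_G(g)$ with $n = m$ gives $C_G(h) = C_G(g)$, the projection $p$ is $C_G(h)$-invariant. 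Disjointifying $\{\alpha_{t_j}(q_0)\}_{j}$ into pairwise orthogonal $\alpha_g$-invariant central projections $e_j \le \alpha_{t_j}(q_0)$ with $\sum_j e_j = p$, and noting $\alpha_g|_{I(A)\alpha_{t_j}(q_0)} = \operatorname{Ad}(\alpha_{t_j}(u))$, the unitary $w := \sum_j \alpha_{t_j}(u)e_j \in I(A)p$ satisfies $\alpha_g|_{I(A)p} = \operatorname{Ad}(w)$. Now I would apply Lemma \ref{lem:conditions-for-invariance} to the unital C*-dynamical system $(I(A)p,\, C_G(g),\, \alpha)$ and the element $g$: its hypotheses hold because $C_{C_G(g)}(g) = C_G(g)$, so the relevant index is again $m$, one has $g^m = h \ne e$, and $C_{C_G(g)}(g^n) = C_G(g^n) \cap C_G(g) = C_G(g)$ whenever $g^n \ne e$. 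The lemma produces a unitary $u_h \in I(A)p$, invariant under $C_{C_G(g)}(h) = C_G(h)$, with $\alpha_h|_{I(A)p} = \operatorname{Ad}(u_h)$.

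Finally I would verify that $p$ is a meandering projection for $(I(A),G,\alpha)$ with group element $r = h$ and unitary $u_h$: condition (1) of Definition \ref{defn:meandering-projection} is the content of the previous paragraph, and condition (2) is automatic because $g \in FC(G)$ forces $h = g^m \in FC(G)$, so $[G:C_G(h)] < \infty$, whence any left transversal $T$ of $C_G(h)$ in $G$ is finite and $\sup_F\|\sum_{t\in F}\alpha_t(p)\| \le [G:C_G(h)] < \infty$. This contradiction proves that $\alpha_g$ is properly outer. The last assertion follows since $A\times_\lambda G$ prime implies $(A,G,\alpha)$ is prime, which by Theorem \ref{thm:primeness} is equivalent to $(A,G,\alpha)$ having the regular ideal intersection property. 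I expect the main obstacle to be the middle step — correctly engineering a $C_G(h)$-invariant summand of $I(A)$ together with a $C_G(h)$-invariant implementing unitary out of the merely $C_g$-invariant data, i.e. the disjointification/patching producing $w$ and the verification that Lemma \ref{lem:conditions-for-invariance} applies verbatim to $(I(A)p, C_G(g), \alpha)$ with the same numerical data; by contrast, the reduction to meandering projections and the verification of condition (2) are routine given the FC-center hypothesis.
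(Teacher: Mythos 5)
Your argument is correct and is essentially the paper's proof: argue by contradiction, produce a $C_G(g)$-invariant central projection $p \in Z(I(A))$ on whose corner $\alpha_g$ is implemented by a unitary, apply Lemma \ref{lem:conditions-for-invariance} with $C_G(g)$ in place of $G$ and $I(A)p$ in place of $I(A)$ to obtain a $C_G(h)$-invariant unitary implementing $\alpha_h$ for $h = g^{[C_G(g):C_g]}$, and conclude via meandering projections (Proposition \ref{prop:projection-equivalences}) and Theorem \ref{thm:regular-ideal-intersection-property}, with the final assertion coming from Theorem \ref{thm:primeness}. The only divergence is minor: where you build the invariant corner by hand, joining the finitely many $C_G(g)$-translates of $q_0$ and patching the implementing unitaries by disjointification, the paper instead invokes the largest such projection from \cite{Hamana1982a}*{Proposition 5.1}, whose $C_G(g)$-invariance follows by maximality as in \cite{Hamana1985}*{Proposition 8.2} — both routes yield the same intermediate statement.
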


\begin{rem} Note that if $G$ is abelian, then for $g \in G$, $C_G(g) = G$. In particular, $C_G(g)=C_G(g^n)$ for all $n \geq 1$ such that $g^n \ne e$. 
\end{rem}

\begin{proof}
Suppose to the contrary that $\alpha_g$ is not properly outer. We will prove that $(A,G,\alpha)$ does not have the regular ideal intersection property. It follows from \cite{Hamana1982a}*{Proposition 5.1} that there is a largest nonzero central projection $p \in Z(I(A))$ and a unitary $u_g \in I(A)p$ such that $\alpha_g|_{I(A)p} = \operatorname{Ad}(u_g)$. Arguing as in the proof of \cite{Hamana1985}*{Proposition 8.2}, we further obtain that $p$ is $C_G(g)$-invariant. Letting $h\coloneqq g^{[C_G(g) : C_g]}$, and applying Lemma \ref{lem:conditions-for-invariance} with $C_G(g)$ in place of $G$ and $I(A)p$ in place of $I(A)$ yields that $\alpha_h=\operatorname{Ad}(u_h)$ for some $C_G(h)$-invariant unitary $u_h \in I(A)p$. Theorem \ref{thm:regular-ideal-intersection-property} implies that $(A,G,\alpha)$ does not have the regular ideal intersection property. The last statement follows from Theorem \ref{thm:primeness}.
\end{proof}

\begin{rem} \label{rem:iff-for-regular-ideal-intersection-property}
If $(A,G,\alpha)$ is properly outer, then it follows from \cite{KennedySchafhauser2019}*{Corollary~6.7} and \cite{Zarikian2019}*{Theorem~4.6} that $(A,G,\alpha)$ has the ideal intersection property, and in particular the regular ideal intersection property. It follows that if every element in $G \setminus \{e\}$ satisfies the hypotheses of Proposition \ref{prop:conditions-for-proper-outerness}, which in particular implies that $G = FC(G)$, then $(A,G,\alpha)$ has the regular ideal intersection property if and only if it is properly outer.
\end{rem}

The following result, due to Olesen and Pedersen \cite{OlesenPedersen1982}*{Theorem 10.4}, Kwa\'{s}niewski and Meyer \cite{KwasniewskiMeyer2018}*{Theorem 2.5} and Geffen and Ursu \cite{GeffenUrsu2023}*{Corollary 9.6}, was discussed at the beginning of this section. We will now give a short proof utilizing Proposition \ref{prop:conditions-for-proper-outerness}.

\begin{cor} \label{cor:cyclic-groups}
Let $(A,G,\alpha)$ be a C*-dynamical system with $G = \mathbb{Z}$ or $G = \mathbb{Z}_n$, with $n$ a square-free positive integer. Then $(A,G,\alpha)$ has the ideal intersection property if and only if it is properly outer. 
\end{cor}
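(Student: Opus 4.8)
The plan is to apply Theorem \ref{thm:FC-hypercentral-ideal-intersection-property} together with Proposition \ref{prop:conditions-for-proper-outerness} and Remark \ref{rem:iff-for-regular-ideal-intersection-property}. Since $G = \mathbb{Z}$ or $G = \mathbb{Z}_n$ is abelian, it is FC-hypercentral, so by Theorem \ref{thm:FC-hypercentral-ideal-intersection-property} the ideal intersection property coincides with the regular ideal intersection property. Moreover, by Remark \ref{rem:iff-for-regular-ideal-intersection-property}, it suffices to verify that every nontrivial element $g \in G$ satisfies the hypotheses of Proposition \ref{prop:conditions-for-proper-outerness}; indeed, one direction (properly outer implies the ideal intersection property) is already recorded in the excerpt (via \cite{KennedySchafhauser2019}*{Corollary~6.7} and \cite{Zarikian2019}*{Theorem~4.6}), and Proposition \ref{prop:conditions-for-proper-outerness} gives the converse once its hypotheses are checked.

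So the heart of the proof is the purely group-theoretic verification. Since $G$ is abelian, $G = FC(G)$ and $C_G(g) = G = C_G(g^n)$ for every $g$ and every $n$, so the centralizer conditions are automatic. It remains to check, for each $g \in G \setminus \{e\}$, that $[C_G(g):C_g] = [G:C_g]$ is finite and that $g^{[G:C_g]} \neq e$. When $G = \mathbb{Z}$, the only element of finite index whose cyclic subgroup has finite index is any nonzero $g$, with $[G:C_g] = |g|$; here we would need $g \cdot |g| \neq 0$, which holds since $g \neq 0$. Wait --- this needs care: $g^{[G:C_g]}$ means $g$ added to itself $|g|$ times, i.e. $g|g|$, which is nonzero. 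So the hypothesis is satisfied for all $g \neq 0$ in $\mathbb{Z}$. When $G = \mathbb{Z}_n$ with $n$ square-free, write $d = \gcd(g,n)$ so that $C_g$ has order $n/d$ and $[G:C_g] = d$; the order of $g^d$ in $\mathbb{Z}_n$ is $(n/d)/\gcd(d, n/d)$, and since $n$ is square-free, $\gcd(d, n/d) = 1$, so $g^d$ has order $n/d > 1$ whenever $d < n$, i.e. whenever $g \neq 0$. Hence $g^{[G:C_g]} = g^d \neq e$ for all nontrivial $g$.

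With all hypotheses of Proposition \ref{prop:conditions-for-proper-outerness} verified for every $g \in G \setminus \{e\}$, Remark \ref{rem:iff-for-regular-ideal-intersection-property} gives that $(A,G,\alpha)$ has the regular ideal intersection property if and only if it is properly outer, and then Theorem \ref{thm:FC-hypercentral-ideal-intersection-property} upgrades this to the ideal intersection property. I do not anticipate any serious obstacle: the only place requiring attention is the arithmetic point that square-freeness of $n$ is exactly what forces $\gcd(d, n/d) = 1$, which is the reason the hypothesis on $g^{[G:C_g]}$ cannot fail (and, conversely, is why the square-free assumption is genuinely needed --- if $n$ had a square factor one could choose $g$ with $g^{[G:C_g]} = e$, and the argument would break down, consistent with the known fact that the statement fails for general $n$).
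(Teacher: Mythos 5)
Your proposal is correct and follows essentially the same route as the paper: check that every $g \in G \setminus \{e\}$ satisfies the hypotheses of Proposition~\ref{prop:conditions-for-proper-outerness} (the centralizer conditions being automatic for abelian $G$, and square-freeness of $n$ forcing $g^{[G:C_g]} \ne e$), then combine that proposition with the cited results of Kennedy--Schafhauser and Zarikian for the converse. The only differences are cosmetic: your detour through Theorem~\ref{thm:FC-hypercentral-ideal-intersection-property} is harmless but unnecessary (the ideal intersection property trivially implies the regular one, which is all Proposition~\ref{prop:conditions-for-proper-outerness} needs), and your $\gcd(d,n/d)=1$ computation is just a rephrasing of the paper's divisibility argument.
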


\begin{proof}
If $G = \mathbb{Z}$, then for $g \in G \setminus \{e\}$,  $g^{[C_G(g) : C_g]} \ne e$. Otherwise, if $G = \mathbb{Z}_n$ for $n$ a square-free positive integer, and $g \in G \setminus \{e\}$, then the order of $g$ is $|C_g|$. If $g^{[C_G(g) : C_g]} = e$, then $|C_g|$ divides $[C_g : C_G(g)]$. Since $[C_G(g) : C_g] |C_g| = |C_G(g)|$ divides $|G| = n$ and $n$ is square-free, it follows that $g^{[C_G(g) : C_g]} \ne e$. In either case, it follows from Proposition \ref{prop:conditions-for-proper-outerness} that $\alpha_g$ is properly outer. Hence $(A,G,\alpha)$ is properly outer. On the other hand, if $(A,G,\alpha)$ is properly outer, then it has the ideal intersection property by \cite{KennedySchafhauser2019}*{Corollary~6.7} and \cite{Zarikian2019}*{Theorem~4.6}.
\end{proof}

\begin{example}[Tarski monster groups]\label{ex:Tarski-monster} Let $G$ be a Tarski monster group corresponding to a prime number $p$, i.e. $G$ is a finitely generated simple group such that every non-trivial subgroup is cyclic of order $p$, and in particular has square-free order. Note that $G$ is necessarily ICC. Olshanskii \cite{Olshanskii1982} proved that $G$ exists if $p > 10^{75}$. By \cite{KalantarKennedy2017}*{Theorem 6.5}, $G$ is C*-simple.

Let $(A,G,\alpha)$ be a C*-dynamical system. For $g \in G \setminus \{e\}$, let $C_g$ denote the cyclic subgroup generated by $g$. It follows from the properties of $G$ that $C_g = C_G(g) \cong \mathbb{Z}_p$. Hence $g$ satisfies the hypotheses of Proposition \ref{prop:conditions-for-proper-outerness}. It follows from Remark \ref{rem:iff-for-regular-ideal-intersection-property} and Theorem \ref{thm:primeness} that $(A,G,\alpha)$ has the regular ideal intersection property if and only if whenever $(A,G,\alpha)$ is sub-induced from a regular C*-dynamical subsystem $(J,H,\beta)$ with $H$ proper, then $(J,H,\beta)$ is properly outer. In particular it follows from Theorem \ref{thm:primeness} that if $(A,G,\alpha)$ is prime, then $A \times_\lambda G$ is prime if and only if whenever $(A,G,\alpha)$ is induced from a regular C*-dynamical subsystem $(J,H,\beta)$ with $H$ proper, then $(J,H,\beta)$ is properly outer.
\end{example}

\subsection{C*-dynamical systems over $\mathrm{PSL}_2(\ZZ)$ and $\mathrm{SL}_2(\ZZ)$}

In this subsection we will consider C*-dynamical systems over the projective special linear group $\mathrm{PSL}_2(\ZZ)$ and the special linear group $\mathrm{SL}_2(\ZZ)$. Using Proposition \ref{prop:conditions-for-proper-outerness}, we will obtain a significant simplification of the conditions in Theorem \ref{thm:primeness}, which characterizes the primality of reduced crossed products, for C*-dynamical systems over these groups. With minor modifications, this result can also be applied to simplify the conditions in Theorem \ref{thm:regular-ideal-intersection-property}, which characterizes the regular ideal intersection property.

The group $\mathrm{PSL}_2(\ZZ)$ is ICC, and in fact C*-simple by \cite{Poznansky2008}. On the other hand, $\mathrm{SL}_2(\ZZ)$ is not ICC, since
\[
FC(\mathrm{SL}_2(\ZZ))=\mathrm{Z}(\mathrm{SL}_2(\ZZ))=\{I_2,-I_2\},
\]
where $I_2$ denotes the $2\times 2$ identity matrix. Because of their restrictive subgroup structure, we will be able to significantly simplify the conditions in Theorem \ref{thm:primeness} for C*-dynamical systems over these groups.

Before considering C*-dynamical systems, we first require two basic results about the structure of centralizers in $\mathrm{PSL}_2(\ZZ)$ and $\mathrm{SL}_2(\ZZ)$. While these results are undoubtedly well known, we were unable to find them stated explicitly in the literature, so we provide the necessary details.

\begin{lem}\label{lem:centralizers-in-PSL} For $r\in \mathrm{PSL}_2(\mathbb{Z}) \setminus \{e\}$, the centralizer $C_{\mathrm{PSL}_2(\ZZ)}(r)$ is isomorphic to one of $\mathbb{Z}$, $\mathbb{Z}_2$ or $\mathbb{Z}_3$. In particular, if $n\geq 1$ and $r^n\neq e$, then $C_{\mathrm{PSL}_2(\mathbb{Z})}(r)=C_{\mathrm{PSL}_2(\mathbb{Z})}(r^n)$.
\end{lem}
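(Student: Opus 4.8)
The plan is to use the well-known description of $\mathrm{PSL}_2(\mathbb{Z})$ as a free product $\mathbb{Z}_2 * \mathbb{Z}_3$, together with the Kurosh subgroup theorem (or equivalently, the action on the Bass--Serre tree). First I would recall that in a free product $G = A * B$, every element of finite order is conjugate into $A$ or $B$, while every element of infinite order acts on the Bass--Serre tree as a hyperbolic isometry with a unique invariant axis. This dichotomy drives the whole argument.

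The key steps: (1) If $r \in \mathrm{PSL}_2(\mathbb{Z})$ has finite order, then since the only torsion in $\mathbb{Z}_2 * \mathbb{Z}_3$ comes from conjugates of the factors, $r$ has order $2$ or $3$, and after conjugating we may assume $r$ generates one of the factors. A standard normal-form computation in a free product then shows that the centralizer of a generator of a free factor is exactly that factor; hence $C_{\mathrm{PSL}_2(\mathbb{Z})}(r) \cong \mathbb{Z}_2$ or $\mathbb{Z}_3$. (2) If $r$ has infinite order, then $r$ is hyperbolic on the tree with a unique axis $L_r$. Any element commuting with $r$ must preserve $L_r$ and commute with $r$ on it, so $C_{\mathrm{PSL}_2(\mathbb{Z})}(r)$ embeds into the stabilizer of $L_r$, which is a (possibly infinite) dihedral or cyclic group; since $\mathrm{PSL}_2(\mathbb{Z})$ is torsion-generated only by elements of order $2,3$ and an infinite dihedral group would force an order-$2$ element inverting $r$ (impossible inside a centralizer unless it is trivial), one concludes $C_{\mathrm{PSL}_2(\mathbb{Z})}(r)$ is torsion-free, hence a subgroup of $\mathbb{Q}$ containing $r$; being finitely generated (it is a subgroup of a finitely presented group acting with the stabilizer structure above — more carefully, it is contained in the maximal cyclic subgroup containing $r$), it is $\cong \mathbb{Z}$. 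Alternatively, and more cleanly, I would cite that in $\mathbb{Z}_2 * \mathbb{Z}_3$ the centralizer of any infinite-order element is infinite cyclic, equal to the unique maximal cyclic subgroup containing it.

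For the final assertion: if $n \geq 1$ and $r^n \neq e$, then $r$ and $r^n$ generate subgroups of the same type (both finite of the same prime order when $r$ is torsion and $n$ is coprime to the order, or both infinite cyclic). In the finite-order case, $\langle r \rangle = \langle r^n \rangle$ whenever $r^n \neq e$ since the order is prime, so the centralizers coincide trivially. In the infinite-order case, $r$ and $r^n$ have the same axis, hence $C(r) = C(r^n)$ equals the common maximal cyclic subgroup.

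The main obstacle I anticipate is making the infinite-order case fully rigorous without invoking a black-box: specifically, ruling out that $C_{\mathrm{PSL}_2(\mathbb{Z})}(r)$ could be infinite dihedral. The point is that an infinite dihedral overgroup of $\langle r \rangle$ would contain an involution $s$ with $srs^{-1} = r^{-1}$, and such an $s$ does not centralize $r$, so it simply does not lie in $C(r)$; thus $C(r)$ is an index-at-most-$2$ torsion-free subgroup of that dihedral group, hence cyclic. I would present this carefully, or else shortcut the whole lemma by citing the standard fact (e.g. from the theory of free products, or from Serre's \emph{Trees}) that centralizers of nontrivial elements in $\mathbb{Z}_2 * \mathbb{Z}_3$ are cyclic of order $2$, $3$, or $\infty$.
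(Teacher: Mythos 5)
Your proposal is correct and rests on the same key input as the paper: the identification $\mathrm{PSL}_2(\mathbb{Z}) \cong \mathbb{Z}_2 * \mathbb{Z}_3$ together with the structure of centralizers in a free product. The difference is one of packaging. The paper simply cites Karrass--Solitar (Theorem 1) for the fact that the centralizer of a nontrivial element of $\mathbb{Z}_2 * \mathbb{Z}_3$ is conjugate to a factor or infinite cyclic, whereas you offer to re-prove this via normal forms and the Bass--Serre tree; your sketch of the hyperbolic case (centralizer preserves the axis, cannot contain an axis-reversing element since such an element conjugates $r$ to $r^{-1}$, and torsion-free subgroups of the infinite dihedral group are cyclic) is sound, using that edge stabilizers are trivial so an element acting trivially on the axis is trivial. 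For the final assertion the paper's argument is slicker and uniform: $C(r) \subseteq C(r^n)$ is automatic, and since $C(r^n)$ is cyclic, hence abelian, and contains $r$, one gets $C(r^n) \subseteq C(r)$ with no case distinction. Your case analysis (prime order forces $\langle r \rangle = \langle r^n \rangle$; in the infinite-order case $r$ and $r^n$ share an axis and the common maximal cyclic subgroup) also works, but note that making the "common maximal cyclic subgroup" step airtight essentially reduces to the same abelianness observation, so you may as well adopt the paper's one-line argument there.
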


\begin{proof} Let $G\coloneqq \mathrm{PSL}_2(\ZZ)$. We will utilize the fact that $G$ is isomorphic to the free product $\ZZ_2*\ZZ_3$. For $r \in G \setminus \{e\}$, it follows from \cite{KarrassSolitar1971}*{Theorem 1} that the centralizer $C_G(r)$ is either conjugate to one of $\mathbb{Z}_2$ or $\mathbb{Z}_3$, or isomorphic to $\mathbb{Z}$.

Suppose now that $n\geq 1$ and $r^n\neq e$. The inclusion $C_G(r) \subseteq C_G(r^n)$ is clear. Also, from above, $C_G(r^n)$ is cyclic, and in particular abelian. Since $r \in C_G(r^n)$, it follows that $C_G(r^n) \subseteq C_G(r)$.
  \end{proof}
    
    \begin{lem}\label{lem:centralizers-in-SL} 
    For $r \in \mathrm{SL}_2(\mathbb{Z}) \setminus Z(SL_2(\mathbb{Z}))$, the centralizer $C_{\mathrm{SL}_2(\mathbb{Z})}(r)$ is isomorphic to one of $\mathbb{Z} \oplus \mathbb{Z}_2$, $\mathbb{Z}_4$ or $\mathbb{Z}_6$. In particular, for a subgroup $H \leq SL_2(\mathbb{Z})$ with $r \in H$, $C_H(r)$ is isomorphic to one of $\mathbb{Z}$, $\mathbb{Z} \oplus \mathbb{Z}_2$, $\mathbb{Z}_3$, $\mathbb{Z}_4$ or $\mathbb{Z}_6$. 
    \end{lem}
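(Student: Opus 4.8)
The plan is to transfer the computation to $\mathrm{PSL}_2(\ZZ)$ via the quotient map $\pi \colon \mathrm{SL}_2(\ZZ) \to \mathrm{PSL}_2(\ZZ)$, whose kernel is the center $Z \coloneqq Z(\mathrm{SL}_2(\ZZ)) = \{I_2,-I_2\}$, and then invoke Lemma \ref{lem:centralizers-in-PSL}. Write $\bar r = \pi(r)$, and note $\bar r \ne e$ since $r \notin Z$. First I would observe that $Z \subseteq C_{\mathrm{SL}_2(\ZZ)}(r)$, so $C_{\mathrm{SL}_2(\ZZ)}(r) = \pi^{-1}(Q)$ where $Q \coloneqq \pi(C_{\mathrm{SL}_2(\ZZ)}(r))$ is a subgroup of $C_{\mathrm{PSL}_2(\ZZ)}(\bar r)$ containing the nontrivial element $\bar r$. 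By Lemma \ref{lem:centralizers-in-PSL}, $C_{\mathrm{PSL}_2(\ZZ)}(\bar r)$ is isomorphic to $\ZZ$, $\ZZ_2$, or $\ZZ_3$; in particular it is cyclic, hence so is $Q$, and $Q$ is one of $\ZZ$, $\ZZ_2$, $\ZZ_3$.

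The next step is to note that $C_{\mathrm{SL}_2(\ZZ)}(r)$ is a central extension of the cyclic group $Q$ by $Z \cong \ZZ_2$, and a group $G$ with $G/A$ cyclic for a central subgroup $A$ is abelian. So $C_{\mathrm{SL}_2(\ZZ)}(r)$ is an abelian group $G$ with $G/\ZZ_2$ isomorphic to $\ZZ$, $\ZZ_2$, or $\ZZ_3$. When $G/\ZZ_2 \cong \ZZ$ the extension splits (as $\ZZ$ is free), so $G \cong \ZZ \oplus \ZZ_2$. When $G/\ZZ_2 \cong \ZZ_3$, $G$ is abelian of order $6$, so $G \cong \ZZ_6$. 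When $G/\ZZ_2 \cong \ZZ_2$, $G$ is abelian of order $4$, so $G \cong \ZZ_4$ or $\ZZ_2 \oplus \ZZ_2$; to rule out the latter I would use that $-I_2$ is the \emph{unique} element of order $2$ in $\mathrm{SL}_2(\ZZ)$ (either from the amalgamated-product structure $\mathrm{SL}_2(\ZZ) \cong \ZZ_4 *_{\ZZ_2} \ZZ_6$, or from the observation that $A \in \mathrm{SL}_2(\ZZ)$ with $A^2 = I_2$ and $A \ne \pm I_2$ would have eigenvalues $1,-1$ and hence determinant $-1$), forcing $G \cong \ZZ_4$. This proves the first assertion.

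For the ``in particular'' statement, let $H \leq \mathrm{SL}_2(\ZZ)$ with $r \in H$. Then $C_H(r) = H \cap C_{\mathrm{SL}_2(\ZZ)}(r)$ is a subgroup of $C_{\mathrm{SL}_2(\ZZ)}(r)$ containing $r$, and $r \ne -I_2$, so $r$ has infinite order in the case $C_{\mathrm{SL}_2(\ZZ)}(r) \cong \ZZ \oplus \ZZ_2$, order $4$ in the case $C_{\mathrm{SL}_2(\ZZ)}(r) \cong \ZZ_4$, and order $3$ or $6$ in the case $C_{\mathrm{SL}_2(\ZZ)}(r) \cong \ZZ_6$. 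Listing the subgroups of $\ZZ \oplus \ZZ_2$ that contain an element of infinite order (namely $\ZZ$ and $\ZZ \oplus \ZZ_2$), of $\ZZ_4$ that contain an element of order $4$ (only $\ZZ_4$), and of $\ZZ_6$ that contain an element of order $3$ or $6$ (namely $\ZZ_3$ and $\ZZ_6$), one obtains that $C_H(r)$ is isomorphic to one of $\ZZ$, $\ZZ \oplus \ZZ_2$, $\ZZ_3$, $\ZZ_4$, $\ZZ_6$.

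I expect the main obstacle to be the exclusion of $\ZZ_2 \oplus \ZZ_2$ in the second paragraph: this is the one point that cannot be settled by formal extension-theoretic bookkeeping and requires a genuine structural fact about $\mathrm{SL}_2(\ZZ)$, namely the uniqueness of its order-$2$ element. Everything else follows routinely once Lemma \ref{lem:centralizers-in-PSL} is available.
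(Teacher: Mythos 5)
Your proof is correct and follows essentially the same route as the paper: both deduce the $\mathrm{SL}_2(\ZZ)$ statement from Lemma \ref{lem:centralizers-in-PSL} via the central quotient $\mathrm{SL}_2(\ZZ)/\{\pm I_2\} \cong \mathrm{PSL}_2(\ZZ)$, and both use that $-I_2$ is the unique element of order $2$ in $\mathrm{SL}_2(\ZZ)$ (you to exclude $\ZZ_2\oplus\ZZ_2$, the paper to exclude $C_H(r)\cong\ZZ_2$). Your write-up simply supplies the extension-theoretic details that the paper's two-line proof leaves implicit.
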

    
    \begin{proof} The first statement follows from Lemma \ref{lem:centralizers-in-PSL} since $\mathrm{PSL}_2(\ZZ)$ is isomorphic to $\mathrm{SL}_2(\mathbb{Z})/Z(\mathrm{SL}_2(\mathbb{Z}))$. The second statement follows from the inclusion $C_H(r) \subseteq C_{\mathrm{SL}_2(\mathbb{Z})}(r)$, and
    the fact that the only element in $\mathrm{SL}_2(\mathbb{Z})$ of order $2$ is $-I_2$, so that $C_H(r)\neq \mathbb{Z}_2$.
     \end{proof}

     We now turn our attention to C*-dynamical systems over $\mathrm{PSL}_2(\ZZ)$ and $\mathrm{SL}_2(\ZZ)$. For any group $G$, if $g \in G \setminus \{e\}$ has a cyclic centralizer and $H \leq G$ is a subgroup with $g \in FC(H)$, then $H$ must have a cyclic subgroup of finite index. Lemma \ref{lem:centralizers-in-PSL} implies that the centralizer of any element in $\mathrm{PSL}_2(\ZZ)$ is either finite cyclic, $\mathbb{Z}$ or $\mathbb{Z} \oplus \mathbb{Z}_2$.Similarly,  Lemma \ref{lem:centralizers-in-SL} implies that the centralizer of any non-central element in $\mathrm{SL}_2(\ZZ)$ is cyclic. As a result, we will see that the invariance conditions in Theorem \ref{thm:primeness} are mostly unnecessary for these groups. 

\begin{prop}\label{pro:characterization-for-PSL_2-actions}
Let $(A,\mathrm{PSL}_2(\ZZ),\alpha)$ be a prime C*-dynamical system. The following conditions are equivalent:
\begin{enumerate}
    \item $A\times_\lambda \mathrm{PSL}_2(\ZZ) $ is prime;

    \item Whenever $(A,\mathrm{PSL}_2(\ZZ),\alpha)$ is induced from a regular C*-dynamical subsystem $(J,H,\beta)$ with $r \in H \setminus \{e\}$ such that $\beta_r$ is not properly outer, then the $H$-conjugacy class $\operatorname{Cl}_H(r)$ of $r$ is infinite.
\end{enumerate}
\end{prop}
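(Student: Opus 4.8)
The plan is to read off both implications from Theorem~\ref{thm:primeness}, using the key structural input of Lemma~\ref{lem:centralizers-in-PSL} — that centralizers of nontrivial elements of $\mathrm{PSL}_2(\ZZ)$ are cyclic and insensitive to passing to powers — to trade the invariance hypotheses appearing in conditions (3) and (5) of Theorem~\ref{thm:primeness} for the bare ``$\beta_r$ not properly outer'' hypothesis of condition (2) above. Throughout, $G$ denotes $\mathrm{PSL}_2(\ZZ)$.

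\textbf{(2) $\Rightarrow$ (1).} I would verify condition (5) of Theorem~\ref{thm:primeness}. Suppose $(I(A),G,\alpha)$ is induced from a regular C*-dynamical subsystem $(K,H,\beta)$ with $r \in H \setminus \{e\}$ and a $C_H(r)$-invariant unitary $u_r \in K$ implementing $\beta_r$. Put $J := A \cap K$; by the correspondence between regular ideals of $A$ and of $I(A)$ we have $K = I(J)$, and by the converse half of Lemma~\ref{lem:imprimitivity-detect-induced-injective-envelope} the system $(A,G,\alpha)$ is induced from the regular C*-dynamical subsystem $(J,H,\beta)$. Since $u_r \in I(J)$ and $\beta_r|_J = \operatorname{Ad}(u_r)|_J$, the automorphism $\beta_r$ of $J$ is quasi-inner on the (nonzero, invariant, regular) ideal $J \trianglelefteq J$, hence is not properly outer — the invariance of $u_r$ is not even needed here. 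By hypothesis (2), $\operatorname{Cl}_H(r)$ is therefore infinite, which is condition (5) of Theorem~\ref{thm:primeness}; hence $A \times_\lambda G$ is prime.

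\textbf{(1) $\Rightarrow$ (2).} Suppose $A \times_\lambda G$ is prime and that $(A,G,\alpha)$ is induced from a regular C*-dynamical subsystem $(J,H,\beta)$ with $r \in H \setminus \{e\}$ and $\beta_r$ not properly outer; assume for contradiction that $\operatorname{Cl}_H(r)$ is finite, i.e.\ $r \in FC(H)$. By Theorem~\ref{thm:imprimitivity-reduced-crossed-product}, $I(A \times_\lambda G) \cong \mathcal{B}(\ell^2(G/H)) \overline{\otimes} I(J \times_\lambda H)$, and since a nonzero proper central projection of $I(J \times_\lambda H)$ would produce one of the tensor product, primality of $A \times_\lambda G$ forces $J \times_\lambda H$ to be prime; in particular $(J,H,\beta)$ is prime, so by Theorem~\ref{thm:primeness} it has the regular ideal intersection property. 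Now I would check that $r$ satisfies the hypotheses of Proposition~\ref{prop:conditions-for-proper-outerness} for $(J,H,\beta)$: by Lemma~\ref{lem:centralizers-in-PSL}, $C_H(r) \leq C_G(r)$ is a nontrivial subgroup of one of $\ZZ$, $\ZZ_2$, $\ZZ_3$, hence cyclic, so the cyclic subgroup $C_r$ has finite index in $C_H(r)$, one has $r^{[C_H(r):C_r]} \neq e$ (either the index is $1$, or $C_H(r) \cong \ZZ$ is torsion-free), and $C_H(r) = H \cap C_G(r) = H \cap C_G(r^n) = C_H(r^n)$ for all $n \geq 1$ with $r^n \neq e$ by the last assertion of Lemma~\ref{lem:centralizers-in-PSL}. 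Proposition~\ref{prop:conditions-for-proper-outerness} then yields that $\beta_r$ is properly outer, contradicting the hypothesis; hence $\operatorname{Cl}_H(r)$ is infinite.

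\textbf{Main obstacle.} There is no serious obstacle given the machinery already assembled; the point requiring care is the two-way translation between ``$\beta_r$ not properly outer'' and ``$\beta_r$ implemented by a $C_H(r)$-invariant unitary on a regular ideal''. In the easy direction this is just the observation that a unitary in $I(J)$ implementing $\beta_r$ on $J$ already witnesses failure of proper outerness via the ambient ideal $J \trianglelefteq J$. In the harder direction it is precisely the content of Proposition~\ref{prop:conditions-for-proper-outerness} (which rests on Lemma~\ref{lem:conditions-for-invariance}) that, once $C_H(r)$ is cyclic and $r \in FC(H)$, non-proper-outerness of $\beta_r$ together with the regular ideal intersection property for $(J,H,\beta)$ cannot coexist. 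Everything else is routine bookkeeping within the induction/imprimitivity dictionary of Section~\ref{sec:imprimitivity}.
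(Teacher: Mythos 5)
Your proof is correct and takes essentially the same route as the paper's: both directions run through the equivalence of conditions (1) and (5) of Theorem~\ref{thm:primeness}, with Lemma~\ref{lem:imprimitivity-detect-induced-injective-envelope} translating between $(I(A),G,\alpha)$ and $(A,G,\alpha)$, Theorem~\ref{thm:imprimitivity-reduced-crossed-product} yielding primality of $J\times_\lambda H$, and Lemma~\ref{lem:centralizers-in-PSL} verifying the hypotheses of Proposition~\ref{prop:conditions-for-proper-outerness} to rule out non--properly outer $\beta_r$ with finite conjugacy class. The differences are only presentational (argument by contradiction versus the paper's contrapositive, plus details the paper leaves implicit), so no further comparison is needed.
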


\begin{proof} First suppose that (1) holds, i.e. that $A\times_\lambda \mathrm{PSL}_2(\ZZ)$ is prime. Then by Theorem \ref{thm:primeness}, $(A,\mathrm{PSL}_2(\ZZ),\alpha)$ has the regular ideal intersection property. Suppose in addition that $(A,\mathrm{PSL}_2(\ZZ),\alpha)$ is induced from a regular C*-dynamical subsystem $(J,H,\beta)$. It follows from Theorem \ref{thm:imprimitivity-reduced-crossed-product} that $J \times_\lambda H$ is also prime. For $r \in FC(H) \setminus \{e\}$, Lemma \ref{lem:centralizers-in-PSL} implies that $C_H(r) = C_H(r^n)$ for all $n \geq 1$ such that $r^n \ne e$, and that $C_H(r)$ is isomorphic to one of $\mathbb{Z}$, $\mathbb{Z}_2$ or $\mathbb{Z}_3$. In each case, the cyclic subgroup $C_r$ generated by $r$ satisfies $[C_H(r) : C_r] < \infty$ and $r^{[C_H(r) : C_r]} \ne e$.  Hence the assumptions of Proposition \ref{prop:conditions-for-proper-outerness} are satisfied, and it follows that the action of $r$ on $J$ must be properly outer, giving (2). 

Conversely, suppose that (2) holds. Suppose that $(A,\mathrm{PSL}_2(\ZZ),\alpha)$ is induced from a regular C*-dynamical subsystem $(J, H, \beta)$ with $r \in FC(H) \setminus \{e\}$. By assumption, $\beta_r$ is properly outer, so in particular its extension to $I(J)$ is not inner. It follows from the equivalence between Conditions (1) and (5) of Theorem \ref{thm:primeness} that (1) holds, i.e. that $A\times_\lambda \mathrm{PSL}_2(\ZZ) $ is prime.
    \end{proof}

    \begin{prop}\label{pro:characterization-for-SL_2-actions}
    Let $(A,\mathrm{SL}_2(\ZZ),\alpha)$ be a prime C*-dynamical system. The following conditions are equivalent:
\begin{enumerate}
    \item $A\times_\lambda \mathrm{SL}_2(\ZZ) $ is prime;

    \item There is no $\mathrm{SL}_2(\ZZ)$-invariant essential hereditary C*-subalgebra $B \subseteq A$ such that $\alpha_{-I_2}|_B = \operatorname{exp}(\delta)$ for a $\mathrm{SL}_2(\ZZ)$-commuting *-derivation $\delta$ of $B$, and whenever $(A,\mathrm{SL}_2(\mathbb{Z}),\alpha)$ is induced from a regular C*-dynamical subsystem $(J,H,\beta)$ with $r \in H \setminus Z(\mathrm{SL}_2(\mathbb{Z}))$ such that $\beta_r$ is not properly outer, then the $H$-conjugacy class $\operatorname{Cl}_H(r)$ of $r$ is infinite.

    \item There is no $\mathrm{SL}_2(\ZZ)$-invariant unitary $u\in I(A)$ such that $\alpha_{-I_2}=\operatorname{Ad}(u)$ on $I(A)$, and whenever $(A,\mathrm{SL}_2(\mathbb{Z}),\alpha)$ is induced from a regular C*-dynamical subsystem $(J,H,\beta)$ with $r \in H \setminus Z(\mathrm{SL}_2(\mathbb{Z}))$ such that $\beta_r$ is not properly outer, then the $H$-conjugacy class $\operatorname{Cl}_H(r)$ of $r$ is infinite.
\end{enumerate}
\end{prop}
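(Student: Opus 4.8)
The plan is to deduce the result from Theorem~\ref{thm:primeness}, in the spirit of the proof of Proposition~\ref{pro:characterization-for-PSL_2-actions}; the one new feature is that $FC(\mathrm{SL}_2(\ZZ))=Z(\mathrm{SL}_2(\ZZ))=\{I_2,-I_2\}$ is non-trivial, so the group element $r$ occurring in the conditions of Theorem~\ref{thm:primeness} must be treated separately according to whether $r\in Z(\mathrm{SL}_2(\ZZ))$ or not. Since $(A,\mathrm{SL}_2(\ZZ),\alpha)$ is prime, ``induced'' and ``sub-induced'' coincide, and by Theorem~\ref{thm:primeness} it is enough to characterise when $(A,\mathrm{SL}_2(\ZZ),\alpha)$ has the regular ideal intersection property. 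First I would record that conditions (2) and (3) are equivalent: their second clauses are literally identical, while their first clauses are matched by Proposition~\ref{prop:intrinsic-equivalences} applied with $H=\mathrm{SL}_2(\ZZ)$, $J=A$ and $r=-I_2\in FC(\mathrm{SL}_2(\ZZ))$, using that $(A,\mathrm{SL}_2(\ZZ),\alpha)$ is trivially induced from the regular C*-dynamical subsystem $(A,\mathrm{SL}_2(\ZZ),\alpha)$. So it suffices to prove $(1)\Leftrightarrow(2)$.

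The central element $r=-I_2$ is responsible for the first clause of (2)/(3). If $A\times_\lambda\mathrm{SL}_2(\ZZ)$ is prime, then the equivalence of Conditions (1) and (3) of Theorem~\ref{thm:primeness}, applied with $H=\mathrm{SL}_2(\ZZ)$, $J=A$ and $r=-I_2$ --- whose $\mathrm{SL}_2(\ZZ)$-conjugacy class $\{-I_2\}$ is finite --- yields precisely the first clause of (2). For the converse I would show that the failure of the first clause of (3) is the same as the existence of a regular C*-dynamical subsystem $(J,H,\beta)$ of $(I(A),\mathrm{SL}_2(\ZZ),\alpha)$ with $-I_2\in H$ and $\beta_{-I_2}=\operatorname{Ad}(u)$ for a $C_H(-I_2)$-invariant unitary $u\in J$: because $-I_2$ is central and lies in $H$, the defining formula for the induced action shows that, under the identification $I(A)\cong\ell^\infty(\mathrm{SL}_2(\ZZ)/H)\,\overline{\otimes}\,J$ of Theorem~\ref{thm:imprimitivity-reduced-crossed-product}, the automorphism $\alpha_{-I_2}$ acts as $\operatorname{Ad}(1\otimes u)$, and since $C_H(-I_2)=H$ the $H$-invariance of $u$ propagates along the cosets of $H$ to make $1\otimes u$ an $\mathrm{SL}_2(\ZZ)$-invariant unitary of $I(A)$; conversely such a unitary yields such a subsystem with $H=\mathrm{SL}_2(\ZZ)$. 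Together with the equivalence of the first clauses of (2) and (3), this identifies the first clause of (2) with the absence of such a subsystem, which by Theorem~\ref{thm:primeness} is part of the content of primality.

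The non-central elements account for the second clause. Suppose $(A,\mathrm{SL}_2(\ZZ),\alpha)$ is induced from $(J,H,\beta)$, that $r\in H\setminus Z(\mathrm{SL}_2(\ZZ))$, that $\beta_r$ is not properly outer, and --- for a contradiction --- that $\operatorname{Cl}_H(r)$ is finite, i.e.\ $r\in FC(H)$. Since $A\times_\lambda\mathrm{SL}_2(\ZZ)$ is prime, it follows from Theorem~\ref{thm:imprimitivity-reduced-crossed-product} that $J\times_\lambda H$ is prime, so $(J,H,\beta)$ is a prime system; Theorem~\ref{thm:primeness} then shows it has the regular ideal intersection property, and hence by Proposition~\ref{prop:projection-equivalences} and Theorem~\ref{thm:regular-ideal-intersection-property} the system $(I(J),H,\beta)$ has no meandering projection. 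I would contradict this by constructing one, following the proof of Proposition~\ref{prop:conditions-for-proper-outerness} with $(J,H,\beta)$ in place of $(A,G,\alpha)$. Since $\beta_r$ is not properly outer there is a largest nonzero $p\in Z(I(J))$ with $\beta_r|_{I(J)p}=\operatorname{Ad}(w)$, and $p$ is $C_H(r)$-invariant. By Lemmas~\ref{lem:centralizers-in-PSL} and~\ref{lem:centralizers-in-SL}, $C_H(r)$ is cyclic or isomorphic to $\ZZ\oplus\ZZ_2$; since $r\neq-I_2$ is not the unique order-two element of $\mathrm{SL}_2(\ZZ)$, the cyclic subgroup $C_r$ generated by $r$ has finite index in $C_H(r)$, $r^{[C_H(r):C_r]}\neq e$, and a short case check on the isomorphism type of $C_H(r)$ --- using, in the infinite-order cases, that centralisers of powers coincide in $\mathrm{PSL}_2(\ZZ)$ (Lemma~\ref{lem:centralizers-in-PSL}) --- yields $C_H(r^{[C_H(r):C_r]})=C_H(r)$. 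Applying Lemma~\ref{lem:conditions-for-invariance} with $C_H(r)$ as the ambient group then upgrades $w$ to a $C_H(r)$-invariant unitary implementing $\beta_h$ on $I(J)p$, where $h:=r^{[C_H(r):C_r]}\in FC(H)\setminus\{e\}$; and since $[H:C_H(r)]=|\operatorname{Cl}_H(r)|<\infty$, the boundedness requirement in the definition of a meandering projection is automatic. Thus $p$ is meandering for $(I(J),H,\beta)$, the desired contradiction. Finally, $(2)\Rightarrow(1)$ is the easy direction: an offending subsystem from Theorem~\ref{thm:primeness} with $r\in H\setminus\{e\}$ and $\operatorname{Cl}_H(r)$ finite either has $r=-I_2$, which is excluded by the first clause of (2) via the computation above, or $r\in H\setminus Z(\mathrm{SL}_2(\ZZ))$, in which case Corollary~\ref{cor:detect-induced-quotient} realises $(A,\mathrm{SL}_2(\ZZ),\alpha)$ as induced from $(A\cap J,H,\beta)$ with $\beta_r$ not properly outer on $A\cap J$, contradicting the second clause.

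The hard part will be the element $r=-I_2$: being central, its conjugacy class is always finite, so it cannot be excluded by the conjugacy-class condition, and one must instead show that the obstruction it generates is equivalent to a condition visible on $A$ itself --- this is the purpose of the tensor-product computation in the second paragraph. A secondary subtlety is that in the non-central case Proposition~\ref{prop:conditions-for-proper-outerness} cannot be quoted verbatim: for $r$ of order $4$ or $6$ one has $r^{2}=-I_2$, respectively $r^{3}=-I_2$, so its hypothesis $C_H(r)=C_H(r^n)$ fails, and one must instead run its proof by hand inside the cyclic group $C_H(r)=C_r$, as indicated above.
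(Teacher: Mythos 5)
Your proposal is correct and follows essentially the same route as the paper: the central element $-I_2$ is isolated and its obstruction is converted into a globally $\mathrm{SL}_2(\ZZ)$-invariant unitary on $I(A)$ by summing $\alpha_g(u)$ over a transversal of $H$, while non-central $r$ are handled by the centralizer analysis of Lemma~\ref{lem:centralizers-in-SL} feeding into the proper-outerness argument; the only organizational difference (proving $(1)\Leftrightarrow(2)$ first rather than $(1)\Leftrightarrow(3)$) is immaterial. One point in your favour: your caution about elements of order $4$ or $6$ is warranted, since the paper's own proof asserts $C_H(r)=C_H(r^n)$ for all $n$ with $r^n\neq e$, which fails when $r^n=-I_2$ and $H\supsetneq C_H(r)$; your fix --- running the argument of Proposition~\ref{prop:conditions-for-proper-outerness} inside the abelian group $C_H(r)$, where the centralizer hypothesis of Lemma~\ref{lem:conditions-for-invariance} is automatic and $h=r^{[C_H(r):C_r]}$ still satisfies $C_H(h)=C_H(r)$ --- is exactly what is needed and slightly sharpens the published argument.
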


\begin{proof} We first prove the equivalence between (1) and (3). Suppose that (1) holds, i.e. that $A\times_\lambda \mathrm{SL}_2(\ZZ) $ is prime. Since $-I_2 \in Z(\mathrm{SL}_2(\ZZ))$, the equivalence between Conditions (1) and (5) of Theorem \ref{thm:primeness} implies the first part of the statement in (2). For the second part, suppose that $(A,\mathrm{SL}_2(\ZZ),\alpha)$ is induced from a regular C*-dynamical subsystem $(J,H,\beta)$ with $r\in FC(H) \setminus Z(\mathrm{SL}_2(\mathbb{Z}))$.

Lemma \ref{lem:centralizers-in-SL} implies that $C_H(r) = C_H(r^n)$ for all $n \geq 1$ such that $r^n \ne e$, and that $C_H(r)$ is isomorphic to one of $\mathbb{Z}$, $\mathbb{Z} \oplus \mathbb{Z}_2$, $\mathbb{Z}_3$, $\mathbb{Z}_4$ or $\mathbb{Z}_6$. In each case, the cyclic subgroup $C_r$ generated by $r$ satisfies $[C_H(r) : C_r] < \infty$ and $r^{[C_H(r) : C_r]} \ne e$. Hence the assumptions of Proposition \ref{prop:conditions-for-proper-outerness} are satisfied, and it follows that the action of $r$ on $J$ must be properly outer, giving (3). 

 Conversely, suppose that (3) holds. Suppose that $(A,\mathrm{SL}_2(\mathbb{Z}),\alpha$ is induced from a regular C*-dynamical subsystem $(J,H,\beta)$. For $r\in FC(H) \setminus Z(\mathrm{SL}_2(\mathbb{Z}))$, $\beta_r$ is properly outer by assumption, so its extension to $I(J)$ is not inner. 

 It remains to consider the element $r := -I_2 \in \mathrm{SL}_2(\mathbb{Z})$. Note that $r \in Z(H)$, so $C_H(r) = H$. Suppose for the sake of contradiction that there is an $H$-invariant unitary $u_r \in I(J)$ such that $\beta_r = \operatorname{Ad}(u_r)$. We claim there is an $\mathrm{SL}_2(\mathbb{Z})$-invariant unitary $u\in I(A)$ such that $\alpha_{-I_2}=\operatorname{Ad}(u)$. 

 To see this, let $T$ be a left transversal of $H$ in $\mathrm{SL}_2(\mathbb{Z})$. For $g \in T$ and $b \in I(A)\alpha_g(p)$,
 \begin{align*}
     \alpha_r(b)&=\alpha_r(\alpha_g(\alpha_{g^{-1}}(b)))=\alpha_g(\alpha_r(\alpha_{g^{-1}}(b)))\\&=\alpha_g(u_r \alpha_{g^{-1}}(b) u_r^*)=\alpha_g(u_r)b\alpha_{g}(u_r)^*. 
 \end{align*}
Hence $\alpha_r|_{I(A)\alpha_g(p)} = \operatorname{Ad}(\alpha_g(u_r))$. Since the projections $\{\alpha_g(p) : g \in T\}$ are mutually orthogonal, for each finite subset $F \subseteq T$, the corresponding sum $v_F := \sum_{g \in F} \alpha_g(u_r) \in I(A)$ is a partial isometry. Taking the order limit over $F$ in $I(A)$, we obtain an $\mathrm{SL}_2(\mathbb{Z})$-invariant unitary $v \in I(A)$ such that $\alpha_r = \operatorname{Ad}(v)$, contradicting the first part of (2). It now follows from the equivalence between Conditions (1) and (5) of Theorem \ref{thm:primeness} that (1) holds, i.e. that $A \times_\lambda \mathrm{SL}_2(\mathbb{Z})$ is prime. The equivalence between (2) and (3) also follows from Theorem \ref{thm:primeness}.
\end{proof}

\begin{rem}
It is straightforward to verify that in the separable setting, these conditions are also equivalent to a simplified version of Condition (7) in Theorem \ref{thm:primeness}. 
\end{rem}

\subsection{Free products of cyclic groups}
In this subsection we will consider C*-dynamical systems over finite free products of cyclic groups of square-free order. A straightforward extension of the arguments in Lemma \ref{lem:centralizers-in-PSL} and Proposition \ref{pro:characterization-for-PSL_2-actions}, which utilize Proposition \ref{prop:conditions-for-proper-outerness}, will yield a simplification of the conditions in Theorem \ref{thm:primeness}, which characterizes the primality of reduced crossed products, for C*-dynamical systems over these groups. While this result will still require the consideration of regular C*-dynamical subsystems over subgroups with non-trivial FC-center, we will be able to give a complete description of subgroups with this property (see Remark \ref{rem:FC-subgroups-free-products}).

\begin{prop}\label{pro:characterization-for-free-products} 
For $m \geq 1$, let $G = G_1 \ast \cdots \ast G_m $ be a finite free product of cyclic groups of square-free order, i.e. for $1 \leq i \leq m$, $G_i = \mathbb{Z}$ or $G_i = \mathbb{Z}_{n_i}$ for $n_i$ a square-free positive integer. Let $(A,G,\alpha)$ be a prime C*-dynamical system. The following are equivalent:
\begin{enumerate}
    \item $A\times_\lambda G $ is prime;

    \item Whenever $(A,G,\alpha)$ is induced from a regular C*-dynamical subsystem $(J,H,\beta)$ with $r \in FC(H) \setminus \{e\}$, then $\beta_r$ is properly outer.
\end{enumerate}
\end{prop}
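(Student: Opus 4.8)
The plan is to follow the template of Proposition~\ref{pro:characterization-for-PSL_2-actions}, replacing the centralizer analysis for $\mathrm{PSL}_2(\ZZ)$ with the corresponding analysis for a finite free product $G = G_1 \ast \cdots \ast G_m$ of cyclic groups of square-free order. The key structural input is a Karrass--Solitar type description of centralizers in free products: for $r \in G \setminus \{e\}$, either $r$ is conjugate into some free factor $G_i$, in which case $C_G(r)$ is contained in a conjugate of $G_i$ and is therefore finite cyclic of square-free order, or $r$ has infinite order and $C_G(r)$ is infinite cyclic. In both cases $C_G(r)$ is cyclic, and moreover $C_G(r) = C_G(r^n)$ for every $n \geq 1$ with $r^n \ne e$: the inclusion $\subseteq$ is trivial, and since $C_G(r^n)$ is cyclic hence abelian and contains $r$, the reverse inclusion follows exactly as in Lemma~\ref{lem:centralizers-in-PSL}. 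This is essentially the statement that I would isolate as a preliminary lemma before the proof proper.

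For the forward direction, suppose $A \times_\lambda G$ is prime. By Theorem~\ref{thm:primeness}, $(A,G,\alpha)$ has the regular ideal intersection property. Suppose it is induced from a regular C*-dynamical subsystem $(J,H,\beta)$; by Theorem~\ref{thm:imprimitivity-reduced-crossed-product} the reduced crossed product $J \times_\lambda H$ is again prime (one extracts this from the tensor decomposition $I(A \times_\lambda G) \cong \mathcal{B}(\ell^2(G/H)) \overline{\otimes} I(J \times_\lambda H)$ together with the fact that primality passes to and from injective envelopes). Now fix $r \in FC(H) \setminus \{e\}$. Since $H \leq G$ and the centralizer of $r$ in $G$ is cyclic, $C_H(r)$ is cyclic too, so the cyclic subgroup $C_r$ generated by $r$ has finite index in $C_H(r)$; and because every cyclic subgroup of $G$ has square-free order (or is infinite), the square-free order argument of Corollary~\ref{cor:cyclic-groups} shows $r^{[C_H(r):C_r]} \ne e$. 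Finally $C_H(r) = C_H(r^n)$ for all relevant $n$ by the preliminary lemma. Thus all hypotheses of Proposition~\ref{prop:conditions-for-proper-outerness} are met, applied inside $(J,H,\beta)$, and we conclude $\beta_r$ is properly outer, giving (2).

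For the converse, assume (2). Suppose $(I(A),G,\alpha)$ is induced from a regular C*-dynamical subsystem $(K,H,\beta)$ with $r \in FC(H) \setminus \{e\}$ and $\beta_r = \operatorname{Ad}(u_r)$ for a $C_H(r)$-invariant unitary $u_r \in K$. Setting $J = A \cap K$ and using Corollary~\ref{cor:detect-induced-quotient}, $(A,G,\alpha)$ is induced from $(J,H,\beta)$, and the extension of $\beta_r$ to $I(J)$ is inner, so $\beta_r$ is \emph{not} properly outer, contradicting (2). Hence no such subsystem exists, and by the equivalence of Conditions (1) and (5) in Theorem~\ref{thm:primeness}, $A \times_\lambda G$ is prime. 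I expect the main obstacle to be the preliminary centralizer lemma: one must carefully invoke (or reprove, via the subgroup theorem for free products / Bass--Serre theory) the classification of centralizers in a free product of cyclic groups, and verify that it forces the ``square-free order or infinite'' dichotomy on every cyclic subgroup; once that is in hand, the rest is a direct transcription of the $\mathrm{PSL}_2(\ZZ)$ argument. A secondary point requiring a line of justification is that primality of $A \times_\lambda G$ descends to primality of $J \times_\lambda H$ through the $\mathcal{B}(\ell^2(G/H)) \overline{\otimes} (-)$ decomposition, which I would handle by noting that $\mathcal{B}(\mathcal{H}) \overline{\otimes} M$ has trivial center iff $M$ does.
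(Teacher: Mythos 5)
Your argument is correct and is essentially the paper's own proof: the paper likewise invokes Karrass--Solitar (Theorem 1) to get that $C_G(r)$ is either a conjugate of a factor $G_i$ or isomorphic to $\ZZ$, and then repeats the argument of Proposition~\ref{pro:characterization-for-PSL_2-actions} verbatim (your extra remarks on why primality descends through the $\mathcal{B}(\ell^2(G/H))\overline{\otimes}(-)$ decomposition and why $r^{[C_H(r):C_r]}\ne e$ are exactly the details the paper leaves implicit). The only cosmetic slip is the phrase ``finite cyclic of square-free order'' for the case $r$ conjugate into a factor, which fails when $G_i=\ZZ$, but that case is already covered by your infinite cyclic alternative.
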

\begin{proof}
It follows from \cite{KarrassSolitar1971}*{Theorem 1} that for $r \in G \setminus \{e\}$, the centralizer $C_G(r)$ is either conjugate to a factor $G_i$ or isomorphic to $\mathbb{Z}$. The equivalence between (1) and (2) now follows as in the proof of Proposition \ref{pro:characterization-for-PSL_2-actions}. 
\end{proof}

\begin{example}[Free groups] Proposition \ref{pro:characterization-for-free-products} applies in particular to the free group $\mathbb{F}_n \cong \mathbb{Z}^{\ast n}$ on $n \geq 1$ generators. Let $(A, \mathbb{F}_n, \alpha)$ be a prime C*-dynamical system. If $A\times_\lambda G $ is not prime, then by Theorem \ref{thm:primeness}, $(A,\mathbb{F}_n,\alpha)$ is induced from a regular C*-dynamical subsystem $(J,H,\beta)$ such that $FC(H)$ is non-trivial. By the Nielsen–Schreier theorem, $H$ is free, so $H$ must be isomorphic to $\mathbb{Z}$. 
\end{example}

\begin{example}[Infinite dihedral group] The infinite dihedral group $D_\infty=\ZZ_2*\ZZ_2$ is FC-hypercentral but not FC. This follows from the alternative presentation $D_\infty \cong \langle x,y : x^2 = e,\ xyx = y^{-1} \rangle$, from which we see that $D_\infty=\{x^\varepsilon y^n:\varepsilon\in\{0,1\}, n\in \ZZ\}$, and hence that $FC(D_\infty)$ is the cyclic group of order $2$ generated by $y$.

Let $(A,D_\infty,\alpha)$ be a C*-dynamical system. Since $D_\infty$ is FC-hypercentral from above, we can apply Theorem \ref{thm:FC-hypercentral-ideal-intersection-property}. However, by Proposition \ref{pro:characterization-for-free-products}, the invariance condition in Conditions (4), (5) and (7) in Theorem \ref{thm:FC-hypercentral-ideal-intersection-property} are unnecessary. It follows that $(A,D_\infty,\alpha)$ has the ideal intersection property if and only if whenever $(A,D_\infty,\alpha)$ is sub-induced from a regular C*-dynamical system $(J,H,\beta)$ with $r \in FC(H) \setminus \{e\}$, then $\beta_r$ is properly outer. 

If, in addition, $(A,D_\infty,\alpha)$ is minimal, then $A \times_\lambda D_\infty$ is simple if and only if $\alpha_{y^n}$ is properly outer for all $n \geq 1$, where $y \in D_\infty$ denotes the second generator in the alternate presentation of $D_\infty$ above.
\end{example}

\begin{rem}\label{rem:FC-subgroups-free-products} 
It follows from Condition (2) of Proposition \ref{pro:characterization-for-free-products} that in order to verify the primality of the reduced crossed product of a C*-dynamical system over a finite free product $G$ of cyclic groups, it is only necessary to consider regular C*-dynamical systems over subgroups $H$ with non-trivial FC center. We will now give a complete description of subgroups $H \leq G$ with this property. 

We claim that if $H \leq G$ has non-trivial FC-center, then either $H$ is conjugate to a subgroup of one of the factors (and hence is cyclic), $H$ is isomorphic to the infinite cylic group $\mathbb{Z}$ or $H$ is isomorphic to the infinite dihedral group $\mathbb{Z}_2 \ast \mathbb{Z}_2$. To see this, note that by the Kurosh subgroup theorem, $H$ is itself a (potentially infinite) free product of cyclic groups in which each factor is isomorphic to either $\mathbb{Z}$ or a subgroup of one of the factors of $G$. The claim now follows from the fact that the free product of two non-trivial groups has non-trivial FC-center if and only if each factor has order $2$.
\end{rem}

\end{document}